\newtheorem{prop}{Proposition}[section]
\newtheorem{lemma}[prop]{Lemma}
\newtheorem{thm}[prop]{Theorem}
\newtheorem{cor}[prop]{Corollary}
\theoremstyle{definition}
\newtheorem{defn}[prop]{Definition}
\newtheorem{rmk}[prop]{Remark}
\newtheorem{ex}[prop]{Example}
\renewcommand{\tilde}{\widetilde}     
\newcommand{\ra}{\rightarrow}
\DeclareMathOperator{\Bun}{Bun}
\DeclareMathOperator{\Jac}{Jac}
\DeclareMathOperator{\rk}{rk}     
\DeclareMathOperator{\Pic}{Pic} 
\DeclareMathOperator{\Spec}{Spec} 
\DeclareMathOperator{\Sym}{Sym}
\DeclareMathOperator{\Hom}{Hom}
\DeclareMathOperator{\ParHom}{ParHom}
\DeclareMathOperator{\Aut}{Aut}
\DeclareMathOperator{\id}{id}
\def\DM{\mathrm{DM}}
\def\CH{\mathrm{CH}} 
\def\CHM{\mathrm{CHM}}
\DeclareMathOperator{\eff}{eff}
\DeclareMathOperator{\SmProj}{SmProj}
\DeclareMathOperator{\codim}{codim}
\DeclareMathOperator{\ch}{ch}
\DeclareMathOperator{\td}{td}
\DeclareMathOperator{\pr}{pr}
\DeclareMathOperator{\loc}{loc}
\DeclareMathOperator{\im}{Im}
\def\ab{{\mathrm{ab}}}
\def\kim{{\mathrm{Kim}}}
\def\num{{\mathrm{num}}}
\def\even{{\mathrm{even}}}
\def\odd{{\mathrm{odd}}}
\def\cA{\mathcal A}\def\cC{\mathcal C}
\def\cE{\mathcal E}\def\cF{\mathcal F}
\def\cH{\mathcal H}\def\cI{\mathcal I}
\def\cJ{\mathcal J}\def\cK{\mathcal K}\def\cL{\mathcal L}
\def\cM{\mathcal M}\def\cN{\mathcal N}\def\cO{\mathcal O}
\def\cP{\mathcal P}
\def\CC{\mathbb C}
\def\GG{\mathbb G}
\def\LL{\mathbb L}
\def\NN{\mathbb N}
\def\PP{\mathbb P}\def\QQ{\mathbb Q}\def\RR{\mathbb R}
\def\ZZ{\mathbb Z}
\def\fh{\mathfrak h}
\newcommand{\flip}[2][]{\ext@arrow 3359\leftrightarrowfill@@{#1}{#2}}
\def\rightarrowfill@@{\arrowfill@@\relax\relbar\rightarrow}
\def\leftarrowfill@@{\arrowfill@@\leftarrow\relbar\relax}
\def\leftrightarrowfill@@{\arrowfill@@\leftarrow\relbar\rightarrow}
\def\arrowfill@@#1#2#3#4{%
  $\m@th\thickmuskip0mu\medmuskip\thickmuskip\thinmuskip\thickmuskip
   \relax#4#1
   \xleaders\hbox{$#4#2$}\hfill
   #3$%
}
\title[Motives of moduli spaces of bundles]{Motives of moduli spaces of bundles on curves via variation of stability and flips}
\author{Lie Fu, Victoria Hoskins and Simon Pepin Lehalleur}
\thanks{\textit{2020 Mathematics Subject Classification:}  14H60, 14D20, 14C15, 14E05}
\thanks{\textit{Key words and phrases:} moduli spaces, vector bundles on curves, Higgs bundles, parabolic bundles, Chow motives, wall-crossing, flips and flops}
\thanks{L. F. is supported by the Radboud Excellence Initiative programme. S. P. L. is supported by The Netherlands Organisation for Scientific Research (NWO), under project number 613.001.752.}
\begin{document}

\maketitle

\begin{abstract}
We study the rational Chow motives of certain moduli spaces of vector bundles on a smooth projective curve with additional structure (such as a parabolic structure or Higgs field). In the parabolic case, these moduli spaces depend on a choice of stability condition given by weights; our approach is to use explicit descriptions of variation of this stability condition in terms of simple birational transformations (standard flips/flops and Mukai flops) for which we understand the variation of the Chow motives. For moduli spaces of parabolic vector bundles, we describe the change in motive under wall-crossings, and for moduli spaces of parabolic Higgs bundles, we show the motive does not change under wall-crossings. Furthermore, we prove a motivic analogue of a classical theorem of Harder and Narasimhan relating the rational cohomology of moduli spaces of vector bundles with and without fixed determinant. For rank $2$ vector bundles of odd degree, we obtain formulas for the rational Chow motives of moduli spaces of semistable vector bundles, moduli spaces of Higgs bundles and moduli spaces of parabolic (Higgs) bundles that are semistable with respect to a generic weight (all with and without fixed determinant).
\end{abstract}

\tableofcontents

\section{Introduction}

Let $C$ be a smooth projective geometrically connected curve of genus $g$ over a field $k$. Let $\cN = \cN_C(n,d)$ denote the moduli space of semistable vector bundles of rank $n$ and degree $d$ on $C$.  When $n$ and $d$ are coprime, $\cN$ is a smooth projective variety of dimension $n^2(g-1) +1$.

There has been a long history of work on the cohomological invariants of $\cN$. Inductive formulas for the Betti numbers of $\cN$ were obtained by Harder and Narasimhan \cite{HN} using the Weil conjectures and point counting over finite fields, by Atiyah and Bott \cite{AB} over $k = \CC$ using gauge theory, and by Bifet, Ghione and Letizia \cite{BGL} using more algebro-geometric methods. All three approaches essentially involve first describing the cohomology of the stack $\Bun_{n,d}$ of all vector bundles on $C$ and then inductively computing the cohomology of $\cN$ by performing a Harder--Narasimhan recursion. The approach of \cite{BGL} lead to a closed formula for the class of $\Bun_{n,d}$ in a dimensional completion of the Grothendieck ring of varieties \cite{BD} and in Voevodsky's triangulated category of motives over $k$ with $\QQ$-coefficients \cite{HPL_formula}. Furthermore, the ideas in \cite{BGL} were used by del Ba\~no to show that the Chow motive of $\cN$ lies in the tensor subcategory generated by the motive of the curve \cite{dB_motive_moduli_vb}.

More generally, for a smooth projective variety $X$ and a moduli space of sheaves of some kind (possibly with some additional structure) on $X$, there are several examples in which the motive of this moduli space lies in the tensor subcategory generated by the motive of $X$. This holds for the moduli space of stable Higgs bundles on $C$ of coprime rank and degree \cite{HPL_Higgs} and for certain moduli spaces of semistable sheaves on K3 and abelian surfaces (as well as for closely related spaces such as crepant resolutions, twisted and non-commutative analogues) \cite{Bulles,Salvator-Lie-Ziyu}.

\subsection{Qualitative results in arbitrary rank}

Our motivation for this paper was to provide more explicit descriptions of the Chow motives of certain moduli spaces of vector bundles on a smooth projective curve with additional structure. Our approach is to use concrete geometric descriptions of the birational transformations between moduli spaces under variation of stability \cite{BH, Thaddeus_VGIT} together with recent descriptions giving the change in Chow motives of smooth projective varieties under standard flips and flops and Mukai flops \cite{Jiang19, LLW-annals}. In fact, since we also want to work with moduli spaces of Higgs bundles on $C$, which are non-proper, but smooth if $n$ and $d$ are coprime, we first need to extend the last of these results to smooth quasi-projective varieties (\textit{cf.}\ Theorem \ref{thm:MotiveMukaiFlop}) using a local-to-global trick explained in Appendix \ref{appendix}. 

One can simplify the geometry by fixing a degree $d$ line bundle $\cL$ and studying the moduli space $\cN_{\cL} = \cN_{\cL,C}(n,d)$ of semistable vector bundles with determinant isomorphic to $\cL$. When $n$ and $d$ are coprime, $\cN_{\cL}$ is also smooth and projective and, moreover, is Fano \cite{DN} and rational \cite{KS}. Quite remarkably, Harder and Narasimhan \cite{HN} showed that, for any $\ell$ coprime to the characteristic of $k$, the $\ell$-adic cohomology of $\cN$ is the tensor product of that of $\cN_{\cL}$ and $\Jac(C)$. Our first result gives a motivic refinement of this classical theorem (see Theorem \ref{thm motive N and N fixed det}).

\begin{thm}\label{main thm N with and without fixed det}
Let $n$ and $d$ be coprime; then there is an isomorphism of rational Chow motives 
\[\fh(\cN) \simeq \fh(\cN_{\cL}) \otimes \fh(\Jac(C)).\]
\end{thm}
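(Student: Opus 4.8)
The plan is to present $\cN$ as the quotient of $\cN_{\cL}\times\Jac(C)$ by a free action of a finite group and to reduce the theorem to the statement that this group acts trivially on the Chow motive of the fixed-determinant moduli space. Write $J:=\Jac(C)=\Jac^{0}(C)$ and let $\Jac^{d}(C)\subset\Pic(C)$ be the degree-$d$ component. First I would record the geometry of the determinant morphism $\det\colon\cN\to\Jac^{d}(C)$, whose fibre over $\cL$ is $\cN_{\cL}$, together with the action of $J$ on $\cN$ by tensoring. Identifying $\cN_{\cL}\times J$ with $\cN\times_{\Jac^{d}(C)}J$ along the map $J\to\Jac^{d}(C)$, $L\mapsto\cL\otimes L^{\otimes n}$, one sees that
\[
q\colon\cN_{\cL}\times J\longrightarrow\cN,\qquad (E,L)\longmapsto E\otimes L,
\]
is a torsor under the finite group(-scheme) $G:=J[n]$, with $\xi\in G$ acting by $(E,L)\mapsto(E\otimes\xi,\,L\otimes\xi^{-1})$; this action is free (it is free on the $J$-factor), so $q$ is finite \'etale whenever the characteristic of $k$ does not divide $n$, and in general one uses the corresponding quotient formalism for finite flat group schemes. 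Since we use rational coefficients and all the spaces involved are smooth projective, $\fh$ turns this quotient into invariants:
\[
\fh(\cN)\;\simeq\;\bigl(\fh(\cN_{\cL})\otimes\fh(J)\bigr)^{G},
\]
the image of the idempotent $\tfrac{1}{|G|}\sum_{\xi\in G}\fh(\xi)$.

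The group $G$ acts on the factor $\fh(J)$ by translations, and $\fh(J)^{G}\simeq\fh(J/G)\simeq\fh(J)$, the last isomorphism being induced by the multiplication-by-$n$ isogeny $J\to J$, which is invertible on rational Chow motives. Hence it suffices to show that $G$ acts trivially on $\fh(\cN_{\cL})$; granting this, the idempotent above becomes $\id_{\fh(\cN_{\cL})}\otimes(\text{idempotent onto }\fh(J)^{G})$, and we obtain $\fh(\cN)\simeq\fh(\cN_{\cL})\otimes\fh(J)^{G}\simeq\fh(\cN_{\cL})\otimes\fh(\Jac(C))$. To prove that each automorphism $t_\xi\colon E\mapsto E\otimes\xi$ of $\cN_{\cL}$ ($\xi\in G$) acts as the identity on $\fh(\cN_{\cL})$, I would combine three inputs: (i) $t_\xi$ has finite order, since its $n$-fold composite is $t_{\xi^{\otimes n}}=\id$, so $\fh(t_\xi)$ is a finite-order automorphism of $\fh(\cN_{\cL})$; (ii) $t_\xi$ acts trivially on the ($\ell$-adic or singular) cohomology of $\cN_{\cL}$ --- this follows from the cohomological theorem of Harder and Narasimhan \cite{HN} together with the triviality of the translation action on $H^{*}(J)$, and intrinsically from the fact that, as $\gcd(n,d)=1$, the ring $H^{*}(\cN_{\cL})$ is generated by the Chern classes of $\operatorname{ad}(\cU)$ for a universal bundle $\cU$ on $C\times\cN_{\cL}$, which are $t_\xi$-invariant because $\operatorname{ad}(\cU)$ is; and (iii) by del Ba\~no \cite{dB_motive_moduli_vb}, $\fh(\cN_{\cL})$ lies in the tensor subcategory generated by $\fh(C)$, hence is finite-dimensional in the sense of Kimura. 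From (ii) and (iii), $\fh(t_\xi)-\id_{\fh(\cN_{\cL})}$ is homologically trivial, hence nilpotent; by (i) it also has the form $\id$ plus an element of finite order; and a nilpotent $N$ for which $\id+N$ has finite order vanishes in characteristic $0$. Thus $\fh(t_\xi)=\id_{\fh(\cN_{\cL})}$, completing the proof.

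The step I expect to be the main obstacle is this last one. Cohomological triviality of the $G$-action on $\cN_{\cL}$ is classical, but its upgrade to Chow motives is not formal: a self-correspondence acting trivially on cohomology may act nontrivially on a Chow motive --- for instance translation by a torsion point on a product of elliptic curves. The argument sketched above sidesteps this using Kimura finite-dimensionality of $\fh(\cN_{\cL})$ together with the finiteness of the order of $t_\xi$; a more explicit alternative, in the spirit of del Ba\~no's construction, would be to realise $\fh(\cN_{\cL})$ as a direct summand of a finite sum of Tate twists of motives of powers of $C$ via correspondences built only from Chern classes of $\operatorname{ad}(\cU)$ and from cycles pulled back from $C$ --- these are visibly $G$-invariant, which forces the induced $G$-action on the summand $\fh(\cN_{\cL})$ to be trivial. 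A minor point is the behaviour in small characteristic, where $J[n]$ is no longer \'etale: there one either restricts to the case that the characteristic of $k$ does not divide $n$ or appeals to the quotient formalism for finite flat group schemes.
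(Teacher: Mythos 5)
Your proposal follows the same global strategy as the paper (present $\cN$ as the quotient of $\cN_{\cL}\times\Jac(C)$ by the diagonal $\Jac(C)[n]$-action, then show the action on each motivic factor is trivial), but it diverges at the decisive step, and your route there is both correct and arguably more self-contained. Where you argue that $\fh(t_\xi)-\id$ is homologically (hence numerically) trivial and therefore nilpotent by Kimura finite-dimensionality, and that a unipotent automorphism of finite order over $\QQ$ must be the identity, the paper instead invokes the conservativity of the $\ell$-adic realisation on abelian geometric motives in characteristic $0$ (Wildeshaus) to promote the Harder--Narasimhan cohomological statement directly to the level of Chow motives. Your argument only uses the nilpotence of the numerically trivial ideal in $\End$ of a finite-dimensional motive (a slightly stronger input than the conservativity statement quoted in Proposition \ref{prop kimura canc}, but a standard consequence of Kimura's theory), and in particular it does not force a reduction to characteristic $0$ for this step --- a genuine simplification. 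Two caveats. First, the finite-dimensionality of $\fh(\cN_{\cL})$ is not literally del Ba\~no's theorem, which concerns $\cN$; for the fixed-determinant space you need the Beauville--B\"ulles diagonal argument (Proposition \ref{prop:N-abelian} in the paper). Second, your treatment of small characteristic is the one real soft spot: when $\mathrm{char}(k)$ divides $n$, the group scheme $J[n]$ is non-\'etale, the torsor $q$ is inseparable, and the identification $\fh(\cN)\simeq(\fh(\cN_{\cL})\otimes\fh(J))^{G}$ via an averaging idempotent over $k$-points no longer makes sense as stated; the paper sidesteps this entirely by lifting $(C,\cL)$ to characteristic $0$ and using specialisation of Chow groups in smooth projective families, and you would need either that reduction or a genuine argument for quotients by finite flat group schemes to close this case.
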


The proof involves first checking that $\fh(\cN)$ and $\fh(\cN_{\cL})$ lie in the tensor subcategory generated by $\fh(C)$, and thus are abelian motives; for $\cN$ this is a theorem of del Ba\~no \cite{dB_motive_moduli_vb} and for $\cN_{\cL}$, see Proposition \ref{prop:N-abelian}, which uses an argument of B\"ulles \cite{Bulles} and Beauville \cite{Beauville_diag}. We reduce to the case of a field of characteristic zero, then use the theorem of Harder and Narasimhan and the fact that the $\ell$-adic realisation is conservative on abelian geometric motives in characteristic $0$ \cite{wildeshaus}.

In the case of parabolic vector bundles, which are vector bundles with flags of a specified type at a finite number of parabolic points $D = \{p_1,\dots, p_N\}$ on $C$, there are various notions of stability, encoded by a set of parabolic weights $\alpha$, which give rise to different moduli spaces $\cN^\alpha =\cN^\alpha_{C,D}(n,d)$. For generic weights (i.e. where semistability and stability coincide), these moduli spaces are smooth and projective. The space of weights admits a wall and chamber decomposition by considering how the notion of (semi)stability changes and the birational transformation between moduli spaces separated by a wall can be explicitly described \cite{BH,BY_rationality,Thaddeus_VGIT}. In the nicest cases, the wall-crossing transformation is a standard flip or flop whose centres can be explicitly described as certain projective bundles over a product of smaller moduli spaces and the dimensions of the projective space fibres can be computed in terms of the dimension of a certain Ext group. Consequently, for such walls, one obtains an explicit description of how the Chow motive varies with $\alpha$ (Corollary \ref{cor motivic par WC}). We also provide a motivic description of flag degeneration (Corollary \ref{cor motives flag degen}). Furthermore, for coprime rank and degree and sufficiently small weights $\alpha$, there is a forgetful map $\cN^\alpha \ra \cN$ which is an iterated flag bundle \cite{BH,BY_rationality}. Hence, it suffices to know $\fh(\cN)$ to compute the Chow motives of $\fh(\cN^\alpha)$ for generic $\alpha$ via wall-crossing; for some explicit formulas in rank $n =2$, see $\S$\ref{sec intro rk 2}. In particular, $\fh(\cN^\alpha)$ depends on $\alpha$, as was already known for the Poincar\'{e} polynomial \cite{Bauer,Holla}.  As a corollary of these motivic formulas, we obtain descriptions of the Chow groups of 1-cycles (see Corollaries \ref{cor:CH1constant} and \ref{cor int Jac 1cycles par}) strengthening \cite{Chakraborty1,Chakraborty2} and for fixed $i$, a stabilisation result for $\CH^i(\cN^\alpha)$ (see Corollary \ref{cor stable chow gps parabolic}).  

The algebraic symplectic analogues of moduli spaces of (parabolic) vector bundles are the moduli spaces of Higgs bundles $\cM$ and parabolic Higgs bundles $\cM^{\alpha}$, which are no longer proper, but are smooth when semistability and stability coincide. There is a $\GG_m$-action on $\cM$ given by scaling the Higgs field \cite{Hitchin, Simpson} which gives an associated Bia{\l}ynicki-Birula decomposition \cite{BB} that enables one to see that the cohomology of $\cM$ is nonetheless pure. In \cite[Corollary 6.9]{HPL_Higgs}, the second and third authors show that the Voevodsky motive of $\cM$ is a Chow motive by this method. The same holds for moduli spaces of parabolic Higgs bundles which are semistable with respect to a generic weight (\textit{cf.}\ Lemma \ref{lemma:par Higgs pure}). Moreover, via the Bia{\l}ynicki-Birula decomposition, the Chow motive of $\cM$ (resp. $\cM^{\alpha}$) can be expressed in terms of $\cN$ (resp. $\cN^{\alpha}$) and moduli spaces of chains (resp. parabolic chains); see \cite{HPL_Higgs} for the non-parabolic case.

There are several surprising changes as we pass from (parabolic) vector bundles to (parabolic) Higgs bundles. First, the analogue of Theorem \ref{main thm N with and without fixed det} does not hold if we replace $\cN$ by $\cM$; this was already seen on cohomology in rank $n = 2$ by Hitchin \cite{Hitchin}. Moreover, for a general curve $C$ of genus at least 2, the motive of $\cM_{\cL}$ is not contained in the subcategory generated by $\fh(C)$ (see Proposition \ref{prop:EnlargeCategory}). Second, although the Chow motive of $\cN^\alpha$ depends on $\alpha$, this is no longer the case for $\cM^\alpha$. 

\begin{thm}\label{main thm par higgs} (Corollary \ref{cor motive par higgs indept of alpha})
For a generic weight $\alpha$, the integral Chow motive of the moduli space $\cM^\alpha$ of $\alpha$-semistable parabolic Higgs bundles of rank $n$ and degree $d$ is independent of $\alpha$.
\end{thm}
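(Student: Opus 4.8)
The plan is to connect any two generic weights by a path in the space of parabolic weights that crosses the wall-and-chamber structure transversally, one wall at a time, and to show that a single such wall-crossing realises $\cM^{\alpha_-}$ and $\cM^{\alpha_+}$ as smooth quasi-projective varieties related by a Mukai flop; the invariance of the \emph{integral} Chow motive then follows from the quasi-projective Mukai flop formula of Theorem \ref{thm:MotiveMukaiFlop} (whose proof relies on the local-to-global device of Appendix \ref{appendix}). First I would recall that the generic weights form the complement of a locally finite union of hyperplanes (walls) inside the convex polytope of admissible weights, that $\cM^\alpha$ is smooth (and has a well-defined integral Chow motive by Lemma \ref{lemma:par Higgs pure}) precisely when $\alpha$ lies in the interior of a chamber, and that two generic weights in the same chamber give the same moduli space. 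Choosing the connecting path in general position, it therefore suffices to treat a pair $\alpha_-,\alpha_+$ of generic weights in adjacent chambers, separated by a single wall $W$ and clustered near a \emph{generic} point $\alpha_0\in W$, so that only one numerical type of $\alpha_0$-strictly-semistable parabolic Higgs bundle occurs.

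The heart of the matter is the explicit local description of this wall-crossing, following \cite{BH, Thaddeus_VGIT, BY_rationality}. At a generic point $\alpha_0$ of $W$ the $\alpha_0$-strictly-semistable objects are exactly the parabolic Higgs bundles admitting a two-step Jordan--H\"older filtration with $\alpha_0$-stable parabolic Higgs subquotients of prescribed parabolic type; the loci swept out in $\cM^{\alpha_\pm}$ by the one-sided destabilising sub (resp.\ quotient) are the exceptional loci of birational contractions $\cM^{\alpha_\pm}\dashrightarrow \cM^{\alpha_0}$. I would identify the common centre $Z$ with (a product of) smaller moduli spaces of parabolic Higgs bundles — themselves smooth for the induced generic weights — and compute, via the deformation theory of parabolic Higgs bundles (hypercohomology of the parabolic Higgs complex), that the exceptional locus over $Z$ on each side is the projectivisation $\PP(\cE_\mp)$ of a vector bundle $\cE_\mp$ on $Z$ whose rank is the relevant $\Ext$-dimension. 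The crucial extra input, absent in the non-Higgs case, is the algebraic symplectic form on $\cM^{\alpha}$ (Biswas--Ramanan / Yokogawa): Serre duality for the parabolic Higgs complex, paired with the symplectic form, identifies $\cE_+$ with $\cE_-^\vee$ and exhibits $Z\subset\cM^{\alpha_\pm}$ as a subvariety whose normal bundle has the cotangent-type shape built into the definition of a Mukai flop. Hence $\cM^{\alpha_-}$ and $\cM^{\alpha_+}$ are related by a Mukai flop in the sense of \cite{Jiang19, LLW-annals}, degenerating to an isomorphism when $\cE_\pm$ has rank one (the "trivial" walls for the Higgs problem), and in higher rank built, if necessary, from a finite composition of such.

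Finally, applying Theorem \ref{thm:MotiveMukaiFlop} to each wall-crossing yields $\fh(\cM^{\alpha_-})\simeq\fh(\cM^{\alpha_+})$ as integral Chow motives, and iterating along the path proves the theorem; this is also the conceptual reason the $\alpha$-dependence of $\fh(\cN^{\alpha})$ seen in Corollary \ref{cor motivic par WC} evaporates after passing to Higgs bundles, since a flip becomes a flop and contributes no correction term. The main obstacle is the second step: pinning down the precise local geometry of the wall-crossing — proving smoothness of the centres $Z$, verifying that both exceptional loci are honest projective bundles of the $\Ext$-predicted ranks (and not merely generically so), and extracting from the symplectic form the self-duality of the normal bundle that upgrades the expected flip to a genuine Mukai flop. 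The remaining ingredients are routine variation-of-stability bookkeeping or direct appeals to the motivic results recalled above.
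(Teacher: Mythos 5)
Your proposal follows essentially the same route as the paper: reduce to a single wall-crossing between adjacent chambers, identify it as a Mukai flop, and apply the quasi-projective Mukai flop result (Theorem \ref{thm:MotiveMukaiFlop}, proved via the local-to-global trick of Appendix \ref{appendix}). The only difference is that the paper does not re-derive the local geometry of the wall-crossing (the step you correctly flag as the main obstacle) but simply cites Thaddeus \cite[6.2]{Thaddeus_var_PHiggs} for the fact that these wall-crossings are Mukai flops, working with full flags so that all walls are good.
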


On the level of Poincar\'{e} polynomials, this result was known in rank $2$ with fixed determinant over $k = \CC$ \cite{BY_ParHiggs} (in this case, the moduli spaces are diffeomorphic by \cite{NakajimaPHB}). Our proof relies on Thaddeus' description \cite{Thaddeus_var_PHiggs} of wall-crossings as Mukai flops, which can be thought of as algebraic symplectic versions of standard flops and do not alter the motive by Theorem \ref{thm:MotiveMukaiFlop}.

\subsection{Explicit formulas in rank 2 and odd degree}\label{sec intro rk 2}

In order to compute Chow motives of these moduli spaces, the basic building block is $\fh(\cN)$. By Theorem \ref{main thm N with and without fixed det}, it suffices to know $\fh(\cN_{\cL})$. In rank $n = 2$ and odd degree $d$, we compute this by using variation of stability for pairs consisting of a vector bundle and a section studied by Thaddeus \cite{Thaddeus_pairs, Thaddeus_VGIT} and improving work of del Ba\~{n}o in \cite{dB_rk2} on the motive of $\cN_{\cL}$ in a semisimple category of motives by using the fact that $\fh(\cN_{\cL})$ is abelian (\textit{cf.}\ Proposition \ref{prop:N-abelian}) and hence Kimura finite-dimensional. Consequently, we obtain corresponding decompositions of the Chow groups of these moduli spaces (see $\S$\ref{sec cor Chow gps N} for corollaries and comparisons with previous results for $\cN_{\cL}$).

\begin{thm}\label{main thm rk 2 rational}
For $n = 2$, $d$ odd and $\cL \in \Pic^d(C)(k)$, the rational Chow motive of the moduli space $\cN_{\cL}$ of semistable bundles with determinant $\cL$ is
\[ \fh(\cN_{\cL}) \simeq \fh(\Sym^{g-1}(C))(g-1) \oplus \bigoplus_{i=0}^{g-2} \fh(\Sym^i(C))\otimes \left( \QQ(i) \oplus \QQ(3g-3-2i) \right). \]
\end{thm}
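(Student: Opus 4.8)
The plan is to compute $\fh(\cN_{\cL})$ by running Thaddeus' variation-of-stability analysis for pairs $(E,\phi)$ where $E$ is a rank-$2$ bundle with $\det E \cong \cL$ and $\phi \in H^0(E)$ is a section. Fixing $d$ odd (say large enough after twisting, which does not change $\cN_{\cL}$), the stability parameter $\sigma$ ranges over a finite set of chambers; the moduli space $M_0$ at one extreme is a projective bundle over $\cN_{\cL}$ (sections of a generically stable bundle), and the moduli space $M_{\max}$ at the other extreme is a projective space (or more precisely has an explicit simple description coming from the fact that no section is stable for large $\sigma$, forcing $E$ to be an extension of a line bundle by $\cL$ minus that line bundle). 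First I would recall from \cite{Thaddeus_pairs, Thaddeus_VGIT} that each wall-crossing $M_{i-1} \dashrightarrow M_{i}$ is a blow-up along a smooth center followed by a blow-down onto another smooth center, where both centers are (essentially) symmetric products $\Sym^i(C)$ or projective bundles over them, and the projective-space fibre dimensions are computed via Riemann--Roch/Ext-group dimension counts that Thaddeus carries out explicitly.

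The second step is to translate this chain of blow-ups/blow-downs into motives. Using the blow-up formula for Chow motives (if $\tilde Y \to Y$ is the blow-up of $Y$ along a smooth center $Z$ of codimension $c$, then $\fh(\tilde Y) \simeq \fh(Y) \oplus \bigoplus_{j=1}^{c-1} \fh(Z)(j)$), each wall-crossing gives a relation
\[ \fh(M_{i}) \oplus \bigoplus_{j=1}^{c_i^- - 1} \fh(Z_i)(j) \;\simeq\; \fh(M_{i-1}) \oplus \bigoplus_{j=1}^{c_i^+ - 1} \fh(Z_i)(j), \]
where $Z_i$ is built from $\Sym^i(C)$ and $c_i^\pm$ are the two codimensions. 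One then solves this telescoping system starting from $\fh(M_{\max})$, which is a motive of projective space (a sum of Tate twists), and works back down to $\fh(M_0)$; finally one strips off the projective bundle over $\cN_{\cL}$ using the projective bundle formula to isolate $\fh(\cN_{\cL})$. This is exactly del Ba\~no's strategy in \cite{dB_rk2}, but there it is carried out only in a semisimplified (e.g. numerical or Grothendieck-group) category, so one obtains the formula only up to the ambiguity that semisimplification allows. The key extra input here is Proposition \ref{prop:N-abelian}: $\fh(\cN_{\cL})$ is an abelian motive, hence Kimura--O'Sullivan finite-dimensional, so the relevant cancellation/isomorphism-lifting argument upgrades from "equal in $K_0$ of motives" (or after $\otimes$ with a semisimple category) to an honest isomorphism of Chow motives. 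One also needs to know that every intermediate $M_i$ and every center $Z_i$ has abelian, finite-dimensional motive — this follows since they are projective bundles over symmetric products of $C$ (whose motives are summands of tensor powers of $\fh(C)$) and the whole construction stays inside the tensor subcategory generated by $\fh(C)$.

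The main obstacle will be bookkeeping: matching Thaddeus' indexing of walls, centers $\Sym^i(C)$, and the two codimensions $c_i^\pm$ with the precise Tate twists appearing in the target formula, and verifying that the alternating telescoping sum collapses to exactly $\fh(\Sym^{g-1}(C))(g-1) \oplus \bigoplus_{i=0}^{g-2} \fh(\Sym^i(C)) \otimes (\QQ(i) \oplus \QQ(3g-3-2i))$. Concretely, one must check: (i) the count of walls is $g-1$ (so that $i$ runs $0,\dots,g-2$ for the "generic" walls with the last wall $i=g-1$ being special, contributing the single twisted term $\fh(\Sym^{g-1}(C))(g-1)$); (ii) at the $i$-th wall the center is $\Sym^i(C)$ (or a projective bundle over it whose fibre contribution has already been absorbed) appearing with twists running over a range of length forcing precisely the two "boundary" twists $\QQ(i)$ and $\QQ(3g-3-2i)$ to survive after cancellation with the neighbouring wall; and (iii) the dimensions add up — note $\dim \cN_{\cL} = 3g-3$, which is why the self-dual pairing of twists is $i \leftrightarrow 3g-3-2i$ (after accounting for the relative dimension of $\Sym^i(C)$, i.e. the Poincaré-duality symmetry $\fh(\cN_{\cL}) \simeq \fh(\cN_{\cL})^\vee(3g-3)$ must be visibly respected). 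The cancellation mechanism itself is routine once set up, and the finite-dimensionality input removes the only serious categorical difficulty, so the real work is careful comparison with \cite{Thaddeus_pairs, Thaddeus_VGIT, dB_rk2} and an induction on $g$ (or on the wall index) to verify the closed form.
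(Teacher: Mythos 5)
This is essentially the paper's proof: Thaddeus' flip sequence for fixed-determinant stable pairs, del Ba\~{n}o's telescoping computation starting from the projective space at the extreme chamber and the projective bundle over $\cN_{\cL}$ at the other, and the upgrade from a semisimplified setting to an isomorphism in $\CHM(k,\QQ)$ via Proposition \ref{prop:N-abelian} together with Kimura finite-dimensionality and cancellation. The one substantive slip is in your predicted bookkeeping: after twisting to $d=4g-3+2\delta$ the number of walls is $(d-1)/2=2g-2+\delta$ (not $g-1$), with centres $\Sym^i(C)$ for $i$ up to $2g-2+\delta$, and the closed formula only emerges once the symmetric powers with $i\geq g$ are re-expressed with rational coefficients in terms of lower symmetric powers and the Jacobian --- precisely the reduction that del Ba\~{n}o's computation (and the paper's verification that it needs no degree-one line bundle on $C$ and is independent of $\delta$) carries out.
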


We obtain the following formulas for the integral Chow motives of the other bundle moduli spaces in terms of $\fh(\cN)$ or $\fh(\cN_{\cL})$. For parabolic vector bundles, this comes from studying the wall-crossing flips/flops. For (parabolic) Higgs bundles, this come from using Bia{\l}ynicki-Birula decompositions associated to $\GG_m$-actions scaling the Higgs field and Theorem \ref{main thm par higgs}.

\begin{thm}\label{main thm rk 2 integral}
For $n = 2$, $d$ odd and $\cL \in \Pic^d(C)(k)$, we have the following formulas for the integral Chow motives of associated bundle moduli spaces.
\begin{enumerate}[label=\emph{(\roman*)}, leftmargin=0.7cm] 
\item (Theorem \ref{thm par motive rk 2 odd deg}) For the moduli space $\cN^\alpha$ of parabolic bundles on $C$ with flags at $N$ parabolic points which are semistable with respect to a generic weight $\alpha$, we have
\[ \fh(\cN^{\alpha})  \simeq \fh(\cN) \otimes \fh(\PP^1)^{\otimes N} \oplus \bigoplus_{j=0}^{N-3} \fh(\Jac(C))^{\otimes 2}(g+j)^{\oplus b_j(\alpha)}.\]
where the exponents $b_j(\alpha)$ are given in Definition \ref{def exponents in terms of alpha}.
\item (Corollary \ref{cor par motive rk 2 odd det}) For the moduli space $\cN^\alpha_{\cL}$ of parabolic bundles on $C$ with determinant $\cL$ and flags at $N$ parabolic points which are semistable with respect to a generic weight $\alpha$, we have
\[ \fh(\cN^{\alpha}_{\cL})  \simeq \fh(\cN_{\cL}) \otimes \fh(\PP^1)^{\otimes N} \oplus \bigoplus_{j=0}^{N-3} \fh(\Jac(C))(g+j)^{\oplus b_j(\alpha)}\]
where the exponents $b_j(\alpha)$ are as above.
\item (Theorem \ref{thm motive rk 2 Higgs}) For the moduli space $\cM$ of semistable Higgs bundles, we have
\[ \fh(\cM) \simeq \fh(\cN(2,d)) \oplus \bigoplus_{j=1}^{g-1} \fh(\Pic^{a_{d,j}}(C)) \otimes \fh(\Sym^{2j-1}(C))(3g - 2j -2 ), \]
where $a_{d,j} = g-j+(d-1)/2$.
\item (Proposition \ref{prop motive rk 2 higgs fixed det}) For the moduli space $\cM_{\cL}$ of semistable Higgs bundles with determinant $\cL$, we have
\[ \fh(\cM_{\cL}) \simeq \fh(\cN_{\cL}) \oplus \bigoplus_{j=1}^{g-1} \fh(\widetilde{\Sym}^{2j-1}(C))(3g - 2j -2 )\]
where $\widetilde{\Sym}^{i}(C) \ra {\Sym}^{i}(C)$ is the pull back of the multiplication-by-2 map on $\Jac(C)$. 
\item (Theorem \ref{thm motive rk 2 para higgs}) For the moduli space $\cM^\alpha$ of parabolic Higgs bundles on $C$ with flags at $N$ parabolic points which are semistable with respect to a generic weight $\alpha$, we have
\[ \fh(\cM^\alpha) \simeq \fh(\cN) \otimes \fh(\PP^1)^{\otimes N} \oplus \bigoplus_{\begin{smallmatrix}  0 \leq l \leq N  \\ \frac{l+1-N}{2} \leq j \leq g -1 \end{smallmatrix}} \fh(\Pic^{a_{d,j}}(C)) \otimes \fh(\Sym^{2j+N - l-1}(C))(3g -2j+l-2)^{\oplus {N \choose l }} \]
where $a_{d,j} := g-j+(d-1)/2$. As in Theorem \ref{main thm par higgs}, this is independent of $\alpha$.
\item (Proposition \ref{prop par higgs rk 2 fixed det}) For the moduli space $\cM^\alpha_{\cL}$ of parabolic Higgs bundles on $C$ with determinant $\cL$ and flags at $N$ parabolic points which are semistable with respect to a generic weight $\alpha$, we have
\[ \fh(\cM^\alpha_{\cL}) \simeq \fh(\cN_{\cL}) \otimes \fh(\PP^1)^{\otimes N} \oplus \bigoplus_{\begin{smallmatrix}  0 \leq l \leq N  \\ \frac{l+1-N}{2} \leq j \leq g -1 \end{smallmatrix}}  \fh(\widetilde{\Sym}^{2j+N - l-1}(C))(3g -2j+l-2)^{\oplus {N \choose l }} \] 
where $\widetilde{\Sym}^{i}(C) \ra {\Sym}^{i}(C)$ is the pull back of the multiplication-by-2 map on $\Jac(C)$.
\end{enumerate}
\end{thm}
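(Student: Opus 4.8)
The plan is to derive all six formulas by a common two-step scheme: first reduce the Chow motive of the moduli space in question to that of $\cN=\cN(2,d)$ or $\cN_\cL=\cN_\cL(2,d)$ by a controlled geometric operation --- iterated wall-crossing in (i)--(ii), a Bia{\l}ynicki-Birula decomposition in (iii)--(vi) --- and then substitute the explicit formula for $\fh(\cN_\cL)$ from Theorem \ref{main thm rk 2 rational}, invoking Theorem \ref{main thm N with and without fixed det} to pass between the fixed- and free-determinant versions whenever a formula is stated in terms of $\fh(\cN)$. Throughout one rewrites projective-space-bundle contributions using the projective bundle formula and $\fh(\PP^1)\simeq\QQ\oplus\QQ(1)$.

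\emph{Parts (i) and (ii).} For a sufficiently small generic weight $\alpha_0$ the forgetful morphism $\cN^{\alpha_0}\to\cN$ (resp.\ $\cN^{\alpha_0}_\cL\to\cN_\cL$) is an iterated flag bundle \cite{BH,BY_rationality}, which for $n=2$ is a tower of $N$ projective $\PP^1$-bundles, so the projective bundle formula gives $\fh(\cN^{\alpha_0})\simeq\fh(\cN)\otimes\fh(\PP^1)^{\otimes N}$. One next passes from $\alpha_0$ to an arbitrary generic $\alpha$ by crossing finitely many walls. By \cite{BH,BY_rationality,Thaddeus_VGIT}, in rank $2$ each crossing is a standard flip or flop whose centre is a projective bundle over a product of two moduli spaces of parabolic line bundles, i.e.\ motivically over $\fh(\Jac(C))^{\otimes 2}$ --- and over $\fh(\Jac(C))$ after fixing the determinant, where one line bundle is determined by the other. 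Applying the motivic flip/flop formula of Corollary \ref{cor motivic par WC} at each wall (and Corollary \ref{cor motives flag degen} for flag degenerations) changes the motive by summands $\fh(\Jac(C))^{\otimes 2}(g+j)$, the Tate twist being read off from the dimension of the projective fibre, itself governed by the relevant $\Ext^1$. The bookkeeping of which walls separate $\alpha_0$ from $\alpha$, and of these fibre dimensions, is exactly what Definition \ref{def exponents in terms of alpha} packages into the exponents $b_j(\alpha)$; carrying it out yields (i), and the fixed-determinant version gives (ii).

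\emph{Parts (iii) and (iv).} By \cite[\S 6]{HPL_Higgs}, the Bia{\l}ynicki-Birula decomposition of $\cM$ (resp.\ $\cM_\cL$) associated to the $\GG_m$-action scaling the Higgs field yields $\fh(\cM)\simeq\bigoplus_F\fh(F)(c_F)$, the sum over connected components $F$ of the fixed locus with $c_F$ the codimension of the stratum attracting to $F$; the components are $\cN$ (zero Higgs field, contributing $\fh(\cN)$ with no twist) together with moduli of rank-$2$ Higgs chains. In odd degree the latter split into pieces parametrised by $\Pic^{a_{d,j}}(C)\times\Sym^{2j-1}(C)$ for $1\le j\le g-1$ --- a sub-line-bundle $L_1$ together with the vanishing divisor of the off-diagonal Higgs field, whose length $2j-1$ and whose degree $a_{d,j}=g-j+(d-1)/2$ are forced by the semistability inequalities --- and a weight computation on the deformation complex gives the codimension $3g-2j-2$, proving (iii). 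For the fixed-determinant case the same decomposition applies, but a point of the $j$-th component now records, beside a divisor $D\in\Sym^{2j-1}(C)$, a square root of $\cL\otimes K(-D)$, i.e.\ a point of $\widetilde{\Sym}^{2j-1}(C)$, the pullback of $\Sym^{2j-1}(C)\to\Jac(C)$ along multiplication by $2$; this is why the Jacobian factor disappears and $\Sym$ is replaced by $\widetilde{\Sym}$, giving (iv).

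\emph{Parts (v) and (vi).} By Theorem \ref{main thm par higgs} the motive of $\cM^\alpha$ is independent of the generic weight, so one may compute it for $\alpha$ small enough that the zero-Higgs-field locus $\cN^{\alpha}$ is a tower of $N$ $\PP^1$-bundles over $\cN$, contributing $\fh(\cN)\otimes\fh(\PP^1)^{\otimes N}$. The Bia{\l}ynicki-Birula decomposition for $\cM^\alpha$ --- pure by Lemma \ref{lemma:par Higgs pure} --- then adds the non-trivial fixed components, which are now moduli of \emph{parabolic} Higgs chains: the underlying line bundles acquire a parabolic structure at the $N$ marked points, and placing a size-$l$ subset of those points on one factor gives a component parametrised by $\Pic^{a_{d,j}}(C)\times\Sym^{2j+N-l-1}(C)$ with multiplicity $\binom{N}{l}$; recomputing the attracting codimension with the parabolic corrections produces the twist $3g-2j+l-2$ and the index ranges $0\le l\le N$, $(l+1-N)/2\le j\le g-1$. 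Fixing the determinant again replaces the Jacobian factor by $\widetilde{\Sym}$ as in (iv), giving (vi). The conceptual ingredients are all in place --- wall-crossings are standard or Mukai flips with known motivic effect (Corollary \ref{cor motivic par WC}, Theorem \ref{thm:MotiveMukaiFlop}), and Bia{\l}ynicki-Birula decompositions split these Chow motives \cite{HPL_Higgs}. The genuine difficulty, and the step most prone to error, is the numerical bookkeeping: pinning down the exact indexing sets for the walls and for the fixed components, the degrees $a_{d,j}$ and the divisor lengths, the codimensions of the attracting strata (equivalently, the $\Ext^1$-dimensions with their parabolic modifications) that fix the Tate twists, and correctly recognising the fixed-determinant loci as the double covers $\widetilde{\Sym}^i(C)$.
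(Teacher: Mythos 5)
Your proposal follows essentially the same route as the paper for all six parts: flag degeneration from a minimal chamber plus inductive wall-crossing via Corollary \ref{cor motivic par WC} for (i)--(ii), and the Bia{\l}ynicki-Birula decomposition for the Higgs-field-scaling $\GG_m$-action (with weight-independence from Theorem \ref{main thm par higgs} used to reduce to small $\alpha$) for (iii)--(vi), including the correct identification of the fixed-determinant fixed loci as the covers $\widetilde{\Sym}^{i}(C)$. The only substantive step you defer --- checking that the accumulated wall-crossing contributions assemble into the exponents $b_j(\alpha)$ of Definition \ref{def exponents in terms of alpha} --- is precisely what the paper carries out via the generating-function identity \eqref{eq b and d} and the induction over walls in the proof of Theorem \ref{thm par motive rk 2 odd deg}.
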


\section{Background on motives}
\label{sec motives}

Let $k$ be a field and $R$ a commutative ring.

\subsection{Chow motives}

We write $\CHM(k,R)$ for the category of Chow motives over $k$ with coefficients in $R$. This is of course a very classical object, but since the definition is short and there are two possible conventions in the literature, we provide some details. Objects in $\CHM(k,R)$ are triples $(X,p,n)$, where $X$ is a smooth projective $k$-variety, $p$ is a projector in the ring $\CH^{*}(X\times X, R)$ of correspondences up to rational equivalence (with product given by composition of correspondences) and $n\in\mathbb{Z}$. We adopt the standard notations: $\fh(X)=(X,\id_{X},0)$ and $R(n)=(\Spec(k),\id_{\Spec(k)},n)$. The category $\CHM(k,R)$ admits a symmetric monoidal structure coming from products in $\SmProj_{k}$. Morphisms in $\CHM(k,R)$ are given, following an homological convention, by
\[
\CHM(k,R)((X,p,m),(Y,q,n))=q\circ \CH^{\dim(Y)+n-m}(X\times Y,R) \circ p
\]
(where $\dim(Y)$ has to be interpreted as a locally constant function if $Y$ is reducible). With this convention, there is a covariant functor $\fh:\SmProj_{k}\to \CHM(k,R)$.

By definition, we get the following, which we state for later reference.

\begin{lemma}\label{lem chow vanish}
Let $X$ be a smooth projective variety. Then $\CH^{i}(X,R)\simeq \Hom(\fh(X),R(i))$ (and thus vanishes if $i<0$ or $i>\dim(X)$).
\end{lemma}  

A Chow motive $M\in\CHM(k,\QQ)$ is Kimura finite-dimensional, in the sense of \cite{Kimura-FiniteDimension} (see also \cite{AndreBourbaki}), if there exists a decomposition $M=M_{\even}\oplus M_{\odd}$ and  $n_{\even},n_{\odd}\in \NN$ such that $\Lambda^{n_{\even}}(M_{\even})=0$ and $\Sym^{n_{\odd}}(M_{\odd})=0$. Let $\CHM(k,\QQ)^\kim$ denote the full subcategory of Kimura finite-dimensional motives. Kimura finite-dimensionality has some very pleasant categorical consequences, one of which we now recall.

\begin{prop}[see {\cite{AndreBourbaki}}]\label{prop kimura canc} 
The category $\CHM(k,\QQ)^{\kim}$ is closed under taking direct sums, tensor products, direct summands and duals. The functor $\CHM(k,\QQ)^\kim \ra M_\num(k,\QQ)$ to the category of motives for numerical equivalence is full and conservative.   
\end{prop}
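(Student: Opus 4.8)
The plan is to treat the three assertions separately; only the last one uses Kimura finite-dimensionality in an essential way.

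\textbf{Closure properties.} For duals and direct summands this is immediate from the isomorphisms $\Lambda^{m}(M^{\vee})\cong(\Lambda^{m}M)^{\vee}$, $\Sym^{m}(M^{\vee})\cong(\Sym^{m}M)^{\vee}$, and from the fact that any Schur functor carries a direct summand of $M$ (cut out by an idempotent) to a direct summand of the corresponding Schur functor of $M$. For direct sums I would use the binomial decomposition $\Lambda^{m}(M\oplus N)\cong\bigoplus_{i+j=m}\Lambda^{i}M\otimes\Lambda^{j}N$ and its $\Sym$-analogue, which show that a sum of two evenly (resp.\ oddly) finite-dimensional motives is again evenly (resp.\ oddly) finite-dimensional; decomposing $M\oplus N$ into its even and odd parts then finishes this case. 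The only combinatorially nontrivial point is the tensor product: one must check that evenly $\otimes$ evenly and oddly $\otimes$ oddly are evenly finite-dimensional while evenly $\otimes$ oddly is oddly finite-dimensional. Here I would use that ``$M$ evenly (resp.\ oddly) finite-dimensional'' is equivalent to the vanishing of $S_{\lambda}(M)$ for every partition $\lambda$ with more than $r$ rows (resp.\ columns), for some $r$, together with the decomposition $S_{\lambda}(M\otimes N)\cong\bigoplus_{\mu,\nu}\bigl(S_{\mu}(M)\otimes S_{\nu}(N)\bigr)^{\oplus g_{\mu\nu\lambda}}$ and the bound $\ell(\lambda)\le\ell(\mu)\,\ell(\nu)$ on the Kronecker coefficients (together with its conjugate version); this is Kimura's plethysm computation, which is formal in the $\QQ$-linear pseudo-abelian rigid symmetric monoidal category $\CHM(k,\QQ)$.

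\textbf{Fullness.} Since $\CHM(k,\QQ)$ is rigid I would reduce to Homs out of the unit via $\Hom(M,N)\cong\Hom(\mathbf 1,M^{\vee}\otimes N)$, and likewise in $M_{\num}(k,\QQ)$ because the natural functor $\CHM(k,\QQ)\to M_{\num}(k,\QQ)$ is monoidal. It then suffices to prove that $\Hom_{\CHM}(\mathbf 1,P)\to\Hom_{M_{\num}}(\mathbf 1,\overline{P})$ is surjective for every Chow motive $P=(X,p,n)$, and Kimura finiteness is not needed here. Unwinding the definitions, this is the map $p_{*}\CH^{\dim X+n}(X,\QQ)\to\overline{p}_{*}\,\CH^{\dim X+n}_{\num}(X,\QQ)$ induced by the tautological surjection $\CH^{*}(X,\QQ)\onto\CH^{*}_{\num}(X,\QQ)$, and applying the idempotent correspondence $p$ compatibly on source and target preserves surjectivity.

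\textbf{Conservativity.} The crucial input is Kimura's nilpotence theorem: for $M$ Kimura finite-dimensional, any endomorphism $f\in\End_{\CHM}(M,\QQ)$ that becomes zero in $M_{\num}(k,\QQ)$ is nilpotent. Granting it, conservativity is formal: given $f\colon M\to N$ between Kimura-finite motives with $\overline{f}$ invertible in $M_{\num}$, lift $\overline{f}^{-1}$ to some $g\colon N\to M$ using fullness; then $gf-\id_{M}$ and $fg-\id_{N}$ are numerically trivial, hence nilpotent, so $gf$ and $fg$ are invertible in $\CHM$ and $f$ is an isomorphism. For the nilpotence theorem I would argue as follows. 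Numerically trivial morphisms form a two-sided $\otimes$-ideal, so with respect to a splitting $M=M_{\even}\oplus M_{\odd}$ with $\Lambda^{n+1}M_{\even}=0$ and $\Sym^{m+1}M_{\odd}=0$, every block of $f$ and every composite of blocks is again numerically trivial. For a purely evenly (resp.\ oddly) finite-dimensional motive one runs the Cayley--Hamilton formalism valid in a rigid $\QQ$-linear symmetric monoidal category: a numerically trivial $f$ satisfies a monic relation of degree $n+1$ (resp.\ $m+1$) whose lower coefficients are, by Newton's identities, polynomials in the scalars $\tr(f^{j})\in\QQ$; but each power $f^{j}$ is again numerically trivial, so $\tr(f^{j})=0$, the relation collapses to $f^{n+1}=0$ (resp.\ $f^{m+1}=0$), and the nilpotence degree is \emph{uniform}. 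Feeding this back through the block decomposition --- grouping maximal runs inside $M_{\even}$, maximal runs inside $M_{\odd}$, and ``excursions'' $M_{\even}\to M_{\odd}\to\cdots\to M_{\even}$, each of which is again a numerically trivial endomorphism of $M_{\even}$ or of $M_{\odd}$ --- bounds the length of every nonzero word in the blocks of $f$, so $f$ is nilpotent. Apart from the definition of numerical equivalence everything here is formal; the point I expect to need the most care is running the Cayley--Hamilton/Newton's-identities argument with the correct signs in the graded-commutative (``super'') symmetric monoidal setting, together with the Schur/Kronecker bookkeeping behind closure under $\otimes$. For both I would follow Kimura's original argument \cite{Kimura-FiniteDimension}, as surveyed in \cite{AndreBourbaki}.
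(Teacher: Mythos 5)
The paper does not actually prove this proposition; it is quoted from Andr\'e's Bourbaki survey, so I am measuring your sketch against the standard Kimura/Andr\'e--Kahn argument that the citation points to. Most of what you write is a faithful reconstruction of that argument: the binomial and Kronecker-coefficient computations for sums and tensor products, the duality isomorphisms, the observation that fullness is automatic because the $\Hom$-groups of $M_\num(k,\QQ)$ are by construction quotients of those of $\CHM(k,\QQ)$, and the deduction of conservativity from the nilpotence of numerically trivial endomorphisms are all correct and are exactly how the cited proof goes.

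There is, however, a genuine gap in your treatment of direct summands. Your Schur-functor argument shows only that if $N$ is a direct summand of $M$ and $S_{\lambda}(M)=0$ then $S_{\lambda}(N)=0$; applied to $M=M_{\even}\oplus M_{\odd}$ this proves that $N$ is \emph{Schur-finite} (it kills every $S_{\lambda}$ with $\lambda$ outside a suitable hook), but Kimura finite-dimensionality demands more, namely an actual splitting $N=N_{\even}\oplus N_{\odd}$, and Schur-finiteness is not known to imply such a splitting formally. The standard proof of closure under direct summands is not formal: one passes to $M_\num(k,\QQ)$, uses Jannsen's semisimplicity to decompose the image $\overline{N}$ as $\overline{N}_{\even}\oplus\overline{N}_{\odd}$ compatibly with $\overline{M}_{\even}\oplus\overline{M}_{\odd}$, and then lifts the resulting idempotents along the surjection $\End(M)\to\End(\overline{M})$, which is possible precisely because its kernel is nilpotent by the nilpotence theorem. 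So this closure property, contrary to your opening remark, uses the nilpotence machinery in an essential way and must be proved after it, not before. A secondary imprecision: in your nilpotence argument, knowing that each individual numerically trivial endomorphism of $M_{\even}$ (e.g.\ each ``excursion'') satisfies $f^{n+1}=0$ only makes the ideal of such endomorphisms nil of bounded index, whereas bounding the length of nonzero words requires the ideal to be \emph{nilpotent}; that extra step is supplied by the multilinearised Cayley--Hamilton identity (or Nagata--Higman), and your run-and-excursion bookkeeping does not close without it. Since you defer to Kimura's paper for the details this second point is repairable, but as written the word-length bound does not follow from what you establish.
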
    

In particular, because $M_{\num}(k,\QQ)$ is abelian semi-simple by \cite{Jannsen}, $\CHM(k,\QQ)^{\kim}$ satisfies cancellation: if there exists an isomorphism $M\oplus P\simeq N\oplus P$, then $M\simeq N$.

Kimura and O'Sullivan conjectured that every Chow motive is Kimura finite-dimensional. While this is wide open in general, we still have the following result for abelian motives which turns out to cover all the motives encountered in this paper. The category $\CHM(k,\QQ)^{\ab}$ of abelian motives is the thick subcategory generated by motives of abelian varieties, or equivalently, the thick tensor subcategory generated by motives of curves (see \cite[Proposition 4.5, Theorem 5.2]{SchollClassical}). 

\begin{prop}[{\cite[Th\'eor\`eme 2.8]{AndreBourbaki}}]\label{prop kimura ab}
Abelian motives are Kimura finite-dimensional.
\end{prop}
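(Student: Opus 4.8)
The plan is to deduce the statement from Kimura's theorem that the Chow motive of an abelian variety is finite-dimensional, together with the closure properties of Kimura finite-dimensional motives recorded in Proposition \ref{prop kimura canc}. By definition (see \cite{SchollClassical}), $\CHM(k,\QQ)^{\ab}$ is the smallest full subcategory of $\CHM(k,\QQ)$ that contains the motive $\fh(A)$ of every abelian variety $A$ over $k$ and is closed under direct sums, direct summands and tensor products (equivalently, the one generated by motives of smooth projective curves). Now Proposition \ref{prop kimura canc} asserts precisely that $\CHM(k,\QQ)^{\kim}$ is closed under direct sums, tensor products, direct summands and duals, so it is one of the subcategories being intersected; hence, once we know that each $\fh(A)$ is Kimura finite-dimensional, it contains all of them, and therefore $\CHM(k,\QQ)^{\ab}\subseteq\CHM(k,\QQ)^{\kim}$, which is the claim. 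So everything reduces to showing that the motive of an abelian variety is Kimura finite-dimensional.

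For this I would follow \cite{Kimura-FiniteDimension}. Let $A$ be an abelian variety of dimension $g$. First I invoke the Chow--K\"unneth decomposition $\fh(A)=\bigoplus_{i=0}^{2g}\fh^{i}(A)$ constructed by Deninger--Murre and K\"unnemann, in which $\fh^{i}(A)$ is a direct summand of $\fh^{1}(A)^{\otimes i}$, realises in cohomological degree $i$, and vanishes for $i>2g$. Writing $M_{\even}=\bigoplus_{i\ \mathrm{even}}\fh^{i}(A)$ and $M_{\odd}=\bigoplus_{i\ \mathrm{odd}}\fh^{i}(A)$, so that $\fh(A)=M_{\even}\oplus M_{\odd}$, I must show that $M_{\even}$ is evenly finite-dimensional and $M_{\odd}$ is oddly finite-dimensional. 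Both reduce to the single assertion that $\fh^{1}(A)$ is oddly finite-dimensional, concretely $\Sym^{2g+1}\fh^{1}(A)=0$: granting that, the standard formulas decomposing $\Sym^{n}$ and $\Lambda^{n}$ of direct sums and tensor products into sums of tensor products of lower symmetric and exterior powers --- available because we use $\QQ$-coefficients --- propagate the vanishing from $\fh^{1}(A)$ to each $\fh^{i}(A)$ and hence to $M_{\even}$ and $M_{\odd}$.

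The vanishing $\Sym^{2g+1}\fh^{1}(A)=0$ is the one genuinely non-formal point and, I expect, the main obstacle. Under any Weil cohomology realisation it becomes the triviality $\Lambda^{2g+1}H^{1}(A)=0$ (the realisations are symmetric monoidal only for the Koszul-signed symmetry on graded vector spaces, which is why a symmetric power of the odd-degree motive $\fh^{1}(A)$ realises to an exterior power of $H^{1}(A)$); so the difficulty is to promote this cohomological statement to one about the Chow motive itself, where Chow groups are far from being controlled by cohomology. Following Kimura, I would present $\Sym^{2g+1}\fh^{1}(A)$ as the image of an explicit idempotent on $\fh(A^{2g+1})$ built from the symmetriser and the K\"unneth projectors, and then use the extra structure carried by powers of abelian varieties --- above all the Fourier decomposition of their Chow groups due to Beauville and Deninger--Murre --- to conclude that this summand vanishes; I would cite \cite{Kimura-FiniteDimension} for this computation rather than reproduce it. Proposition \ref{prop kimura canc}, used in the first step, is itself proved in \cite{Kimura-FiniteDimension} by a direct combinatorial analysis of the decompositions $M=M_{\even}\oplus M_{\odd}$ under $\oplus$, $\otimes$, direct summands and duals, again using the splitting of symmetric and exterior powers over $\QQ$. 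Assembling the three ingredients --- Kimura's vanishing, these closure properties, and the definition of $\CHM(k,\QQ)^{\ab}$ --- completes the proof; this is the argument of \cite{AndreBourbaki}.
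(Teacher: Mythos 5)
The paper gives no proof of this proposition, simply citing Andr\'e's Bourbaki expos\'e (Th\'eor\`eme 2.8), which in turn rests on Kimura's theorem for abelian varieties; your sketch is a correct reconstruction of exactly that argument. The reduction via the closure properties of Proposition \ref{prop kimura canc}, the Deninger--Murre/K\"unnemann Chow--K\"unneth decomposition with $\fh^{i}(A)$ a summand of $\fh^{1}(A)^{\otimes i}$, and the key vanishing $\Sym^{2g+1}\fh^{1}(A)=0$ (K\"unnemann, via the Fourier transform on Chow groups of abelian varieties) are precisely the ingredients of the cited proof.
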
  

\subsection{Voevodsky motives}

Some of the moduli spaces we are interested in, namely moduli spaces of (parabolic) Higgs bundles, are not smooth projective varieties, but only smooth and quasi-projective. The natural way to associate a motive to them is via the triangulated category $\DM(k,R)$ of Voevodsky motives over $k$ with coefficients in $R$. It turns out that, in all the cases considered in this paper, the resulting motive is actually a Chow motive, where we identify $\CHM(k,R)$ with a full subcategory of $\DM(k,R)$ via the fundamental embedding theorem of Voevodsky \cite{VoevodskyBookChapter}, which we recall now in the form we need.

\begin{thm}\label{thm:embedding}
Assume that $k$ is a perfect field or that the characteristic of $k$ is invertible in $R$. Then the ``motive'' functor
\[
M:\SmProj_{k}\to \DM(k,R)
\]
factors through the category of Chow motives and induces a fully faithful embedding
\[
\CHM(k,R)\hookrightarrow \DM(k,R).
\]
\end{thm}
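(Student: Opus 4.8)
The plan is to reproduce Voevodsky's argument, reducing the statement to a small number of deep structural facts about $\DM(k,R)$ and then checking that the remainder is formal. First I would record the features of the target we need: $\DM(k,R)$ is a tensor triangulated category, it is pseudo-abelian (idempotents split), it contains the Tate objects $R(n)$ with $R(1)$ being $\otimes$-invertible, and it receives the monoidal motive functor $M$ from $\SmProj_{k}$ (indeed from $\Sm_{k}$), with $M(\Spec k)=R(0)$ the unit. Granting the key $\Hom$-computation stated below, one then constructs the comparison functor $\CHM(k,R)\to\DM(k,R)$ essentially by hand: a projector $p$ in $\CH^{\dim X}(X\times X,R)$ is, via that computation, an idempotent endomorphism of $M(X)$ in $\DM(k,R)$, hence splits; one sets $(X,p,n)\mapsto(\im p)(n)[2n]$ on objects and sends a correspondence to the morphism of $\DM(k,R)$ it defines. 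Checking that this is a well-defined additive monoidal functor compatible with composition of correspondences, and that it restricts to $\fh$ along $\SmProj_{k}\to\CHM(k,R)$, is bookkeeping with the standard compatibilities between intersection products and the pairings in $\DM$.

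The heart of the proof is the identification, for $X,Y\in\SmProj_{k}$ with $Y$ of pure dimension $d_{Y}$ and all $m,n\in\ZZ$,
\[
\Hom_{\DM(k,R)}\big(M(X)(m)[2m],\,M(Y)(n)[2n]\big)\;\cong\;\CH^{\,d_{Y}+n-m}(X\times Y,R),
\]
compatibly with composition. I would deduce it from three inputs: (i) \emph{rigidity of motives of smooth projective varieties}, i.e.\ $M(Y)$ is strongly dualizable with dual $M(Y)^{\vee}\cong M(Y)(-d_{Y})[-2d_{Y}]$, which together with $M(X)\otimes M(Y)\cong M(X\times Y)$ rewrites the left-hand side as $\Hom_{\DM(k,R)}\big(M(X\times Y),R(d_{Y}+n-m)[2(d_{Y}+n-m)]\big)$; (ii) the \emph{comparison with Chow groups}, $\Hom_{\DM(k,R)}(M(W),R(q)[2q])\cong\CH^{q}(W,R)$ for $W$ smooth and $q\in\ZZ$, i.e.\ the weight-$q$ part of the Friedlander--Voevodsky comparison of motivic cohomology with Bloch's higher Chow groups; and (iii) the fact that $-\otimes R(1)$ is fully faithful (so the Tate shifts above lose no information), which is built into $\DM(k,R)$ and ultimately rests on Voevodsky's cancellation theorem. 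Granting the displayed formula, for $X,Y\in\SmProj_{k}$ the group $\Hom_{\DM(k,R)}(M(X)(m)[2m],M(Y)(n)[2n])$ coincides by definition with $\Hom_{\CHM(k,R)}((X,\id,m),(Y,\id,n))$, and composition matches; so the comparison functor is fully faithful on the full subcategory spanned by the objects $(X,\id,n)$. Since both categories are pseudo-abelian and the functor is additive, full faithfulness propagates to all direct summands, hence to all of $\CHM(k,R)$; and this forces $M$ to factor, up to natural isomorphism, as $\SmProj_{k}\to\CHM(k,R)\hookrightarrow\DM(k,R)$.

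The only non-formal step --- and the sole place the hypothesis that $k$ is perfect or that $\mathrm{char}(k)$ is invertible in $R$ is used --- is establishing (i) and (ii). In characteristic zero these are Voevodsky's original results, proved via resolution of singularities: (ii) by comparing the two complexes computing motivic cohomology, and (i) by an induction over the strata of a smooth compactification using Gysin triangles. In characteristic $p>0$ one replaces resolution by de Jong--Gabber alterations; an alteration whose generic degree is prime to $p$ becomes a split epimorphism after $-\otimes R$, which is exactly what makes the relevant descent arguments of Kelly (and Cisinski--Déglise) go through --- hence the requirement that $p$ be invertible in $R$ --- while perfectness of $k$ enters through the theory of sheaves with transfers underlying both statements. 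The cancellation theorem (iii) holds already with $\ZZ$-coefficients, hence over any $R$. I expect assembling and correctly invoking these inputs to be the main obstacle; once they are in place the embedding is, as above, essentially formal.
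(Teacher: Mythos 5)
Your core argument is the same as the paper's: everything reduces to the bivariant Hom formula $\DM(k,R)(M(X),M(Y))\simeq \CH^{\dim Y}(X\times Y,R)$, obtained by combining duality for smooth projective varieties, representability of Chow groups in $\DM^{\eff}(k,R)$, and Voevodsky's cancellation theorem, after which the embedding follows formally by idempotent completion. That part of your proposal is sound and matches the paper.

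Where you go wrong is in your account of how the hypothesis ``$k$ perfect \emph{or} $\mathrm{char}(k)$ invertible in $R$'' enters; as written, your argument does not prove the stated disjunction. You claim that in characteristic $p>0$ one needs de Jong--Gabber alterations for duality and for the Chow-group comparison, and hence needs $p$ invertible in $R$. But then your argument would fail for a perfect field of characteristic $p$ with, say, $R=\ZZ$ (e.g.\ $k=\FF_p$), a case the theorem explicitly covers. The point is that all three inputs hold \emph{integrally} over any perfect field with no recourse to alterations: duality for smooth projective varieties has a direct proof requiring only $k$ perfect (Huber--Kahn, Appendix B), representability of Chow groups is in Mazza--Voevodsky--Weibel under the same hypothesis, and cancellation likewise. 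Conversely, you never address imperfect fields at all; that is the only place the hypothesis ``$p$ invertible in $R$'' is actually used, and it is handled not inside the proofs of (i)--(ii) but by a separate reduction: the base-change functor $\DM(k,R)\to\DM(k^{\mathrm{perf}},R)$ is an equivalence when $p$ is invertible in $R$ (Cisinski--D\'eglise), and Chow groups are unchanged under purely inseparable extensions, so the general case follows from the perfect one. You should restructure the final paragraph accordingly: perfect case first, with integral coefficients and no alterations; then the perfect-closure reduction for the other branch of the hypothesis.
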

\begin{proof}
  This result follows directly (by passing to idempotent completions) from the following formula for Hom groups in $\DM(k,R)$; for smooth projective varieties $X,Y$ of pure dimensions $d,e$, we have an isomorphism
  \[
\DM(k,R)(M(X),M(Y))\simeq \CH^{e}(X\times_{k}Y,R)
\]
which is natural in $X$ and $Y$. Let us assume first that $k$ is perfect. The formula then follows from the combination of the following results, which all only require $k$ perfect: duality for smooth projective varieties in the category $\DM^{\eff}(k,R)$ of effective Voevodsky motives (see \cite[Appendix B]{HuberKahn} for a reference which only requires $k$ perfect), the representability of Chow groups in $\DM^{\eff}(k,R)$ \cite[Proposition 14.16 and Corollary 19.2]{MVW}, and Voevodsky's cancellation theorem saying that $\DM^{\eff}(k,R)\to \DM(k,R)$ is fully faithful \cite{VoevodskyCancellation}.

Let us now assume that $p$ is invertible in $R$. Let $k^{\mathrm{perf}}/k$ be a perfect closure of $k$. Under the assumption on $R$, the base change functor $\DM(k,R)\to \DM(k^{\mathrm{perf}},R)$ is an equivalence of categories \cite[Proposition 8.1]{cisinski-deglise-integral}, while the Chow groups also do not change under purely inseparable extensions. This implies the general case.
\end{proof}

\section{Motives under flips and flops}

In the context of Minimal Model Program (see \cite{KollarMori}), flips and flops are among the basic building blocks for birational transforms between algebraic varieties of dimension greater than 2. We recall here the geometry of two particularly elementary instances, namely standard flips and flops, and Mukai flops\footnote{In the literature, standard flips and flops are also called elementary, ordinary or Atiyah flips and flops; Mukai flops also go under the name of elementary transforms, especially in the context of hyper-K\"ahler geometry.}, and present some results on the corresponding change of motives, which will be applied later to birational transformations between various moduli spaces of stable vector bundles on curves. 
\subsection{Standard flips and flops}

\begin{defn}[Standard flips and flops]
	\label{def:Flip}
	Let $S$ be a smooth $k$-variety. Let $m$ and $l$ be two positive integers.  Let $V$ be a vector bundle on $S$ of rank $m+1$ and $\varpi: Z=\PP(V)\to S$ the associated projective bundle. 
	Let $X$ be a smooth variety containing $Z$ as a closed subvariety such that the restriction of the normal bundle $N_{Z/X}$ to each fibre of $\varpi$ is isomorphic to $\mathcal{O}_{\PP^m}(-1)^{\oplus l+1}$, or equivalently (see \cite[Lemma 1.1]{LLW-annals}), $N_{Z/X}\simeq \mathcal{O}_\varpi(-1)\otimes \varpi^*(V')$ for some vector bundle $V'$ of rank $l+1$ on $S$. 
	
	Let $\tau\colon\tilde X\to X$ be the blow-up of~$X$ along the smooth center~$Z$; then the exceptional divisor~$E$ is isomorphic to~$\PP(V)\times_{S}\PP(V')$ with normal bundle~$N_{E/\tilde X}\simeq\mathcal{O}(-1,-1)$. Therefore we can contract~$E$ along the other direction, $E\to Z':=\mathbb{P}(V')$, which identifies~$E$ as the projectivisation of the vector bundle~$\varpi^{\prime*}(V')$, where $\varpi'\colon Z'\to S$ is the natural projection. The contraction of $E$ to $Z'$ amounts to blowing down $\tilde X$ to a new (smooth) variety $X'$, and the normal bundle becomes $N_{Z'/X'}\simeq\mathcal{O}_{\varpi'}(-1)\otimes\varpi^{\prime *}V$. We call the induced birational transform $\phi: X\dashrightarrow X'$ a \textit{standard flip} \textit{of type} $(m,l)$ \textit{with centre} $S$. When $m=l$, it is called a \textit{standard flop}. In terms of the $K$-partial order $\geq_K$ on the birational class of $X$, for a standard flip as above, we have $X \geq_K X'$ if and only if $m \geq l$. 
	
In this paper, we assume that $X$ is quasi-projective. Note that the flipped variety $X'$ could fail to be quasi-projective in general. We assume further that there exists a small extremal contraction (in the sense of Minimal Model Program \cite{KollarMori}) $X\to \bar{X}$ whose restriction to $Z$ is $\varpi$. Then $X'$ is quasi-projective (see \cite[Proposition 4.2]{Kanemitsu-K-equivalence} and  \cite[Proposition 1.3]{LLW-annals}). 
	We summarise the situation in the following commutative diagram.
	\begin{equation}
		\label{equation:flip}
	\begin{tikzcd}
	& & E=Z\times_S Z' \arrow[d, hook, "j"] \arrow[ddll, swap, "p"] \arrow[ddrr, "p'"] \\
	& & \tilde{X} \arrow[dl, swap, "\tau"] \arrow[dr, "\tau'"] \\
	Z \arrow[r, hook, "i"] \arrow[rrdd, swap, "\varpi"] & X \arrow[dashed, rr, "\phi"] \arrow[rd]& & X'  \arrow[ld] & Z' \arrow[l, hook', swap, "i'"] \arrow[ddll, "\varpi'" ] \\
	&& \bar{X}\\
	& & S \arrow[u, hook]
	\end{tikzcd}
	\end{equation}	
\end{defn}

If all the varieties are projective, the relation between the Chow motives of $X$ and $X'$ are established by Lee--Lin--Wang \cite[Theorem 2.1]{LLW-annals} in the case of flops ($m=l$) and by   Jiang \cite[Theorem 3.4.(3), Corollary 3.8]{Jiang19} in general. 

\begin{thm}\label{thm Jiang Chow flip}
	Let $\phi\colon X \dashrightarrow X'$ be a standard flip of type $(m,l)$ with centre $S$  between smooth projective varieties, as in Definition \ref{def:Flip}. Suppose $m \geq l$, then there is an isomorphism of integral Chow motives:
	\[(\tau_*{\tau'}^{*}, \sum_j i_*\circ (-\cdot c_1(\mathcal{O}_\varpi(1))^j)\circ\varpi^* )\colon \fh(X') \oplus \bigoplus_{j=l+1}^m \fh(S)(j) \xrightarrow{\simeq} \fh(X). \]
	In particular, when $\phi$ is a standard flop ($m=l$), we have an isomorphism $\tau'_*\tau^*\colon \fh(X)\xrightarrow{\simeq} \fh(X')$, with inverse $\tau_*{\tau'}^{*}$.
\end{thm}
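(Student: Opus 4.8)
This statement is due to Lee--Lin--Wang (for flops) and Jiang (in general); I would reconstruct the proof by reducing everything to Manin's blow-up and projective-bundle formulas for \emph{integral} Chow motives and then pinning down the explicit correspondences.

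\textbf{Step 1: two blow-up descriptions of $\fh(\tilde X)$.} Apply Manin's blow-up formula to $\tau\colon\tilde X\to X$, whose centre $Z$ has codimension $l+1$, and to $\tau'\colon\tilde X\to X'$, whose centre $Z'$ has codimension $m+1$; this gives isomorphisms
\[
\fh(X)\oplus\bigoplus_{j=1}^{l}\fh(Z)(j)\ \xrightarrow{\ \sim\ }\ \fh(\tilde X)\ \xleftarrow{\ \sim\ }\ \fh(X')\oplus\bigoplus_{j=1}^{m}\fh(Z')(j),
\]
whose components are $\tau^{*}$, resp.\ $\tau'^{*}$, on the first summand and the standard correspondences through the exceptional divisor $E$ on the others. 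Now expand the right-hand sides using the projective-bundle formula: since $Z=\PP(V)\to S$ with $V$ of rank $m+1$ and $Z'=\PP(V')\to S$ with $V'$ of rank $l+1$, we get $\fh(Z)\simeq\bigoplus_{a=0}^{m}\fh(S)(a)$ and $\fh(Z')\simeq\bigoplus_{b=0}^{l}\fh(S)(b)$. Substituting, both sides become $\fh(X)$, resp.\ $\fh(X')$, plus an explicit tower of Tate twists of $\fh(S)$; comparing, for each $t$, the multiplicity of the summand $\fh(S)(t)$ on the two sides shows the two towers agree term by term except for one extra copy of $\fh(S)(j)$ for each $j$ with $l+1\le j\le m$ on the $X'$-side --- exactly the discrepancy appearing in the statement.

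\textbf{Step 2: from the count to honest correspondences.} Because $\CHM(k,\ZZ)$ does not satisfy cancellation, Step 1 is not yet a proof: one must exhibit genuine mutually inverse correspondences. For this I would unwind the composite
\[
\fh(X')\oplus\bigoplus_{j=l+1}^{m}\fh(S)(j)\ \longrightarrow\ \fh(\tilde X)\ \xrightarrow{\ \tau_{*}\ }\ \fh(X),
\]
where the first arrow is $\tau'^{*}$ on $\fh(X')$ and the appropriate ``$j_{*}$ composed with a Chern-class operator and $\varpi^{*}$'' on each Tate summand, check that this composite equals the correspondence $\bigl(\tau_{*}\tau'^{*},\ \sum_{j}i_{*}\circ(-\cdot c_{1}(\cO_{\varpi}(1))^{j})\circ\varpi^{*}\bigr)$ of the statement, and verify its invertibility by composing with the analogous map in the opposite direction. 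The verification reduces, via the projection formula, to the standard blow-up identities (of the shape $\tau^{*}i_{*}(\beta)=j_{*}(c_{l}(\cF)\cdot g^{*}\beta)$ for the universal quotient $\cF$ on $E$; $j^{*}j_{*}(\gamma)=-\xi_{E}\cdot\gamma$; and $\tau_{*}j_{*}(\xi_{E}^{\,k}\cdot g^{*}\beta)$ a Segre-class pushforward vanishing for $k<l$), together with the normal-bundle hypotheses $N_{Z/X}\simeq\cO_{\varpi}(-1)\otimes\varpi^{*}V'$ and $N_{Z'/X'}\simeq\cO_{\varpi'}(-1)\otimes\varpi'^{*}V$ and the relation on $E=\PP(V)\times_{S}\PP(V')$ between the two tautological divisor classes coming from $N_{E/\tilde X}\simeq\cO(-1,-1)$. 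After discarding the terms that vanish for Segre-degree reasons, what remains is an integral square matrix of correspondences relating the two $\fh(S)$-towers which is triangular with diagonal entries equal (up to sign) to identity correspondences and off-diagonal entries supplied by the lower-order Chern classes, hence invertible over $\ZZ$. This bookkeeping, which is the core of Jiang's computation, is the step I expect to be the main obstacle: although the statement ``looks like'' a cancellation of common summands, making it an honest isomorphism of integral motives forces one to carry the explicit correspondences through both the blow-up and the projective-bundle formulas and to control all the Chern-class cross-terms between the $Z$-tower and the $Z'$-tower.

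\textbf{Step 3: the flop case.} When $m=l$ the extra summand $\bigoplus_{j=l+1}^{m}\fh(S)(j)$ is empty and the two $\fh(S)$-towers coincide term by term, so the claim collapses to the assertion that $\tau'_{*}\tau^{*}\colon\fh(X)\to\fh(X')$ is an isomorphism with inverse $\tau_{*}\tau'^{*}$. One proves this directly, following Lee--Lin--Wang, by checking $\tau_{*}\tau'^{*}\tau'_{*}\tau^{*}=\Delta_{X}$ in $\CH^{\dim X}(X\times X)$ from the same blow-up identities, and symmetrically on $X'$.
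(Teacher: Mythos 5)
The paper does not actually prove this statement: it is imported verbatim from the literature, with the flop case attributed to Lee--Lin--Wang and the general case to Jiang, so there is no in-paper argument to compare against. Your reconstruction follows essentially the strategy of those cited proofs. Your Step 1 is arithmetically correct (the codimensions $l+1$ and $m+1$ of $Z$ and $Z'$, the ranks $m+1$ and $l+1$ of $V$ and $V'$, and the resulting term-by-term comparison of the two $\fh(S)$-towers all check out), and you are right to insist that this counting is only heuristic in $\CHM(k,\ZZ)$, where cancellation fails. The genuine content is your Step 2, and there you have correctly identified all the ingredients of Jiang's computation: the excess intersection formula $\tau^{*}i_{*}(\beta)=j_{*}(c_{l}(\cF)\cdot g^{*}\beta)$, the self-intersection identity $j^{*}j_{*}=-\xi_{E}\cdot$ coming from $N_{E/\tilde X}\simeq\cO(-1,-1)$, the Segre-class vanishing of $\tau_{*}j_{*}(\xi_{E}^{k}\cdot g^{*}\beta)$ for $k<l$, and the triangularity of the resulting integral matrix of correspondences between the towers. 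What you have written is a correct and well-targeted plan rather than a completed verification --- the triangularity claim and the identification of the composite with the specific correspondence $(\tau_{*}\tau'^{*},\ \sum_{j}i_{*}\circ(-\cdot c_{1}(\cO_{\varpi}(1))^{j})\circ\varpi^{*})$ are asserted rather than computed --- but you flag this honestly, and carrying it out is exactly the bookkeeping done in Jiang's paper. Since the authors themselves chose to cite rather than reprove the result, your proposal is, if anything, more detailed than what the paper offers.
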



\subsection{Mukai flops}
\label{subsec:MukaiFlop}
The so-called Mukai flop was discovered by Mukai \cite{Mukai84Invent} as a typical instance of birational transforms between hyper-K\"ahler manifolds (see also \cite[\S 3]{Huybrechts97JDG}). The following definition is a mild generalisation of the classical one\footnote{Our definition is called a twisted Mukai flop in \cite{LLW-annals}.}, which is enough for the purpose of this paper. 
\begin{defn}[Mukai flops]
		\label{def:MukaiFlop}
	Let $S$ be a smooth variety and let $m$ be a positive integer.  Let $V$ be a vector bundle on $S$ of rank $m+1$ and $\varpi: Z=\PP(V)\to S$ the associated projective bundle. 
	Let $X$ be a smooth variety containing $Z$ as a closed subvariety such that 
 	\[N_{Z/X}\simeq \Omega_{Z/S}\otimes \varpi^*(L),\]
 	for some line bundle $L$ on $S$, where $\Omega_{Z/X}$ is the relative cotangent bundle of $\varpi: Z\to S$.
	
	Let $\tau\colon\tilde X\to X$ be the blow-up of~$X$ along the smooth center~$Z$, then the exceptional divisor $E=\PP(\Omega_{Z/S})$, which is isomorphic to the incidence hypersurface inside $\PP(V)\times_{S}\PP(V^\vee)$, has normal bundle $N_{E/\tilde X}\simeq\mathcal{O}(-1,-1)$. 
	By  contracting $E$ along the other direction $E\to Z':=\mathbb{P}(V^\vee)$, we blow down $\tilde X$ to a new (smooth) variety $X'$. We call the induced birational transform $\phi: X\dashrightarrow X'$ a \textit{Mukai flop} of type $m$ with centre $S$. A significant difference compared to standard flops is that the fibre product $X\times_{\bar{X}}X'$ has two irreducible components of the same dimension, namely, $\tilde{X}$ and $Z\times_SZ'$, intersecting along $E$.
	
	As in Definition \ref{def:Flip}, in order to stay in the category of quasi-projective varieities, we assume in this paper that there exists a small extremal contraction in the sense of Minimal Model Program $X\to \bar{X}$ whose restriction to $Z$ is $\varpi$. The situation is summarised in the following diagram.
	
	\begin{equation}
	\label{diag:MukaiFlop}
	\begin{tikzcd}
	&& Z\times_S Z' \arrow[dddll, swap, "\pr_1"] \arrow[dddrr, "\pr_2"] \\
	& & E \arrow[d, hook, "j"] \arrow[ddll, "p"] \arrow[ddrr, swap, "p'"] \arrow[u, hook]\\
	& & \tilde{X} \arrow[dl, swap, "\tau"] \arrow[dr, "\tau'"] \\
	Z \arrow[r, hook, "i"] \arrow[rrdd, swap, "\varpi"] & X \arrow[dashed, rr, "\phi"] \arrow[rd]& & X'  \arrow[ld] & Z' \arrow[l, hook', swap, "i'"] \arrow[ddll, "\varpi'"] \\
	&& \bar{X}\\
	& & S \arrow[u, hook]
	\end{tikzcd}
	\end{equation}
\end{defn}

In the projective situation, Lee--Lin--Wang \cite[Theorem 6.3]{LLW-annals} proved the invariance of Chow motives under a Mukai flop; see also \cite{FuWang08} for a generalisation to \textit{stratified Mukai flops}.
\begin{thm}
	\label{thm:LLWMukai}
	Let $\phi\colon X\dashrightarrow X''$ be a Mukai flop between smooth projective varieties, as in Definition \ref{def:MukaiFlop}, then the cycle given by the fibre product $[X\times_{\bar{X}}X']=[\tilde{X}]+[Z\times_SZ']$ induces an isomorphism of integral Chow motives:
	\[\tau'_*\tau^*+i'_*\pr_{2,*}\pr_1^*i^*\colon \fh(X)\xrightarrow{\simeq} \fh(X'),\]
	whose inverse is given by the same cycle. 
\end{thm}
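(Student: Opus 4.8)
The plan is to follow the strategy of Lee--Lin--Wang: verify directly that the two correspondences obtained from the irreducible components of $[X\times_{\bar X}X']$, read in the two directions, are mutually inverse morphisms of integral Chow motives. Write
\begin{align*}
\Gamma&:=\tau'_*\tau^*+i'_*\pr_{2,*}\pr_1^*i^*\colon \fh(X)\to\fh(X'),\\
\Gamma'&:=\tau_*{\tau'}^{*}+i_*\pr_{1,*}\pr_2^*{i'}^{*}\colon \fh(X')\to\fh(X),
\end{align*}
where $\Gamma'$ is the cycle $[X\times_{\bar X}X']$ viewed as a correspondence in the opposite direction. The configuration in diagram \eqref{diag:MukaiFlop} is invariant under the involution interchanging $(X,Z,V)$ with $(X',Z',V^\vee)$ (it swaps $\pr_1\leftrightarrow\pr_2$ and the two rulings of $E$), so it is enough to prove $\Gamma'\circ\Gamma=\id_{\fh(X)}$; then $\Gamma\circ\Gamma'=\id_{\fh(X')}$ follows by applying the involution.

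The inputs are standard intersection theory adapted to the blow-up $\tau$ and to its twin $\tau'$. Since $Z=\PP(V)$ has codimension $m$ in $X$, the blow-up formula gives $\fh(\tilde X)\simeq\fh(X)\oplus\bigoplus_{j=1}^{m-1}\fh(Z)(j)$ and likewise $\fh(\tilde X)\simeq\fh(X')\oplus\bigoplus_{j=1}^{m-1}\fh(Z')(j)$, with explicit idempotents; in particular $\tau_*\tau^*=\id_{\fh(X)}$ while ${\tau'}^{*}\tau'_*$ is the projector of $\fh(\tilde X)$ onto ${\tau'}^{*}\fh(X')$, i.e. $\id_{\fh(\tilde X)}$ minus an explicit projector supported on $E$. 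I would also use the self-intersection formula ${i'}^{*}i'_*=\big(\,\cdot\,c_m(N_{Z'/X'})\big)$, the identities $\tau\circ j=i\circ p$ and $\tau'\circ j=i'\circ p'$, proper base change around the squares of \eqref{diag:MukaiFlop}, and the geometry from Definition \ref{def:MukaiFlop}: $E\simeq\PP(\Omega_{Z/S})$ is the incidence hypersurface inside $Z\times_S Z'=\PP(V)\times_S\PP(V^\vee)$, with $N_{E/\tilde X}\simeq\cO_E(-1,-1)$ and $p\colon E\to Z$, $p'\colon E\to Z'$ the two $\PP^{m-1}$-bundle projections. Note that $\fh(Z)\simeq\fh(Z')\simeq\bigoplus_{j=0}^m\fh(S)(j)$, so the ``error summands'' in the two blow-up formulas are abstractly isomorphic; but integral Chow motives need not satisfy cancellation, so this does not by itself give $\fh(X)\simeq\fh(X')$, which is precisely why the canonical correspondence must be checked by hand.

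The computation then expands
\begin{multline*}
\Gamma'\circ\Gamma
=\underbrace{\tau_*{\tau'}^{*}\tau'_*\tau^*}_{(a)}
+\underbrace{\tau_*{\tau'}^{*}\,i'_*\pr_{2,*}\pr_1^*i^*}_{(b)}\\
+\underbrace{i_*\pr_{1,*}\pr_2^*{i'}^{*}\,\tau'_*\tau^*}_{(c)}
+\underbrace{i_*\pr_{1,*}\pr_2^*{i'}^{*}\,i'_*\pr_{2,*}\pr_1^*i^*}_{(d)}.
\end{multline*}
In $(a)$ one substitutes the formula for ${\tau'}^{*}\tau'_*$, producing $\tau_*\tau^*=\id_{\fh(X)}$ plus a correction correspondence supported on $E$. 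In $(d)$ one replaces ${i'}^{*}i'_*$ by $\cdot\,c_m(N_{Z'/X'})$ and, using $N_{Z/X}\simeq\Omega_{Z/S}\otimes\varpi^*L$ together with its dual for $Z'$, rewrites it as a correspondence on $Z\times_S Z'$. Terms $(b)$ and $(c)$ are transported onto $E$ via $\tau'\circ j=i'\circ p'$ and $\tau\circ j=i\circ p$, together with base change and the projection formula along $p$ and $p'$. One is thereby reduced to an identity of Chern classes in $\CH^{*}(E)$ — expressed through $h=c_1(\cO_E(1,0))$, $h'=c_1(\cO_E(0,1))$ and the Segre classes of $\Omega_{Z/S}$ — which makes the correction from $(a)$, the excess from $(d)$, and the contributions $(b),(c)$ cancel in pairs, leaving $\id_{\fh(X)}$.

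The main obstacle is exactly this last bookkeeping on $E$ and on $Z\times_S Z'$. Unlike for a standard flop — where, by Theorem \ref{thm Jiang Chow flip} with $m=l$ and the fact that $\tilde X=X\times_{\bar X}X'$, the bare correspondence $\tau'_*\tau^*$ already does the job — here $X\times_{\bar X}X'=\tilde X\cup(Z\times_S Z')$ is reducible with a second component of full dimension, so the term $i'_*\pr_{2,*}\pr_1^*i^*$ is genuinely needed and interacts nontrivially with the blow-up correction; moreover the self-intersection contribution in $(d)$ is $c_m$ of a rank-$m$ bundle, not zero. The cancellation succeeds only because $N_{Z/X}$ has the special form $\Omega_{Z/S}\otimes\varpi^*L$: this is what matches the excess term along $Z\times_S Z'$ against the correction coming from ${\tau'}^{*}\tau'_*\neq\id$. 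Organising the argument so that this is transparent, rather than buried in a brute-force Grothendieck--Riemann--Roch computation, is the real content, and I would follow \cite[\S6]{LLW-annals}, analysing $\Gamma'\circ\Gamma$ one summand of $\fh(\tilde X)$ at a time.
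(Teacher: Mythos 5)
The paper does not prove this statement at all: Theorem \ref{thm:LLWMukai} is imported verbatim from Lee--Lin--Wang \cite[Theorem 6.3]{LLW-annals} (with \cite{FuWang08} cited for the stratified generalisation), and the only original work in the paper on Mukai flops is the extension to the quasi-projective case in Theorem \ref{thm:MotiveMukaiFlop}, which again treats the projective local model as a black box via Remark \ref{rmk LLW holds for S nonproper}. So there is no ``paper's own proof'' to compare against; your proposal is a reconstruction of the cited argument.

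As a reconstruction it is structurally sound and consistent with how Lee--Lin--Wang proceed: the correct observation that both components $\tilde X$ and $Z\times_S Z'$ of $X\times_{\bar X}X'$ have full dimension (so the second correspondence is genuinely needed, unlike for a standard flop), the reduction by symmetry to $\Gamma'\circ\Gamma=\id$, the four-term expansion with $\tau_*\tau^*=\id$, the excess/self-intersection inputs ${i'}^*i'_*=c_m(N_{Z'/X'})\cdot(-)$ and ${\tau'}^*i'_*=j_*(c_{m-1}(\mathrm{excess})\,p'^*(-))$, and the correct identification of where the hypothesis $N_{Z/X}\simeq\Omega_{Z/S}\otimes\varpi^*L$ must enter. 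The honest caveat is that, as you yourself flag, the decisive step --- the identity in $\CH^*(E)$ and $\CH^*(Z\times_S Z')$ that makes the correction from ${\tau'}^*\tau'_*$, the $c_m$ excess term from $(d)$, and the cross terms $(b),(c)$ cancel --- is asserted rather than carried out, and that computation is essentially the entire content of the theorem. Two smaller points you should make explicit if you write this up: (i) the involution argument requires checking that $N_{Z'/X'}$ again has the form $\Omega_{Z'/S}\otimes\varpi'^*L'$, so that $(X',Z')$ is a Mukai flop datum whose flop is $(X,Z)$ (true, but it is part of the construction in Definition \ref{def:MukaiFlop} and should be recorded); and (ii) the passage from the identity of actions on Chow groups to the identity of correspondences, i.e.\ of morphisms of motives, needs either a direct computation of the composed cycles or an appeal to Manin's identity principle applied to $X\times T$, as is done in the proof of Theorem \ref{thm:MotiveMukaiFlop}.
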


\begin{rmk}\label{rmk LLW holds for S nonproper}
In fact, if $X$ and $X'$ are both smooth varieties which are defined and proper over $S$, but $S$ itself is not necessarily proper, then the arguments in \cite{LLW-annals} give a corresponding isomorphism of Chow groups $\CH^*(X)\xrightarrow{\simeq} \CH^*(X')$.
\end{rmk}

Later when dealing with parabolic Higgs bundles in \S\ref{subsec:ParabolicHiggs}, we will need the following slight generalisation of Theorem \ref{thm:LLWMukai} to treat certain smooth quasi-projective varieties. 
\begin{thm} 
	\label{thm:MotiveMukaiFlop}
	Let $\phi\colon X\dashrightarrow X'$ be a Mukai flop between smooth varieties. Using the notation in Diagram \eqref{diag:MukaiFlop}, there is an isomorphism of Chow groups 
	\[\tau'_*\tau^*+i'_*\pr_{2,*}\pr_1^*i^*\colon \CH^*(X)\xrightarrow{\simeq} \CH^*(X'),\]
	whose inverse is given by $\tau_*{\tau'}^*+i_*\pr_{1,*}\pr_2^*{i'}^*$. Here all the pull-backs are Gysin homomorphisms as in \cite{FultonBook}.
	Assuming that the Voevodsky motives of $X$ and $X'$ can be identified with Chow motives (via Theorem \ref{thm:embedding}; in particular $k$ is perfect or we invert the characteristic), then these isomorphisms give isomorphisms between $\fh(X)$ and $\fh(X')$.
\end{thm}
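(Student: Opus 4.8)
The plan is to split the statement into two parts — an isomorphism of Chow groups, and then its upgrade to an isomorphism of Chow motives under the stated hypothesis — treating the first by a local-to-global reduction to the projective case of \cite{LLW-annals} and the second by a Yoneda argument that feeds the first back in. The first thing to check is that the two displayed correspondences make sense for merely quasi-projective smooth $X, X'$. In $\tau'_*\tau^* + i'_*\pr_{2,*}\pr_1^*i^*$ the maps $\tau'_*, i'_*, \pr_{2,*}$ are proper push-forwards — the morphisms $\tau'\colon\tilde X\to X'$ (a blow-down), $i'\colon Z'\hookrightarrow X'$ (a closed immersion) and $\pr_2\colon Z\times_S Z'\to Z'$ (a projective-bundle projection) are proper whether or not $X, X'$ are — while $\tau^*$ is the Gysin pull-back along the lci morphism $\tau$, $i^*$ the refined Gysin pull-back along the regular immersion $i$, and $\pr_1^*$ the flat pull-back along the projective bundle $\pr_1$, all defined over an arbitrary base in Fulton's intersection theory \cite{FultonBook}. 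Hence the correspondences define homomorphisms $\CH^*(X)\to\CH^*(X')$ and $\CH^*(X')\to\CH^*(X)$; the same discussion, now using covariant functoriality of $M$ for proper maps and the purity/Gysin triangles of the six-functor formalism on $\DM(k,R)$ for the Gysin pull-backs, produces morphisms $\Phi\colon M(X)\to M(X')$ and $\Psi\colon M(X')\to M(X)$ in $\DM(k,R)$.

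For the Chow-group isomorphism: since the small extremal contraction $X\to\bar X$ is projective, $X$ and $X'$ are proper over $\bar X$, which puts us in the situation of Remark~\ref{rmk LLW holds for S nonproper}. Covering $\bar X$ by affine opens — over each of which the whole of Diagram~\eqref{diag:MukaiFlop} restricts compatibly — and splicing the resulting local isomorphisms by the localisation and Mayer--Vietoris exact sequences (which are compatible with the correspondences, these being given by the same cycles) yields the global isomorphism $\CH^*(X)\xrightarrow{\simeq}\CH^*(X')$ with the stated inverse. Alternatively, and more directly, one observes that the computation in \cite[Theorem~6.3]{LLW-annals} showing that the two composites are the identity is purely intersection-theoretic — blow-up formula, excess- and self-intersection formulas for $E\subset\tilde X$ and for $E\subset Z\times_S Z'$, base change in the Cartesian squares of the diagram, projection formula — and uses properness only of the structure morphisms above, never of $X$ or $X'$, so it applies verbatim.

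For the upgrade to motives, assume $M(X), M(X')$ lie in $\CHM(k,R)\hookrightarrow\DM(k,R)$; by full faithfulness $\Phi, \Psi$ are then morphisms of Chow motives, and it remains to see they are mutually inverse isomorphisms. Every object of $\CHM(k,R)$ is a direct summand of a finite sum of objects $\fh(T)(i)$ with $T$ smooth projective and $i\in\ZZ$, so by Yoneda it suffices to show that precomposition with $\Phi$ induces a bijection $\Hom_{\CHM}(\fh(T)(i),\fh(X))\xrightarrow{\simeq}\Hom_{\CHM}(\fh(T)(i),\fh(X'))$ for all such $T, i$, and that $\Psi\circ\Phi$ and $\Phi\circ\Psi$ act as identities there. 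Unwinding the definition of morphisms in $\CHM$ identifies, compatibly with $\Phi$, the group $\Hom_{\CHM}(\fh(T)(i),\fh(X))$ with a Chow group $\CH^{*}(T\times X, R)$ (using a presentation of $\fh(X)$ as a summand of the motive of a smooth projective variety), and under this identification the action of $\Phi$ becomes the correspondence $\id_{M(T)}\otimes\Phi$; by the Künneth-type compatibilities of proper push-forward and Gysin pull-back along $\id_T\times(-)$ this is exactly the Mukai-flop correspondence of the theorem for the base-changed flop $T\times X\dashrightarrow T\times X'$, which is again a Mukai flop in the sense of Definition~\ref{def:MukaiFlop}, of type $m$, with centre $T\times S$ over $T\times\bar X$. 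Applying the Chow-group statement just proved to that flop shows these maps are isomorphisms with the expected inverses; hence $\Phi$ is an isomorphism of Chow motives with inverse $\Psi$, which under Theorem~\ref{thm:embedding} is the claimed isomorphism $\fh(X)\simeq\fh(X')$.

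The main obstacle is this last passage from Chow groups to Chow motives: since a smooth quasi-projective variety carries no Chow motive a priori, one has no choice but to build the correspondences in $\DM$ and then bootstrap the motivic statement from the Chow-group statement applied to all products $T\times X$ — which is, I expect, precisely what the local-to-global argument of the appendix formalises. The real work is the naturality bookkeeping: checking that the construction of $\Phi$ and the Lee--Lin--Wang inverse are compatible with base change along smooth projective $T$ and with restriction to opens of $\bar X$, and that the identifications $\Hom_{\CHM}(\fh(T)(i),\fh(X))\cong\CH^*(T\times X,R)$ intertwine the two descriptions of the correspondence. By contrast, once one has verified that properness of $X, X'$ is nowhere used, the input from \cite{LLW-annals} and \cite{FultonBook} is essentially formal.
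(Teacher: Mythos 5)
Your second step — upgrading the Chow‑group isomorphism to an isomorphism of Chow motives via Manin's identity principle, using that $T\times X\dashrightarrow T\times X'$ is again a Mukai flop with centre $T\times S$ — is exactly what the paper does, and that part is fine. The gap is in the first step, the Chow‑group isomorphism itself, where neither of your two proposed routes goes through as stated. The Mayer--Vietoris/localisation gluing over an affine cover of $\bar X$ fails because restriction to an open cover is not injective on Chow groups (already $\CH^1(\PP^1)=\ZZ$ dies on the two standard affines), so knowing that $\Psi\circ\Phi-\id$ vanishes after restriction to each $X_U$ tells you nothing globally; the localisation sequence is only right exact and cannot be spliced in the way you describe. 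Your alternative — that the computation of \cite[Theorem 6.3]{LLW-annals} ``applies verbatim'' because properness of $X,X'$ is never used — is precisely the claim that requires justification: the LLW argument establishes a cycle identity by composing correspondences, and the paper is only willing to assert (Remark \ref{rmk LLW holds for S nonproper}) that this extends to the case where $X$ and $X'$ are \emph{proper over $S$}, which holds for the local models $X_{\loc}=\PP_Z(N_{Z/X}\oplus\cO_Z)$ and $X'_{\loc}$ but not for $X,X'$ themselves (they are only proper over $\bar X$, and $S$ sits in $\bar X$ as a proper closed subvariety).

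The missing idea is the reduction from $X$ to the local model, which is what Proposition \ref{prop:LocalGlobal} supplies: it expresses $\CH^l(X)$ as the middle cohomology of the complex $\CH^{l-1}(E)\to\CH^l(\tilde X)\oplus\CH^l(X_{\loc})\to\CH^l(E)$, via the explicit maps $(\tau^*,\pi^*i^*)$ and $(\tau_*,i_*\pi_*)$. The paper then applies the LLW isomorphism $\mathcal F_{\loc}$ only to the flop of local models $X_{\loc}\dashrightarrow X'_{\loc}$ (where Remark \ref{rmk LLW holds for S nonproper} legitimately applies), checks that $(\id,\mathcal F_{\loc})$ is compatible with the two complexes, and deduces the global isomorphism together with the identification of the resulting composite with $\tau'_*\tau^*+i'_*\pr_{2,*}\pr_1^*i^*$. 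You sensed that the appendix was the crux, but you located it in the wrong place: the local‑to‑global trick is not about bootstrapping the motivic statement from products $T\times X$ (that is Manin's identity principle, a separate and easier step); it is the device that makes the Chow‑group statement itself provable for non‑proper $X$. Without it, your argument rests on an unverified extension of \cite{LLW-annals} that the authors deliberately avoided assuming.
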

\begin{proof}
	Let $X_{\loc}=\PP_Z(N_{Z/X}\oplus \mathcal{O}_Z) \stackrel{\pi}{\ra} Z$ and $X'_{\loc}=\PP_{Z'}(N_{Z'/X'}\oplus \mathcal{O}_{Z'})\stackrel{\pi'}{\ra} Z'$ which contain $Z$ and $Z'$ respectively via the zero sections.
	Then there is a Mukai flop $\phi_{\loc}\colon X_{\loc}\dashrightarrow X'_{\loc}$ as in Definition \ref{def:MukaiFlop}, which should be considered as the local model of $\phi$.
	 Let $\iota$ and $\iota'$ be the inclusions of $E$ into $X_{\loc}$ and $X'_{\loc}$ as the infinite parts respectively (see Appendix \ref{appendix}). Consider the following diagram
\begin{equation}
\label{diag:ChowFlop}
	\begin{tikzcd}
	\CH^{l-1}(E) \arrow[d, equal, "\id"] \arrow[r, hook, "{(j_*, -\iota_*)}"]& \CH^l(\tilde{X})\oplus \CH^l(X_{\loc}) \arrow[d, "{(\id,\mathcal{F}_{\loc})}"]\arrow[r, two heads, "{(j^*, -\iota^*)}"]& \CH^k(E)\arrow[d, equal, "\id"]\\
	\CH^{l-1}(E)  \arrow[r, hook, "{(j_*, -\iota'_*)}"]& \CH^l(\tilde{X})\oplus \CH^l(X'_{\loc}) \arrow[r, two heads, "{(j^*, -{\iota'}^*)}"]& \CH^l(E),
	\end{tikzcd}
\end{equation}
where $\mathcal{F}_{\loc}$ is the isomorphism obtained by  applying Theorem \ref{thm:LLWMukai} to the Mukai flop $\phi_{\loc}$ (in fact, we apply Remark \ref{rmk LLW holds for S nonproper} as $S$ is not necessarily proper, but $X_{\loc}$ and $X'_{\loc}$ are both defined and proper over $S$). Note that Diagram \eqref{diag:ChowFlop} is indeed commutative, as $\mathcal{F}_{\loc}$ restricts to identity outside of $Z$ and $Z'$. By the local-to-global trick recalled in  Proposition \ref{prop:LocalGlobal}, $\CH^l(X)$ (resp.~$\CH^l(X')$) is isomorphic, via an explicit correspondence, to the middle cohomology of the top (resp.~bottom) row of  Diagram \eqref{diag:ChowFlop}. Hence, we conclude that the composition 
\begin{equation*}
	\mathcal{F}:=(\tau'_*, i'_*\pi'_*)\circ (\id, \mathcal{F}_{\loc})\circ (\tau^*, \pi^*i^*)=\tau'_*\tau^*+i'_*\pr_{2,*}\pr_1^*i^*
\end{equation*}
 induces an isomorphism between $\CH^*(X)$ and $\CH^*(X')$, with inverse given by the composition 
 \begin{equation*}
 \mathcal{F}^{-1}=(\tau_*, i_*\pi_*)\circ (\id, \mathcal{F}_{\loc}^{-1})\circ ({\tau'}^*, {\pi'}^*{i}'^*)=\tau_*{\tau'}^*+i_*\pr_{1,*}\pr_2^*{i'}^*.
 \end{equation*}
 Finally, with the additional hypothesis that the motives of $X$ and $X'$ are Chow motives, we obtain the isomorphism between $\fh(X)$ and $\fh(X')$ by Manin's identity principle, since for any smooth projective variety $T$, the product $\phi\times \id\colon X\times T\dashrightarrow X'\times T$ is again a Mukai flop.
\end{proof}

\section{Motives of moduli spaces of stable vector bundles}

Let $\cN = \cN_C(n,d)$ denote the moduli space of semistable vector bundles of rank $n$ and degree $d$ on a smooth geometrically connected genus $g$ curve $C$ over $k$. We recall that a vector bundle $E$ is semistable if for all proper subbundles $F \subset E$, we have $\mu(F) \leq \mu(E)$, where $\mu(E):= \deg(E)/\rk(E)$. It is stable if this equality is strict, and geometrically stable\footnote{Over an algebraically closed field $k$, the notions of stability and geometric stability coincide.} if it is stable after all field extensions. The moduli space $\cN$ is a projective $k$-variety constructed by GIT \cite{Seshadri_VBAC} containing the moduli space $\cN^s$ of geometrically stable bundles as an open subset.

In this section, we assume that $n$ and $d$ are coprime, so that semistability, stability and geometric stability for rank $n$ degree $d$ vector bundles coincide and so $\cN$ is a smooth projective $k$-variety of dimension $n^2(g-1) + 1$. For $g=0$, this moduli space is empty unless $n = 1$ (in which case it is a point) and for $g =1$, we have $\cN \simeq \Jac(C)$ when $k=\bar{k}$ \cite[Theorem 7]{Atiyah_Elliptic}. The moduli space $\cN$ admits a universal family $\cE$, as $n$ and $d$ are coprime \cite{Ramanan}; that is, $\cE$ is a vector bundle on $ C \times \cN$ such that for every geometric point $p$ of $\cN$, the bundle $\cE_{|C \times \{p\}}$ is a stable vector bundle in the isomorphism class specified by $p$.

Let $\cL \in \Pic^d(C)(k)$ and consider the moduli space $\cN_{\cL}=\cN_{\cL}(n)$ of semistable vector bundles with determinant isomorphic to $\cL$. As $n$ and $d$ are coprime, $\cN_{\cL}$ is a smooth projective variety admitting a universal family. In this section, we study the Chow motives $\fh(\cN)$ and $\fh(\cN_{\cL})$. 

\subsection{Beauville's diagonal trick}

Given a moduli space $\cM$ of sheaves of some kind on a smooth projective variety $X$, there are several cases in which the motive of $\cM$ lies in the tensor subcategory generated by the motive of $X$. In the case of the Chow motive of $\cN$, this was shown by del Ba\~{n}o in \cite{dB_motive_moduli_vb}. Other cases where this general principle is known to hold include the moduli space of stable Higgs bundles on $C$ \cite{HPL_Higgs} and certain moduli spaces of (semistable) sheaves on K3 and abelian surfaces (and closely related spaces: crepant resolutions, twisted and non-commutative analogues) \cite{Bulles,Salvator-Lie-Ziyu}. However, in $\S$\ref{sec higgs rank 2 fixed det}, we will see that this is not the case for moduli spaces of stable Higgs bundles with fixed determinant on a general curve $C$, and that motives of certain finite covers of the curve are necessary to generate the motive of the moduli space in that case (Proposition \ref{prop:EnlargeCategory}).

Based on an idea of Beauville \cite{Beauville_diag} using Chern classes of the universal family to describe the diagonal of $\cN$, B\"ulles \cite{Bulles} observed that one can give a short proof of del Ba\~{n}o's theorem. We review the argument and check that it gives a similar result for $\cN_{\cL}$.

\begin{prop}\label{prop:N-abelian}
Assume that $n$ and $d$ are coprime and $\cL$ is a line bundle of degree $d$ on $C$. Then the rational Chow motives $\fh(\cN)$ and $\fh(\cN_{\cL})$ both lie in the tensor subcategory of $\CHM(k,\QQ)$ generated by $\fh(C)$ (i.e. appear as direct summands of the Chow motive of a large enough power of $C$).
\end{prop}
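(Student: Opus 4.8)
The plan is to use Beauville's diagonal decomposition argument, exactly as in \cite{Bulles}. The key input is that for a smooth projective variety $\cM$ equipped with a universal sheaf $\cE$ on $C \times \cM$, one can often write the diagonal class $[\Delta_{\cM}] \in \CH^*(\cM \times \cM, \QQ)$ in terms of Künneth-type components pulled back from $C$ via the Chern classes of $\cE$. Concretely, I would first treat $\cN$: the universal bundle $\cE$ on $C \times \cN$ gives, for the two projections $\cN \times \cN \to \cN$, pullbacks $\cE_1, \cE_2$ of $\cE$ to $C \times \cN \times \cN$; one forms classes on $\cN \times \cN$ by taking $\pi_{\cN\times\cN, *}$ of products of Chern classes of $\cE_1$, $\cE_2$ and of classes pulled back from $C$ (using a basis, or rather a "motivic basis", of $\fh(C)$). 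Beauville's observation, made precise by Bülles, is that the subalgebra of $\CH^*(\cN \times \cN, \QQ)$ generated by such classes contains the diagonal; this is because $\cN \times \cN$ carries the sheaf $\sExt^\bullet_{\pi}(\cE_1, \cE_2)$ whose support-theoretic behaviour detects the diagonal (where $\mathrm{Ext}$ vanishes in the expected degrees off the diagonal by stability), and a Grothendieck--Riemann--Roch computation expresses the relevant Chern/localised-Chern class of this complex — which cuts out $\Delta_{\cN}$ — as a polynomial in Chern classes of $\cE_i$ and classes from $C$. Once $[\Delta_{\cN}]$ is so expressed, the standard formalism (see e.g. the argument in \cite{Bulles}, or Kimura's "motivic decomposition via a decomposition of the diagonal") shows $\fh(\cN)$ is a direct summand of a sum of twists of $\fh(C^{\times m})$ for $m$ large, i.e. $\fh(\cN)$ lies in the tensor subcategory generated by $\fh(C)$.

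For $\cN_{\cL}$ the argument is identical once one checks that $\cN_{\cL}$ also carries a universal bundle (true since $n,d$ are coprime, as recalled in the text) and that $\cN_{\cL} \times \cN_{\cL}$ again carries the relevant relative $\mathrm{Ext}$-complex with the same vanishing off the diagonal coming from stability. The only point needing a remark is that the Chern classes pulled back from $C$ that appear all live in $\CH^*(C)$, whose motive is $\fh(C)$, so no larger category is needed; this is automatic because the construction of the classes on the product only uses $\fh(C)$ and the structure sheaves/Chern classes of the universal families, and pushing forward along $C \times \cN_{\cL} \times \cN_{\cL} \to \cN_{\cL} \times \cN_{\cL}$ keeps us in the tensor category generated by $\fh(C)$ and Tate twists (Tate twists being already in that category since $\QQ(1)$ is a summand of $\fh(\PP^1) \subset \fh(C \times \cdots)$ — or more simply, one allows Tate twists in the statement "tensor subcategory generated by $\fh(C)$", as the parenthetical in the proposition makes explicit by saying "direct summands of the Chow motive of a large enough power of $C$" after twisting).

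I would organise the write-up as: (1) recall that $\cN$ and $\cN_{\cL}$ carry universal bundles; (2) recall Beauville's lemma on generation of $\CH^*$ of self-products by Chern classes of the universal family and classes from $C$ — here I would cite \cite{Beauville_diag} and follow the streamlined exposition of \cite{Bulles}; (3) apply the GRR computation to express $[\Delta]$ in the subring so generated, using coprimality of $n$ and $d$ to guarantee the relevant $\mathrm{Ext}$-sheaves are supported on the diagonal (so that a suitable localised Chern class is $[\Delta]$ up to invertible corrections); (4) invoke the motivic formalism translating a decomposition of the diagonal into a statement that $\fh(\cM)$ is a summand of twists of powers of $\fh(C)$.

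The main obstacle I anticipate is step (3): one must be careful that the "diagonal-detecting" class one writes down really equals $[\Delta]$ and not merely a class agreeing with it up to lower-dimensional corrections, and that the corrections can themselves be absorbed into the same subalgebra by a Noetherian induction on the support (this is precisely where Beauville's and Bülles's arguments do the real work). A secondary, more bookkeeping point is to make sure the GRR computation of $\ch$ of $R\pi_* R\sHom(\cE_1, \cE_2)$ only involves $\td(C)$ and Chern classes of $\cE_i$, so that after pushforward everything is manifestly in the tensor category generated by $\fh(C)$; this is routine but must be stated. For $\cN_{\cL}$ versus $\cN$ there is no new difficulty beyond noting the universal bundle exists and the stability-based vanishing is unchanged.
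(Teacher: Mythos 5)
Your proposal follows essentially the same route as the paper: both use Beauville's diagonal trick via the relative $\mathrm{Ext}$-complex of the universal bundle, a Grothendieck--Riemann--Roch computation to place its Chern classes in the ideal of correspondences factoring through powers of $C$, and the resulting decomposition of $[\Delta]$ to exhibit $\fh(\cN)$ and $\fh(\cN_{\cL})$ as summands of twists of $\fh(C^{m})$ (the paper handles your step (3) cleanly via Porteous's formula applied to the degeneracy locus, which has expected codimension by the stability-based vanishing). The argument is correct and matches the paper's proof, which likewise reduces the fixed-determinant case to the observation that $\cN_{\cL}$ carries a universal family when $n$ and $d$ are coprime.
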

\begin{proof}
Since the argument is the same in both cases, we give the proof for $\cN_{\cL}$ as this case has not explicitly appeared in the literature. We follow closely the exposition of \cite[Theorem 0.1]{Bulles}. First, consider the natural projections 
\[
    \xymatrix{
      & \cN_{\cL}\times C \times \cN_{\cL} \ar[ld]_{\pi_{12}} \ar[d]_{\pi} \ar[rd]^{\pi_{23}} & \\
      \cN_{\cL} \times C & \cN_{\cL}\times \cN_{\cL} & C\times \cN_{\cL}.
    }
\]
Since $n$ and $d$ are coprime, there is a universal family on $\cN_{\cL}$ given by a vector bundle $\cE$ on $C \times \cN_{\cL}$. Following Beauville \cite{Beauville_diag}, let us define a K-theory class
\[
[\mathcal{E}\mathrm{xt}^{!}_{\pi}]:= \sum_{i}(-1)^{i}[R^{i}\pi_{*} \mathcal{H}\mathrm{om}(\pi_{12}^{*}\cE,\pi_{23}^{*}\cE)] \in K_{0}(\cN_{\cL}\times \cN_{\cL}).
\]
As $C$ is 1-dimensional, this sum has only two potentially non-zero terms, for $i=0$ or $1$, and it is possible to represent this class by a two-term complex $u:K^{0}\to K^{1}$ of locally free sheaves on $\cN_{\cL}\times \cN_{\cL}$. For stable vector bundles $E$ and $F$ on $C$, the homomorphism group $\mathrm{Hom}(E,F)$ vanishes unless $E\simeq F$, and is one-dimensional if $E\simeq F$. From this, Beauville deduces that the diagonal $\Delta_{\cN_{\cL}}$ is equal to the degeneracy locus of $u$ (at least set-theoretically, which is enough to carry out the rest of the proof) \cite[p.28]{Beauville_diag}. This degeneracy locus is a determinantal subvariety of the expected codimension and by applying Porteous's formula \cite[Theorem 14.4]{FultonBook} one obtains
\begin{equation}
  \label{Porteous}
c_{N}(-[\mathcal{E}\mathrm{xt}^{!}_{\pi}])=[\Delta_{\cN_{\cL}}]\in \CH^{N}(\cN_{\cL}\times \cN_{\cL})_\QQ,
\end{equation}
where $N=\dim \cN_{\cL} = (n^2-1)(g-1)$. 

The Chow group $\CH^{*}(\cN_{\cL}\times \cN_{\cL})_\QQ$ has a ring structure given by convolution of cycles, and the class $[\Delta_{\cN_{\cL}}]$ of the diagonal is a two-sided unit. Following B\"{u}lles, we introduce
\[
I:=\langle  \beta\circ \alpha|\alpha\in \CH^{*}(\cN_{\cL}\times C^{k})_\QQ, \beta \in\CH^{*}(C^{k}\times \cN_{\cL})_\QQ, k\geq 1\rangle
  \]
which is clearly a two-sided ideal of $\CH^{*}(\cN_{\cL}\times \cN_{\cL})_\QQ$. The same computation as in \cite[Proof of Theorem 0.1]{Bulles} shows that $I$ is also closed under intersection products.

Let us show that \eqref{Porteous} implies that $[\Delta_{\cN_{\cL}}]\in I$. As in \cite[Proof of Theorem 0.1]{Bulles}, the Grothendieck--Riemann--Roch Theorem implies that 
\[
\ch(-[\mathcal{E}\mathrm{xt}^{!}_{\pi}])= -\pi_{*}(\pi^{*}_{12} \ch(\cE^{\vee})\cdot \pi^{*}_{23}(\cE) \cdot \pi_{2}^{*}\td(C))
  \]
  with $\cE^{\vee}=\mathcal{H}\mathrm{om}(\cE,\cO)$. Let $\alpha:=\ch(\cE^{\vee})\cdot \pi^{*}_{2}\sqrt{\td{C}}$ and $\beta:=\ch(\cE)\cdot \pi^{*}_{2}\sqrt{\td{C}}$. Then, for $a \in \NN$, by considering the graded part of the previous equation in codimension $a$, we see that
  \[
\ch_{a}(-[\mathcal{E}\mathrm{xt}^{!}_{\pi}])=-\sum_{i+j=a+2}\alpha^{i}\circ \beta^{j} \in I.
\]
An induction on $a$ then implies that $c_{a}(-[\mathcal{E}\mathrm{xt}^{!}_{\pi}])$ also lies in $I$. By \eqref{Porteous}, we see that $[\Delta_{\cN_{\cL}}]\in I$. By the argument at the end of \cite[Proof of Theorem 0.1]{Bulles}, we then see that $\fh(\cN_{\cL})$ can be realised as a direct summand of a motive of the form $\bigoplus_{i}\fh(C^{k_{i}})(n_{i})$. This last motive is in the tensor subcategory of $\CHM(k,\QQ)$ generate by $\fh(C)$, and can also be embedded as a direct summand of the motive of a large enough power of $C$. This completes the proof.
\end{proof}

\subsection{Motives of moduli spaces of vector bundles with and without fixed determinant}

\begin{thm}\label{thm motive N and N fixed det}
Assume that $n$ and $d$ are coprime and $\cL \in \Pic^d(C)(k)$. In $\CHM(k,\QQ)$, we have
\[\fh(\cN) \simeq \fh(\cN_{\cL}) \otimes \fh(\Jac(C)).\]
\end{thm}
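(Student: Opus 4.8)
The plan is to deduce this motivic refinement from the classical $\ell$-adic statement of Harder and Narasimhan by exploiting that both sides are abelian motives, hence Kimura finite-dimensional, and that the $\ell$-adic realisation is conservative on such motives in characteristic zero.

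\emph{Step 1: Reduce to characteristic zero.} First I would argue that it suffices to prove the isomorphism after base change to $\bar{k}$, or at least to a field of characteristic zero. The moduli spaces $\cN$, $\cN_\cL$ and $\Jac(C)$ are all defined over $k$ and have good reduction; using a spreading-out and specialisation argument for Chow motives of smooth projective varieties (lifting the curve $C$ to characteristic $0$ over a suitable base and using the block/rigidity structure), one reduces to the case $\mathrm{char}(k) = 0$. Alternatively, since the statement is about an isomorphism of motives which can be detected after field extension by faithfully flat descent for Chow groups with $\QQ$-coefficients, one may pass to a characteristic-zero field over which everything is defined.

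\emph{Step 2: Both motives are abelian, hence Kimura.} By del Ba\~no's theorem $\fh(\cN)$ lies in the tensor subcategory generated by $\fh(C)$, and by Proposition \ref{prop:N-abelian} the same holds for $\fh(\cN_\cL)$; of course $\fh(\Jac(C))$ is abelian. Hence $\fh(\cN)$ and $\fh(\cN_\cL) \otimes \fh(\Jac(C))$ are both abelian motives, so by Proposition \ref{prop kimura ab} they are Kimura finite-dimensional, i.e. lie in $\CHM(k,\QQ)^\kim$.

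\emph{Step 3: Construct a candidate morphism and check it realises to an isomorphism.} The Harder--Narasimhan theorem gives a specific mechanism behind the cohomological identity: tensoring with $\Jac(C)$ corresponds to the $\Jac(C)$-action on $\cN$ by tensoring a bundle with a degree-$0$ line bundle, together with the determinant map $\det\colon \cN \to \Pic^d(C)$, which makes $\cN$ an \'etale-locally trivial $\cN_\cL$-bundle over $\Pic^d(C) \cong \Jac(C)$ (after a translation). Concretely, fixing an $n$-torsion-free structure or using a Poincar\'e bundle, one gets a morphism $\mu\colon \cN_\cL \times \Jac(C) \to \cN$, $(E, M) \mapsto E \otimes M$, which is finite \'etale of degree $n^{2g}$ onto $\cN$, and one builds the candidate correspondence $\fh(\cN) \to \fh(\cN_\cL) \otimes \fh(\Jac(C))$ from $\mu$ together with a $1/n^{2g}$-normalised transfer and the Jacobian's group structure — this is precisely the geometric input of \cite{HN}. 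Applying the $\ell$-adic realisation functor and invoking Harder--Narasimhan \cite{HN}, this correspondence becomes an isomorphism on $\ell$-adic cohomology.

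\emph{Step 4: Conclude by conservativity.} Since the realisation is an isomorphism and both source and target are Kimura finite-dimensional (hence the realisation functor, or rather the induced functor to numerical motives, is conservative by Proposition \ref{prop kimura canc}, using \cite{wildeshaus} for the $\ell$-adic realisation on abelian geometric motives in characteristic $0$), the candidate morphism is already an isomorphism of Chow motives. The main obstacle I anticipate is \emph{Step 3}: producing an honest morphism of Chow motives (not merely an abstract isomorphism) whose realisation is the Harder--Narasimhan isomorphism — one must be careful about the torsion in $\Jac(C)$, the choice of Poincar\'e/universal bundles, and ensuring the normalising factors make sense with $\QQ$-coefficients; the conservativity step, by contrast, is essentially formal once finite-dimensionality is in hand.
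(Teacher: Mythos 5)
Your proposal follows essentially the same route as the paper: reduce to characteristic zero by lifting the curve and specialising Chow correspondences, note via Proposition \ref{prop:N-abelian} that both sides are abelian hence Kimura finite-dimensional motives, and upgrade the Harder--Narasimhan $\ell$-adic isomorphism coming from the finite \'etale map $\cN_{\cL}\times\Jac(C)\to\cN$ to an isomorphism of Chow motives using Wildeshaus's conservativity of the $\ell$-adic realisation on abelian motives in characteristic zero. Two minor cautions: your ``alternative'' reduction in Step 1 by passing to a characteristic-zero field extension is impossible when $\mathrm{char}(k)>0$ (only the specialisation argument works), and whereas you apply conservativity to the global correspondence in one stroke, the paper first identifies $\cN\simeq\cN_{\cL}\times^{\Gamma_n}\Jac(C)$ for $\Gamma_n=\Jac(C)[n]$, kills the $\Gamma_n$-action on $\fh(\Jac(C))$ by Kimura cancellation, and invokes Harder--Narasimhan plus conservativity only for the action on $\fh(\cN_{\cL})$ --- both assemblies of the same ingredients are valid.
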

\begin{proof}
Since $n$ and $d$ are coprime, the moduli spaces $\cN$ and $\cN_{\cL}$ are fine moduli spaces and we fix universal families $\cE$ (resp. $\cE'$) on $\cN\times C$ (resp. $\cN_{\cL}\times C$). One can choose $\cE'$ such that the induced  morphism $\iota:\cN_{\cL}\to \cN$ satisfies $(\iota\times\id)^{*}(\cE)\simeq \cE'$. The morphism $\iota$ is easily seen to be a proper monomorphism (for instance by checking the valuative criterion of properness), hence a closed immersion. Combining this with the action of $\Jac(C)$ on $\cN$ by tensor product, we get a morphism
\[
\phi : \cN_{\cL}\times \Jac(C)\to \cN, \quad (E,L)\mapsto \iota(E)\otimes L.
\]
We will show that the map of Chow motives induced by this last morphism is an isomorphism.

First, let us show that we can reduce to the case of a field $k$ of characteristic zero. Let $B=\mathrm{Spec}(R)$ be a complete local trait with $R$ a discrete valuation ring with fraction field $K$ of characteristic $0$ and residue field $k$. Since $C$ is a smooth projective curve, there exists a smooth projective curve $\cC/B$ which lifts $C$ \cite[Exposé III Théorème 7.3]{SGA1}. Moreover, since $C$ has dimension $1$, we have $H^{2}(C,\cO_{C})=0$; hence there exists a line bundle $\Lambda$ on $\cC$ which lifts $\cL$ \cite[Corollary 5.6.(a)]{illusie-fga}. There is a relative moduli space $\cN_{\cC/B}$ (resp. $\cN_{\cC/B,\Lambda}$) of vector bundles of rank $n$ and degree $d$ (resp. determinant $\Lambda$) over $\cC/B$, which are smooth projective schemes over $B$ whose generic and special fibres are the corresponding moduli spaces over $K$ and $k$ (see \cite[Theorem 4.3.7]{HL} for a very general construction of fine moduli spaces of sheaves in a relative setting). In particular, we have a relative Jacobian $\Jac(\cC/B)=\cN_{\cC/B}(1,0)$. The construction of the previous paragraph works in this generality and produces a $B$-morphism
\[
\phi_{\cC} : \cN_{\cC/B,\Lambda}\times_B \Jac(\cC/B)\to \cN_{\cC/B}.
\]
Assume now that we know that, over the generic fibre, this induces a isomorphism of Chow motives $\fh(\phi_{\cC_K}): \fh(\cN_{\cC_K,\Lambda_K}\times \Jac(\cC_{K}))\to \fh(\cN_{\cC_K})$. By applying the specialisation morphisms for Chow groups in smooth projective families over $B$ \cite[\S 10.1]{FultonBook}, which are compatible with composition of correspondences, we see that this implies the same claim over $k$.

We can thus assume that $k$ is of characteristic $0$. Let $L/k$ be a finite field extension. The base change functor $\CHM(k,\QQ)\to \CHM(L,\QQ)$ is conservative by a trace argument. Since the characteristic of $k$ is $0$, by replacing $k$ by a finite extension, we can assume that the group scheme $\Jac(C)[n]$ of $n$-torsion points in the Jacobian is the constant finite étale group scheme associated to $\Gamma_{n}:=\Jac(C)[n](k)$, and we identify the two.

The group $\Gamma_{n}$ acts on $\cN_{\cL}$ via $M \cdot E := E\otimes M^{-1}$ and on $\Jac(C)$ by translation; we let $\cN_{\cL}\times^{\Gamma_{n}} \Jac(C)$ denote the quotient by the diagonal $\Gamma_n$-action. The morphism $\phi$ factors through this quotient and induces a morphism
\[
\overline{\phi}: \cN_{\cL}\times^{\Gamma_{n}} \Jac(C) \to \cN.
\]
It is well known that this morphism is an isomorphism, but we could not find a reference in this generality; let us sketch the argument. It is enough to prove that this is an isomorphism after extending the field $k$, hence we can assume that $k$ is algebraically closed. We construct an inverse in the other direction as follows. Let $S$ be a $k$-scheme and $[\cF\to C\times S]\in \cN(S)$ be the class of a family of rank $n$ degree $d$ stable vector bundles parametrised by $S$ (recall that $\cF$ and $\cF\otimes \pi_{S}^{*}\Lambda$ where $\Lambda$ is a line bundle on $S$ determine the same $S$-point of $\cN$). Then $\det(\cF)\otimes \pi_C^*\cL^{-1}\to C\times S$ is a family of line bundles of degree $0$ and in particular defines an $S$-point of $\Jac(C)$. This family of line bundles does not necessarily admit an $n$-th root, but it does after passing to the finite étale cover $S'\to S$ defined by the pullback square
\[
\xymatrix@C=4em{
  S'\ar[r]^{M} \ar[d]_{p} & \Jac(C) \ar[d]^{[n]} \\
  S \ar[r]^{\det(\cF)\otimes \cL^{-1}} & \Jac(C).
  }
\]
By construction, $S'$ is a $\Gamma_{n}$-torsor over $S$ and the induced map $S' \ra \Jac(C)$ gives us a family $M$ of degree $0$ line bundles on $C$ parametrised by $S'$ (this is the point where it is useful to assume $k$ is algebraically closed), uniquely determined up to the pullback of a line bundle on $S'$. We have $\det(p_C^*\cF\otimes M^{-1})\simeq p_C^*\det(\cF)\otimes M^{\otimes{-n}}\simeq \pi_C^*\cL\otimes \pi_{S'}^{*}\Lambda$ for $p_C := \mathrm{Id}_C \times p$ and $\pi_C : C \times S' \ra C$, and $\Lambda$ a line bundle on $S'$. Hence, the pair $(p_{C}^{*}\cF\otimes M^{-1},M)$ defines a morphism $S^{'}\to\cN_{L}\times \Jac(C)$, which is $\Gamma_{n}$-equivariant and descends to a morphism $S\to \cN_{L}\times^{\Gamma_{n}}\Jac(C)$ between the $\Gamma_n$-quotients. One checks that the resulting morphism does not depend on the choice of $\cF$ and $M$ in their equivalence class. The whole construction is functorial in $S$, and defines a morphism $\cN\to \cN_{L}\times^{\Gamma_{n}}\Jac(C)$, which one can show is an inverse to $\overline{\phi}$.

Since we are working with Chow motives with rational coefficients, we deduce that
\[
\fh(\cN)\simeq  \fh(\cN_{\cL}\times^{\Gamma_n} \Jac(C))\simeq \fh(\cN_{\cL}\times \Jac(C))^{\Gamma_{n}}\simeq (\fh(\cN_{\cL})\otimes \fh(\Jac(C)))^{\Gamma_{n}}.
\]
Let us write $\fh(\Jac(C))=\fh(\Jac(C))^{\Gamma_{n}}\oplus R$. Since the morphism $[n]:\Jac(C)\to \Jac(C)$ is a finite étale $\Gamma_{n}$-torsor, it induces an isomorphism of motives $[n]^{*}:\fh(\Jac(C))\simeq \fh(\Jac(C))^{\Gamma_{n}}$. Moreover, $\fh(\Jac(C))^{\Gamma_{n}}$ is Kimura finite-dimensional, as it is a direct factor of $\fh(\Jac(C))$; thus, by Proposition \ref{prop kimura canc}, we can cancel on both sides and deduce that $R\simeq 0$. We deduce that the $\Gamma_{n}$-action on $\fh(\Jac(C))$ is trivial (see \cite[Lemma 2.1]{JiangYin} for a different argument using results of Beauville on Chow groups of abelian varieties \cite{Beauville_AV}). We deduce that
\[
\fh(\cN)\simeq  (\fh(\cN_{\cL}))^{\Gamma_{n}}\otimes \fh(\Jac(C)).
\]
Finally, it remains to show that the action of $\Gamma_{n}$ on $\fh(\cN_{\cL})$ is trivial. A classical theorem of Harder--Narasimhan \cite{HN} shows that this is the case for the $\Gamma_{n}$-action on the $\ell$-adic cohomology $H^{*}(\cN_{\cL},\QQ_{\ell})$ for any $\ell$ prime to the characteristic of $k$. In other words, the $\ell$-adic realisation of the morphism $\fh(\cN_{\cL})^{\Gamma_{n}}\to \fh(\cN_{\cL})$ is an isomorphism. By Proposition \ref{prop:N-abelian}, this is a morphism between abelian Chow motives. Since $k$ is now a field of characteristic $0$, the $\ell$-adic realisation functor $R_{\ell}:\DM(k,\QQ)\to D(\QQ_{\ell})$ is conservative when restricted to abelian geometric motives by \cite[Theorem 1.12]{wildeshaus}. We deduce that  $\fh(\cN_{\cL})^{\Gamma_{n}}\to \fh(\cN_{\cL})$ is an isomorphism, which concludes the proof.
\end{proof}

\subsection{A closed formula in rank 2 using wall-crossing for stable pairs}

In this section, we obtain a formula for the rational Chow motive of the moduli space of stable vector bundles of rank two and odd degree $d$ from a result of del B\~{a}no \cite{dB_rk2}. Del B\~{a}no's proof relies in turn on work of Thaddeus \cite{Thaddeus_pairs} involving variation of stability for so-called (Bradlow) stable pairs, consisting of a vector bundle with a non-zero section. The notion of (semi)stability for such pairs depends on a stability parameter $\sigma \in \QQ_{>0}$ and involves checking an inequality for all subbundles. For each $\sigma$, there is an associated moduli space of $\sigma$-semistable pairs $\cP^\sigma =\cP^\sigma_C(2,d)$, which is a projective variety constructed via GIT \cite{Thaddeus_pairs}. The space of stability parameters $\QQ_{>0}$ admits a wall and chamber decomposition by considering how the notion of (semi)stability changes as $\sigma$ varies. The walls correspond to critical values of $\sigma$ for which semistability and stability do not coincide (i.e., there is a subbundle violating stability) and in the chambers, semistability and stability coincide and the corresponding moduli space is smooth. For moduli spaces $\cP^\sigma_{\cL} =\cP^\sigma_{C,\cL}(2,d)$ of pairs with fixed determinant, Thaddeus showed (i) for $\sigma$ in the minimal chamber, there is a forgetful map $\cP^{\sigma}_{\cL} \ra \cN_{\cL}$ (which is a projective bundle if $d \geq 4g -3$); (ii) each wall-crossing corresponds to standard flip (or flop) between the $\cP^{\sigma}_{\cL}$'s on both sides with centre a symmetric power of $C$ and iii) for $\sigma$ in the maximal non-empty chamber, the moduli space of stable pairs is a projective space. 

In \cite{Thaddeus_pairs}, the field $k$ is assumed to be algebraically closed; however we claim that the results hold over a general field. Indeed, when $k=\bar{k}$, for a given pair $(E,s)$ in the exceptional locus of a wall-crossing, there is a unique line subbundle $L\subset E$ which does not violate stability on one side of the wall but does on the other \cite[(1.4)]{Thaddeus_pairs}. By descent, this implies that $L$ is defined over $k$ when $E$ is, and the description of the moduli spaces on the walls is valid over $k$.

\begin{thm}\label{thm motive N rank 2}
Let $\cL$ be a line bundle on $C$ of odd degree $d$. Then the rational Chow motive of the moduli space $\cN_{\cL}=\cN_{C,\cL}(2,d)$ of stable rank $2$ bundles with fixed determinant $\cL$ is given by\footnote{The formula holds for all $g\geq 0$, with the convention that $\Sym^{-1}(C)=\emptyset$.}
\[\fh(\cN_{\cL}) \simeq \fh(\Sym^{g-1}(C))(g-1) \oplus \bigoplus_{i=0}^{g-2} \fh(\Sym^i(C))\otimes \left( \QQ(i) \oplus \QQ(3g-3-2i) \right) \]
and the rational Chow motive of the moduli space $\cN=\cN_C(2,d)$ of stable rank $2$ degree $d$ vector bundles is given by 
\[\fh(\cN)\simeq \fh(\Jac(C)) \otimes \left(\fh(\Sym^{g-1}(C))(g-1) \oplus \bigoplus_{i=0}^{g-2} \fh(\Sym^i(C))\otimes \left( \QQ(i) \oplus \QQ(3g-3-2i)\right)\right). \]
\end{thm}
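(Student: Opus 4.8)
The plan is to compute $\fh(\cN_\cL)$ first and deduce $\fh(\cN)$ from it: by Theorem \ref{thm motive N and N fixed det} one has $\fh(\cN)\simeq\fh(\cN_\cL)\otimes\fh(\Jac(C))$, so substituting the formula for $\fh(\cN_\cL)$ gives the second displayed isomorphism. For $\fh(\cN_\cL)$ the idea is to reproduce del Ba\~no's wall-crossing computation \cite{dB_rk2} --- which rests on Thaddeus' variation of stability for pairs \cite{Thaddeus_pairs} --- but to carry it out directly in $\CHM(k,\QQ)$ rather than in a semisimple quotient. A preliminary reduction brings us to $d\geq 4g-3$: tensoring by a line bundle $M$ of degree $e$ gives $\cN_{C,\cL}(2,d)\simeq\cN_{C,\cL\otimes M^{\otimes 2}}(2,d+2e)$, and since the right-hand side of the asserted formula depends only on $C$, it is enough to treat one degree in each parity class, so we may assume $d\geq 4g-3$. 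For such $d$, Thaddeus' results --- valid over our general field $k$ by the discussion preceding the theorem --- provide a finite chain of birational maps $\cP^{\sigma_0}_\cL\dashrightarrow\cdots\dashrightarrow\cP^{\sigma_r}_\cL$ in which $\cP^{\sigma_0}_\cL\simeq\PP^{d+g-2}$ (the maximal chamber), $\cP^{\sigma_r}_\cL\to\cN_\cL$ is a projective bundle of relative dimension $d+1-2g$ (by Riemann--Roch, using the vanishing of $H^1$ in this range), and each step is a standard flip or flop with centre a symmetric power $\Sym^i(C)$ sitting inside with projective-space fibres of explicitly computable dimension.

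I would then push this chain through Theorem \ref{thm Jiang Chow flip}: each wall contributes, to one side of the resulting identity of integral Chow motives, a summand $\bigoplus_j\fh(\Sym^{i_k}(C))(j)$ over an explicit range of twists. Telescoping the chain and substituting the projective bundle formula $\fh(\cP^{\sigma_r}_\cL)\simeq\bigoplus_{j=0}^{d+1-2g}\fh(\cN_\cL)(j)$ together with $\fh(\PP^{d+g-2})\simeq\bigoplus_{j=0}^{d+g-2}\QQ(j)$ yields an identity in $\CHM(k,\QQ)$ which, after del Ba\~no's bookkeeping of wall positions, flip types, symmetric powers and twist ranges, rearranges into $\bigoplus_{j=0}^{d+1-2g}\fh(\cN_\cL)(j)\simeq\bigoplus_{j=0}^{d+1-2g}F(j)$, where $F$ is the motive on the right-hand side of the claimed formula. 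This rearrangement --- an explicit bookkeeping of Tate and symmetric-power motives --- is the lengthy but routine part of the argument.

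The crux, and the place where the present argument improves on \cite{dB_rk2}, is to pass from $\bigoplus_j\fh(\cN_\cL)(j)\simeq\bigoplus_j F(j)$ to $\fh(\cN_\cL)\simeq F$: this "division by $\fh(\PP^{d+1-2g})$", together with the intermediate cancellations, is not available in $\CHM(k,\QQ)$ in general, which is exactly why del Ba\~no worked in a semisimple category. Here, however, $\fh(\cN_\cL)$ is abelian by Proposition \ref{prop:N-abelian}, each $\fh(\Sym^i(C))$ is abelian as a direct summand of $\fh(C)^{\otimes i}$, and Tate motives are abelian; hence $\fh(\cN_\cL)$ and $F$ are Kimura finite-dimensional by Proposition \ref{prop kimura ab} and lie in $\CHM(k,\QQ)^{\kim}$. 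Since the functor $\CHM(k,\QQ)^{\kim}\to M_{\num}(k,\QQ)$ is full and conservative (Proposition \ref{prop kimura canc}), two Kimura motives that become isomorphic in the semisimple category $M_{\num}(k,\QQ)$ are already isomorphic in $\CHM(k,\QQ)$; and in $M_{\num}(k,\QQ)$ the identity $\bigoplus_j\fh(\cN_\cL)(j)\simeq\bigoplus_j F(j)$ does force $\fh(\cN_\cL)\simeq F$, by comparing the multiplicities of simple summands along each twist orbit. This yields $\fh(\cN_\cL)\simeq F$ in $\CHM(k,\QQ)$, completing the proof.
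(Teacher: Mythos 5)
Your proposal is correct and follows essentially the same route as the paper: reduce to $\fh(\cN_\cL)$ via Theorem \ref{thm motive N and N fixed det}, shift to odd degree $d\geq 4g-3$, push Thaddeus' flip chain through Theorem \ref{thm Jiang Chow flip} with del Ba\~no's bookkeeping, and upgrade the resulting identity from $M_{\num}(k,\QQ)$ to $\CHM(k,\QQ)$ using that both sides are abelian (Proposition \ref{prop:N-abelian}), hence Kimura finite-dimensional, so that the full conservative functor of Proposition \ref{prop kimura canc} lifts the isomorphism. The only differences are organisational (the paper carries out the telescoping in the $\LL$-adic completion of $K_0(\CHM^{\eff})$ and checks injectivity into $K_0(M_{\num}^{\eff}(k,\QQ))$, rather than cancelling direct summands in $\CHM(k,\QQ)^{\kim}$ as you do), plus two small points you should add: in the degree-shifting step take $M$ to be a power of $\cL$ so that it is guaranteed to exist over a non-closed field, and treat $g=0,1$ separately (empty moduli space, resp. Atiyah's description of $\cN$ over an elliptic curve), since the flip-chain argument requires $g\geq 2$.
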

\begin{proof}
  For $g=0$, both moduli spaces are empty and both formulas hold. For $g=1$, the determinant induces a morphism $\cN\to \Pic^{d}(C)\simeq \Jac(C)$ (where we use $\Pic^{1}(C)(k)\neq \emptyset$). We claim that the induced morphism of Chow motives is an isomorphism. It suffices to check this after passing to the algebraic closure $k=\bar{k}$. In that case, the determinant morphism is actually an isomorphism and $\cN_{\cL}$ is a point by \cite[Theorem 7]{Atiyah_Elliptic}. Since $\Pic^{1}(C)(k)\neq \emptyset$, we also have $\fh(C)\simeq \fh(\Jac(C))$ in $\CHM(k,\QQ)$ (even if $C(k)=\emptyset$). 

By Theorem \ref{thm motive N and N fixed det},  it suffices to prove the formula for $\fh(\cN_{\cL})$. By assumption, $C$ has a degree $d$ line bundle $\cL$ and tensoring with $\cL^{\otimes e}$ induces an isomorphism $\cN_C(2,d) \cong \cN_C(2,(2e+1)d)$; thus we can assume without loss of generality that $d \geq 4g-3$ and $g \geq 2$. Let us write $d=4g-3+2\delta$ with $\delta\geq 0$.

Since $d \geq  4g-3$, Thaddeus' work \cite{Thaddeus_pairs} shows there are $\frac{d-1}{2}=2g-2+\delta$ walls and $2g-1 + \delta$ chambers for pairs with corresponding smooth pairs moduli spaces $\cP^{\sigma_i}_{\cL}$ for $0 \leq i \leq 2g -2+\delta$ with $\sigma_0 > \cdots > \sigma_{2g-2+\delta} >0$ which fit into a diagram
	\begin{equation*}
		 \xymatrix{	\cP^{\sigma_0}_{\cL} \ar[d]_{\simeq} \ar@{-->}[rr]^{(0,5g -7+2\delta)}_{\text{centre } C} &  & \cP^{\sigma_1}_{\cL}  \ar@{-->}[rr]^{(1, 5g -9+2\delta)}_{\begin{smallmatrix}\text{centre } \\ \Sym^2(C) \end{smallmatrix}} & & \cP^{\sigma_2}_{\cL} \ar@{-->}[r] & \: \cdots \: \ar@{-->}[r]   & \cP^{{\sigma_{2g-3+\delta}}}_{\cL} \: \: \ar@{-->}[rr]^{(2g-3+\delta,g -1+2\delta)}_{\begin{smallmatrix} \text{centre } \\ \Sym^{2g-2+\delta}(C) \end{smallmatrix}} & & \: \: \cP^{{\sigma_{2g-2+\delta}}}_{\cL} \ar[d]_{\pi} \\
			 \PP^{5g-5+2\delta} & & &&& & & & \cN_{\cL} 
		}
	\end{equation*}
where the horizontal maps are standard flips (or flops) with the given type and centre and $\pi$ is the forgetful map, which is a $\PP^{2g-2+2\delta}$-bundle.

Let us first assume that $C$ admits a line bundle of degree $1$ and that $d=4g-3$ (which in this case can be achieved by tensoring by that line bundle). Under this assumption, using the above sequence of flips with $\delta=0$, del Ba\~{n}o computes the class of the motive of $\cN_{\cL}(2,d)$ in the completion of the ring $\cK$ of $K_{0}(\CHM_{k}^{\eff})$ along the ideal generated by the Lefschetz motive $\LL:=\QQ(1)$ \cite[Corollary 2.6]{dB_rk2} in terms of $\fh^1(C)$.

In general, suppose that $C$ does not admit a line bundle of degree $1$ and that $\delta$ is not necessarily $0$. We claim that the proof of \cite[Corollary 2.6]{dB_rk2} still applies in this case with some minor modifications. The assumption that $C$ has a line bundle of degree $1$ is not used in the proof, besides the reduction to degree $d=4g-3$ (in particular, as del Ba\~{n}o observes in \cite[\S 1.2.4]{dB_rk2}, the formula for motives with rational coefficients of high enough symmetric powers in terms of the motive of the Jacobian holds without any assumption on $C$, despite the fact that those symmetric powers are not projective bundles over the Jacobian). The computation for $\delta>0$ is then completely parallel to the computation for $\delta=0$. Let us just explain why the end result does not depend on $\delta$. The computation shows that the class of the motive of $\cN_{\cL}$ in the ring $\cK$ is given by applying the homomorphism $\cK[[T]]\to\cK,T\mapsto \LL$ to the expression
\[
\frac{1}{1-T^{2g-1+2\delta}}\left[ \frac{(1-T^{2g-1+2\delta})(1+T)^{\fh^{1}(C)}}{(1-T)(1-T^{2})}-\frac{(1+1)^{\fh^{1}(C)}}{(1-\LL)}\left(\frac{T^{2g-1+\delta}-T^{2g-1+\delta}}{1-T}-\frac{T^{3g-1+2\delta}-T^{g}}{1-T^{2}}\right) \right]
\]
and we see that the contributions of $\delta$ cancel out to give the same expression as in the case $\delta=0$.

At this point, we have a formula in the ring $\cK$. Del Ba\~{n}o then claims in \cite[Corollary 2.7]{dB_rk2} that this implies a formula in the category $M_{\num}^{\eff}(k,\QQ)$ of effective numerical motives with rational coefficients. Since he does not give a complete proof and this is a crucial step in our argument, let us fill in the details. Write $\cK_{\num}$ for the completion of $K_{0}(M_{\num}^{\eff}(k,\QQ))$ along the ideal generated by $\LL$. The formula in $\cK$ implies the same formula in $\cK_{\num}$. Let us show that the map $K_{0}(M_{\num}^{\eff}(k,\QQ))\to \cK_{\num}$ is injective. For this, it is enough to show that the filtration $(\LL^{n})_{n\geq 0}$ is separated. Jannsen's theorem \cite{Jannsen} states that $M_{\num}^{\eff}(k,\QQ)$ is an abelian semisimple category, so $K_{0}(M_{\num}^{\eff}(k,\QQ))$ is the free abelian group generated by isomorphism classes of objects of $M_{\num}^{\eff}(k,\QQ)$. The ring $K_{0}(M_{\num}^{\eff}(k,\QQ))$ is idempotent-complete, so the ideal $(\LL^{n})$ is idempotent-complete for every $n\geq 0$. It is thus enough to show that a simple object $M$ in $\cap_{n\geq 0}(\LL^{n})$ is zero, or equivalently that for any $N\in M_{\num}(k,\QQ)$, we have $\Hom(N,M)=0$. Writing $M=M'\otimes\LL^{n}$, we see that this group vanishes for $n$ large enough for dimensional reasons. Since both sides of \cite[Corollary 2.7]{dB_rk2} lie in $K_{0}(M_{\num}^{\eff}(k,\QQ))$, we get a formula in $K_{0}(M_{\num}^{\eff}(k,\QQ))$. Finally, since $M_{\num}^{\eff}(k,\QQ)$ is semisimple, the formula of \cite[Corollary 2.7]{dB_rk2} holds for isomorphism classes of objects in $M_{\num}^{\eff}(k,\QQ)$.

Recall that the motive $\fh(C)$ decomposes as $\fh(C)=\QQ(0)\oplus \fh^{1}(C)\oplus \QQ(1)$. By using this decomposition and expanding the formula in our Theorem, we see that it is equivalent to del Ba\~{n}o's formula. We have thus proven our theorem in $M_{\num}(k,\QQ)$.

By Theorem \ref{prop:N-abelian}, both sides of the formula are abelian Chow motives. By Propositions \ref{prop kimura ab} and \ref{prop kimura canc}, we deduce that the formula also holds in $\CHM(k,\QQ)$. This concludes the proof. \end{proof}

We note that as we used Kimura finite-dimensionality, the above isomorphism is not explicit.

\subsubsection{Corollaries and comparisons with previous results}\label{sec cor Chow gps N}

From Theorem \ref{thm motive N rank 2}, one obtains the following description of the Chow groups of $\cN_{\cL}$ for $n=2$ and $d$ odd, when $\Pic^1(C)(k)\neq \emptyset$. We assume that $g\geq 2$ for simplicity and to avoid some case distinctions.

\begin{cor}\label{cor:general formula Chow}
Let $a\in \NN$. There is an isomorphism
\[
\CH^{a}(\cN_{\cL})_\QQ\simeq \CH^{a+1-g}(\Sym^{g-1}(C))_\QQ\oplus\bigoplus_{i=0}^{g-2}\left(\CH^{a-i}(\Sym^{i}(C))_\QQ\oplus \CH^{a-3g+3+2i}(\Sym^{i}(C))_\QQ\right).
\]
\end{cor}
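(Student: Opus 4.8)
The plan is to deduce Corollary~\ref{cor:general formula Chow} directly from the motivic formula in Theorem~\ref{thm motive N rank 2} by applying the functor $\Hom(-,\QQ(a))$ and using the identification of Chow groups with Hom-groups in $\CHM(k,\QQ)$ recorded in Lemma~\ref{lem chow vanish}. Concretely, for any smooth projective variety $X$ one has $\CH^{a}(X)_\QQ \simeq \Hom_{\CHM(k,\QQ)}(\fh(X),\QQ(a))$, and this extends additively to direct sums of Chow motives and to Tate twists via $\Hom(\fh(X)(j),\QQ(a)) \simeq \Hom(\fh(X),\QQ(a-j)) \simeq \CH^{a-j}(X)_\QQ$.

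First I would take the isomorphism
\[
\fh(\cN_{\cL}) \simeq \fh(\Sym^{g-1}(C))(g-1) \oplus \bigoplus_{i=0}^{g-2} \fh(\Sym^i(C))\otimes \left( \QQ(i) \oplus \QQ(3g-3-2i) \right)
\]
from Theorem~\ref{thm motive N rank 2} and apply $\Hom_{\CHM(k,\QQ)}(-,\QQ(a))$ to both sides. The left-hand side gives $\CH^{a}(\cN_{\cL})_\QQ$ by Lemma~\ref{lem chow vanish}. On the right-hand side, $\Hom$ commutes with the finite direct sum, so I would treat each summand separately: the summand $\fh(\Sym^{g-1}(C))(g-1)$ contributes $\Hom(\fh(\Sym^{g-1}(C))(g-1),\QQ(a)) \simeq \Hom(\fh(\Sym^{g-1}(C)),\QQ(a-g+1)) \simeq \CH^{a+1-g}(\Sym^{g-1}(C))_\QQ$; and for $0 \le i \le g-2$, the summand $\fh(\Sym^i(C))\otimes(\QQ(i)\oplus\QQ(3g-3-2i))$ contributes $\CH^{a-i}(\Sym^i(C))_\QQ \oplus \CH^{a-3g+3+2i}(\Sym^i(C))_\QQ$, again by unwinding the Tate twist and using the identification of the first variable with the Chow group. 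Assembling these pieces reproduces exactly the displayed decomposition.

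There is essentially no obstacle here: the only points to be careful about are that the Tate twist $\QQ(j)$ in $\CHM(k,\QQ)$ shifts the cohomological degree by $j$ (so that $\Hom(M(j),\QQ(a)) = \Hom(M,\QQ(a-j))$), and that negative or out-of-range values of the Chow-group index simply yield zero by the vanishing part of Lemma~\ref{lem chow vanish}, which is consistent with the convention $\Sym^{-1}(C)=\emptyset$ used in Theorem~\ref{thm motive N rank 2}. The assumption $g \ge 2$ guarantees that the index set $0 \le i \le g-2$ is non-empty and avoids degenerate bookkeeping, but the argument is purely formal once the motivic identity is in hand. In short, the corollary is an immediate consequence of applying a representable functor to an additive decomposition, and the proof is a one-line invocation of Theorem~\ref{thm motive N rank 2} together with Lemma~\ref{lem chow vanish}.
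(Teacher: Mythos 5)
Your proposal is correct and is exactly the argument the paper intends: the corollary is stated as an immediate consequence of Theorem~\ref{thm motive N rank 2}, obtained by applying $\Hom_{\CHM(k,\QQ)}(-,\QQ(a))$ to the motivic decomposition and using Lemma~\ref{lem chow vanish} together with the Tate-twist shift $\Hom(\fh(X)(j),\QQ(a))\simeq\CH^{a-j}(X)_\QQ$. Your bookkeeping of the twists and of the vanishing for out-of-range indices matches the paper's conventions.
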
  

For $a$ small or close to $3g-3$, many of these terms vanish for dimensional reasons (Lemma \ref{lem chow vanish}). Consequently, we can recover several descriptions of rational Chow groups in the literature (although our isomorphism is non-explicit). 
\begin{cor}\
\begin{enumerate}[label=\emph{(\roman*)}, leftmargin=0.7cm]
\item \cite{Ramanan} $\CH^{1}(\cN_{\cL})\simeq \ZZ$ .
\item \cite{BKN} $\CH^{2}(\cN_{\cL})_\QQ\simeq \left\{ \begin{array}{ll} \CH_{0}(C)_{\QQ} & \text{ if } g=2, \\ \CH_{0}(C)_{\QQ}\oplus \QQ &   \text{ if }  g>2. \end{array}\right.$
\item \cite{ChoeHwang} $\CH_{1}(\cN_{\cL})_\QQ\simeq \CH_{0}(C)_{\QQ}$.
\end{enumerate}  
\end{cor}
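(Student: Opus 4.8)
The plan is to obtain (i)--(iii) directly from Corollary~\ref{cor:general formula Chow} by plugging in the relevant codimension $a$ and discarding every summand that vanishes by Lemma~\ref{lem chow vanish}; the only other inputs needed are $\CH^{0}(Y)_{\QQ}\simeq\QQ$ for $Y$ smooth projective and irreducible, $\dim\Sym^{i}(C)=i$, and $\CH^{1}(C)_{\QQ}=\CH_{0}(C)_{\QQ}$ since $C$ is a curve. Throughout we keep the standing hypothesis $g\geq 2$ of the preceding discussion.

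For (i), I would take $a=1$. Then the summands $\CH^{a-3g+3+2i}(\Sym^{i}(C))_{\QQ}=\CH^{4-3g+2i}(\Sym^{i}(C))_{\QQ}$ all vanish because $4-3g+2i<0$ for $0\leq i\leq g-2$, and among the $\CH^{1-i}(\Sym^{i}(C))_{\QQ}$ only $i=1$ survives (for $i=0$ this is $\CH^{1}$ of a point, hence $0$), giving $\CH^{0}(\Sym^{1}(C))_{\QQ}\simeq\QQ$ when $g\geq 3$; for $g=2$ that same $\QQ$ appears instead in the leading term $\CH^{2-g}(\Sym^{g-1}(C))_{\QQ}=\CH^{0}(\Sym^{1}(C))_{\QQ}$, which vanishes for $g>2$. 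Hence $\CH^{1}(\cN_{\cL})_{\QQ}\simeq\QQ$; the integral statement $\CH^{1}(\cN_{\cL})\simeq\ZZ$ is Ramanan's \cite{Ramanan} and is recorded only for consistency, as the rational formula does not capture integral structure.

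For (ii), I would take $a=2$. Now $\CH^{5-3g+2i}(\Sym^{i}(C))_{\QQ}$ still vanishes for all $0\leq i\leq g-2$ (the exponent is $\leq 1-g<0$), while in the first family the surviving indices are $i=1$, contributing $\CH^{1}(\Sym^{1}(C))_{\QQ}=\CH_{0}(C)_{\QQ}$ when $g\geq 3$, and $i=2$, contributing $\CH^{0}(\Sym^{2}(C))_{\QQ}\simeq\QQ$ when $g\geq 4$. The leading term $\CH^{3-g}(\Sym^{g-1}(C))_{\QQ}$ equals $\CH_{0}(C)_{\QQ}$ for $g=2$, equals $\QQ$ for $g=3$, and vanishes for $g>3$. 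Collecting terms gives $\CH_{0}(C)_{\QQ}$ for $g=2$ (from the leading term) and $\CH_{0}(C)_{\QQ}\oplus\QQ$ for $g>2$ (the extra $\QQ$ coming from the leading term if $g=3$ and from $i=2$ if $g\geq 4$), which is the claimed answer.

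For (iii), I would first rewrite $\CH_{1}(\cN_{\cL})_{\QQ}=\CH^{3g-4}(\cN_{\cL})_{\QQ}$ using $\dim\cN_{\cL}=3g-3$, then apply Corollary~\ref{cor:general formula Chow} with $a=3g-4$. The three types of summand are $\CH^{2g-3}(\Sym^{g-1}(C))_{\QQ}$, $\CH^{3g-4-i}(\Sym^{i}(C))_{\QQ}$ and $\CH^{2i-1}(\Sym^{i}(C))_{\QQ}$. The middle type is identically zero on $0\leq i\leq g-2$, since there $(3g-4-i)-i=3g-4-2i\geq g>0$, so the codimension exceeds $\dim\Sym^{i}(C)=i$; the last type is non-zero only for $i=1$, where it is $\CH^{1}(\Sym^{1}(C))_{\QQ}=\CH_{0}(C)_{\QQ}$; and the leading term vanishes for $g>2$ but for $g=2$ equals $\CH^{1}(\Sym^{1}(C))_{\QQ}=\CH_{0}(C)_{\QQ}$ (with the sum over $0\leq i\leq g-2$ then reducing to $i=0$, contributing nothing). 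So $\CH_{1}(\cN_{\cL})_{\QQ}\simeq\CH_{0}(C)_{\QQ}$ for every $g$, compatibly with (ii) in genus $2$. There is no genuine obstacle here: the argument is pure bookkeeping, the only point requiring care being that the index $i=g-1$ is concealed in the leading summand of Corollary~\ref{cor:general formula Chow}, so the small genera $g=2,3$ must be inspected separately to locate the surviving $\QQ$ factors; the non-explicitness of these isomorphisms is inherited from that of Corollary~\ref{cor:general formula Chow}.
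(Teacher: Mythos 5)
Your computation is correct and follows exactly the paper's route: the proof there is the one-line observation that all three items are special cases of Corollary~\ref{cor:general formula Chow} combined with the vanishing of Lemma~\ref{lem chow vanish}, and your bookkeeping of which summands survive for each $a$ and each genus matches. The one ingredient you omit is how the paper upgrades (i) from the rational isomorphism $\CH^{1}(\cN_{\cL})_{\QQ}\simeq\QQ$ to the integral statement: rather than simply deferring to the citation, it notes that $\CH^{1}(\cN_{\cL})$ is torsion-free because $\cN_{\cL}$ is a smooth projective Fano variety, so the rational computation does determine the integral group in that case.
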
  
\begin{proof}
These are special cases of Corollary \ref{cor:general formula Chow} and Lemma \ref{lem chow vanish}, with the additional remark that $\CH^1(\cN_\cL)$ is torsion free because $\cN_{\cL}$ is a smooth projective Fano variety.
\end{proof}  

In fact, by examining the flip sequence in the proof of Theorem \ref{thm motive N rank 2} and using Theorem \ref{thm Jiang Chow flip}, we can be more precise and compute some integral Chow groups of $\cN_{\cL}$; for example, one can deduce that $\CH_{1}(\cN_{\cL})_{\hom} \simeq \Jac(C) \simeq\CH^2(\cN_{\cL})_{\hom}$ (see \cite{BKN,ChoeHwang, LiLinPan}).

More interestingly, our formula enables us to deduce simple descriptions of Chow groups which were previously unknown. Let us give the first examples.

\begin{cor}
  We have
  \[
\CH_{2}(\cN_{\cL})_{\QQ} \simeq  \left\{ \begin{array}{ll} \QQ & \text{ if } g=2, \\ \QQ\oplus \CH_{0}(\Sym^{2}(C))_\QQ&   \text{ if }  g\geq 3 \end{array}\right.
\]
and
\[
\CH^{3}(\cN_{\cL})_{\QQ}\simeq \left\{ \begin{array}{ll} \QQ & \text{ if } g=2, \\ \QQ\oplus \Pic(\Jac(C))_{\QQ}&   \text{ if }  g=3 \\ \QQ^{\oplus 2}\oplus \Pic(\Jac(C))_{\QQ}& \text{ if }g \geq 4 \end{array}\right.
\]
\end{cor}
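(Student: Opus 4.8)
The plan is to read both Chow groups off the general formula of Corollary \ref{cor:general formula Chow} by specialising the codimension and discarding the summands that vanish for dimension reasons by Lemma \ref{lem chow vanish}, using $\dim\Sym^{i}(C)=i$ and $\dim\cN_{\cL}=3g-3$ (so that $\CH_{2}(\cN_{\cL})_{\QQ}=\CH^{3g-5}(\cN_{\cL})_{\QQ}$), together with the elementary facts $\CH^{0}(Y)_{\QQ}\simeq\QQ$ for $Y$ geometrically connected and $\CH^{2}(\Sym^{2}(C))_{\QQ}=\CH_{0}(\Sym^{2}(C))_{\QQ}$.

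For $\CH^{3}(\cN_{\cL})_{\QQ}$ I would set $a=3$. Then $\CH^{4-g}(\Sym^{g-1}(C))_{\QQ}$ contributes $\CH^{1}(\Sym^{2}(C))_{\QQ}$ if $g=3$, one copy of $\QQ$ if $g=4$, and nothing otherwise; in the sum $\bigoplus_{i=0}^{g-2}$, the summand $\CH^{3-i}(\Sym^{i}(C))_{\QQ}$ survives only at $i=2$ (yielding $\CH^{1}(\Sym^{2}(C))_{\QQ}$, present once $g\ge 4$) and at $i=3$ (yielding $\QQ$, present once $g\ge 5$), whereas $\CH^{6-3g+2i}(\Sym^{i}(C))_{\QQ}$ survives only in the boundary case $g=2,\, i=0$ (yielding $\QQ$). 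For $\CH_{2}(\cN_{\cL})_{\QQ}$ I would set $a=3g-5$: now $\CH^{2g-4}(\Sym^{g-1}(C))_{\QQ}$ is $\QQ$ if $g=2$, equals $\CH_{0}(\Sym^{2}(C))_{\QQ}$ if $g=3$, and vanishes otherwise; in the sum, $\CH^{3g-5-i}(\Sym^{i}(C))_{\QQ}$ vanishes for every $i$ (since $i\le g-2<\tfrac{3g-5}{2}$), while $\CH^{2i-2}(\Sym^{i}(C))_{\QQ}$ survives only at $i=1$ (yielding $\CH^{0}(C)_{\QQ}\simeq\QQ$, present once $g\ge 3$) and at $i=2$ (yielding $\CH_{0}(\Sym^{2}(C))_{\QQ}$, present once $g\ge 4$). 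Assembling these contributions according to the value of $g$ gives the two displayed formulas, except that the cases $g\ge 3$ are so far written in terms of $\CH^{1}(\Sym^{2}(C))_{\QQ}=\Pic(\Sym^{2}(C))_{\QQ}$.

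It remains to identify $\Pic(\Sym^{2}(C))_{\QQ}\simeq\QQ\oplus\Pic(\Jac(C))_{\QQ}$ for $g\ge 3$, which is the only non-formal input. I would argue motivically: from $\fh(\Sym^{2}(C))\simeq\Sym^{2}(\fh(C))$ in $\CHM(k,\QQ)$ (standard with rational coefficients), $\fh(C)\simeq\QQ(0)\oplus\fh^{1}(C)\oplus\QQ(1)$, $\fh^{1}(C)\simeq\fh^{1}(\Jac(C))$ and $\Sym^{2}(\fh^{1}(C))\simeq\fh^{2}(\Jac(C))$ (part of the standard motivic decomposition of $\fh(\Jac(C))$; compare the use of del Ba\~{n}o's symmetric-power formula in the proof of Theorem \ref{thm motive N rank 2}), one obtains
\[
\fh(\Sym^{2}(C))\simeq\QQ(0)\oplus\fh^{1}(\Jac(C))\oplus\fh^{2}(\Jac(C))\oplus\QQ(1)\oplus\fh^{1}(\Jac(C))(1)\oplus\QQ(2).
\]
Applying $\Hom(-,\QQ(1))$ and using $\Hom(\QQ(a),\QQ(b))=\CH^{b-a}(\Spec k)_{\QQ}$ and $\Hom(\fh^{1}(\Jac(C))(1),\QQ(1))=\Hom(\fh^{1}(\Jac(C)),\QQ(0))=0$, only the summands $\QQ(1)$, $\fh^{1}(\Jac(C))$ and $\fh^{2}(\Jac(C))$ contribute; and Beauville's decomposition of $\CH^{1}$ of an abelian variety \cite{Beauville_AV} identifies $\Hom(\fh^{1}(\Jac(C)),\QQ(1))\oplus\Hom(\fh^{2}(\Jac(C)),\QQ(1))$ with $\CH^{1}(\Jac(C))_{\QQ}=\Pic(\Jac(C))_{\QQ}$, which gives the claim. (Alternatively one may cite the classical isomorphism $\Pic(\Sym^{2}(C))\simeq\Pic(\Pic^{2}(C))\times\ZZ$ for $g\ge 3$, coming from the Abel--Jacobi embedding $\Sym^{2}(C)\hookrightarrow\Pic^{2}(C)$.) I expect the main obstacle to be not any single step but organising the genus case-distinctions cleanly and matching them correctly with this last identification.
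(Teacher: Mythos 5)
Your proposal is correct and follows exactly the paper's route: specialise the general formula of Corollary \ref{cor:general formula Chow} at $a=3$ and $a=3g-5$, kill the out-of-range summands via Lemma \ref{lem chow vanish}, and identify $\Pic(\Sym^{2}(C))_{\QQ}\simeq\QQ\oplus\Pic(\Jac(C))_{\QQ}$ using the decomposition $\fh(C)=\QQ(0)\oplus\fh^{1}(C)\oplus\QQ(1)$. The paper's proof is a one-line citation of these same three ingredients; your case-by-case bookkeeping and the motivic computation of $\Pic(\Sym^{2}(C))_{\QQ}$ simply supply the details it leaves implicit, and they check out.
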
  
\begin{proof}
  This follows from Corollary \ref{cor:general formula Chow} and Lemma \ref{lem chow vanish} together with the fact that
  \[
\Pic(\Sym^{2}(C))_{\QQ}\simeq \QQ\oplus \Pic(\Jac(C))_\QQ.
\]
which follows as $\fh(C)=\QQ(0)\oplus \fh^{1}(C)\oplus \QQ(1)$.
\end{proof}  

For each value of $a$, the descriptions of $\CH^a(\cN_{\cL})_\QQ$ and $\CH_a(\cN_{\cL})_\QQ$ vary in low genus but stabilise in higher genus to the following uniform formulas.

\begin{cor}
	\label{cor:StabilisationCH}
  Let $a\in \NN$. For $g\geq a+1$, we have
  \[
\CH^{a}(\cN_{\cL})_{\QQ} \simeq \bigoplus_{i=\left \lceil \frac{a}{2} \right\rceil}^{a}\CH^{a-i}(\Sym^{i}(C))_\QQ;
  \]
  \[
\CH_{a}(\cN_{\cL})_{\QQ} \simeq \bigoplus_{i=\left \lceil \frac{a}{2} \right\rceil}^{a}\CH_{a-i}(\Sym^{i}(C))_\QQ;
  \]
\end{cor}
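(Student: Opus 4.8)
The plan is to derive Corollary \ref{cor:StabilisationCH} purely formally from Corollary \ref{cor:general formula Chow} by a dimension/vanishing analysis, so no new geometry is needed. Recall from Corollary \ref{cor:general formula Chow} that, for every $a\in\NN$,
\[
\CH^{a}(\cN_{\cL})_\QQ\simeq \CH^{a+1-g}(\Sym^{g-1}(C))_\QQ\oplus\bigoplus_{i=0}^{g-2}\left(\CH^{a-i}(\Sym^{i}(C))_\QQ\oplus \CH^{a-3g+3+2i}(\Sym^{i}(C))_\QQ\right).
\]
First I would use Lemma \ref{lem chow vanish}, which says that $\CH^{j}(Y)_\QQ=0$ whenever $j<0$ or $j>\dim Y$, applied to $Y=\Sym^{i}(C)$ (so $\dim Y=i$). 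Under the hypothesis $g\geq a+1$, the first term vanishes since $a+1-g\leq 0$ but it is $0$ only when $a+1-g<0$, i.e. $a+1<g$; when $a+1=g$ it contributes $\CH^{0}(\Sym^{g-1}(C))_\QQ\simeq\QQ$. One checks this matches the claimed formula at the endpoint $g=a+1$, where the summation index $i$ runs $\lceil a/2\rceil\le i\le a=g-1$ and the $i=a$ term is $\CH^{0}(\Sym^{a}(C))_\QQ=\QQ$, absorbing the boundary term; for $g>a+1$ the first term is genuinely zero.

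Next I would analyse the two families of terms in the sum $\bigoplus_{i=0}^{g-2}$. The term $\CH^{a-i}(\Sym^{i}(C))_\QQ$ is nonzero only when $0\leq a-i\leq i$, i.e. $a/2\leq i\leq a$, equivalently $i\in\{\lceil a/2\rceil,\dots,a\}$; since $g\geq a+1$ we have $a\leq g-1$, and in fact $i\le a\le g-2$ exactly when $g\ge a+2$, with the case $g=a+1$ handled by the boundary term above — so after reconciling the endpoint the surviving terms are precisely $\bigoplus_{i=\lceil a/2\rceil}^{a}\CH^{a-i}(\Sym^{i}(C))_\QQ$. The term $\CH^{a-3g+3+2i}(\Sym^{i}(C))_\QQ$ is nonzero only when $0\leq a-3g+3+2i\leq i$; the left inequality forces $i\geq (3g-3-a)/2$, while $i\leq g-2$ forces $a-3g+3+2i\leq a-g-1<0$ under $g\geq a+1$ — hence every term in this second family vanishes. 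Combining these observations gives the stated formula for $\CH^{a}(\cN_{\cL})_\QQ$.

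Finally, for the statement about $\CH_{a}(\cN_{\cL})_\QQ$, I would use $\dim\cN_{\cL}=3g-3$ (for $g\geq 2$), so $\CH_{a}(\cN_{\cL})_\QQ=\CH^{3g-3-a}(\cN_{\cL})_\QQ$, apply the already-proven cohomological formula with $a$ replaced by $3g-3-a$, and then re-index the surviving sum using $\CH^{(3g-3-a)-i}(\Sym^{i}(C))_\QQ=\CH_{i-(3g-3-a)}(\cdots)$ — wait, more carefully: substitute $a':=3g-3-a$, obtain $\bigoplus_{i=\lceil a'/2\rceil}^{a'}\CH^{a'-i}(\Sym^i(C))_\QQ$, valid whenever $g\ge a'+1$, i.e. $a\ge 2g-2$; since we also want the formula in the stable range $g\ge a+1$ one instead argues directly from Corollary \ref{cor:general formula Chow} by the same vanishing bookkeeping on $\CH_{a}$, i.e. rewriting $\CH_{a}(Y)_\QQ=\CH^{\dim Y-a}(Y)_\QQ$ term by term and repeating the inequality analysis above. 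The main obstacle — really the only subtlety — is getting the boundary cases right: making sure the $\CH^{a+1-g}(\Sym^{g-1}(C))$ term and the endpoints $i=\lceil a/2\rceil$ and $i=a$ of the summation are correctly absorbed or discarded depending on whether $g=a+1$ or $g>a+1$, and confirming that the "dual" family of terms contributes nothing throughout the stable range. Everything else is a mechanical application of Lemma \ref{lem chow vanish}.
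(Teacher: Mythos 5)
Your proposal is correct and follows exactly the paper's (very terse) proof, which simply invokes Corollary \ref{cor:general formula Chow} and Lemma \ref{lem chow vanish}; your bookkeeping of the boundary case $g=a+1$ (where the $\CH^{a+1-g}(\Sym^{g-1}(C))$ term supplies the missing $i=a$ summand) and of the vanishing of the "dual" family is accurate. The self-correction in the homological case — arguing directly term by term via $\CH_{a}(Y)_\QQ=\CH^{\dim Y - a}(Y)_\QQ$ rather than substituting $a'=3g-3-a$ into the already-stabilised formula — lands on the right argument.
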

\begin{proof}
These follow from Corollary \ref{cor:general formula Chow} and Lemma \ref{lem chow vanish}.
\end{proof}  

\section{Motives of moduli spaces of parabolic vector bundles}
\label{sec:ParabolicBundles}
Moduli spaces of parabolic vector bundles were introduced by Mehta and Seshadri \cite{MS}, where one of the key differences with moduli of vector bundles is that in their GIT construction there is a choice of linearisations of the action giving rise to various notions of stability, encoded by a set of parabolic weights. Several authors have studied the geometry of the birational transformations between these moduli spaces for different weights \cite{BH,BY_rationality,Thaddeus_VGIT}. The Betti numbers and Poincar\'{e} polynomials of these moduli spaces over the complex numbers have been computed by Holla \cite{Holla} using a gauge theoretic approach \`{a} la Atiyah--Bott. For $C = \PP^1$ and rank $n =2$, the Poincar\'{e} polynomials have been studied using variation of parabolic stability \cite{Bauer}. We will use the explicit wall-crossing descriptions to give formulae for the Chow motives of these moduli spaces in rank $n = 2$.

\subsection{Moduli spaces of parabolic vector bundles}

Throughout this section, we fix a set $D= \{ p_1, \dots , p_N \}$ of distinct rational $k$-points on $C$, which we refer to as the parabolic points. We limit ourselves to the case of rational points, mostly for simplicity; it is likely that very similar formulas hold for closed points.

\subsubsection{(Quasi)-Parabolic vector bundles}

\begin{defn}
	\label{def:ParaBun}
A \emph{quasi-parabolic vector bundle} $E_*=(E,E_{i,j})$ on $(C,D)$ is vector bundle $E$ on $C$ with flags $E_{i,j}$ in the fibres at each point $p_i \in D$
\[ E_{p_i} = E_{i,1} \supsetneqq E_{i,2} \supsetneqq \cdots \supsetneqq E_{i,l_i} \supsetneqq E_{i,l_i +1}=0.\]
A \emph{parabolic vector bundle} is a quasi-parabolic vector bundle $E_*$ with weights $\alpha = (\alpha_{i,j})$ satisfying 
\[ 0 \leq \alpha_{i,1} < \dots < \alpha_{i,l_i} < 1 \: \quad \text{for each } \: 1 \leq i \leq N.\]
The discrete invariants of $E_*$ are given by the rank and degree of $E$ and for each $p_i \in D$, the \emph{length} $l_i:=l(E_{i,j})$ and \emph{flag type} $n_{i,j} := \dim E_{i,j}$ (or equivalently, the \emph{multiplicity} $m(E_*)=(m_{i,j})$ defined by $m_{i,j} := n_{i,j} - n_{i,j+1}$ for $1 \leq i \leq N, 1 \leq j \leq l_i$). We write these invariants as a tuple $\eta(E_*) = (\rk(E),\deg(E),m(E_*))$. The flags are \emph{full} if $m_{i,j} = 1$ for all $i,j$. 
\end{defn}

For us, we will be interested in varying the weights for quasi-parabolic bundles and we will later think of these weights as defining a notion of stability for quasi-parabolic vector bundles and the variation of weights gives different notions of stability and moduli spaces. 

\begin{rmk}\label{rmk filt sheaf descr}
One can equivalently think of a quasi-parabolic vector bundle $E_*=(E,E_{i,j})$ on $(C,D)$ as a vector bundle with 
a sheaf filtration (by locally free sheaves) for each $p_i \in D$ 
\[ E = E^i_1 \supsetneqq E_{2}^i \supsetneqq \cdots \supsetneqq E_{l_i}^i \supsetneqq E_{l_i +1}^i=E(-p_i) \]
where $E_j^i$ is the kernel of the sheaf homomorphism $E \twoheadrightarrow (E_{p_i}/E_{i,j}) \otimes \cO_{p_i}$. If $(E_*,\alpha)$ is a parabolic vector bundle, then the weights $\alpha$ determine, for each $p_i \in D$, a filtered sheaf $\cE^i_x$ indexed by $x \in \RR$ (\textit{cf.}\  \cite{Simpson_harmonic} and \cite[$\S$5]{BY_rationality}): for $\alpha_{i,j-1} <x \leq \alpha_{i,j}$ set $\cE^i_{x}:= E^i_j$ (where we define $\alpha_{i,0} = 0$ and $\alpha_{i,l_{i+1}}=1$), which defines $\cE^i_x$ for $x \in (0,1]$ and then set $\cE^i_{x+m} := \cE_x^i(-mp_i)$ for $m \in \ZZ$. 
\end{rmk}

\begin{defn}
Let $(E_*,\alpha)$ and $(F_*,\beta)$ be parabolic vector bundles on on $(C,D)$. A homomorphism $\phi : E \ra F$ is said to be parabolic (resp. strongly parabolic) if $\alpha_{i,j} > \beta_{i,k}$ (resp. $\alpha_{i,j} \geq \beta_{i,k}$) implies $\phi(E_{i,k}) \subset F_{i,k+1}$ for all $i,j,k$.
\end{defn}

If $E_*$ and $F_*$ have the same flag lengths and $\alpha = \beta$, then a homomorphism $\phi : E \ra F$ is parabolic if $\phi(E_{i,j}) \subset F_{i,j}$ (resp. strongly parabolic if $\phi(E_{i,j}) \subset F_{i,j+1}$) for all $i,j$.

There is a subsheaf of parabolic homomorphisms $\cP ar\cH om((E_*,\alpha),(F_*,\beta)) \subset \cH om(E,F)$ with torsion quotient supported on $D$. Consequently, the Euler characteristic of this sheaf is 
\[ \chi(\cP ar\cH om((E_*,\alpha),(F_*,\beta)) = \chi(\cH om(E,F)) - \sum_{p_i \in D} \left( \rk (E)  \rk(F) -P_i((E_*,\alpha),(F_*,\beta)) \right), \]
where $P_i((E_*,\alpha),(F_*,\beta)):=\dim \ParHom((E_{i,*},\alpha_{i,*}),(F_{i,*},\beta_{i,*}))$; see \cite[$\S$4]{Yokogawa} and \cite[Lemma 2.4]{BH}. As this Euler characteristic only depends on the underlying discrete invariants and weights, we write
\[ \chi_{\mathrm{par}}((\eta(E_*),\alpha)^\vee \otimes (\eta(F_*),\beta)) :=    \chi(\cP ar\cH om((E_*,\alpha),(F_*,\beta)). \]

\begin{rmk}\label{rmk euler char parhom}
If $E_*$ and $F_*$ have the same flag lengths and $\alpha = \beta$, we drop the weights from the notation and we have 
\[\chi_{\mathrm{par}}(\eta(E_*)^\vee \otimes \eta(F_*)) =  -n_Fd_E +n_Ed_F+ n_En_F(1-g)  - \sum_{i=1}^N \sum_{j > k}  m_{i,j}(E_*) m_{i,k}(F_*). \]
\end{rmk}

\subsubsection{Parabolic weights and stability}

For a fixed rank $n$, the space of weights for rank $n$ parabolic vector bundles on $(C,D)$ is
\[ \cA:= \{ \widetilde{\alpha}=(\widetilde{\alpha}_{i,j}) \in \RR^{Nn} : 0 \leq \widetilde{\alpha}_{i,1} \leq \widetilde{\alpha}_{i,2} \leq \cdots \leq \widetilde{\alpha}_{i,n} <1 \text{ for } 1 \leq i \leq N \}. \]
Note that in the space of weights $\cA$, we allow $\widetilde{\alpha}_{i,j} = \widetilde{\alpha}_{i,j+1}$.

\begin{defn}
For a weight $\widetilde{\alpha} \in \cA$ and $1 \leq i \leq N$, let $l_i(\widetilde{\alpha})$ denote the number of distinct weights in $\widetilde{\alpha}_{i,1} \leq \widetilde{\alpha}_{i,2} \leq \cdots \leq \widetilde{\alpha}_{i,n}$ and for $1 \leq j \leq l_i(\widetilde{\alpha})$, we let $m_{i,j}(\widetilde{\alpha})$ denote the multiplicites of these distinct weights in increasing order. We refer to $l(\widetilde{\alpha}) = (l_i(\widetilde{\alpha}))_i$ and $m(\widetilde{\alpha})=(m_{i,j}(\widetilde{\alpha}))_{i,j}$ as the \emph{length} and \emph{multiplicity} of $\widetilde{\alpha}$. There is a decomposition
\[ \cA = \bigsqcup_m \cA_{m} \]
by multiplicities and the ordering on multiplicities by successive refinement defines an ordering on the set of weights of fixed multiplicity: $\cA_m > \cA_{m'}$ if $\cA_{m'}$ is a proper face contained in the closure of $\cA_m$.
For $\widetilde{\alpha}$ of length $l$ and multiplicity $m$, the \emph{collapsed weight} $\alpha$ is obtained by deleting for each $i$ repeated instances of weights:
\[ \alpha:= ({\alpha}_{i,1} < {\alpha}_{i,2} < \dots < {\alpha}_{i,l_i})_{1 \leq i \leq N} \:\text{ where }{\alpha}_{i,j}:=\widetilde{\alpha}_{i, 1+\sum_{k < j} m_{i,k}}.\]
\end{defn}

When the rank $n$ and multiplicity $m$ are fixed, we can freely go back and forth between extended weights $\widetilde{\alpha}$ and collapsed weights $\alpha$ by deleting or inserting repeated weights.

\begin{defn}
Let $E_*$ be a quasi-parabolic vector bundle on $(C,D)$ of rank $n$ and with multiplicity $m = (m_{i,j})$. For $\widetilde{\alpha} \in \cA_m$, we define the $\alpha$-slope of $E_*$ by
\[ \mu_{\alpha}(E_*) = \frac{\deg_{\alpha}(E_*)}{\rk(E)}, \: \text{ where } \deg_{\alpha}(E_*) =  \deg(E) + \sum_{p_i \in D} \sum_{j=1}^{n}\widetilde{\alpha}_{i,j} = \deg(E) + \sum_{p_i \in D} \sum_{j=1}^{l_i}{\alpha}_{i,j} m_{i,j}.\]

For a subbundle $E' \subset E$, we can intersect $E'_{p_i}$ with the flags $E_{i,j}$ in $E_{p_i}$. If we don't delete repeated subspaces in this flag in $E'_{p_i}$, we obtain a length $l_i$ flag in $E'_{p_i}$ with \emph{multiplicity} defined (slightly unconventionally, but as in \cite{BY_rationality}) by setting for $1 \leq j \leq l_i$
\[ m'_{i,j}:= \dim (E'_{p_i} \cap E_{i,j}) - \dim (E'_{p_i} \cap E_{i,j+1}) \]
which may now also be zero. Then the $\alpha$-slope of this subbundle is define by
\[ \mu_{\alpha}(E'_*) := \frac{\deg_{\alpha}(E'_*)}{\rk(E')}, \: \text{ where } \deg_{\alpha}(E'_*) =  \deg(E') + \sum_{p_i \in D} \sum_{j=1}^{l_i}{\alpha}_{i,j} m'_{i,j}.\]
\end{defn}

If instead we delete repeated subspaces in the flag in $E'_{p_i}$, we obtain a flag of length $l_i' \leq l_i$
\[ E'_{p_i} = E'_{i,1} \supsetneqq E'_{i,2} \supsetneqq \cdots \supsetneqq E'_{i,l'_i} \supsetneqq E'_{i,l'_i +1}=0,\]
giving a quasi-parabolic structure\footnote{Usually the multiplicity of $E'_*$ is defined as the differences $\dim E'_{i,k} - \dim E'_{i,k+1} >0$ in the collapsed flag.} $E'_*$. We endow this with a subset of the weights 
\begin{equation}\label{weights for subbundle}
 (\alpha_{i,k}' )= ({\alpha}_{i,j_1} < {\alpha}_{i,j_2} < \dots < {\alpha}_{i,j_{l'_i}})_{1\leq i \leq N}, \text{ where }  j_k:= \max\{j : E_{i,k}' \subseteq E_{i,j} \}.
\end{equation}
Then $ \deg_{\alpha}(E'_*) = \deg(E') + \sum_{p_i \in D} \sum_{k=1}^{l'_i}{\alpha}_{i,j_k} (\dim E'_{i,k} - \dim E'_{i,k+1})$. 

\begin{rmk}\label{rmk compl inv}
Given a short exact sequence $E' \hookrightarrow E \twoheadrightarrow E''$ with quasi-parabolic structure $E_*$ on $E$, one can similarly give $E''$ a quasi-parabolic structure and define its multiplicity $m''$ analogously. The advantage of our unconventional definition of multiplicities $m'$ and $m''$ of sub- and quotient bundles is that then $m = m' + m''$. If $E$ has weights $\alpha$, then we also have subsets $\alpha'$ and $\alpha''$ of the weights as described above which satisfy $\widetilde{\alpha} = \widetilde{\alpha}' + \widetilde{\alpha}''$ for the associated extended weights. Moreover $(E_*',\alpha') \hookrightarrow (E_*,\alpha)\twoheadrightarrow (E_*'',\alpha'')$ are parabolic homomorphisms.
\end{rmk}

\begin{defn}
	\label{def:StabilityParaBun}
A quasi-parabolic bundle $E_*$ with $m(E_*) = m(\alpha)$ is \emph{$\alpha$-semistable} (resp. \emph{$\alpha$-stable}) if for all subbundles $E' \subset E$, we have $\mu_\alpha(E'_*) \leq \mu_\alpha(E_*)$ (resp. $\mu_\alpha(E'_*) < \mu_\alpha(E_*)$).
\end{defn}

\begin{rmk}\label{rmk shifting weights}
We note that the notion of stability is invariant under the shift
\[ (\alpha_{i,j}) \mapsto (\alpha_{i,j} + C_i) \]
for constants $C_i$ for $1 \leq i \leq N$. Hence, we may assume $\alpha_{i,1} = 0$ without loss of generality.
\end{rmk}

Fix discrete invariants $\eta = (n,d,m)$. For each $\alpha \in \cA_m$, there is a moduli space $\cN^\alpha_{C,D}(\eta)$ of $\alpha$-semistable parabolic bundles with invariants $\eta$, which is a normal projective variety \cite{MS}. If it is non-empty, by the deformation theory of parabolic bundles \cite[Theorem 5.1]{Yokogawa}, it has dimension
\[ \dim \cN^\alpha_{C,D}(\eta) = \dim \cN_C(n,d) + \sum_{i=1}^N \dim \cF(m_i) = n^2(g-1) + 1 +  \sum_{i=1}^N \sum_{j >k} m_{i,j}m_{i,k} \]
where $\cF(m_i)$ is the variety of flags of type $n = \sum_{j=1}^{l_i} m_{i,j} > \sum_{j=2}^{l_i} m_{i,j} > \cdots > m_{i,l_i} > 0$. The moduli space of $\alpha$-stable parabolic bundles $\cN^{\alpha-s}_{C,D}(\eta)$ is a smooth open subvariety of $\cN^\alpha_{C,D}(\eta)$.

\subsubsection{Geometric description of variation of stability}\label{sec geom var stab par}

Fix the discrete invariants $\eta =(n,d,m)$ given by the rank $n$, degree $d$ and multiplicity $m$. For simplicity, we write $\cN^\alpha:=\cN^\alpha_{C,D}(\eta)$ and similarly $\cN^{\alpha-s}$ for the open subset of $\alpha$-stable parabolic bundles.

We can divide the space of weights $\cA$ into walls and chambers such that stability is constant on the chambers and changes over the walls. More precisely, on the walls there are strictly semistable bundles, i.e., we have $\mu_\alpha(E_*') = \mu_\alpha(E_*)$ for a subbundle $E' \subset E$. To describe the walls it therefore suffices to consider the possible discrete invariants $\eta'$ of subbundles.


\begin{defn}\label{defn par wall}
For $\eta':=(n',d', m')$ such that $0 < n' < n$, $d' \in \ZZ$ and  $m'=(m'_{i,j})$ are non-negative integers with $m_{i,j}' \leq m_{i,j}$ and $\sum_{j=1}^{l_i} m_{i,j}' = n'$, we define the corresponding \emph{wall} 
\[ W_{m,\eta'} = \left\{ \alpha \in \cA_m : \mu_\alpha(\eta') = \mu_\alpha(\eta) \right\} = \cA_{m} \cap W_{\eta'}\]
where for $\alpha \in \cA_m$:
\[ \mu_\alpha(\eta') = \mu_\alpha(\eta) \iff 
\frac{d' + \sum_{i=1}^N \sum_{j=1}^{l_i} {\alpha}_{i,j} m_{i,j}'}{n'} = \frac{d + \sum_{i=1}^N \sum_{j=1}^{l_i} {\alpha}_{i,j} m_{i,j}}{n}.\]
The complement of $\RR^{Nn} \setminus W_{\eta'}$ of each wall is two half-spaces $H_{\eta'}^{\pm}$ such that for $\alpha \in H_{\eta'}^{\pm}$ we have $\pm (\mu_\alpha(\eta') -\mu_\alpha(\eta)) >0$. The connected components of $\cA_m \setminus \cup_{\eta} W_{m,\eta}$ are called \emph{chambers} and we refer to weights $\alpha \in \cA_m \setminus \cup_{\eta} W_{m,\eta}$ as being \emph{generic}.
\end{defn}

Note that there are only finitely many walls in $\cA_m$. The invariants $\eta':=(n',d', m')$ have complementary invariants $\eta''=(n'',d'',m'')$ satisfying $n'+n'' = n$, $d' + d'' =d$, $m'+ m'' = m$ and determine the same wall\footnote{If $\eta'=\eta(E_*')$ for a subbundle $E_*' \subset E_*$, then $\eta''$ is the discrete invariants of the quotient bundle $E''_*$.}.  We say a wall $W_{m,\eta'}$ is \emph{good} if there are no discrete invariants other than $\eta'$ and $\eta''$ which define this wall. All walls are good, if we take multiplicities corresponding to full flags (see \cite[(2.4)]{Thaddeus_var_PHiggs}).

By \cite[Proposition 3.2]{BY_rationality}, $\cA_m$ contains a generic weight if and only if the degree $d$ and multiplicities $m_{i,j}$ have greatest common divisor $1$. In this case, for any generic weight, (semi)stability coincides with stability and corresponding the moduli space of stable parabolic vector bundles is a fine moduli space and a smooth projective variety.

The geometric description of the birational tranformation between two generic weights separated by a good wall is described by Boden--Hu \cite{BH}, Boden--Yokogawa \cite{BY_rationality} and Thaddeus \cite{Thaddeus_VGIT}. These papers assume that $k$ is algebraically closed. However, when crossing a good wall, their proofs show that, for a parabolic bundle in the exceptional locus, there is a unique subbundle which does not violate stability on one side of the wall but does on the other side. A descent argument then implies that the description of the wall-crossing holds over a general field $k$.

\begin{thm}[Boden--Hu, Boden--Yokogawa and Thaddeus]\label{thm geom flips par}
Consider a line segment which joins two adjacent chambers and passes through a weight $\alpha$ on a single good wall $W_{m,\eta'}$. Let $\alpha(\pm) \in H_{\eta'}^{\pm}$ be weights in adjacent chambers which satisfy
\[\pm \left(\mu_{\alpha(\pm)}(\eta') - \mu_{\alpha(\pm)}(\eta) \right) >0\]
Then there is a standard flip
 \[   \xymatrix{ 
   \cN^{\alpha(-)} \ar[rd] \ar@{-->}[rr]^{(n_-,n_+)}_{\text{centre }
    \cN^{\alpha-sss} } &  & \cN^{\alpha(+)} \ar[ld] \\ & \cN^{\alpha}
    }
  \]
with centre isomorphic to a product of smooth projective moduli spaces 
\[ \cN^{\alpha-sss} := \cN^{\alpha} \setminus \cN^{\alpha-s} \cong \cN^{\alpha'}_{C,D}(\eta') \times \cN^{\alpha''}_{C,D}(\eta'')\]
with smaller invariants $\eta'$ and complementary invariants $\eta''$ (the weights $\alpha'$ are defined at \eqref{weights for subbundle} and similarly for $\alpha''$). Over the centre the fibres are projective spaces of dimensions
\begin{align*}
n_-:=n_-(\eta') = & -\chi_{\mathrm{par}}((\eta'',\alpha'')^\vee \otimes (\eta',\alpha'))-1\\
n_+:= n_+(\eta')= &  -\chi_{\mathrm{par}}((\eta',\alpha')^\vee \otimes ( \eta'',\alpha''))-1.
\end{align*}
\end{thm}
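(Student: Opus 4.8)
The plan is to deduce the theorem from the original sources \cite{BH, BY_rationality, Thaddeus_VGIT}, since these references already establish the geometry over an algebraically closed field; the only additional work is to verify that the construction descends to an arbitrary field $k$. First I would recall the local picture on a neighbourhood of the wall. Choose a line segment through the generic weights $\alpha(\pm)$ and the wall weight $\alpha\in W_{m,\eta'}$. The key structural fact, due to Boden--Hu and Thaddeus, is that a parabolic bundle $E_*$ is $\alpha(-)$-semistable but not $\alpha(+)$-semistable (or vice versa) precisely when $E_*$ sits in a unique non-split extension
\[ 0 \to (E'_*, \alpha') \to (E_*, \alpha) \to (E''_*, \alpha'') \to 0 \]
with $\mu_\alpha(E'_*)=\mu_\alpha(E_*)$, where $(E'_*,\alpha')$ has invariants $\eta'$ (and $(E''_*,\alpha'')$ the complementary invariants $\eta''$); uniqueness uses that the wall is good, so no other invariants realise equality of slopes on this wall. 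Both $E'_*$ and $E''_*$ are themselves parabolic-stable for the induced weights, hence move in the moduli spaces $\cN^{\alpha'}_{C,D}(\eta')$ and $\cN^{\alpha''}_{C,D}(\eta'')$, which accounts for the asserted description of the semistable-but-not-stable locus $\cN^{\alpha-sss}$ as a product.

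Next I would identify the exceptional loci and the projective bundle structure. On the $\alpha(-)$-side the locus of bundles that become unstable is a projective bundle over $\cN^{\alpha-sss}$ with fibre $\PP(\Ext^1_{\mathrm{par}}((E''_*,\alpha''),(E'_*,\alpha')))$, parametrising the extension classes; on the $\alpha(+)$-side it is the projective bundle with the roles of $E'_*$ and $E''_*$ swapped. The dimensions of these fibres are governed by the parabolic Euler characteristic: since $\Hom_{\mathrm{par}}$ and the top $\Ext$ vanish for non-isomorphic stable objects of the same slope, one gets $\dim\Ext^1_{\mathrm{par}}((E''_*,\alpha''),(E'_*,\alpha')) = -\chi_{\mathrm{par}}((\eta'',\alpha'')^\vee\otimes(\eta',\alpha'))$, and likewise on the other side, yielding $n_\mp = -\chi_{\mathrm{par}}(\cdots)-1$ as stated. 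The compatibility of the normal bundles with the projective bundle structure — i.e. that the blow-ups of $\cN^{\alpha(\pm)}$ along these centres agree and the contraction in the other direction recovers the other moduli space — is exactly the content of the cited theorems, giving the standard flip of type $(n_-,n_+)$ with centre $\cN^{\alpha-sss}$.

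Finally I would address descent to a general field $k$. The references work over $\bar k$, so I would argue as follows: the moduli spaces $\cN^{\alpha(\pm)}$, $\cN^\alpha$ and $\cN^{\alpha-sss}$ are all defined over $k$ (by the GIT construction of \cite{MS} and the fact that the invariants $\eta',\eta''$ and the points of $D$ are $k$-rational), and the birational map $\cN^{\alpha(-)}\dashrightarrow\cN^{\alpha(+)}$ over $\cN^\alpha$ is defined over $k$ since it is an isomorphism on the stable loci $\cN^{\alpha-s}$, which are $k$-open and dense. For the centre: by uniqueness of the destabilising subbundle $E'\subset E$ over $\bar k$, the Galois group permutes nothing, so $E'$ descends to $k$ whenever $E$ does; hence the product decomposition of $\cN^{\alpha-sss}$ and the projective-bundle structures of the exceptional loci are all defined over $k$. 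The dimensions $n_\pm$ are purely numerical (computed from $\chi_{\mathrm{par}}$) and hence insensitive to the base field. The main obstacle I anticipate is making the descent of the ``standard flip'' structure fully rigorous: one must check not merely that the two moduli spaces and the birational map are defined over $k$, but that the normal bundle identification $N_{Z/X}\simeq\cO_\varpi(-1)\otimes\varpi^*(V')$ of Definition \ref{def:Flip} holds over $k$, which requires knowing that the relevant $\Ext$-sheaves and their projectivisations are constructed over $k$ — this follows from base-change for the (parabolic) Ext-sheaves of the universal families, but should be spelled out.
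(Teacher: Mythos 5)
Your proposal follows essentially the same route as the paper: the paper itself does not reprove the wall-crossing geometry but cites \cite[Theorem 4.1]{BY_rationality} (and \cite{BH}, \cite{Thaddeus_VGIT}) for the flip structure over $\bar{k}$, computes the fibre dimensions $n_{\pm}$ from the parabolic Euler characteristic exactly as you do, and handles the passage to a general field by the same descent argument based on the uniqueness of the destabilising subbundle on a good wall. Your added caveat about verifying the normal-bundle identification over $k$ via base change for the parabolic Ext-sheaves is a reasonable refinement of the paper's brief descent remark, not a divergence from it.
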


The closest formulation in the literature to the above statement is \cite[Theorem 4.1]{BY_rationality}, whose proof computes the flip type. Our (slightly unconventional) approach to multiplicities of sub- and quotient bundles allows us to easily compute the dimensions of the fibres using Remark \ref{rmk euler char parhom}:
\begin{align}\label{equation for n plus}
\begin{split}
n_-(\eta')= & n'd'' - n''d' + n'n''(g-1) -1 + \sum_{i=1}^N\sum_{j>k} m_{i,j}'' m_{i,k}' \\
n_+(\eta')= &  n''d' - n'd'' + n''n'(g-1) -1+ \sum_{i=1}^N\sum_{j>k} m_{i,j}' m_{i,k}'',
\end{split}
\end{align}
which satisfy $n_- + n_+ + 1 = \codim(\cN^{\alpha-sss},\cN^{\alpha})$ as stated in \cite[Theorem 4.1]{BY_rationality}.

\subsubsection{Geometric description of flag degeneration}

For weights with different multiplicities that are not separated by walls $W_{\eta'}$, one can consider the flag degeneration giving by forgetting part of the flags. The geometric description of this is due to Boden--Hu \cite{BH} and Boden--Yokogawa \cite{BY_rationality}.

\begin{thm} \cite[Theorem 4.2]{BY_rationality} \label{thm geom flag degen}
Let $\cA_{m'} > \cA_m$ and suppose $\alpha \in \cA_{m'}, \beta \in \cA_m$ are generic weights which are not separated by any walls $W_{\eta'} \subset \cA$. Then there is a forgetful map
\[ \cN^\alpha \ra \cN^\beta \]
which is an iterated Zariski locally trivial flag bundle.
\end{thm}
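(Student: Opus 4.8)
The statement is essentially \cite[Theorem 4.2]{BY_rationality}, and the plan is to recall the construction and explain why it remains valid over an arbitrary field $k$. Since $\cA_{m'} > \cA_m$, the multiplicity $m'$ refines $m$, so that at each parabolic point a flag of multiplicity $m'$ contains as a subflag a canonical flag of multiplicity $m$; deleting the extra steps is the operation underlying the map in question. First I would construct the forgetful morphism at the level of families: given a family of $\alpha$-semistable parabolic bundles of multiplicity $m'$ over a $k$-scheme $T$, coarsen all flags to multiplicity $m$ and check that the result is a family of $\beta$-semistable parabolic bundles of multiplicity $m$. Applying this to the universal family on $\cN^\alpha$ (a fine moduli space, since $\alpha$ is generic) and invoking the universal property of $\cN^\beta$ (also fine, since $\beta$ is generic) then produces the morphism $\cN^\alpha \to \cN^\beta$.

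The crux is the comparison of the two stability conditions. For a subbundle $E' \subset E$ of a quasi-parabolic bundle $E_*$ of multiplicity $m'$, equipped with its induced quasi-parabolic structure, consider the function $\gamma \mapsto \rk(E)\deg_\gamma(E'_*) - \rk(E')\deg_\gamma(E_*)$ on the space of weights. With the (unconventional) convention for multiplicities of sub- and quotient bundles used in Section \ref{sec:ParabolicBundles}, this function is affine in $\gamma$ and extends continuously over the whole closed segment $[\alpha,\beta]$ in $\overline{\cA_{m'}}$, specialising at $\beta$ to the analogous expression for the coarsened structure. Its vanishing locus is the hyperplane $W_{\eta'}$ with $\eta' = \eta(E'_*)$ (or it has constant sign), and by hypothesis $[\alpha,\beta]$ meets no such hyperplane while $\alpha,\beta$ are generic; hence the function has the same nonzero sign at both endpoints. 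It follows that $\alpha$-(semi)stability of a parabolic bundle of multiplicity $m'$ is equivalent to $\beta$-(semi)stability of its coarsening, which legitimises the previous paragraph as well as the converse construction below.

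Finally I would identify $\cN^\alpha$ with an explicit iterated flag bundle over $\cN^\beta$. Let $\cE$ be the universal bundle on $C \times \cN^\beta$ with its universal coarse flags at the points of $D$; for each parabolic point and each coarse flag step, the associated graded piece is a vector bundle on $\cN^\beta$, and a refinement of that flag from multiplicity $m$ to multiplicity $m'$ is precisely a partial flag of the prescribed type in each of these graded bundles. The corresponding relative partial flag variety is Zariski-locally trivial (it is a flag bundle associated to a vector bundle), so performing these refinements successively over all parabolic points and flag steps yields an iterated Zariski-locally trivial flag bundle $\pi \colon \cY \to \cN^\beta$ carrying a tautological refined parabolic bundle. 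By the stability comparison this tautological family is $\alpha$-semistable, hence defines a morphism $\cY \to \cN^\alpha$; one checks via the universal properties of the two fine moduli spaces that it is inverse to $\cN^\alpha \to \cN^\beta$ over $\cN^\beta$, giving $\cN^\alpha \cong \cY$.

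The main obstacle I anticipate is the stability comparison of the second paragraph — that $\alpha$-semistability upstairs is detected by $\beta$-semistability of the coarsening — since this is exactly what forces the precise hypothesis that $\alpha,\beta$ be generic and not separated by a wall $W_{\eta'}$; the construction of the flag bundle and the matching of universal properties are then formal. As in Theorem \ref{thm geom flips par}, the only use of $k = \bar k$ in \cite{BH,BY_rationality} is in the analysis of the relevant sub-objects, and since the subbundle realising an (in)stability is canonical it is automatically defined over $k$, so a descent argument gives the result over an arbitrary field.
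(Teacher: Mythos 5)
The paper offers no proof of this statement at all — it is imported verbatim as a citation of \cite[Theorem 4.2]{BY\_rationality}, with only Remark \ref{rmk flag degen grass} sketching the factorisation into Grassmannian bundles. Your proposal is a correct reconstruction of that cited argument (coarsening of flags in families, the affine-in-$\gamma$ stability comparison along the segment $[\alpha,\beta]$ avoiding all walls, and the identification of the fibres with relative flag varieties of the graded pieces of the universal flags), and it matches the route the paper implicitly relies on, including the observation that the descent to a general field $k$ is harmless because the coarsening and the numerical stability comparison are canonical.
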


\begin{rmk}\label{rmk flag degen grass}
In fact, one can factor this forgetful map into a tower Grassmannian bundles as follows. Consider a chain $\cA_{m'} =\cA_{m(0)} > \cA_{m(1)}> \dots  > \cA_{m(t)} =\cA_m$ such that for each $0 \leq k <t$ there exists a unique $i_k$ such that the length of $l_{i_k}$ of $m(k)$ and $m(k+1)$ differ by $1$. Take generic  weights $\alpha(k) \in \cA_{m(k)}$ not separated by walls with $\alpha(0)=\alpha$ and $\alpha(t) = \beta$, then there is a unique $j_k$ such that the morphism $\cN^{\alpha(k)} \ra \cN^{\alpha(k+1)}$ is given by forgetting the subspace $E_{i_k,j_k}$ in the flag of $E_{p_{i_k}}$. This map is a Grassmannian bundle with fibre $\text{Gr}(m(k)_{i_k,j_k},m(k)_{i_k,j_k} + m(k)_{i_k,j_k-1})$. Hence, we obtain a factorisation into a tower of Grassmannian bundles
\[ \cN^\alpha = \cN^{\alpha(0)} \ra \cN^{\alpha(1)} \ra \cdots \ra \cN^{\alpha(t)}= \cN^\beta. \]
\end{rmk}

Similarly using flag degenerations, one can relate moduli spaces of parabolic vector bundles to moduli spaces of vector bundles (viewed as parabolic vector bundles with trivial flags) provided one takes sufficiently small weights; see \cite[Proposition 5.3]{BY_rationality}. 

\subsubsection{Hecke modifications of parabolic vector bundles}\label{sec Hecke mod}

One can compare moduli spaces of vector bundles of rank $n$ for different values of $d$ using Hecke modifications of parabolic vector bundles. This useful observation was noted at the end of \cite{MS} in rank $n = 2$ and then expanded upon in \cite[$\S$5]{BY_rationality}. Let us introduce Hecke modifications as isomorphisms between moduli spaces of parabolic vector bundles for the same rank but different degrees, multiplicities and weights. The name Hecke modification reflects the fact that on the underlying vector bundle, we perform a Hecke modification at one of the parabolic points $p_i \in D$ using part of the flag at that point.

\begin{defn}
Fix invariants $\eta = (n,d,m)$ and a weight $\alpha \in \cA_m$. For  $p_{i} \in D$ and $1 \leq j < l_i$ and $\alpha_{i,j} <\beta \leq \alpha_{i,j+1}$, we define new invariants $\cH_{i,j}(\eta)$ and weights $\cH_{i,j,\beta}(\alpha)$ by
\begin{enumerate}
\item $\cH_{i,j}(\eta)=(n,d - \sum_{k=1}^jm_{i,k},\hat{m})$ with cyclically permuted multiplicities
\[(\hat{m}_{i,1},\dots \hat{m}_{i,l_i})=(m_{i,j+1},\dots, m_{i,l_{i}},m_{i,1},\dots, m_{i,j})\]
and $\hat{m}_{i',j} = m_{i',j}$ for all $i' \neq i$ and $1 \leq j \leq l_{i'}$,
\item $\hat{\alpha}:=\cH_{i,j,\beta}(\alpha)$ is given by 
 \[ (\hat{\alpha}_{i,1},\dots \hat{\alpha}_{i,l_i})=(\alpha_{i,j+1}-\beta,\alpha_{i,j+2}-\beta,\dots,\alpha_{i,l_i} - \beta, 1+ \alpha_{i,1} - \beta, \dots, 1 + \alpha_{i,j} - \beta)\]
 and $\hat{\alpha}_{i',j} = \alpha_{i',j}$ for all $i' \neq i$ and $1 \leq j \leq l_{i'}$.
\end{enumerate}
Then the \emph{Hecke modification} at $p_i$ with respect to $j$ and $\alpha_{i,j} <\beta \leq \alpha_{i,j+1}$ is the isomorphism
\[ \cH_{i,j,\beta} : \cN^\alpha(\eta) \ra \cN^{\cH_{i,j,\beta}(\alpha)}(\cH_{i,j}(\eta)) \]
sending $E_*$ to $E'_*$ with $E' := \ker(E \twoheadrightarrow E_{p_i}/E_{i,j+1} \otimes \cO_{p_i})$ which inherits a quasi-parabolic structure from $E$ with multiplicities as specified above (see \cite[\S 5]{BY_rationality}).
\end{defn}

Hecke modifications are isomorphisms with inverses given by Hecke modifications. Moreover, Hecke modifications at different points commute in the obvious sense. 

The Hecke modification $\cH_{i,j,\beta}$ is most simply understood in terms of the $D$-tuple of $\RR$-indexed filtered sheaves $\cE^{i'}_x$ associated to the parabolic vector bundle $(E_*,\alpha)$ (see Remark \ref{rmk filt sheaf descr}) as performing a shift by $\beta$ in the $\RR$-indexed filtration at the parabolic point $p_i$ (see \cite[$\S$5]{BY_rationality}):
\[\cH_{i,j,\beta}(\cE^{i'}_x) = \hat{\cE}^{i'}_x \quad \text{where } \: \hat{\cE}^{i'}_x := \left\{ \begin{array}{ll} \cE^i_{x + \beta} & \text{if } i' = i \\ \cE^{i'}_x & \text{if } i' \neq i\end{array} \right. \]

\begin{ex}\label{ex Hecke mod rank 2}
Let us consider the case for $n = 2$. For a parabolic point $p_i$ at which the multiplicity $m$ specifies a full flag (i.e. $l_i = 2$), we get a Hecke modification for $j=1$. This decreases the degree $d$ by $1$ and as $(m_{i,1},m_{i,2}) = (1,1)$ permuting these multiplicities does not change $m$, thus $\cH_{i,1}(\eta) = (n,d-1,m)$. For $\alpha_{i,1} <\beta \leq  \alpha_{i,2}$, the new weight $\hat{\alpha}=\cH_{i,1,\beta}(\alpha)$ is given by $(\hat{\alpha}_{i,1},\hat{\alpha}_{i,2}) = (\alpha_{i,2} - \beta, 1 + \alpha_{i,1} - \beta)$ and is unchanged for $i'\neq i$. When we take the maximal value $\beta = \alpha_{i,2}$, we write the Hecke modification $\cH_{i,1,\alpha_{i,2}}$ simply as
\[ \cH_{p_i} : \cN^\alpha(2,d,m) \ra \cN^{\alpha(p_i)}(2,d-1,m) \]
with $\alpha(p_i)_{i,j} = (0, 1 + \alpha_{i,1} - \alpha_{i,2})$ and $\alpha(p_i)_{i',j} = \alpha_{i',j}$ for all $i' \neq i$. More generally, given a subset $D'\subset D$ with $l_i = 2$ for all $p_i \in D'$, we let
\[ \cH_{D'} : \cN^\alpha(2,d,m) \ra \cN^{\alpha(D')}(2,d-|D'|,m)\]
denote the composition (in any order) of the Hecke modifications $\cH_{p_i}$ for $p_i \in D'$.
\end{ex}

For degrees $d$ and $d'$ which are coprime to $n$, one can relate the vector bundle moduli spaces $\cN(n,d)$ and $\cN(n,d')$ using moduli spaces of parabolic vector bundles, Hecke modifications and a sequence of wall-crossing flips, together with degeneration of flag structures (see \cite{MS,BY_rationality}).

\subsection{Motivic consequences}

\subsubsection{Motivic variation of stability for moduli of parabolic vector bundles}

By combining Theorem \ref{thm geom flips par} and Theorem \ref{thm Jiang Chow flip}, we obtain the following result describing the Chow motives of moduli spaces of parabolic vector bundles for different weights.

\begin{cor}\label{cor motivic par WC}
For a line segment which joins two adjacent chambers and passes through a weight $\alpha$ on a single good wall $W_{m,\eta'}$, let $\alpha(\pm) \in \cH_{\eta'}^{\pm}$ be weights in adjacent chambers satisfying
\[\pm \left(\mu_{\alpha(\pm)}(\eta') - \mu_{\alpha(\pm)}(\eta) \right) >0.\]
Then there is an explicit isomorphism of integral Chow motives
\begin{equation}
	\label{eqn:WallcrossingPara}
	\fh(\cN^{\alpha(+)}) \oplus \bigoplus_{n_-(\eta) <j \leq n_+(\eta)} \fh(\cN^{\alpha-sss})(j) \simeq \fh(\cN^{\alpha(-)}) \oplus \bigoplus_{n_+(\eta) < j \leq n_-(\eta)} \fh(\cN^{\alpha-sss})(j),
\end{equation}
where $n_{\pm}(\eta')$ are specified in \eqref{equation for n plus}.
\end{cor}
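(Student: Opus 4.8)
The plan is to read the statement off directly from Theorem~\ref{thm geom flips par} and Theorem~\ref{thm Jiang Chow flip}. By Theorem~\ref{thm geom flips par}, crossing the good wall $W_{m,\eta'}$ along the given segment realises the birational map $\cN^{\alpha(-)}\dashrightarrow\cN^{\alpha(+)}$ as a standard flip of type $(n_-(\eta'),n_+(\eta'))$ with centre the smooth projective variety $S:=\cN^{\alpha-sss}\cong \cN^{\alpha'}_{C,D}(\eta')\times\cN^{\alpha''}_{C,D}(\eta'')$; genericity of $\alpha(\pm)$ ensures $\cN^{\alpha(\pm)}$ are smooth and projective, and the contractions $\cN^{\alpha(\pm)}\to\cN^{\alpha}$ supply the small extremal contraction required in Definition~\ref{def:Flip}. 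Hence all hypotheses of Theorem~\ref{thm Jiang Chow flip} are in place.

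The only choice to make is which of $\cN^{\alpha(-)},\cN^{\alpha(+)}$ is the source $X$ in Theorem~\ref{thm Jiang Chow flip}, and this is dictated by the $K$-partial order (the theorem requires the flip type $(m,l)$ to satisfy $m\geq l$). If $n_-(\eta')\geq n_+(\eta')$, I would take $X=\cN^{\alpha(-)}$, $X'=\cN^{\alpha(+)}$, $(m,l)=(n_-(\eta'),n_+(\eta'))$; the theorem then yields the explicit isomorphism $\fh(\cN^{\alpha(+)})\oplus\bigoplus_{n_+(\eta')<j\leq n_-(\eta')}\fh(S)(j)\simeq\fh(\cN^{\alpha(-)})$, and since the index set $\{j: n_-(\eta')<j\leq n_+(\eta')\}$ is empty the corresponding sum on the left of \eqref{eqn:WallcrossingPara} vanishes, so this is precisely \eqref{eqn:WallcrossingPara}. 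If instead $n_+(\eta')\geq n_-(\eta')$, I would apply the theorem to the inverse flip $\cN^{\alpha(+)}\dashrightarrow\cN^{\alpha(-)}$, which has type $(n_+(\eta'),n_-(\eta'))$; now the sum indexed by $n_+(\eta')<j\leq n_-(\eta')$ on the right of \eqref{eqn:WallcrossingPara} vanishes and we again recover \eqref{eqn:WallcrossingPara}. In the equality case $n_-(\eta')=n_+(\eta')$ the flip is a standard flop, both sums vanish, and the flop part of Theorem~\ref{thm Jiang Chow flip} gives $\fh(\cN^{\alpha(-)})\simeq\fh(\cN^{\alpha(+)})$. The isomorphism is explicit since the one furnished by Theorem~\ref{thm Jiang Chow flip} is given by concrete correspondences built from the blow-up and blow-down of the wall-crossing together with the projective bundle structure of the exceptional locus over $S$.

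There is no serious obstacle here: all of the mathematical content sits in the two cited theorems, and what remains is bookkeeping — confirming that all three spaces are smooth projective (already asserted in Theorem~\ref{thm geom flips par}), getting the direction of the $K$-order right so as to designate the correct source variety, and checking that the two cases together with the flop case assemble into the single symmetric formula \eqref{eqn:WallcrossingPara}, with $n_\pm$ throughout evaluated at $\eta'$ as in \eqref{equation for n plus}.
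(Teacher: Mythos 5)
Your proposal is correct and is exactly the paper's argument: the corollary is stated as an immediate combination of Theorem~\ref{thm geom flips par} (which identifies the wall-crossing as a standard flip of type $(n_-(\eta'),n_+(\eta'))$ with centre $\cN^{\alpha-sss}$) and Theorem~\ref{thm Jiang Chow flip}, with the same bookkeeping about which side is $K$-larger so that the hypothesis $m\geq l$ holds and exactly one of the two direct sums in \eqref{eqn:WallcrossingPara} is nonempty. The paper gives no further written proof, so there is nothing in your argument that diverges from it.
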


Note that in the above isomorphism at least one of the big direct sums on either side is empty. In the case when $n_-(\eta') = n_+(\eta')$, this wall-crossing is a flop and both big direct sums are empty, so the Chow motives of these moduli spaces are isomorphic.\\

\begin{rmk}\label{rmk par n plus grows with g}
For fixed invariants $\eta = (n,d,m)$, the space $\cA_m$ of corresponding weights is cut into chambers by finitely many walls $W_{m,\eta'}$, which do not depend on the genus $g$ of $C$ (see Definition \ref{defn par wall}). However, the types of the flips at these walls, given by $n_{\pm}(\eta')$ in Equation \eqref{equation for n plus}, grow linearly with $g$. For low genus, there may be some chambers which give rise to empty moduli spaces and there may be some wall-crossings which are simply blow-ups. However, for fixed invariants $\eta = (n,d,m)$, if $g$ is sufficiently large, then we have $n_{\pm}(\eta') >0$ for all invariants $\eta'$ specifying walls in $\cA_m$. For example, for rank $n =2$ with full flags, it suffices to take $g \geq 2$ (see the bounds on the flip type given in Proposition \ref{prop par WC}).
\end{rmk}

As an application, we slightly correct and strengthen the result of Chakraborty \cite[Theorem 1.1]{Chakraborty1} to integral coefficients.

\begin{cor}
	\label{cor:CH1constant}
Fix invariants $\eta = (n,d,m)$ with $m$ corresponding to full flags. 
There exists an integer $g_0$, depending only on $\eta$, such that as long as $g\geq g_0$,
 we have isomorphisms between the Chow groups of 1-cycles and the third homology $H_3$ for two generic weights $\alpha$ and $\beta$:
	\begin{align*}
	\CH_1(\cN^\alpha)&\simeq \CH_1(\cN^\beta),\\
	H_3(\cN^\alpha)&\simeq  H_3(\cN^\beta),
	\end{align*}
where $H_3$ denotes either singular homology equipped with its Hodge structure if $k = \CC$,  or the $\ell$-adic homology over $\bar{k}$ equipped with its Galois action if $\ell$ is coprime to the characteristic of $k$. In particular, the intermediate Jacobians for 1-cycles are isomorphic.	
\end{cor}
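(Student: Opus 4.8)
The plan is to transport the motivic wall-crossing identity of Corollary~\ref{cor motivic par WC} through the functors that compute $\CH_1$ and $H_3$, and to observe that the correction terms drop out for dimension reasons once $g$ is large. Fix $\eta=(n,d,m)$ with $m$ corresponding to full flags; then every wall $W_{m,\eta'}\subset\cA_m$ is good (by the discussion after Definition~\ref{defn par wall}) and Corollary~\ref{cor motivic par WC} applies across each of them, of which there are only finitely many. By Remark~\ref{rmk par n plus grows with g} the flip types $n_\pm(\eta')$ grow linearly in $g$, so there is an integer $g_0$ depending only on $\eta$ with $n_\pm(\eta')\geq 1$ for every wall $\eta'$ whenever $g\geq g_0$; assume $g\geq g_0$. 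Since $\cA_m$ is a (relatively open) convex polytope and its walls form a finite union of hyperplane sections, any two generic weights $\alpha$ and $\beta$ are joined by a path in $\cA_m$ which, after a small generic perturbation, meets the walls transversally and crosses one good wall at a time; as the moduli space is constant within each chamber, it suffices to prove the two statements for weights $\alpha(-)$ and $\alpha(+)$ in adjacent chambers separated by a single good wall $W_{m,\eta'}$, and then compose along the path.

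For such a wall, write $D:=\dim\cN^{\alpha(\pm)}$ (both sides of a flip have the same dimension), recall that $\dim\cN^{\alpha-sss}=D-1-n_-(\eta')-n_+(\eta')$, and relabel so that $n_-:=n_-(\eta')\leq n_+:=n_+(\eta')$; Corollary~\ref{cor motivic par WC} then gives an isomorphism
\[
\fh(\cN^{\alpha(-)})\ \simeq\ \fh(\cN^{\alpha(+)})\ \oplus\ \bigoplus_{j=n_-+1}^{n_+}\fh(\cN^{\alpha-sss})(j)
\]
in $\CHM(k,\ZZ)$. Applying $\Hom_{\CHM(k,\ZZ)}(-,\ZZ(D-1))$ and Lemma~\ref{lem chow vanish} yields
\[
\CH_1(\cN^{\alpha(-)})\ \simeq\ \CH_1(\cN^{\alpha(+)})\ \oplus\ \bigoplus_{j=n_-+1}^{n_+}\CH^{D-1-j}(\cN^{\alpha-sss}),
\]
and for $n_-+1\leq j\leq n_+$ one has $D-1-j\geq D-1-n_+>D-1-n_--n_+=\dim\cN^{\alpha-sss}$ because $n_-\geq 1$, so each summand in the sum vanishes by Lemma~\ref{lem chow vanish}. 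Hence $\CH_1(\cN^{\alpha(-)})\simeq\CH_1(\cN^{\alpha(+)})$, via the explicit wall-crossing correspondences, and composing along the path gives $\CH_1(\cN^{\alpha})\simeq\CH_1(\cN^{\beta})$.

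For $H_3$, apply the Betti realisation when $k=\CC$, or, after base change to $\bar k$, the $\ell$-adic realisation; being symmetric monoidal, these send the above isomorphism of Chow motives to an isomorphism of graded (co)homology compatible with the Hodge structure, resp. the Galois action. In homological degree $3$ the summand $\fh(\cN^{\alpha-sss})(j)$ contributes only a Tate twist of $H_{3-2j}(\cN^{\alpha-sss})$, which vanishes for $j\geq 2$ and in particular for all $j$ with $n_-+1\leq j\leq n_+$ since $n_-\geq 1$. Therefore $H_3(\cN^{\alpha(-)})\simeq H_3(\cN^{\alpha(+)})$ as Hodge structures, resp. Galois modules, and composing along the path gives $H_3(\cN^{\alpha})\simeq H_3(\cN^{\beta})$ with its extra structure; the intermediate Jacobians for $1$-cycles then agree, being functorially attached to this pure, odd-weight Hodge structure, resp. $\ell$-adic representation. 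The one point requiring genuine care is the uniformity of $g_0$ across all walls --- which follows from the linear growth in Remark~\ref{rmk par n plus grows with g} together with the finiteness of the set of walls --- and the verification that \emph{both} correction sums in Corollary~\ref{cor motivic par WC} are annihilated, which is exactly the content of the inequality $n_\pm(\eta')\geq 1$; the remainder is the routine bookkeeping of pushing a motivic identity through realisation functors.
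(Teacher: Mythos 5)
Your proposal is correct and follows essentially the same route as the paper: reduce to adjacent chambers by a path crossing one good wall at a time, choose $g_0$ via Remark~\ref{rmk par n plus grows with g} so that $n_{\pm}(\eta')\geq 1$ on every wall, and observe that the correction summands $\fh(\cN^{\alpha-sss})(j)$ with $j\geq n_-+1\geq 2$ in Corollary~\ref{cor motivic par WC} contribute nothing to $\CH_1$ or $H_3$. The only (immaterial) difference is that you kill the $\CH_1$-contributions by writing $\CH_1=\CH^{D-1}$ and bounding against $\dim\cN^{\alpha-sss}$, whereas the paper notes directly that the twist $j\geq 2$ forces the vanishing.
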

\begin{proof}
As the flags are full, every wall is good (i.e. corresponds uniquely to a pair of complementary invariants $(\eta',\eta'')$ with $\eta = \eta' + \eta''$). By wall-crossing, it suffices to show the isomorphism for $\alpha$ and $\beta$ in adjacent chambers. As in Remark \ref{rmk par n plus grows with g}, we take $g$ sufficiently large so that $n_{\pm}(\eta') >0$ for all invariants $\eta'$ specifying walls in $\cA_m$. We see that in \eqref{eqn:WallcrossingPara}, the index $j$, when appearing, is at least 2. Hence the big direct sums do not contribute to $\CH_1$, nor to $H_3$.
\end{proof}

\subsubsection{Motivic descriptions of flag degenerations}

We have the following motivic consequence of Theorem \ref{thm geom flag degen}.

\begin{cor}\label{cor motives flag degen}
Let $\cA_{m'} > \cA_m$ and suppose $\alpha \in \cA_{m'}, \beta \in \cA_m$ are generic weights which are not separated by any walls $W_{\eta'} \subset \cA$. Then there is an explicit isomorphism of integral Chow motives
\[ \fh(\cN^\alpha) \simeq  \fh(\cN^\beta) \otimes T \]
where $T \in \CHM(k,\ZZ)$ is a pure Tate motive which can be explicitly determined.
\end{cor}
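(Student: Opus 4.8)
The plan is to combine the geometric factorisation of the forgetful map $\cN^\alpha \ra \cN^\beta$ recorded in Remark \ref{rmk flag degen grass} with the projective bundle formula for Chow motives. By that remark one may choose a chain of multiplicities $\cA_{m'}=\cA_{m(0)} > \dots > \cA_{m(t)}=\cA_m$, each refining the previous at a single parabolic point, together with generic weights $\alpha(k) \in \cA_{m(k)}$ that are pairwise not separated by any wall and satisfy $\alpha(0)=\alpha$, $\alpha(t)=\beta$, such that each forgetful map $\pi_k \colon \cN^{\alpha(k)} \ra \cN^{\alpha(k+1)}$ is a Zariski locally trivial Grassmannian bundle: it is the relative Grassmannian $\Gr(a_k, \cW_k) \ra \cN^{\alpha(k+1)}$ of a vector bundle $\cW_k$ of rank $b_k = m(k)_{i_k,j_k} + m(k)_{i_k, j_k-1}$, with $a_k = m(k)_{i_k,j_k}$, the integers $a_k < b_k$ being read off directly from the multiplicities. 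Since $\cN^\alpha \ra \cN^\beta$ is the composite of the $\pi_k$, it suffices to control the effect of a single Grassmannian bundle on Chow motives and then iterate.

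For a single step I would invoke the motivic decomposition of a Zariski locally trivial Grassmannian bundle: for $\pi\colon \Gr(a,\cW) \ra Y$ the relative Grassmannian of a rank $b$ vector bundle on a smooth projective $Y$, with tautological subbundle $\cS \hookrightarrow \pi^*\cW$, the relative Schubert classes (Schur polynomials in the Chern classes of $\cS$, indexed by partitions $\lambda$ inside the $a\times(b-a)$ rectangle) yield an isomorphism of integral Chow motives $\fh(\Gr(a,\cW)) \simeq \fh(Y)\otimes\fh(\Gr(a,b)) \simeq \bigoplus_{\lambda \subseteq a\times(b-a)} \fh(Y)(|\lambda|)$, by the same module-basis argument Manin uses to prove the projective bundle formula (the case $a=1$); equivalently one may iterate the projective bundle formula along the associated complete flag bundle and cancel the common split Tate flag-variety factor. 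Applying this to each $\pi_k$ and composing produces an explicit isomorphism
\[ \fh(\cN^\alpha) \;\simeq\; \fh(\cN^\beta)\otimes T, \qquad T \;:=\; \bigotimes_{k=0}^{t-1}\fh(\Gr(a_k,b_k)), \]
and $T$, being a tensor product of motives of Grassmannians, is a finite direct sum of Tate twists $\ZZ(j)$, hence a pure Tate motive; its class in $K_0$ is the product of Gaussian binomials $\prod_k \binom{b_k}{a_k}_{\LL}$, so $T$ depends only on the combinatorial data $(m,m')$ and not on $C$ or $g$. The isomorphism is explicit because Manin's argument and its Grassmannian refinement are explicit in the Chern classes of the $\cW_k$, which are themselves pulled back from the universal families.

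Two points require checking but are not genuine obstacles. First, Theorem \ref{thm geom flag degen} and Remark \ref{rmk flag degen grass} are stated in \cite{BH,BY_rationality} over an algebraically closed field; they descend to a general $k$ because the forgetful maps and the bundles $\cW_k$ are defined over $k$ via the universal families, which exist here since genericity of the weights forces $\gcd(d,(m_{i,j}))=1$. Second, one must have the Grassmannian bundle formula with \emph{integral} coefficients: the only delicate point is the cancellation step in the alternative (flag-bundle) argument, which is harmless since it cancels a split Tate motive, but the cleanest route is to run Manin's relative-Schubert-basis argument directly. The mathematical content of the corollary is thus entirely contained in Theorem \ref{thm geom flag degen} together with the classical projective bundle formula.
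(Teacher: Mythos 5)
Your proof is correct and follows essentially the same route as the paper: factor the forgetful map into a tower of Zariski locally trivial Grassmannian bundles via Remark \ref{rmk flag degen grass}, apply the (integral) motivic Grassmannian bundle formula at each step, and tensor the resulting Tate factors. The paper states the Grassmannian bundle decomposition without proof where you spell out the relative Schubert basis argument, but this is elaboration rather than a different approach.
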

\begin{proof}
By Theorem \ref{thm geom flag degen}, the forgetful map $\cN^\alpha \ra \cN^\beta$ is an iterated flag bundle. More precisely, as in Remark \ref{rmk flag degen grass}, we can factor this forgetful map into a tower Grassmannian bundles
\[ \cN^\alpha = \cN^{\alpha(0)} \ra \cN^{\alpha(1)} \ra \cdots \ra  \cN^{\alpha(t)}= \cN^\beta \]
with $\cN^{\alpha(k)} \ra \cN^{\alpha(k+1)}$  having fibre a Grassmannian $\text{Gr}(k)$. Then we have explicit isomorphisms
\[ \fh(\cN^{\alpha(k)}) \simeq \fh(\cN^{\alpha(k+1)}) \otimes \fh(\text{Gr}(k)) \]
which together give
\[ \fh(\cN^{\alpha}) \simeq \fh(\cN^\beta) \otimes T \]
where $T$ is a tensor product of motives $\fh(\text{Gr}(k))$ of Grassmannians for $0 \leq k < t$.
\end{proof}

In the case when we forget all of the flags at all of the parabolic points, we obtain simply a vector bundle. Suppose that $n$ and $d$ are coprime, so that semistability and stability coincide for rank $n$ degree $d$ vector bundles and $\cN = \cN(n,d)$ is smooth and projective. In this case the weight given by the origin $\underline{0} \in \cA$ is a generic weight and corresponds to parabolic vector bundles with trivial flags (i.e. vector bundles) and we have $\cN = \cN^{\underline{0}}$. By  \cite[Proposition 5.3]{BY_rationality}, if $\alpha \in \cA_m$ is a generic weight which is sufficiently small, then $\alpha$ and $\underline{0}$ are not separated by any wall and consequently we obtain the following special case of the above result.

\begin{cor}\label{cor full deg flag motive}
Let $n$ and $d$ be coprime. Then for a sufficiently small generic weight $\alpha \in \cA_m$, we have an explicit isomorphism of integral Chow motives
\[ \fh(\cN^{\alpha}) \simeq \fh(\cN) \otimes T, \quad \text{ where } \:\: T = \bigotimes_{i=1}^N \fh( \cF(m_i))\]
and $\cF(m_i)$ is the variety of flags $n = \sum_{j=1}^{l_i} m_{i,j} > \sum_{j=2}^{l_i} m_{i,j} > \cdots > m_{i,l_i} > 0$.
\end{cor}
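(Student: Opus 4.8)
\emph{Proof plan.} The plan is to deduce this as the special case $\beta = \underline{0}$ of Corollary \ref{cor motives flag degen}, after checking that this specialisation is legitimate. First I would record the two inputs. Since $n$ and $d$ are coprime, semistability and stability coincide for rank $n$ degree $d$ vector bundles, so $\cN = \cN(n,d)$ is smooth and projective; moreover a vector bundle is precisely a parabolic bundle with trivial flags, and the origin $\underline{0} \in \cA$ is a generic weight (there is nothing to destabilise parabolically), whence $\cN^{\underline{0}} = \cN$ as moduli spaces. Second, by \cite[Proposition 5.3]{BY_rationality}, for any multiplicity $m$ there is a neighbourhood of $\underline{0}$ in $\cA_m$ consisting of generic weights $\alpha$ lying in a chamber not separated from the chamber of $\underline{0}$ by any wall $W_{\eta'} \subset \cA$ --- this is the meaning of ``sufficiently small''. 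Since $\cA_m > \cA_{\underline{0}}$ in the flag-degeneration order, the hypotheses of Corollary \ref{cor motives flag degen} (hence of Theorem \ref{thm geom flag degen}) are satisfied.

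Then I would apply Corollary \ref{cor motives flag degen} with $\beta = \underline{0}$, obtaining an explicit isomorphism $\fh(\cN^\alpha) \simeq \fh(\cN^{\underline{0}}) \otimes T = \fh(\cN) \otimes T$, where $T$ is the tensor product of the motives of Grassmannians arising from the tower in Remark \ref{rmk flag degen grass}. It remains to identify $T$ with $\bigotimes_{i=1}^N \fh(\cF(m_i))$. The forgetful map $\cN^\alpha \to \cN$ forgets the entire flag at each parabolic point $p_i$, so it is an iterated flag bundle with fibre $\prod_{i=1}^N \cF(m_i)$, where $\cF(m_i)$ is the partial flag variety of type $n = \sum_{j} m_{i,j} > \sum_{j \geq 2} m_{i,j} > \cdots > m_{i,l_i} > 0$: concretely, the tower of Remark \ref{rmk flag degen grass}, taken over all $i$ and all $j$, realises the flag bundle at $p_i$ as the composite of its Grassmannian bundles, and the flag bundles at distinct points are mutually independent. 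Using the Grassmannian bundle formula for Chow motives (valid with integral coefficients, as a Grassmannian bundle admits a relative cellular/Schubert decomposition), each stage of the tower contributes a tensor factor $\fh(\mathrm{Gr}(k))$; multiplying these along the tower for a fixed $p_i$ gives $\fh(\cF(m_i))$ by the analogous cellular decomposition of $\cF(m_i)$, and taking the tensor product over $i = 1, \dots, N$ yields $T \simeq \bigotimes_{i=1}^N \fh(\cF(m_i))$.

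The only point requiring genuine care --- and the closest thing to an obstacle --- is the bookkeeping that identifies the product of the individual Grassmannian-bundle factors of Remark \ref{rmk flag degen grass} with $\bigotimes_i \fh(\cF(m_i))$: one must check that the iterated bundle structure over $\cN$ is compatible with the product decomposition $\prod_i \cF(m_i)$ of the fibre, i.e. that the Grassmannian bundles attached to different parabolic points commute and that their composition over a single point $p_i$ is exactly the flag bundle with fibre $\cF(m_i)$. This is precisely the content of Theorem \ref{thm geom flag degen} and Remark \ref{rmk flag degen grass}, so no new argument is needed; the existence of the small generic weight and the motivic flag-bundle formula are quoted directly.
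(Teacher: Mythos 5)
Your proposal is correct and follows exactly the route the paper takes: the paper derives this corollary as the special case $\beta = \underline{0}$ of Corollary \ref{cor motives flag degen}, citing \cite[Proposition 5.3]{BY_rationality} to ensure a sufficiently small generic $\alpha$ is not separated from the origin by any wall, with $T$ identified as the motive of the fibre $\prod_i \cF(m_i)$ via the Grassmannian tower of Remark \ref{rmk flag degen grass}. Your extra care in checking the bookkeeping for $T \simeq \bigotimes_i \fh(\cF(m_i))$ is a reasonable elaboration of what the paper leaves implicit.
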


As an application, we compute the intermediate Jacobian of 1-cycles for the moduli spaces $\cN^\alpha_\cL$ of parabolic vector bundles with fixed determinant\footnote{Fixing the determinant does not change the type of the wall-crossing flips, only their centres.} $\cL$ for a generic weight $\alpha$; this strengthens the results of Chakraborty \cite[Theorem 1.2]{Chakraborty1} and \cite[Proposition 3.4]{Chakraborty2} to integral coefficients.

\begin{cor}\label{cor int Jac 1cycles par}
Fix invariants $\eta = (n,d,m)$ with $m$ corresponding to full flags. Fix a degree $d$ line bundle $\cL$. There exists an integer $g_0>2$, depending only on $\eta$, such that as along as $g\geq g_0$, 
for a generic weight $\alpha$, we have an isomorphism of Chow groups of 1-cycles,  
\begin{equation*}
	\CH_1(\cN^\alpha_\cL)_{\hom} \simeq \CH_1(\cN_\cL)_{\hom}
	\end{equation*}
and if $k = \CC$, we also have an isomorphism of abelian varieties:
	\begin{equation*}
		J\big(H_3(\cN^\alpha_\cL)\big)\simeq \Jac(C),
	\end{equation*}
	where $J$ denotes the functor that associates to a level 1 Hodge structure its Jacobian. 
	
For rank $n=2$ with full flags at $N$ parabolic points, and always $g\geq g_0$ and $\alpha$ generic, we have $\CH_1(\cN^\alpha_\cL)\simeq \CH_0(C)\oplus \ZZ^{\oplus N}$ and, if $k = \CC$, we have 
	\begin{equation*}
			\operatorname{AJ}\colon \CH_1(\cN^\alpha_\cL)_{\hom}\xrightarrow{\simeq}  J\big(H_3(\cN^\alpha_\cL)\big)\simeq \Jac(C),
	\end{equation*}
where $\operatorname{AJ}$ is the Abel--Jacobi map.
\end{cor}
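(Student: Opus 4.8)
The plan is to run the argument of Corollary~\ref{cor:CH1constant} in the fixed-determinant case, and to additionally use flag degeneration in order to compare $\cN^\alpha_\cL$ directly with $\cN_\cL$, keeping track throughout not only of $\CH_1$ but also of $H_3$ with its Hodge structure (when $k=\CC$) and of the Abel--Jacobi map between $\CH_1(-)_{\hom}$ and $J(H_3(-))$. First I would fix $g_0$: the walls $W_{m,\eta'}\subset\cA_m$ are finite in number and independent of $g$ (Definition~\ref{defn par wall}), while the flip types $n_\pm(\eta')$ of \eqref{equation for n plus} grow linearly in $g$ (Remark~\ref{rmk par n plus grows with g}); hence there is an integer $g_0>2$, depending only on $\eta$, such that $g\geq g_0$ forces $n_\pm(\eta')\geq 1$ for every $\eta'$ defining a wall of $\cA_m$. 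Because all flags are full, every wall is good, so any two generic weights of $\cA_m$ are joined by a segment meeting only good walls, and I may apply Theorem~\ref{thm geom flips par} and Corollary~\ref{cor motivic par WC} in the fixed-determinant case (fixing the determinant changes only the centres of the flips, not their types).

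Next I would carry out the flag-degeneration step. By the fixed-determinant form of Corollary~\ref{cor full deg flag motive}, for a sufficiently small generic weight $\alpha_0\in\cA_m$ the forgetful morphism $\cN^{\alpha_0}_\cL\to\cN_\cL$ is an iterated flag bundle, so $\fh(\cN^{\alpha_0}_\cL)\simeq\fh(\cN_\cL)\otimes\bigotimes_{i=1}^N\fh(\cF(m_i))$ via the flag-bundle correspondences. As the full flag varieties $\cF(m_i)$ of $\GL_n$ are cellular, $\bigotimes_i\fh(\cF(m_i))\simeq\bigoplus_j\ZZ(j)^{\oplus c_j}$ is a sum of Tate motives with $c_0=1$, $c_\delta=1$ for $\delta=\sum_i\dim\cF(m_i)$, and $c_{\delta-1}=N(n-1)$. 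Applying $\Hom(-,\ZZ(\dim\cN^{\alpha_0}_\cL-1))=\CH_1(-)$ together with the vanishing of $\CH^i(\cN_\cL)$ for $i>\dim\cN_\cL$ (Lemma~\ref{lem chow vanish}), only the summands $j=\delta$ and $j=\delta-1$ survive, so
\[\CH_1(\cN^{\alpha_0}_\cL)\;\simeq\;\CH_1(\cN_\cL)\oplus\CH_0(\cN_\cL)^{\oplus N(n-1)}\;\simeq\;\CH_1(\cN_\cL)\oplus\ZZ^{\oplus N(n-1)}\]
(using that $\cN_\cL$ is rational \cite{KS}, so $\CH_0(\cN_\cL)=\ZZ$); this decomposition is compatible with cycle class maps, whence $\CH_1(\cN^{\alpha_0}_\cL)_{\hom}\simeq\CH_1(\cN_\cL)_{\hom}$. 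Passing to the realisation and using that $\cN_\cL$ is simply connected, only the summand $j=\delta$ (with multiplicity $c_\delta=1$) contributes to $H_3$, so $H_3(\cN^{\alpha_0}_\cL)\simeq H_3(\cN_\cL)$ as Hodge structures/Galois modules, hence $J\big(H_3(\cN^{\alpha_0}_\cL)\big)\simeq J\big(H_3(\cN_\cL)\big)$ compatibly with Abel--Jacobi.

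Then I would handle the wall-crossing step. Crossing a single good wall $W_{m,\eta'}$, the fixed-determinant form of Corollary~\ref{cor motivic par WC} (an application of Theorem~\ref{thm Jiang Chow flip}) changes $\fh(\cN^\bullet_\cL)$ by direct summands $\fh(\cN^{\alpha-sss}_\cL)(j)$ with twist indices in the range $\min\{n_-(\eta'),n_+(\eta')\}<j\leq\max\{n_-(\eta'),n_+(\eta')\}$, so $j\leq n_-(\eta')+n_+(\eta')-1$ (here we use $n_\pm(\eta')\geq1$). On the other hand, since $\cN^{\alpha-sss}_\cL$ has codimension $n_-(\eta')+n_+(\eta')+1$ in $\cN^\bullet_\cL$ (Theorem~\ref{thm geom flips par}), Lemma~\ref{lem chow vanish} and the analogous vanishing in (co)homology show that such a summand can contribute to $\CH_1(\cN^\bullet_\cL)$, to $\CH_1(\cN^\bullet_\cL)_{\hom}$, or to $H_3(\cN^\bullet_\cL)$ only when $j\geq n_-(\eta')+n_+(\eta')$ — which never happens here. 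Thus each good-wall crossing preserves $\CH_1$, $\CH_1(-)_{\hom}$, $H_3$ with its Hodge structure or Galois action, and the Abel--Jacobi map, which is functorial for correspondences. Chaining the crossings with the flag-degeneration isomorphism above gives, compatibly with Abel--Jacobi,
\[\CH_1(\cN^\alpha_\cL)_{\hom}\simeq\CH_1(\cN_\cL)_{\hom},\qquad J\big(H_3(\cN^\alpha_\cL)\big)\simeq J\big(H_3(\cN_\cL)\big).\]

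To conclude I would invoke the known description of these invariants for $\cN_\cL$: $J\big(H_3(\cN_\cL)\big)\simeq\Jac(C)$ is the classical isomorphism $H^3(\cN_\cL)\cong H^1(C)$, and for $n=2$ the Abel--Jacobi map $\CH_1(\cN_\cL)_{\hom}\xrightarrow{\simeq}J\big(H_3(\cN_\cL)\big)\simeq\Jac(C)$ is an isomorphism, with $\CH_1(\cN_\cL)\simeq\CH_0(C)$ (see \cite{BKN,ChoeHwang,LiLinPan}); transporting along the correspondences above yields the statement for $\cN^\alpha_\cL$, and since $N(n-1)=N$ when $n=2$ we obtain $\CH_1(\cN^\alpha_\cL)\simeq\CH_0(C)\oplus\ZZ^{\oplus N}$. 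The step I expect to be the main obstacle is the dimension bookkeeping in the two displayed isomorphisms: one has to check carefully that every cellular contribution of the flag varieties and every projective-space fibre contribution of the wall-crossings genuinely vanishes in $\CH_1(-)_{\hom}$ and, after applying $J(-)$, in $H_3$ — which is exactly what the choice of $g_0$ and the codimension estimate of Lemma~\ref{lem chow vanish} are there to ensure — together with the routine but necessary verification that the Abel--Jacobi map intertwines with all the correspondences used.
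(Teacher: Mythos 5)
Your proposal is correct and follows essentially the same route as the paper: reduce to a sufficiently small generic weight via the wall-crossing vanishing of Corollary~\ref{cor:CH1constant} (adapted to fixed determinant), then use the flag-degeneration isomorphism of Corollary~\ref{cor full deg flag degen motive/full deg flag motive} to identify the surviving Tate-twisted summands, kill the extra terms using rationality and simple connectedness of $\cN_\cL$, and conclude with the known descriptions of $H_3(\cN_\cL)$ and $\CH_1(\cN_\cL)$. The only differences are cosmetic: you perform flag degeneration before wall-crossing rather than after, and you justify the vanishing of the wall-crossing summands by the upper-bound $j\le n_-+n_+-1<\codim-1$ on the twist against $\dim\cN^{\alpha-sss}_\cL$ rather than the paper's lower-bound argument $j\ge 2$, both of which are instances of Lemma~\ref{lem chow vanish}.
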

\begin{proof}
As in Remark \ref{rmk par n plus grows with g}, we take $g$ sufficiently large so that $n_{\pm}(\eta') >0$ for all invariants $\eta'$ specifying walls in $\cA_m$. By Corollary \ref{cor:CH1constant}, the $\CH_1$ and the intermediate Jacobian do not vary for generic weights. Therefore, we can assume that $\alpha$ is sufficiently small. In this case,  by Corollary \ref{cor full deg flag motive}, we have isomorphisms of Chow groups and Hodge structure (for $k = \CC$)  
	\begin{align*}
		\CH_1(\cN^\alpha_\cL)_{\hom}&\simeq \CH_1(\cN_\cL)_{\hom}\oplus \CH_0(\cN_\cL)_{\hom}^{\oplus m},\\
		H_3(\cN^\alpha_\cL, \ZZ)&\simeq H_3(\cN_\cL, \ZZ)\oplus H_1(\cN_\cL, \ZZ)(1)^{\oplus m},
	\end{align*}
	where $m$ is the number of summands $\ZZ(1)$ in the Tate motive $T$. However,  $H_1(\cN_\cL, \ZZ)=0$ and $\CH_0(\cN_\cL)_{\hom}=0$, as $\cN_\cL$ is rational \cite{KS}. Therefore, 
\[	\CH_1(\cN^\alpha_\cL)_{\hom}\simeq \CH_1(\cN_\cL)_{\hom} \quad \text{ and } \quad H_3(\cN^\alpha_\cL, \ZZ)\simeq H_3(\cN_\cL, \ZZ),\]
and they are compatible with respect to the Abel--Jacobi maps.
When $g>2$, by \cite[Theorem 2.1]{IyerLewis}, $H_3(\cN_\cL, \ZZ)\simeq H_1(C, \ZZ)(1)$ as Hodge structures; thus we obtain the claimed result for the intermediate Jacobian.

Now suppose $n=2$ and we take full flags at $N$ parabolic points. By the recent work \cite{LiLinPan}, we have $\CH_1(\cN_\cL) \simeq \CH_0(C)$ and as $\cN_{\cL}$ is rational, we also have $\CH_0(\cN_\cL) = \ZZ$. By applying Corollary \ref{cor full deg flag motive}, where the Tate motive $T$ is the motive of $(\PP^1)^N$, we deduce $\CH_1(\cN^\alpha_\cL)\simeq \CH_0(C)\oplus \ZZ^{\oplus N}$. Over $k = \CC$, we have $\CH_1(\cN_\cL)_{\hom}\simeq \Jac(C)$, which implies the claimed Abel--Jacobi isomorphism with the intermediate Jacobian. 
\end{proof}

Similarly to Corollary \ref{cor:StabilisationCH} in $\S$\ref{sec cor Chow gps N}, we have the following stabilisation result. 

\begin{cor}\label{cor stable chow gps parabolic}
Fix invariants $\eta = (n,d,m)$ with $m$ corresponding to full flags. For $i \in \NN$, there is an integer $g_0$, depending only on $\eta$, such that for $C$ of genus $g\geq g_0$ and any generic weight $\alpha$, the Chow groups $\CH^i(\cN^\alpha)$ and $\CH_i(\cN^\alpha)$ are both independent of $\alpha$ and can be computed in terms of Chow groups of $\cN$.
\end{cor}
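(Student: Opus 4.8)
The plan is to deduce the statement from the motivic wall\-/crossing formula of Corollary~\ref{cor motivic par WC} together with the motivic flag degeneration of Corollary~\ref{cor full deg flag motive}, exploiting the observation of Remark~\ref{rmk par n plus grows with g} that, for fixed invariants, the finitely many walls of $\cA_m$ do not move with the genus while the flip types grow linearly in $g$.

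First I would record the homological bookkeeping. Since $m$ corresponds to full flags, every generic weight in $\cA_m$ makes $\cN^\alpha$ a smooth projective fine moduli space, every wall $W_{m,\eta'}$ is good, and $\cA_m$ contains only finitely many walls. For a smooth projective variety $X$ one has $\CH^i(X)=\Hom(\fh(X),\ZZ(i))$ by Lemma~\ref{lem chow vanish} and, directly from the morphism formula defining $\CHM(k,\ZZ)$, also $\CH_i(X)=\CH^{\dim X-i}(X)=\Hom(\ZZ(i),\fh(X))$; for a Tate twist one computes $\Hom(\fh(Y)(j),\ZZ(i))=\CH^{i-j}(Y)$ and $\Hom(\ZZ(i),\fh(Y)(j))=\CH_{i-j}(Y)$, both of which vanish as soon as $j>i$. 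Fixing $i$, I would then use the formulas~\eqref{equation for n plus}: over the finitely many walls $W_{m,\eta'}$ of $\cA_m$ the degree terms $\pm(n''d'-n'd'')$ and the flag terms are bounded by a constant depending only on $\eta$, while each flip type contains the summand $n'n''(g-1)\geq(n-1)(g-1)$; hence one can choose $g_0=g_0(\eta,i)$ so that $g\geq g_0$ forces $\min(n_-(\eta'),n_+(\eta'))\geq i$ for every wall of $\cA_m$.

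For such $g$, crossing any single good wall as in~\eqref{eqn:WallcrossingPara} gives an isomorphism of integral Chow motives relating $\fh(\cN^{\alpha(+)})$ and $\fh(\cN^{\alpha(-)})$, up to direct summands of the form $\fh(\cN^{\alpha-sss})(j)$ whose Tate twists all satisfy $j>i$. Applying $\Hom(-,\ZZ(i))$ and $\Hom(\ZZ(i),-)$ and using the vanishing above, all these summands disappear, so $\CH^i(\cN^{\alpha(-)})\simeq\CH^i(\cN^{\alpha(+)})$ and $\CH_i(\cN^{\alpha(-)})\simeq\CH_i(\cN^{\alpha(+)})$. Since $\cA_m$ is connected and its chambers are the connected components of the complement of the finitely many hyperplane walls, any two generic weights are joined by a path crossing these good walls one at a time; hence $\CH^i(\cN^\alpha)$ and $\CH_i(\cN^\alpha)$ are independent of the generic weight $\alpha$. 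Finally, taking $\alpha$ sufficiently small and invoking Corollary~\ref{cor full deg flag motive}, one has $\fh(\cN^\alpha)\simeq\fh(\cN)\otimes T$ with $T=\bigotimes_{i=1}^{N}\fh(\cF(m_i))$ a finite direct sum $\bigoplus_a\ZZ(a)^{\oplus t_a}$ of Tate motives, the complete flag varieties $\cF(m_i)$ being cellular; therefore $\CH^i(\cN^\alpha)\simeq\bigoplus_a\CH^{i-a}(\cN)^{\oplus t_a}$ and similarly $\CH_i(\cN^\alpha)\simeq\bigoplus_a\CH_{i-a}(\cN)^{\oplus t_a}$. If $\gcd(n,d)>1$, so that $\cN$ is singular and Corollary~\ref{cor full deg flag motive} does not directly apply, one first uses the Hecke modifications of \S\ref{sec Hecke mod}, which for full flags are isomorphisms $\cN^\alpha(n,d,m)\cong\cN^{\hat\alpha}(n,d-1,m)$ carrying generic weights to generic weights, to reduce to a degree coprime to $n$; and if $N=0$ the statement is vacuous.

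The step requiring the most care is the uniformity of $g_0$: one must verify through~\eqref{equation for n plus} that the \emph{minimum} of the flip types over the finitely many walls of $\cA_m$ genuinely grows with $g$, which holds because every such flip type contains the summand $n'n''(g-1)$ with $0<n'<n$. Everything else is a formal consequence of Corollaries~\ref{cor motivic par WC} and~\ref{cor full deg flag motive} and the elementary vanishing of Chow groups in negative (co)dimension.
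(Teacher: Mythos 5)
Your proof is correct and follows essentially the same route as the paper: use Remark \ref{rmk par n plus grows with g} and \eqref{equation for n plus} to choose $g_0$ so that all flip types exceed $i$, observe via Corollary \ref{cor motivic par WC} that the extra Tate-twisted summands then contribute nothing to $\CH^i$ or $\CH_i$, and conclude by shrinking $\alpha$ and applying Corollary \ref{cor full deg flag motive}. Your additional care with the homological bookkeeping and with reducing the case $\gcd(n,d)>1$ to the coprime case via Hecke modifications is welcome (the paper leaves the latter implicit), but it does not change the argument.
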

\begin{proof}
Recall from Remark \ref{rmk par n plus grows with g} that there are finitely many walls $W_{m,\eta'}$, which are all good and do not depend on the genus $g$ of $C$, but the flip types $n_{\pm}(\eta')$ of these walls grow linearly with $g$ as in Equation \eqref{equation for n plus}. In particular, for fixed $i$, if we take $g$ sufficiently large, then $n_{\pm}(\eta') \geq i$ for all walls. In Corollary \ref{cor motivic par WC}, the index $j$ in \eqref{eqn:WallcrossingPara} then satisfies $j > i$ and thus the big direct sums do not contribute to $\CH^i$ nor $\CH_i$, as $i-j <0$. Consequently $\CH^i(\cN^\alpha)$ and $\CH_i(\cN^\alpha)$ are both independent of $\alpha$. Thus we can take $\alpha$ sufficiently small to apply Corollary \ref{cor full deg flag motive}, where $T=\fh(\cF)^{\otimes N}$ and $\cF$ is the full flag variety, and express $\CH^i(\cN^\alpha)$ and $\CH_i(\cN^\alpha)$ in terms of Chow groups of $\cN$.
\end{proof}

\subsection{Closed formulas for the motive in rank 2}

Let us consider rank $n=2$ vector bundles of degree $d$ with $N$ parabolic points. By the shifting trick described in Remark \ref{rmk shifting weights}, we can without loss of generality set the first parabolic weight at each parabolic point to be zero. Therefore, we assume $\alpha_{i,1} = 0$ for $ i =1, \dots , N$ and let us simply write $\alpha_i:=\alpha_{i,2}$; then we consider weights lying in the half-open hypercube
\[ \alpha = (\alpha_1, \dots , \alpha_N) \in \cA_N:=[0,1)^N.\]
Weights in the open hypercube $(0,1)^N$ correspond to the multiplicity $m = (m_{i,j})$ with $m_{i,j} = 1$ for all $i,j$ (i.e. full flags at each parabolic point). We fix invariants $\eta = (2,d,m)$ for rank $2$ degree $d$ parabolic vector bundles with full flags at each of the $N$ parabolic points.

Unlike for the moduli space $\cN=\cN(2,d)$ of rank $2$ vector bundles, where we had to restrict to $d$ odd to describe the Chow motive (as for $g > 2$ and $d$ even, $\cN$ is singular), for parabolic vector bundles we can consider even degrees provided we choose a generic weight. In rank $n = 2$, we will compute the Chow motives of the associated parabolic moduli spaces for all $d$ and all choices of generic weights. In fact, the moduli spaces of rank $2$ parabolic vector bundles for odd and even degrees are related by Hecke modifications and so it suffices to compute these Chow motives for odd degrees (see Remark \ref{rmk par vb rk 2 even deg}, for how to compute them in even degrees using a Hecke modification).

\subsubsection{Description of the walls}

We recall that the walls correspond to subbundles of the same slope. In rank $2$, the invariants of a subbundle have the form $\eta'=(1,d',m')$, where for each $1 \leq i \leq N$, we have $m_{i,1}' + m_{i,2}' = 1$. In particular, $m_{i,1}'$ is determined by $m_{i,2}' \in \{ 0,1\}$ and so we simply write $m_i':=m'_{i,2}$. The associated wall in $
\cA_N$ is given by the equation
\[ 2d' - d = \sum_{i=1}^N  \alpha_i(1-2m'_i).\]
Note that the integer $2d' - d$ has the same parity as $d$. Since the complementary invariants $\eta''=(n''=1,d'',m'')$ define the same wall, we can assume without loss of generality that $2d'-d \geq 0$. Furthermore, the multiplicity $m'$ can be encoded in a subset $\cI \subset \{ 1, \dots , N\}$ given by $\cI = \{ 1 \leq i \leq N : m_{i}' = 0 \}$. Then the corresponding wall has the equation
\[ 0 \leq 2d' -d = \sum_{i \in \cI} \alpha_i - \sum_{i \in \cI^c} \alpha_i \]
where $\cI^c := \{1,\dots, N \} \setminus \cI$. This wall has non-empty intersection with $\cA_N$ precisely when $|\cI| > 2d' -d$. This proves the following description of the walls.

\begin{lemma}\label{lemma walls}
The walls in $\cA_N = [0,1)^N$ can be enumerated as follows.
\begin{enumerate}
\item For odd $d$, for each $s \in \NN$ with $2s +1 <N$ and for each $\cI \subset \{1,\dots , N\}$ with cardinality $|\cI| > 2s+1$, we have the wall $W_{s,\cI}$ defined by
\[ 2s+1 = \sum_{i \in \cI} \alpha_i - \sum_{i \in \cI^c} \alpha_i. \]
This wall is determined by the discrete invariants $\eta'(s,\cI)=(1,d'(s),m'(\cI))$, where $d'(s) = (2s+1+d)/2$ and $m'(\cI)_{i,1} = 1$ (and $m'(\cI)_{i,2} = 0$) if and only if $i \in \cI$.
\item For even $d$, for each $s \in \NN$ with $2s <N$ and $\cI \subset \{1,\dots , N\}$ with cardinality $|\cI| > 2s$, we have the wall $W_{k,\cI}$ defined by
\[ 2s = \sum_{i \in \cI} \alpha_i - \sum_{i \in \cI^c} \alpha_i. \]
This wall is determined by the discrete invariants $\eta'(s,\cI)=(1,d'(s),m'(\cI))$, where $d'(s) = (2s+d)/2$ and $m'(\cI)_{i,1} = 1$ (and $m'(\cI)_{i,2} = 0$) if and only if $i \in \cI$.
\end{enumerate}
\end{lemma}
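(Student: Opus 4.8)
The plan is to organise the computation already carried out above into the stated enumeration. First I would recall that, in rank $2$, a wall corresponds to a sub/quotient line bundle and is therefore determined by data $\eta'=(1,d',m')$ with $m'_{i,1}+m'_{i,2}=1$; writing $m'_i:=m'_{i,2}\in\{0,1\}$ and packaging the $m'_i$ into the subset $\cI=\{\,i : m'_i=0\,\}$, the defining hyperplane $W_{\eta'}\subseteq\RR^N$ is cut out by $2d'-d=\sum_{i\in\cI}\alpha_i-\sum_{i\in\cI^c}\alpha_i$, as computed above. Since the complementary invariants $\eta''$ (with $d''=d-d'$ and associated subset $\cI^c$) define the same hyperplane and satisfy $2d''-d=-(2d'-d)$, I may normalise the representative so that $2d'-d\geq 0$.

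Next I would use that $2d'-d\equiv d\pmod 2$ to split into the two cases of the statement: for $d$ odd, write $2d'-d=2s+1$ with $s\in\NN$, equivalently $d'=d'(s)=(2s+1+d)/2$; for $d$ even, write $2d'-d=2s$, i.e.\ $d'=d'(s)=(2s+d)/2$. The multiplicity $m'(\cI)$ is then recovered from $\cI$ via $m'(\cI)_{i,1}=1\iff i\in\cI$, and the wall is exactly the $W_{s,\cI}$ displayed in the statement. This step is a pure relabelling.

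The one point with real content is to determine which pairs $(s,\cI)$ actually occur, i.e.\ when $W_{s,\cI}\cap\cA_N\neq\emptyset$. For this I would compute the image of the linear functional $\ell_\cI(\alpha):=\sum_{i\in\cI}\alpha_i-\sum_{i\in\cI^c}\alpha_i$ on the half-open cube $[0,1)^N$: its supremum is $|\cI|$, approached but never attained since each $\alpha_i<1$, while its infimum $-|\cI^c|$ is attained only if $\cI^c=\emptyset$. Hence, for the normalised value $2d'-d\geq 0$, the hyperplane $\{\ell_\cI = 2d'-d\}$ meets $\cA_N$ precisely when $2d'-d<|\cI|$, that is $|\cI|>2s+1$ in the odd case and $|\cI|>2s$ in the even case; and such a subset $\cI\subseteq\{1,\dots,N\}$ exists at all exactly when $2d'-d<N$, giving the constraints $2s+1<N$ and $2s<N$ respectively. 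Combining these observations produces the claimed list.

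I do not anticipate a serious obstacle here — the result is essentially bookkeeping. The main thing to watch is the $\cI\leftrightarrow\cI^c$ symmetry, most visible for $d$ even and $s=0$ where $W_{0,\cI}=W_{0,\cI^c}$, so that the displayed list enumerates the walls possibly with repetitions (and possibly including degenerate lower-dimensional loci when $2d'-d=0$); since the statement only claims an enumeration this is harmless, but I would flag it. It is also worth noting that $\ell_\cI$ is a non-zero functional, so a non-degenerate $W_{s,\cI}$ really does have codimension one in $\cA_N$, consistent with calling it a wall.
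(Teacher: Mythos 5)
Your argument is correct and coincides with the paper's own justification (given in the paragraph immediately preceding the lemma): walls come from sub-line-bundle invariants $\eta'=(1,d',m')$, the wall equation is rewritten via $\cI=\{i: m'_i=0\}$, the complementary invariants allow one to normalise $2d'-d\geq 0$, parity splits the two cases, and non-emptiness of $W_{s,\cI}\cap\cA_N$ is exactly $|\cI|>2d'-d$. Your extra remarks on the $\cI\leftrightarrow\cI^c$ repetition for $d$ even, $s=0$ and on the degenerate locus at $2d'-d=0$ are sensible flags but not needed for the statement as claimed.
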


Let us refer to a vertex of the closed hypercube $[0,1]^N$ as \emph{even} (resp. \emph{odd}) if it has an even (resp. odd) number of $1$s. The only vertex which lies in $\cA_N = [0,1)^N$ is the origin, which is even. For $d$ odd (resp. even), we see that precisely the odd (resp. even) vertices lie on the walls. In particular, for $d$ odd, the origin $\underline{0} \in \cA_N$ does not lie on any wall, whereas for $d$ even, the origin lies on the intersection of all the walls indexed by $s = 0$. 

Let us focus on the odd degree case for now and later describe the even degree case by performing a Hecke modification (see Example \ref{ex Hecke mod rank 2} and Remark \ref{rmk par vb rk 2 even deg}).

\begin{prop}\label{prop par WC}
For odd $d$, consider the walls $W_{s,\cI} \subset \cA_N = [0,1)^N$ in Lemma \ref{lemma walls}.
\begin{enumerate}
\item The centre of the hypercube $\underline{\frac{1}{2}} \in \cA_N$ lies on the wall $W_{s,\cI}$ if and only if $|\cI| = 2s+1 + \frac{N}{2}$. In particular, for odd $N$, the centre does not lie on any wall. 
\item The centre of the hypercube $\underline{\frac{1}{2}} \in \cA_N$ lies on $W_{s,\cI}$ if and only if this is a flopping wall.
\item Crossing the wall $W_{s,\cI}$ is a flip with centre $\Jac(C)^2$ and type $(n_{-}(s,\cI),n_{+}(s,\cI))$ where
\begin{equation}\label{compute n in terms of k and I}
 n_{-}(s,\cI)  = g + |\cI| -2s - 3 \quad \text{and} \quad n_{+}(s,\cI)  = g + N -|\cI| +2s - 1.
\end{equation}
In particular, the types $n_{\pm}(s,\cI)$ (i.e. the dimensions of the projective space appearing over the centre) satisfy $g-1 \leq n_{\pm}(s,\cI) \leq g + N - 3$. 
\item For any wall $W_{s,\cI}$ not containing $\underline{\frac{1}{2}}$, crossing this wall in the direction from the centre of the hypercube is a flip which decreases the canonical divisor. More precisely, if  $\beta, \gamma \in \cA_N$ are weights in chambers on either side of $W_{k,\cI}$  and we suppose that the line joining $\beta$ and $ \gamma$ meets $W_{k,\cI}$ in a point $\alpha$ which lies on no other wall, then if $\beta$ denotes the weight nearest to $\underline{\frac{1}{2}}$ we have a wall-crossing flip $\cN^\beta >_K \cN^\gamma$.
\end{enumerate}
\end{prop}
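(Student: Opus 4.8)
The plan is to prove (3) first, since (1), (2) and (4) are then short consequences. For (3) I would invoke Theorem~\ref{thm geom flips par} for the wall $W_{s,\cI}$, whose defining invariants are $\eta'=\eta'(s,\cI)=(1,d'(s),m'(\cI))$ and the complementary $\eta''=\eta-\eta'$. A rank-one bundle has no proper nonzero subbundle, so $\alpha'$-stability is vacuous, and a flag in a one-dimensional fibre carries no data; hence each factor of the centre is $\Pic^{d'(s)}(C)$, resp. $\Pic^{d''}(C)$, a torsor under $\Jac(C)$ (and isomorphic to it, e.g. when $\Pic^1(C)(k)\neq\emptyset$), so the centre is $\Jac(C)^2$. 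To obtain the type I would substitute $n'=n''=1$ into \eqref{equation for n plus}: the only pair with $j>k$ is $(j,k)=(2,1)$, and reading $m'(\cI)$ and $m''=m-m'$ off Lemma~\ref{lemma walls} (and using Remark~\ref{rmk compl inv}) gives $\sum_i m''_{i,2}m'_{i,1}=|\cI|$ and $\sum_i m'_{i,2}m''_{i,1}=N-|\cI|$; together with $d'(s)-d''=2s+1$ this yields $n_-(s,\cI)=g+|\cI|-2s-3$ and $n_+(s,\cI)=g+N-|\cI|+2s-1$. The displayed bounds then follow from $2s+2\le|\cI|\le N$ and $s\ge 0$, the lower bound on $|\cI|$ being the non-emptiness condition $|\cI|>2s+1$ in Lemma~\ref{lemma walls}.

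\textbf{Parts (1) and (2).} Setting $\alpha_i=\tfrac12$ in the wall equation of Lemma~\ref{lemma walls} gives $\sum_{i\in\cI}\alpha_i-\sum_{i\in\cI^c}\alpha_i=|\cI|-\tfrac{N}{2}$, so $\underline{\tfrac12}\in W_{s,\cI}$ if and only if $|\cI|=2s+1+\tfrac{N}{2}$, which has no solution for odd $N$; this is (1). From the part-(3) formulas, $n_-(s,\cI)-n_+(s,\cI)=2|\cI|-N-4s-2$, which vanishes under exactly the same condition, and flopping walls are by definition those with $n_-=n_+$; this is (2).

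\textbf{Part (4).} The key point I would establish is a clean identity: computing $\mu_\alpha(\eta')-\mu_\alpha(\eta)$ for $\eta'=\eta'(s,\cI)$ with $\alpha_{i,1}=0$, $n'=1$, $n=2$ gives $\mu_\alpha(\eta')-\mu_\alpha(\eta)=-\tfrac12\bigl(\sum_{i\in\cI}\alpha_i-\sum_{i\in\cI^c}\alpha_i-(2s+1)\bigr)$, so at $\alpha=\underline{\tfrac12}$ this equals $-\tfrac14\bigl(n_-(s,\cI)-n_+(s,\cI)\bigr)$ by parts (1)--(3). Hence, by the sign convention in Definition~\ref{defn par wall}, the open half-space containing $\underline{\tfrac12}$ is $H^-_{\eta'}$ when $n_->n_+$ and $H^+_{\eta'}$ when $n_+>n_-$ — these cases being strict and exhaustive precisely because $\underline{\tfrac12}\notin W_{s,\cI}$. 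The weight $\beta$ ``nearest to $\underline{\tfrac12}$'' is then the one of $\{\beta,\gamma\}$ not separated from $\underline{\tfrac12}$ by $W_{s,\cI}$, i.e.\ lying in the same $H^\epsilon_{\eta'}$, so $\beta=\alpha(\epsilon)$ and $\gamma=\alpha(-\epsilon)$. By Theorem~\ref{thm geom flips par} the wall-crossing is the standard flip $\cN^{\alpha(-)}\dashrightarrow\cN^{\alpha(+)}$ of type $(n_-(s,\cI),n_+(s,\cI))$, and by the convention recorded in Definition~\ref{def:Flip} one has $\cN^{\alpha(-)}>_K\cN^{\alpha(+)}$ iff $n_->n_+$ (strictly, as $n_-\neq n_+$). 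Combining: if $\epsilon=-$ then $n_->n_+$, so $\cN^\beta=\cN^{\alpha(-)}>_K\cN^{\alpha(+)}=\cN^\gamma$; if $\epsilon=+$ then $n_+>n_-$, so $\cN^\beta=\cN^{\alpha(+)}>_K\cN^{\alpha(-)}=\cN^\gamma$. In either case $\cN^\beta>_K\cN^\gamma$, i.e.\ the wall-crossing away from the centre decreases the canonical divisor.

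\textbf{Main obstacle.} There is no deep difficulty; the whole content is keeping three sets of conventions consistent: the sign in the half-space description $H^\pm_{\eta'}$ (Definition~\ref{defn par wall}), the ordering in ``standard flip of type $(m,l)$'' and its link to $\ge_K$ (Definition~\ref{def:Flip}), and the nonstandard multiplicities of sub- and quotient parabolic bundles (Remark~\ref{rmk compl inv}) feeding into \eqref{equation for n plus} via Remark~\ref{rmk euler char parhom}. The one genuinely substantive step is verifying the proportionality $\mu_{\underline{1/2}}(\eta')-\mu_{\underline{1/2}}(\eta)\propto-(n_-(s,\cI)-n_+(s,\cI))$, which is exactly what makes (2) and (4) fall out of the part-(3) computation simultaneously.
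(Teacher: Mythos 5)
Your proposal is correct and follows essentially the same route as the paper: part (3) via Theorem \ref{thm geom flips par} and the substitution $n'=n''=1$ into \eqref{equation for n plus}, parts (1)--(2) by direct evaluation at $\underline{\tfrac12}$, and part (4) via the identity $\mu_{\underline{1/2}}(\eta'(s,\cI))-\mu_{\underline{1/2}}(\eta)=\tfrac14\bigl(n_+(s,\cI)-n_-(s,\cI)\bigr)$, which is exactly the claim the paper's proof establishes. The only cosmetic difference is that you derive the slope-difference formula for general $\alpha$ before specialising to the centre, whereas the paper computes it at $\underline{\tfrac12}$ directly; the case analysis matching $\beta$ with $\alpha(\pm)$ and the $K$-ordering conclusion are identical.
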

\begin{proof}
The first statement is immediate from the explicit equations of the walls given in Lemma \ref{lemma walls}. For the third statement, we use Theorem \ref{thm geom flips par} to describe the flip for crossing the wall $W_{s,\cI}$: the centre of this flip is a product of two moduli spaces for rank $1$ parabolic vector bundles, thus is a product of Jacobians, and the type of the flip is given by $n_{\pm}(s,\cI):=n_{\pm}(\eta'(s,\cI))$, which we can compute using \eqref{equation for n plus} to give the formula in \eqref{compute n in terms of k and I}. For the final part of the third statement, since $2 \leq |\cI| - 2s \leq N$, we obtain inequalities
\[ g-1 \leq n_{\pm}(s,\cI) \leq g + N -3.\]
Note that $n_{-}(s,\cI) = n_{+}(s,\cI)$ if and only if $|\cI| = 2s+1 + \frac{N}{2}$, which proves the second statement. 

The final statement will follow from the claim that for each wall $W_{s,\cI}$, we have
\[ 4\left(\mu_{\underline{\frac{1}{2}}}(\eta'(s,\cI)) - \mu_{\underline{\frac{1}{2}}}(\eta) \right)= n_{+}(s,\cI) - n_{-}(s,\cI) =  2 (2s+1) + N - 2 | \cI|.\]
Indeed we have 
\[ \mu_{\underline{\frac{1}{2}}}(\eta'(s,\cI)) - \mu_{\underline{\frac{1}{2}}}(\eta)= \frac{2s+1 +d}{2} + \sum_{i=1}^N \frac{1}{2} m'(\cI)_{i,2} - \frac{d + \sum_{i=1}^N \frac{1}{2} m_{i,2}}{2} = \frac{1}{4} \left( 2 (2s+1) + N - 2 | \cI|\right).\]
To conclude the final statement, we then use Theorem \ref{thm geom flips par}: if $n_{+}(s,\cI) - n_{-}(s,\cI) >0$, then $\beta = \alpha(+)$ and $\gamma = \alpha(-)$ and if $n_{+}(s,\cI) - n_{-}(s,\cI) <0$, then $\beta = \alpha(-)$ and $\gamma = \alpha(+)$. In both cases, we obtain $\cN^\beta >_K \cN^\gamma$ and so crossing a wall away from the centre decreases $K$.
\end{proof}

\subsubsection{Group actions on the space of weights}

In this section we exploit certain symmetries of the weight space $\cA_N$ by introducing two group actions on $\cA_N$ which enable us to significantly reduce the number of wall-crossings we must study. The first group action arises from Hecke modifications and the second group action arises from permutations of the weights. 

By $\S$\ref{sec Hecke mod} (see Example \ref{ex Hecke mod rank 2}), performing a Hecke modification at a subset $D' \subset D$ gives an isomorphism 
\[ \cH_D: \cN^\alpha(\eta) \cong \cN^{\alpha(D')}(\eta(D')) \]
where $\eta(D')= (2,d - |D'|, m)$ and (recalling we set $\alpha_{i,1} = 0$ and $\alpha_i :=\alpha_{i,2}$ for all $i$)
\[ \alpha(D')_i =\left\{ \begin{array}{ll} \alpha_i  & \text{if } p_i \notin D'  \\ 1 - \alpha_i & \text{if } p_i \in D' \end{array} \right. \]
If we perform a Hecke modification at a subset $D'$ of even cardinality $|D'| = 2e$, then after tensoring by a line bundle of degree $e$, we obtain an isomorphism
\[ \cN^\alpha(\eta) \cong \cN^{\alpha(D')}(\eta)\]
between moduli spaces for the same invariants $\eta$ but different weights. This gives a natural action on the weight space $\cA_N$ by Hecke modifications at an even number of points: this group is generated by Hecke modifications at the points $(p_i,p_{i+1})$ for $1 \leq i \leq N-1$. Therefore, we consider the group $H \cong (\ZZ/2\ZZ)^{N-1}$ acting on $\cA_N$ by, for $1 \leq i \leq N-1$, the involution
\[ (\alpha_1, \dots, \alpha_N) \mapsto (\alpha_1, \dots ,\alpha_{i-1},1-\alpha_i,1-\alpha_{i+1}, \alpha_{i+2}, \dots , \alpha_N). \]
Note that any weights in the same $H$-orbit have isomorphic moduli spaces and the $H$-action sends walls to walls and chambers to chambers.

We also have the natural permutation action of the symmetric group $S_N$ on $\cA_N = [0,1)^N$. Although, the corresponding moduli spaces for weights in the same $S_N$-orbit are not necessarily isomorphic, this action sends wall to walls and chambers to chambers, and preserves the numerical data of the wall-crossings in the following sense.

\begin{lemma} 
The action of $\sigma\in S_N$ on $\cA_N$ sends $W_{s,\cI}$ to $W_{s,\sigma(\cI)}$ and preserves the type of the wall-crossing, in the sense that $n_{\pm}(\eta'(s,\cI)) = n_{\pm}(\eta'(s, \sigma(\cI)))$.
\end{lemma}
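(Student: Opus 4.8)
The plan is to unwind the definitions and reduce both assertions to the observation that the relevant data depends on $\cI$ only through its cardinality. First I would recall that $S_N$ acts on $\cA_N = [0,1)^N$ by permuting the coordinates, and that by Lemma \ref{lemma walls} the wall $W_{s,\cI}$ is cut out inside $\cA_N$ by the single linear equation $\sum_{i \in \cI} \alpha_i - \sum_{i \in \cI^c} \alpha_i = c$, where $c = 2s+1$ when $d$ is odd and $c = 2s$ when $d$ is even. Since a coordinate permutation $\sigma$ carries the linear form $\sum_{i \in \cI}\alpha_i - \sum_{i \in \cI^c}\alpha_i$ to $\sum_{i \in \sigma(\cI)}\alpha_i - \sum_{i \in \sigma(\cI)^c}\alpha_i$ (using $\sigma(\cI^c) = \sigma(\cI)^c$ for the bijection $\sigma$), it maps the zero locus of the first equation onto that of the second; hence $\sigma$ permutes the walls according to $\cI \mapsto \sigma(\cI)$, which is the first claim.

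For the second assertion I would invoke the explicit formulas from Proposition \ref{prop par WC}(3), namely $n_-(s,\cI) = g + |\cI| - 2s - 3$ and $n_+(s,\cI) = g + N - |\cI| + 2s - 1$ for $d$ odd (with the analogous expressions for $d$ even). These are most transparently obtained by substituting the discrete invariants $\eta'(s,\cI) = (1, d'(s), m'(\cI))$ into \eqref{equation for n plus}: since $n' = n'' = 1$ and $d'(s)$ is independent of $\cI$, the terms $n'd'' - n''d'$ and $n'n''(g-1)$ contribute only through $d$, $s$ and $g$, while the correction sum $\sum_{i=1}^N \sum_{j>k} m''(\cI)_{i,j}\, m'(\cI)_{i,k}$ picks out exactly the indices $i \in \cI$ (so it equals $|\cI|$), and dually $\sum_{i=1}^N\sum_{j>k} m'(\cI)_{i,j}\, m''(\cI)_{i,k} = N - |\cI|$; this uses Remark \ref{rmk euler char parhom}. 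Because permuting the parabolic points replaces $m'(\cI)$ by $m'(\sigma(\cI))$ while leaving $d'(s)$ untouched, and $|\sigma(\cI)| = |\cI|$, we conclude $n_\pm(\eta'(s,\cI)) = n_\pm(\eta'(s,\sigma(\cI)))$.

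There is essentially no obstacle here: the whole content is bookkeeping. The one point requiring care is to track how $\eta'(s,\cI)$ transforms under $\sigma$ — the degree component $d'(s)$ is unaffected, and only the parabolic-point index $i$ of the multiplicity vector $m'(\cI)$ is permuted, so that $\eta'(s,\sigma(\cI)) = (1, d'(s), m'(\sigma(\cI)))$ — together with the remark that the correction terms in \eqref{equation for n plus} are symmetric functions of the parabolic points, weighted only through membership in $\cI$. Granting this, both equalities are immediate.
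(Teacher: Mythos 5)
Your proof is correct and follows essentially the same route as the paper, whose entire argument is the one-line observation that the formulas in \eqref{compute n in terms of k and I} depend only on $g$, $s$ and the cardinality $|\cI|$. You simply fill in the bookkeeping (the transformation of the defining linear equation of the wall and the computation of the correction sums as $|\cI|$ and $N-|\cI|$) that the paper leaves implicit.
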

\begin{proof}
This follows as $n_{\pm}(\eta'(s,\cI))$ in \eqref{compute n in terms of k and I} only depend on $g$, $s$ and the cardinality $|\cI|$.
\end{proof}

We can utilise these actions to reduce the number of wall-crossings we must consider. 

\begin{prop}\label{prop standard par walls}
Let $d$ be an odd degree. A set of representatives for the walls under the
$H \rtimes S_N$-action is given by, for $0 \leq l \leq N/2 -1$, the wall $W(l)$ defined by
\[ 1 = -\sum_{i=1}^l \alpha_i + \sum_{i=l+1}^N \alpha_i \]
corresponding to the invariants $\eta(l) =(1,(d+1)/2,m(l))$ with $m(l)_{i,1} = 0$ for $i \leq l$ and $m(k)_{i,1} = 1$ for $i > l$.
\end{prop}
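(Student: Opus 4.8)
\emph{Proof strategy.}
The plan is to attach to each wall an $H\rtimes S_N$-invariant number that already distinguishes the $W(l)$, and then to reduce an arbitrary wall to one of the $W(l)$ by explicit moves built from the generators of $H$ and $S_N$. The starting point is Lemma~\ref{lemma walls}(1): for $d$ odd the walls of $\cA_N=[0,1)^N$ are the $W_{s,\cI}$ with $s\in\NN$, $2s+1<N$ and $|\cI|>2s+1$, the supporting hyperplane of $W_{s,\cI}$ being $\{\sum_{i\in\cI}\alpha_i-\sum_{i\notin\cI}\alpha_i=2s+1\}$; and $W(l)$ is precisely $W_{0,\cI_l}$ with $\cI_l=\{l+1,\dots,N\}$. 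Reading off the invariants of this wall from Lemma~\ref{lemma walls}(1) is then immediate: $s=0$ forces $d'(0)=(d+1)/2$, and $m'(\cI_l)_{i,1}=1$ iff $i\in\cI_l$, i.e.\ iff $i>l$, which is the description of $\eta(l)$.

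\emph{The invariant.}
First I would note that each generator $\tau_j$ of $H$, namely $(\alpha_i)\mapsto(\dots,1-\alpha_j,1-\alpha_{j+1},\dots)$, is in coordinates centred at the barycentre $\mathbf c=(\tfrac12,\dots,\tfrac12)$ of the unit cube the orthogonal involution that negates the $j$-th and $(j{+}1)$-st coordinates, while $S_N$ permutes coordinates; so $H\rtimes S_N$ acts on $\cA_N$ by restrictions of affine isometries of $\RR^N$ fixing $\mathbf c$. Since (by the discussion preceding the statement) this action permutes the walls, the Euclidean distance from $\mathbf c$ to the supporting hyperplane of a wall is constant on $H\rtimes S_N$-orbits. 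A one-line computation with the affine form $f(\alpha)=\sum_{i\in\cI}\alpha_i-\sum_{i\notin\cI}\alpha_i-(2s+1)$ gives $\lVert\nabla f\rVert=\sqrt N$ and $f(\mathbf c)=|\cI|-\tfrac{N}{2}-(2s+1)$, so the orbit invariant is $r(s,\cI):=\bigl|\,|\cI|-\tfrac{N}{2}-(2s+1)\,\bigr|$. For $W(l)$ this is $\bigl|(N-l)-\tfrac{N}{2}-1\bigr|=\tfrac{N}{2}-1-l$, strictly decreasing for $0\le l\le\tfrac{N}{2}-1$; hence the $W(l)$ lie in pairwise distinct orbits.

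\emph{The reduction.}
It then remains to send an arbitrary wall to some $W(l)$ with $0\le l\le\tfrac N2-1$. After a permutation we may assume the wall is $W_{s,\{1,\dots,k\}}$ with $k=|\cI|>2s+1$, and I would use two moves on $(s,k)$. If $s\ge1$ then $k\ge 2s+2$, so $1,2\in\{1,\dots,k\}$ and applying $\tau_1$ turns the supporting hyperplane into $-\alpha_1-\alpha_2+\sum_{3\le i\le k}\alpha_i-\sum_{i>k}\alpha_i=2s-1$ (positive since $s\ge1$), which after a permutation is $W_{s-1,\{1,\dots,k-2\}}$; so $(s,k)$ becomes $(s-1,k-2)$, still with $k-2>2(s-1)+1$. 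If $s=0$ and $k\ge2$, applying $\tau_1$ and then multiplying the resulting equation by $-1$ identifies the wall with $W_{0,\{1,2\}\cup\{k+1,\dots,N\}}$, which after a permutation is $W_{0,\{1,\dots,N-k+2\}}$; so $(0,k)$ becomes $(0,N-k+2)$. Applying the first move $s$ times reaches $(0,k-2s)$ with $k-2s\ge2$: if $N-(k-2s)\le\tfrac N2-1$ this is already $W(N-k+2s)$, and otherwise $k-2s<\tfrac N2+1$, so one application of the second move produces $W\!\bigl((k-2s)-2\bigr)$ with $0\le(k-2s)-2<\tfrac N2-1$. Together with the inequivalence from the previous paragraph, this shows $\{W(l): 0\le l\le\tfrac N2-1\}$ is a complete and irredundant set of representatives.

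\emph{Expected main obstacle.}
The conceptual content is light once one notices that $H\rtimes S_N$ acts by isometries fixing the centre of the cube, so that distance-to-centre is a faithful orbit invariant for $d$ odd (the parity of the normal direction being automatically pinned down). The only point requiring care is the bookkeeping in the reduction step: checking at each stage that the pair $(s,k)$ still satisfies $2s+1<N$ and $k>2s+1$ so that one remains in the list of walls of Lemma~\ref{lemma walls}, that the renormalisation $\eta'\leftrightarrow\eta''$ is legitimate, and that the final inequalities hold for both parities of $N$.
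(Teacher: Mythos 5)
Your reduction step is essentially the paper's own argument: the paper performs a single Hecke modification at a $2s$-element subset of $\cI$ where you iterate the move $(s,k)\mapsto(s-1,k-2)$ a total of $s$ times, and your second move (a Hecke modification at two points followed by passing to the complementary invariants) is exactly the paper's. Your one genuine addition is the verification that the $W(l)$ lie in pairwise distinct orbits, via the observation that $H\rtimes S_N$ acts by affine isometries fixing the centre of the cube so that the distance from the centre to the supporting hyperplane is an orbit invariant; the paper's proof only establishes that every wall can be moved to some $W(l)$ and leaves irredundancy unaddressed, so this is a correct and worthwhile supplement.
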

\begin{proof}
Let us start with a wall of the form 
\[ 2s+1 = \sum_{i \in \cI} \alpha_i - \sum_{i \in \cI^c} \alpha_i \]
of Lemma \ref{lemma walls}. If $s > 0$, then as $|\cI| > 2s + 1$, we pick a subset $\cJ \subset \cI$ of cardinality $2s$ and perform a Hecke modification at the set $E = \{ p_j : j \in \cJ \} \subset D$. This wall transforms to
\[ 1 = \sum_{i \in \cK} \alpha_i - \sum_{i \in \cK^c} \alpha_i \]
with  $\cK = \cI \setminus \cJ$ of cardinality $|\cK| > 1$. If $|\cK^c| \leq \frac{N}{2} -1$, then after a permutation we can put it in the above form. If $|\cK^c| > \frac{N}{2} -1$, then $|\cK| < \frac{N}{2} +1$. After performing a Hecke modification at a set $\cJ \subset \cK$ of cardinality $2$ and multiplying the equation by $-1$, we obtain a wall
\[ 1 =- \sum_{i \in \cK \setminus \cJ} \alpha_i + \sum_{i \in \cK^c \cup \cJ} \alpha_i\] 
with $|\cK \setminus \cJ| < \frac{N}{2} -1$, and after a permutation we can put it in the above form.
\end{proof}

Recall that each wall $W(l)$ separates $\RR^N \setminus W(l)$ into two half-spaces $H(l)^{\pm}$ such that for $\alpha \in H(l)^{\pm}$, we have $\pm(\mu_\alpha(\eta(l)) - \mu_\alpha(\eta)) > 0$. For odd $d$, note that $\underline{0} \in H(l)^+$. 

\begin{prop}\label{prop flip type standard par walls}
For an odd degree $d$ and $0 \leq l \leq N/2 -1$, crossing the wall $W(l)$ at a point on no other wall from $H(l)^+$ to $H(l)^-$ is a flip of type $(g+l-1,g+N-3 -l)$ with centre $\Jac(C)^2$. In particular, for $l< N/2 -1$, this flip increases $K$ and for $N$ even and $l = N/2 -1$, this wall-crossing is a flop and so preserves $K$.
\end{prop}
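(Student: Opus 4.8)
The plan is to recognise $W(l)$ as one of the walls $W_{s,\cI}$ already analysed and then read off everything from the results proved so far. First I would use Proposition~\ref{prop standard par walls} to observe that $W(l)$ is exactly the wall $W_{s,\cI}$ of Lemma~\ref{lemma walls} with $s=0$ and $\cI=\{l+1,\dots,N\}$, so that $|\cI|=N-l$; because we are taking full flags, this wall is good, so Theorem~\ref{thm geom flips par} and hence Proposition~\ref{prop par WC}(3) apply. From the latter, crossing $W(l)=W_{0,\cI}$ is a standard flip with centre $\Jac(C)^2$ (a product of two rank~$1$ parabolic moduli spaces, i.e.\ Picard varieties) and of type $(n_-(0,\cI),n_+(0,\cI))$; substituting $s=0$ and $|\cI|=N-l$ into \eqref{compute n in terms of k and I} gives $n_-=g+N-l-3$ and $n_+=g+l-1$ (both positive for $g\geq 2$, so this is a genuine flip).

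The one point requiring care is the orientation. By construction $H(l)^{\pm}$ is the locus where $\pm(\mu_\alpha(\eta(l))-\mu_\alpha(\eta))>0$, so $H(l)^{\pm}$ coincides with $H_{\eta(l)}^{\pm}$ in the notation of Theorem~\ref{thm geom flips par}, and therefore crossing $W(l)$ from $H(l)^+$ to $H(l)^-$ is the birational map $\cN^{\alpha(+)}\dashrightarrow\cN^{\alpha(-)}$. In Theorem~\ref{thm geom flips par} the flip is written in the direction $\cN^{\alpha(-)}\dashrightarrow\cN^{\alpha(+)}$ and has type $(n_-,n_+)$, the convention of Definition~\ref{def:Flip} being that the first entry of the type records the dimension of the projective space fibred over the centre \emph{inside the source}. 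Reversing the flip therefore produces a standard flip $\cN^{\alpha(+)}\dashrightarrow\cN^{\alpha(-)}$ of type $(n_+,n_-)=(g+l-1,\,g+N-3-l)$ with centre $\Jac(C)^2$, which is the asserted statement. I expect the bookkeeping of these $\pm$/source and target conventions to be the only real hazard in the argument.

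For the final assertions about $K$ I would invoke the characterisation recorded in Definition~\ref{def:Flip}: a standard flip $X\dashrightarrow X'$ of type $(m,l')$ satisfies $X\geq_K X'$ if and only if $m\geq l'$. Here $m=g+l-1$ and $l'=g+N-3-l$, and $m\geq l'$ is equivalent to $l\geq N/2-1$. Hence for $0\leq l<N/2-1$ we have $m<l'$, so $\cN^{\alpha(-)}>_K\cN^{\alpha(+)}$ and crossing from $H(l)^+$ to $H(l)^-$ strictly increases $K$; this is consistent with Proposition~\ref{prop par WC}(4) together with the facts, already noted, that $\underline 0\in H(l)^+$ and, in this range, $\underline{\tfrac12}\in H(l)^-$. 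When $N$ is even and $l=N/2-1$ we get $m=l'=g+N/2-2$, so the type is $(m,m)$ and the wall-crossing is a standard flop, which is $K$-equivalent and hence preserves $K$. Once the wall has been identified with $W_{0,\cI}$, the whole proposition is thus a direct substitution into \eqref{compute n in terms of k and I} plus a sign check.
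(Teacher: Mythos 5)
Your proof is correct and follows essentially the same route as the paper, whose entire argument is to cite Theorem \ref{thm geom flips par} and Equation \eqref{compute n in terms of k and I}; you simply carry out the substitution $s=0$, $|\cI|=N-l$ explicitly and verify the orientation of the flip. Your careful check of the source/target convention for the type $(n_+,n_-)$ and of the $K$-ordering is exactly the bookkeeping the paper leaves implicit, and it comes out right.
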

\begin{proof}
This follows from Theorem \ref{thm geom flips par} and Equation \eqref{compute n in terms of k and I} in Proposition \ref{prop par WC}.
\end{proof}

We recall that in the wall and chamber decomposition of $\cA_N$ for an odd degree $d$, only the odd vertices in $[0,1]^N$ lie on the walls. Moreover the centre of the hypercube lies on a wall if and only if $N$ is even. We make the following definition of minimal and maximal chambers, whose terminology will be justified in Proposition \ref{prop max and min chambers} below.

\begin{defn}
For an odd degree $d$, a chamber in $\cA_N^\circ = (0,1)^N$ is said to be
\begin{enumerate}
\item \emph{minimal} if it contains an even vertex in its closure,
\item \emph{maximal} if it contains the centre of the hypercube in its closure.
\end{enumerate}
\end{defn}

The minimal are the chambers lying on the exterior of the hypercube. Note that if $N$ is odd, then there is a unique maximal chamber containing the centre of the hypercube. 

\begin{prop}\label{prop max and min chambers}
Consider the $K$-ordering on chambers in $\cA_N$ given by
\[ C_1 >_K C_2 :\iff \cN^{\alpha_1} >_K \cN^{\alpha_2} \text{ for } \alpha_i \in C_i.\]
For an odd degree $d$, the $K$-minimal (resp. $K$-maximal) chambers are the minimal (resp. maximal) chambers defined above. Moreover, the moduli spaces $\cN^\alpha$ for all weights $\alpha$ in minimal chambers are isomorphic and the moduli spaces $\cN^\alpha$ for all weights $\alpha$ in maximal chambers are related by a sequence of flops.
\end{prop}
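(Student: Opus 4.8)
The plan is to reduce everything to a combinatorial statement about the position of the centre $\underline{\frac12}$ relative to the walls in $\cA_N=[0,1)^N$. By Proposition~\ref{prop par WC}(2) and~(4) together with Proposition~\ref{prop flip type standard par walls}, a wall $W_{s,\cI}$ is a \emph{flopping} wall exactly when $\underline{\frac12}\in W_{s,\cI}$, and for a non-flopping wall the wall-crossing flip strictly raises $K$ precisely when it moves a point to the side of $W_{s,\cI}$ containing $\underline{\frac12}$, and strictly lowers $K$ when it moves to the opposite side. Since the $K$-order $\geq_K$ on a birational class of models related by an iterated sequence of flips and flops is the transitive closure of the one-step comparisons (a flop giving $=_K$; cf.\ \cite{KollarMori}), the order on chambers is generated by single wall-crossings; in particular a chamber $C$ is $K$-maximal (resp.\ $K$-minimal) iff no single wall-crossing out of $C$ strictly raises (resp.\ lowers) $K$. (For $g\ge 2$ all the $\cN^\alpha$ are non-empty, by the inequalities on flip types in Proposition~\ref{prop flip type standard par walls}, so there is a single birational class here.)

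\emph{Maximal chambers.} I would first show that $C$ is a maximal chamber $\iff\underline{\frac12}\in\overline C$: one implication is the definition, and for the converse note that $\overline C$ is the intersection of the cube constraints $0\le\alpha_i\le 1$ with, for each bounding wall $W$ of $C$, the closed half-space of $W$ on the side of $C$; as $\underline{\frac12}$ satisfies the cube constraints, $\underline{\frac12}\in\overline C$ iff $\underline{\frac12}$ lies weakly on the $C$-side of every bounding wall. For flopping bounding walls this is automatic ($\underline{\frac12}$ lies on them), and for non-flopping bounding walls it says exactly that crossing them out of $C$ does not raise $K$; hence $\underline{\frac12}\in\overline C\iff C$ is $K$-maximal. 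This already gives the equality of $K$-maximal and maximal chambers. For the ``moreover'', the maximal chambers are precisely the chambers contained in the open convex region $R$ obtained by intersecting, over all non-flopping walls $W$, the open half-space of $W$ containing $\underline{\frac12}$; indeed $\underline{\frac12}$ is an interior point of $R$, the only walls meeting $R$ are the flopping ones and these all pass through $\underline{\frac12}$, so $R$ minus the flopping walls is a central hyperplane arrangement intersected with a convex body, whose regions are exactly the maximal chambers and whose adjacency graph is connected. Crossing a flopping wall is a standard flop (type $(n_-,n_+)$ with $n_-=n_+$ by Proposition~\ref{prop par WC}(2)), so by Theorem~\ref{thm Jiang Chow flip} any two maximal $\cN^\alpha$ are linked by a chain of flops.

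\emph{Minimal chambers.} The chamber of the origin is $C_0=\{\alpha\in[0,1)^N:\sum_i\alpha_i<1\}$: for every wall $W_{s,\cI}$ one has $\sum_{i\in\cI}\alpha_i-\sum_{i\in\cI^c}\alpha_i\le\sum_i\alpha_i<1\le 2s+1$ on $C_0$, so $C_0$ lies strictly on one side of every wall and its only bounding wall is $W_{0,\{1,\dots,N\}}=\{\sum_i\alpha_i=1\}$. For $N\ge 3$ this wall is non-flopping with $\underline{\frac12}$ on the far side (since $N/2>1$), so the unique wall-crossing out of $C_0$ raises $K$ and $C_0$ is $K$-minimal; for $N=2$ the wall is a flopping wall and again $C_0$ is $K$-minimal. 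Now $H$ acts transitively on the even vertices of $[0,1]^N$, preserves walls and chambers, and preserves $\geq_K$ (its generators induce isomorphisms $\cN^\alpha\cong\cN^{h(\alpha)}$ intertwining the wall-crossing maps); moreover, for $d$ odd the origin lies on no wall, so it is in the closure of the unique chamber $C_0$. Hence any minimal chamber $C$, having an even vertex $v$ in its closure, satisfies $h(C)=C_0$ for the $h\in H$ with $h(v)=\underline 0$. Therefore $C$ is $K$-minimal and $\cN^\alpha\cong\cN^{h(\alpha)}$ with $h(\alpha)\in C_0$, which proves that the moduli spaces for all weights in minimal chambers are isomorphic.

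\emph{The remaining implication and main obstacle.} What is left is $K$-minimal $\Rightarrow$ minimal, i.e.\ that there are no ``spurious'' $K$-minimal chambers; equivalently, that every non-minimal chamber admits a $K$-decreasing wall-crossing. I would set up the monovariant $\Psi(C):=\#\{\text{non-flopping walls }W:\ C\text{ lies on the side of }W\text{ away from }\underline{\frac12}\}$, which increases by $1$ under a $K$-decreasing crossing, decreases by $1$ under a $K$-increasing one, and is unchanged by a flop; being bounded above, iterating $K$-decreasing crossings from any chamber terminates at a $K$-minimal chamber, so the content is to show that $\Psi$ attains its maximum only on the $H$-orbit of $C_0$. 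I expect this to be the main obstacle; the approach would be to use the explicit wall equations of Lemma~\ref{lemma walls} together with the $H\rtimes S_N$-symmetry and the reduction to the representative walls $W(l)$ (Proposition~\ref{prop standard par walls}) to analyse the bounding walls of a putative $K$-minimal chamber and force its closure to contain an even vertex.
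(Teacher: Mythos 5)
Your strategy is the same as the paper's: deduce the first assertion from Proposition~\ref{prop par WC}(4), identify all minimal chambers with one another via the Hecke/$H$-action, and connect the maximal chambers through the flopping walls. Your treatment of the maximal chambers (the convex region $R$ cut out by the near half-spaces of the non-flopping walls, connectedness of the residual central arrangement) and of the isomorphisms between minimal chambers is correct and considerably more careful than the paper's own very brief proof. But the gap you flag at the end is genuine: nothing you establish rules out a chamber $C$ containing no even vertex in its closure for which every non-flopping bounding wall already has $\underline{\frac{1}{2}}$ on the opposite side, so that no single wall-crossing out of $C$ lowers $K$. Such a $C$ would be $K$-minimal but not minimal, contradicting the set equality asserted in the statement. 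This is not a formal consequence of Proposition~\ref{prop par WC}(4), and the most tempting shortcut --- pushing a generic $\alpha\in C$ radially away from $\underline{\frac{1}{2}}$ until one lands in a minimal chamber --- fails, because the ray can exit the cube $[0,1]^N$ without crossing any further wall (already for $N=3$ the ray towards the facet midpoint $(1,\tfrac12,\tfrac12)$ never leaves the maximal chamber), so ``outermost along a ray'' does not imply ``vertex chamber''. One really needs the combinatorial assertion that every non-vertex chamber has at least one bounding wall with the centre on its own side; your monovariant $\Psi$ correctly reduces the problem to showing that the local maxima of $\Psi$ on the chamber graph are exactly the vertex chambers, but you do not prove this, and it is the entire content of the inclusion ($K$-minimal $\Rightarrow$ minimal).

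To be fair, the paper's proof (``the first part follows the last statement in Proposition~\ref{prop par WC}'') elides exactly the same point, so you have correctly isolated the one step requiring a genuine argument rather than missed something the authors supply. A secondary, smaller issue: you assert that $\geq_K$ on this family is the transitive closure of the one-step comparisons. The implication ``some wall-crossing out of $C$ lowers $K$ $\Rightarrow$ $C$ is not $K$-minimal'' is immediate, but the converse half of your reduction (local $K$-minimality implies global $K$-minimality) again rests on being able to reach a minimal chamber from an arbitrary chamber by a monotone outward path of wall-crossings --- that is, on the same missing combinatorial fact. Closing that single point would complete the proof.
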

\begin{proof} The first part follows the last statement in Proposition \ref{prop par WC}. Since Hecke modifications can be used to identify all the minimal chambers, their corresponding moduli spaces are isomorphic. The maximal chambers are all related by crossing walls through the centre of the hypercubes, which are precisely the flopping walls by Proposition \ref{prop par WC}.
\end{proof}

\subsubsection{Explicit formulas for the motive}

\begin{prop}\label{prop motive min and max}
Let $\alpha \in \cA_N^\circ = (0,1)^N$ be generic weight and let $\fh(\cN^\alpha)$ denote the integral Chow motive of the moduli space $\cN^\alpha$ of $\alpha$-semistable rank $2$ and odd degree $d$ parabolic bundles.
\begin{enumerate}
\item If $\alpha = \alpha_{\min}$ is in a minimal chamber, then there is an explicit isomorphism
\[ \fh(\cN^{\alpha_{\min}}) \simeq \fh(\cN) \otimes \fh(\PP^1)^{\otimes N}.  \]
\item If $\alpha = \alpha_{\max}$ is in a maximal chamber, then there is an explicit isomorphism
\[ \fh(\cN^{\alpha_{\max}}) \simeq \fh(\cN) \otimes \fh(\PP^1)^{\otimes N} \oplus \bigoplus_{\begin{smallmatrix}0\leq s < M \\ 2s \leq l < 2M \end{smallmatrix}}\: \: \bigoplus_{j=l}^{N-3 - l} \fh(\Jac(C))^{\otimes 2}(g+j)^{\oplus {N \choose  l-2s}} \]
where $M := (N-2)/4$.
\item For arbitrary generic $\alpha$, there is an explicit isomorphism
\[ \fh(\cN^{\alpha})  \simeq \fh(\cN) \otimes \fh(\PP^1)^{\otimes N} \oplus \bigoplus_{j=0}^{N-3} \fh(\Jac(C))^{\otimes 2}(g+j)^{\oplus b_j}\]
such that the exponents $b_j \in \NN$ can be inductively computed (see Theorem \ref{thm par motive rk 2 odd deg}, for a closed formula). In particular, this Chow motive is a direct factor of $\fh(\cN^{\alpha_{\max}})$ and contains $\fh(\cN^{\alpha_{\min}})$ as a direct summand.
\end{enumerate}
\end{prop}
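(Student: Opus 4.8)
The plan is to derive all three parts from the explicit wall-crossing isomorphism of Corollary \ref{cor motivic par WC} together with the $K$-order bookkeeping of Propositions \ref{prop par WC}, \ref{prop standard par walls}, \ref{prop flip type standard par walls} and \ref{prop max and min chambers}. For part (1) I would combine Proposition \ref{prop max and min chambers} — which says the motives $\fh(\cN^\alpha)$ for $\alpha$ in any minimal chamber are all isomorphic, and the minimal chamber whose closure contains the origin $\underline{0}$ is realised by arbitrarily small generic weights — with Corollary \ref{cor full deg flag motive}, noting that for rank $2$ with full flags each flag variety $\cF(m_i)$ is $\PP^1$, so the corresponding Tate motive is $\fh(\PP^1)^{\otimes N}$.

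For part (3) the preliminary observations are that, by Theorem \ref{thm geom flips par}, in rank $2$ every wall-crossing has centre $\cN^{\alpha-sss}\cong\cN^{\alpha'}_{C,D}(\eta')\times\cN^{\alpha''}_{C,D}(\eta'')$, a product of two (degree-shifted) Jacobians since the parabolic points are rational, so that $\fh(\cN^{\alpha-sss})\simeq\fh(\Jac(C))^{\otimes 2}$, and that by Proposition \ref{prop par WC}(3) the flip type $(n_-(s,\cI),n_+(s,\cI))$ always lies in the range $g-1\le n_\pm\le g+N-3$. Since the $K$-minimal chambers are exactly the minimal chambers (Proposition \ref{prop max and min chambers}), a maximal $K$-decreasing chain of wall-crossings starting from an arbitrary generic $\alpha$ ends in a minimal chamber $\alpha_{\min}$; reading this chain backwards and iterating Corollary \ref{cor motivic par WC} — a flop step leaves the motive unchanged, a genuine flip step adds $\bigoplus_j\fh(\Jac(C))^{\otimes 2}(j)$ for $j$ ranging over an interval of integers inside $[g,g+N-3]$ — yields $\fh(\cN^\alpha)\simeq\fh(\cN^{\alpha_{\min}})\oplus\bigoplus_{j=0}^{N-3}\fh(\Jac(C))^{\otimes 2}(g+j)^{\oplus b_j}$, with $b_j$ the number of flips along the chain whose twist-interval contains $g+j$. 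This is the announced inductive computation of the $b_j$ (the closed formula being deferred to Theorem \ref{thm par motive rk 2 odd deg}), and substituting part (1) produces the displayed isomorphism; that $\fh(\cN^\alpha)$ is a direct factor of $\fh(\cN^{\alpha_{\max}})$ follows identically by running a $K$-decreasing chain down from the $K$-maximal chamber $\alpha_{\max}$ to $\alpha$, and that it contains $\fh(\cN^{\alpha_{\min}})$ as a summand is immediate from the formula.

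For part (2) I would apply part (3) with $\alpha=\alpha_{\max}$: by Proposition \ref{prop max and min chambers} all maximal chambers are related by flops, so $\fh(\cN^{\alpha_{\max}})$ is independent of the choice and may be computed along a generic path from a small generic weight near $\underline{0}$ to a generic weight near the centre $\underline{\frac{1}{2}}$ of the hypercube. Using the wall equations of Lemma \ref{lemma walls} and the sign computation in the proof of Proposition \ref{prop par WC}, the walls met by this path (transversally, and ignoring flopping walls, which do not affect the motive) are exactly those $W_{s,\cI}$ with $|\cI|>(2s+1)+\frac{N}{2}$; for each such wall one has $n_-(s,\cI)>n_+(s,\cI)$, and setting $l:=N-|\cI|+2s$ turns its contribution into $\bigoplus_{j=l}^{N-3-l}\fh(\Jac(C))^{\otimes 2}(g+j)$, turns the constraints into $0\le s<M:=(N-2)/4$ and $2s\le l<2M$, and makes the number of walls with a given pair $(s,l)$ equal to $\binom{N}{|\cI|}=\binom{N}{l-2s}$. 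Summing over all these walls, with $\fh(\cN^{\alpha_{\min}})$ supplied by part (1), gives the stated formula.

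The step I expect to be most delicate is the bookkeeping in part (2): pinning down precisely which walls separate $\underline{0}$ from $\underline{\frac{1}{2}}$, arranging the path so it meets each of them once and transversally (in particular so that crossings of flopping walls, and any low-genus coincidences in which a flip degenerates to a blow-up, do not corrupt the count), and verifying that the reindexing $l=N-|\cI|+2s$ genuinely converts the crude twist-interval $\{n_+(s,\cI)+1,\dots,n_-(s,\cI)\}$ and the multiplicity $\binom{N}{|\cI|}$ into the clean $\{l,\dots,N-3-l\}$ with multiplicity $\binom{N}{l-2s}$. A secondary point needing care is the compatibility of the direction conventions of Theorem \ref{thm geom flips par} and Corollary \ref{cor motivic par WC}, so that crossing a wall toward the $K$-maximal chamber genuinely adds, rather than removes, the relevant Tate twists of $\fh(\Jac(C))^{\otimes 2}$.
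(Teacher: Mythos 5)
Your proposal is correct and follows essentially the same route as the paper: part (1) via Hecke modifications plus flag degeneration, and parts (2)–(3) by iterating Corollary \ref{cor motivic par WC} along a (perturbed) ray between the origin and the centre of the hypercube, with the same wall count $|\cI|>2s+1+\tfrac{N}{2}$, the same reindexing $l=N-|\cI|+2s$, and the same multiplicity $\binom{N}{l-2s}$. The only difference is organisational — you prove (3) first and obtain (2) as the special case $\alpha=\alpha_{\max}$, whereas the paper does (2) then (3) — and your flagged concern about the direction convention is well placed, since the correct reading is the one you adopt (crossing towards the $K$-larger chamber adds the summands $\fh(\Jac(C))^{\otimes 2}(g+j)$, $l\le j\le N-3-l$).
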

\begin{proof}
Let us first consider the minimal chambers. It suffices to consider a weight $\alpha$ in the minimal chamber containing the origin in its closure, as all other minimal chambers are related by Hecke modifications and so their corresponding moduli spaces are isomorphic (\textit{cf.}\ Proposition \ref{prop max and min chambers}). By successively degenerating the flag as in Theorem \ref{thm geom flag degen}, we obtain a forgetful morphism
\[ \cN^\alpha  \ra \cN\]
which is a $(\PP^1)^N$-fibration, which gives the above description of $\fh(\cN^{\alpha_{\min}})$ (\textit{cf.}\ Corollary \ref{cor full deg flag motive}).

To describe the Chow motives of the moduli spaces for weights in maximal chambers, it suffices to consider one maximal chamber, as the moduli spaces $\cN^\alpha$ for all weights $\alpha$ in maximal chambers are related by a sequence of flops (\textit{cf.}\ Proposition \ref{prop max and min chambers}). We will compute this using a sequence of flips from a minimal chamber to a maximal chamber. The line segment from the origin to the centre of the hypercube crosses the wall $W_{s,\cI}$ (given in Lemma \ref{lemma walls}) if and only if $2s+1 \leq |\cI|  - \frac{N}{2}$, with equality if and only if the wall-crossing happens at the centre $\underline{\frac{1}{2}}$ (i.e this is a flopping wall by Proposition \ref{prop par WC}). By replacing the centre of the hypercube by a small perturbation of the form $(\frac{1}{2} - \epsilon_1, \dots , \frac{1}{2} -\epsilon_N)$, we can ensure this line segment goes from a minimal chamber to a maximal chamber by crossing one wall at a time and does not cross the flopping walls. Hence this line segment crosses the walls $W_{s,\cI}$ with $2s+1 < |\cI|  - \frac{N}{2}$  (or equivalently $|\cI^c| < \frac{N}{2} - (2s+1)$). Since $|\cI| \leq N$, it follows that $2s+1 <\frac{N}{2}$. We can enumerate these walls as follows: for each $0 \leq s < M:=\frac{N-2}{4}$ and each $0 \leq t < \frac{N}{2} - (2k+1)$ and each subsets $\cI^c \subset \{1, \dots , n\}$ of cardinality $t$, we cross the wall $W_{s,\cI}$. Each of these wall-crossings is a flip whose type is explicitly described in Proposition \ref{prop par WC} and so by applying Corollary \ref{cor motivic par WC}, we obtain the above formula. More precisely, any such wall $W_{s,\cI}$ is transformed by Hecke modifications and permutations to a standard wall of the form $W(|\cI^c| + 2s)$ in Proposition \ref{prop standard par walls}. Since $0 \leq |\cI^c| < \frac{N}{2} - (2s+1)$, we have $2s \leq |\cI^c| + 2s < 2M$. Thus for $0 \leq s < M$ and $2s \leq l < 2M$ we have ${N \choose l - 2s}$ wall-crossings of the type $W(l)$, which is a flip of type $(g-1, g+N-3 -l)$ with centre $\Jac(C)^2$ by Proposition \ref{prop flip type standard par walls}.

For an arbitrary generic weight $\alpha$, consider the ray from the centre of the hypercube through $\alpha$ to the edge of the hypercube. By perturbing $\alpha$, we can assume this ray passes each wall one by one. By Proposition \ref{prop par WC}, we know that moving along this ray from the centre to the edge of the hypercube decreases $K$. Therefore, we consider the corresponding sequence of flips running backwards along this ray, starting at a weight $\alpha_{\min}$ in a minimal chamber (at the exterior of the cube) and passing through $\alpha$ and then ending at a weight $\alpha_{\max}$ in a maximal chamber. All of these wall-crossings are flips with centre $\Jac(C)^2$ and the types are specified by Proposition \ref{prop par WC}. Hence, by iterative applications of Corollary \ref{cor motivic par WC}, one obtains the final statement.
\end{proof}

We can more explicitly compute the exponents appearing above using a method inspired by Bauer's computation \cite{Bauer} of the Poincar\'{e} polynomials of moduli spaces of parabolic vector bundles on $\PP^1$. In fact, for parabolic vector bundles on a curve $C$ of positive genus, the wall-crossing picture and associated combinatorics is the same, except for the fact that rather than the inductive description starting with a chamber where the parabolic moduli space in $g = 0$ is empty, we start with a chamber where the parabolic moduli space is related to the moduli space $\cN= \cN_C(2,d)$ of semistable vector bundles on $C$.

\begin{defn}\label{def exponents in terms of alpha}
For a weight $\alpha$ and $0 \leq j \leq N$, define
\[
d_j(\alpha)  =\# \left\{ \cI \subset \{1,\dots,N\} :   |\cI | \equiv j \: \text{ mod } \: 2 \text{, and }   -1+j < |\cI| + \sum_{i \in \cI^c} \alpha_i - \sum_{i \in \cI} \alpha_i < 1 + j \right\} \]
and
\[ b_j(\alpha)  = \sum_{i=0}^j \left\lfloor \frac{i + 2}{2} \right\rfloor c_{j-i}(\alpha), \: \text{ where } \: c_j(\alpha) = {N \choose j} -  d_j(\alpha) .\]
\end{defn}

The relationship between the constants $b_j(\alpha)$ and $d_j(\alpha)$ is given by the following equality of polynomials in $x$
\begin{equation}\label{eq b and d}
 (1 -x)(1-x^2)\sum_{j=0}^{N-3} b_j(\alpha)x^j = (1+x)^N + \sum_{j=0}^N d_j(\alpha) x^j.
\end{equation}

\begin{thm}\label{thm par motive rk 2 odd deg}
For a generic weight $\alpha \in \cA_N^\circ$, the integral Chow motive of the moduli space of rank $2$ and odd degree $\alpha$-semistable parabolic vector bundles is given by
\[ \fh(\cN^{\alpha})  \simeq \fh(\cN) \otimes \fh(\PP^1)^{\otimes N} \oplus \bigoplus_{j=0}^{N-3} \fh(\Jac(C))^{\otimes 2}(g+j)^{\oplus b_j(\alpha)}\]
with the exponents $b_j(\alpha)$ given in Definition \ref{def exponents in terms of alpha}.
\end{thm}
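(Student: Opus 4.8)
The plan is to upgrade the inductive description of the exponents in Proposition \ref{prop motive min and max}(3) to the closed formula by a combinatorial bookkeeping argument in the spirit of Bauer's computation \cite{Bauer}. By Proposition \ref{prop motive min and max}(3) we already know $\fh(\cN^\alpha)\simeq\fh(\cN)\otimes\fh(\PP^1)^{\otimes N}\oplus\bigoplus_{j=0}^{N-3}\fh(\Jac(C))^{\otimes 2}(g+j)^{\oplus b_j}$ for well-defined integers $b_j\in\NN$ (well-definedness resting on Kimura cancellation, Propositions \ref{prop:N-abelian}, \ref{prop kimura ab}, \ref{prop kimura canc}), so the theorem amounts to the identity $b_j=b_j(\alpha)$. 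I would set $P_\alpha(x):=\sum_{j=0}^{N-3}b_jx^j$ and $\widetilde P_\alpha(x):=\sum_{j=0}^{N-3}b_j(\alpha)x^j$, and prove $P_\alpha=\widetilde P_\alpha$ by induction on the number of walls separating the chamber of $\alpha$ from a minimal chamber; the chamber complex of $\cA_N^\circ=(0,1)^N$ is connected through wall-crossings and every chamber can be joined to a minimal one, so this suffices.

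For the base case I would check that $P_\alpha=\widetilde P_\alpha=0$ when $\alpha$ lies in a minimal chamber. For $P_\alpha$ this is Proposition \ref{prop motive min and max}(1). For $\widetilde P_\alpha$ it is a direct computation from Definition \ref{def exponents in terms of alpha}: for $\alpha$ near an even vertex $v\in\{0,1\}^N$ one has $d_j(\alpha)=\#\{\cI:\,|\cI\setminus\mathrm{supp}(v)|-|\cI\cap\mathrm{supp}(v)|=j-|v|\}$, which by a Vandermonde identity equals $\binom{N}{j}$, so $c_j(\alpha)=0$ and hence $b_j(\alpha)=0$ for all $j$; equivalently, via \eqref{eq b and d}, the numerator $(1+x)^N+\sum_jd_j(\alpha)x^j$ vanishes there.

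For the inductive step I would compare the variation of $P_\alpha$ and of $\widetilde P_\alpha$ across a single wall $W_{s,\cI}$. On the motivic side, Corollary \ref{cor motivic par WC} together with the wall-type computation in Proposition \ref{prop par WC}(3) (and the reduction of $W_{s,\cI}$ to a standard wall $W(l)$ via the $H\rtimes S_N$-action of Proposition \ref{prop standard par walls}, with the flip type read off from Proposition \ref{prop flip type standard par walls} and \eqref{compute n in terms of k and I}) shows that crossing $W_{s,\cI}$ changes $P_\alpha(x)$ by $\pm(x^l+x^{l+1}+\cdots+x^{N-3-l})$, where $l=l(s,\cI)$ is the corresponding standard index, the sign being $+$ when moving toward the centre $\underline{\frac{1}{2}}$ of the cube (flopping walls, i.e.\ $l=N/2-1$ for $N$ even, contribute nothing since then $n_-=n_+$). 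On the combinatorial side, using $\frac{1}{(1-x)(1-x^2)}=\sum_{i\ge0}\lfloor\frac{i+2}{2}\rfloor x^i$ one has $\widetilde P_\alpha(x)=\frac{(1+x)^N+\sum_jd_j(\alpha)x^j}{(1-x)(1-x^2)}$ by \eqref{eq b and d}, so the variation of $\widetilde P_\alpha$ is $\frac{\sum_j\Delta d_j(\alpha)\,x^j}{(1-x)(1-x^2)}$; crossing $W_{s,\cI}$ moves $\alpha$ between two consecutive ``slabs'' for the subset $\cI$ and for its complement $\cI^c$, and one computes $\sum_j\Delta d_j(\alpha)x^j=\pm(x^l-x^{l+2}-x^{N-2-l}+x^{N-l})$, which divided by $(1-x)(1-x^2)$ is exactly $\pm(x^l+\cdots+x^{N-3-l})$. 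Matching these completes the induction.

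The main obstacle is the last comparison: verifying that the two contributions to $\Delta d_j(\alpha)$ — one indexed by $\cI$ and the ``reflected'' one indexed by $\cI^c$ — land at the monomials $\{x^l,x^{l+2}\}$ and $\{x^{N-2-l},x^{N-l}\}$ with the correct signs, i.e.\ that the ad hoc integer $l=l(s,\cI)$ produced by the Hecke-and-permutation reduction of Proposition \ref{prop standard par walls} coincides with the integer dictated by the two-sided interval condition $j-1<|\cI|+\sum_{i\in\cI^c}\alpha_i-\sum_{i\in\cI}\alpha_i<j+1$ of Definition \ref{def exponents in terms of alpha}. This requires a careful case analysis according to whether $|\cI|-2s$ is above or below $N/2+1$ (which determines whether the ``heavy'' side of the wall is $\cI$ or $\cI^c$), and a tracking of orientations so that ``moving toward the centre'' on the motivic side corresponds to the correct direction of slab change on the combinatorial side. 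Everything else — the base case, the generating-function algebra, and the a posteriori independence of $P_\alpha$ from the auxiliary choices — is routine once this dictionary is pinned down.
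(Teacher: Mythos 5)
Your proposal is correct and follows essentially the same route as the paper's proof: verify the formula in a minimal chamber (where $d_j=\binom{N}{j}$ and hence $b_j(\alpha)=0$), then induct over wall-crossings by matching the change $\sum_j \Delta d_j(\alpha)x^j = \pm\bigl(x^{l+2}-x^{l}+x^{N-2-l}-x^{N-l}\bigr)$, divided by $(1-x)(1-x^2)$ as in \eqref{eq b and d}, against the band of Tate twists $\fh(\Jac(C))^{\otimes 2}(g+j)$, $l\leq j\leq N-3-l$, produced by the flip of type $(g+l-1,\,g+N-3-l)$ from Corollary \ref{cor motivic par WC} and Proposition \ref{prop par WC}. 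The orientation/case analysis you flag as the main obstacle is exactly the computation the paper carries out via the inequalities characterising $H^{\pm}_{s,\cI}$, and your base-case check at arbitrary even vertices is a harmless strengthening of the paper's check at the origin.
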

\begin{proof}
By inductively crossing walls, it suffices to check this formula holds for a particular chamber and then show that the formula respects wall-crossings. First consider $\alpha = (\epsilon_1, \dots , \epsilon_N) \in (0,1)^N$ in the minimal chamber very close to the origin. Then
\[ d_j(\alpha) = \# \left\{ \cI \subset \{1,\dots,N\} : |\cI| = j \right\} = { N \choose j } \text{ and } c_j(\alpha) = b_j(\alpha) = 0, \] 
which agrees with the formula for $\fh(\cN^{\alpha_{\min}})$ in Proposition \ref{prop motive min and max}. 

Now suppose we have two generic weights $\alpha_{\pm}$ which are separated by a single wall $W_{s, \cI}$ defined by $\eta':=\eta'(s,\cI)$ as in Lemma \ref{lemma walls}. Suppose that $\alpha_{\pm} \in H_{s,\cI}^\pm$; thus
\[ \mu_{\alpha_-}(\eta') - \mu_{\alpha_-}(\eta) < 0 < \mu_{\alpha_+}(\eta') - \mu_{\alpha_+}(\eta)\]
or equivalently
\[ 2s+1 + \sum_{i \in \cI^c} (\alpha_{-})_i - \sum_{i \in \cI} (\alpha_-)_i< 0 < 2s+1 + \sum_{i \in \cI^c}  (\alpha_{+})_i - \sum_{i \in \cI}  (\alpha_{+})_i. \]
This implies
\[ \sum_{i \in \cI^c}  (\alpha_{-})_i - \sum_{i \in \cI}  (\alpha_{-})_i + |\cI| < |\cI| - 2s -1 <  \sum_{i \in \cI^c}  (\alpha_{+})_i - \sum_{i \in \cI}  (\alpha_{+})_i + |\cI| \]
and also for $\cJ = \cI^c$ we have
\[ \sum_{i \in \cJ^c}  (\alpha_{+})_i - \sum_{i \in \cJ}  (\alpha_{+})_i + |\cJ| < |\cJ| + 2s +1 = N - |\cI| + 2s +1 < \sum_{i \in \cJ^c}  (\alpha_{-})_i - \sum_{i \in \cJ}  (\alpha_{-})_i + |\cJ|. \]
Hence, the only differences between the values of $d_j(\alpha_-)$ and $d_j(\alpha_+)$ are as follows
\begin{align*}
d_{|\cI| - 2s -2}(\alpha_-)  &=  d_{\cI - 2s -2}(\alpha_+) +1   \quad & d_{|\cI| - 2s }(\alpha_+)  & =  d_{\cI - 2s }(\alpha_-) +1 \\
d_{N - |\cI| + 2s }(\alpha_+)  &=  d_{N - |\cI| + 2s }(\alpha_-) +1  \quad &
d_{N - |\cI| + 2s +2  }(\alpha_-)  & =  d_{N - |\cI| + 2s +2 }(\alpha_+) +1.
\end{align*}
From \eqref{eq b and d}, we obtain a corresponding relationship between the $b_j(\alpha_{\pm})$ 
\begin{equation}\label{poly in x for bs}
(1-x)(1-x^2)\sum_{j=0}^{N-3}(b_j(\alpha_+) - b_j(\alpha_-))x^{j} = x^{|\cI| -2s} -x^{|\cI| - 2s -2} + x^{N - |\cI| + 2s} - x^{N - |\cI| + 2s +2}. 
\end{equation}

By Proposition \ref{prop standard par walls}, crossing the wall $W_{s,\cI}$ from $\alpha_-$ to $\alpha_+$ is a flip with centre $\Jac(C)^2$ of type $(n_-,n_+) = (g + | \cI | -2s -3, g + N -| \cI | + 2s -1)$. By taking Poincar\'{e} polynomials of the corresponding equation in Corollary \ref{cor motivic par WC}, we obtain 
\[ P_t(\cN^{\alpha_+}) - P_t(\cN^{\alpha_-}) = P_t(\Jac(C))^2) \left(P_t(\PP^{n_-}) - P_t(\PP^{n_+})\right) = (1 + t)^{4g} \frac{t^{2(n_+ +1)}-t^{2(n_- +1)} }{1-t^2}.\]
Thus to prove the claimed formula respects this wall-crossing, we need to show
\[ (1-t^2)\sum_{j=0}^{N-3}(b_j(\alpha_+) - b_j(\alpha_-)t^{2j} =t^{2(N - | \cI| +2s)} - t^{2(|\cI| -2s -2)} . \]
After multiplying this by $(1-t^4)$, we see this holds by inserting $x = t^2$ in \eqref{poly in x for bs}.
\end{proof}

\begin{cor} 
For a generic weight $\alpha \in \cA_N^\circ$, the rational Chow motive of the moduli space $\cN^{\alpha}$ of $\alpha$-semistable parabolic vector bundles of rank $2$ and odd degree $d$ is given by
\begin{align*}
  \fh(\cN^{\alpha})  \simeq & \left(\bigoplus_{j=0}^{N-3} \fh(\Jac(C))^{\otimes 2}(g+j)^{\oplus b_j(\alpha)} \right) \oplus \fh(\Jac(C)) \otimes \fh(\PP^1)^{\otimes N} \otimes \fh(\Sym^{g-1}(C))(g-1) \\
  & \oplus  \bigoplus_{i=0}^{g-2} \fh(\Jac(C)) \otimes \fh(\PP^1)^{\otimes N} \otimes \fh(\Sym^i(C)) \otimes  \left( \QQ(i) \oplus \QQ(3g-3-2i)\right)
\end{align*}
with the exponents $b_j(\alpha)$ given in Definition \ref{def exponents in terms of alpha}.
\end{cor}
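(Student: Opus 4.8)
The plan is to simply combine the two main rank-two formulas already established, namely the integral statement of Theorem~\ref{thm par motive rk 2 odd deg} and the rational formula of Theorem~\ref{thm motive N rank 2} for $\fh(\cN)$, via distributivity of the tensor product over direct sums in $\CHM(k,\QQ)$. There is no genuine obstacle here; the corollary is a formal substitution, and the only care needed is bookkeeping of Tate twists and checking that the passage from integral to rational coefficients is harmless.

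First I would apply the additive tensor functor $\CHM(k,\ZZ)\to\CHM(k,\QQ)$ (extension of scalars) to the isomorphism of Theorem~\ref{thm par motive rk 2 odd deg}, obtaining in $\CHM(k,\QQ)$
\[
\fh(\cN^{\alpha})\;\simeq\;\fh(\cN)\otimes\fh(\PP^1)^{\otimes N}\;\oplus\;\bigoplus_{j=0}^{N-3}\fh(\Jac(C))^{\otimes 2}(g+j)^{\oplus b_j(\alpha)},
\]
with the exponents $b_j(\alpha)$ of Definition~\ref{def exponents in terms of alpha} (the right-hand summands are already rational Tate-twisted motives, so nothing changes there). Next I would substitute, into the first summand $\fh(\cN)\otimes\fh(\PP^1)^{\otimes N}$, the rational formula of Theorem~\ref{thm motive N rank 2}:
\[
\fh(\cN)\;\simeq\;\fh(\Jac(C))\otimes\Bigl(\fh(\Sym^{g-1}(C))(g-1)\;\oplus\;\bigoplus_{i=0}^{g-2}\fh(\Sym^i(C))\otimes\bigl(\QQ(i)\oplus\QQ(3g-3-2i)\bigr)\Bigr).
\]
Tensoring this with $\fh(\PP^1)^{\otimes N}$ and distributing the tensor product over the direct sum — using that $\CHM(k,\QQ)$ is an additive symmetric monoidal category in which $\otimes$ is bi-additive — yields exactly the two displayed families of summands in the claimed formula, the $\fh(\Jac(C))\otimes\fh(\PP^1)^{\otimes N}\otimes\fh(\Sym^{g-1}(C))(g-1)$ term and the $\bigoplus_{i=0}^{g-2}\fh(\Jac(C))\otimes\fh(\PP^1)^{\otimes N}\otimes\fh(\Sym^i(C))\otimes(\QQ(i)\oplus\QQ(3g-3-2i))$ term. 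Adding back the untouched wall-crossing summands $\bigoplus_{j=0}^{N-3}\fh(\Jac(C))^{\otimes 2}(g+j)^{\oplus b_j(\alpha)}$ completes the identification and hence the proof.

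The one point worth flagging, rather than a difficulty, is that the isomorphism produced is non-explicit: the formula for $\fh(\cN_{\cL})$ (and hence $\fh(\cN)$) in Theorem~\ref{thm motive N rank 2} was obtained using Kimura finite-dimensionality and cancellation in $\CHM(k,\QQ)^{\kim}$, so the resulting decomposition of $\fh(\cN^{\alpha})$ with rational coefficients inherits this non-constructive character, in contrast to the explicit integral isomorphism of Theorem~\ref{thm par motive rk 2 odd deg}. This should be remarked on but requires no extra argument.
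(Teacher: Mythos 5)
Your proposal is correct and is exactly the argument the paper intends: the corollary is stated without proof precisely because it is the formal substitution of the rational formula for $\fh(\cN)$ from Theorem \ref{thm motive N rank 2} into the integral isomorphism of Theorem \ref{thm par motive rk 2 odd deg}, after passing to rational coefficients and distributing $\otimes$ over $\oplus$. Your closing remark that the resulting isomorphism is non-explicit (inherited from the use of Kimura finite-dimensionality in Theorem \ref{thm motive N rank 2}) is also consistent with the paper's own caveat following that theorem.
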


For $\cL \in \Pic^d(C)$, we have the moduli space $\cN^\alpha_{\cL}$ of $\alpha$-semistable rank $2$ parabolic vector bundles with determinant $\cL$. The wall-crossing picture remains the same except that the centre of each flip/flop between fixed determinant parabolic moduli spaces is only one copy of the Jacobian. Consequently, by the same argument we obtain the following formula.

\begin{cor}\label{cor par motive rk 2 odd det}
For a generic weight $\alpha \in \cA_N^\circ$, the integral Chow motive of the moduli space of $\alpha$-semistable parabolic vector bundles of rank $2$ and odd degree determinant $\cL$ is given by
\[ \fh(\cN^{\alpha}_{\cL})  \simeq \fh(\cN_{\cL}) \otimes \fh(\PP^1)^{\otimes N} \oplus \bigoplus_{j=0}^{N-3} \fh(\Jac(C))(g+j)^{\oplus b_j(\alpha)}\]
with the exponents $b_j(\alpha)$ given in Definition \ref{def exponents in terms of alpha}.
\end{cor}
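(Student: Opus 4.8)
The plan is to run the argument of Theorem \ref{thm par motive rk 2 odd deg} in the fixed-determinant setting, so I will only indicate where the two situations differ. First I would observe that the wall-and-chamber decomposition of $\cA_N^\circ=(0,1)^N$, the enumeration of walls in Lemma \ref{lemma walls}, the reductions in Propositions \ref{prop standard par walls} and \ref{prop flip type standard par walls}, and the combinatorial exponents $b_j(\alpha)$ of Definition \ref{def exponents in terms of alpha} depend only on the discrete invariants $\eta=(2,d,m)$ together with $g$, $s$ and $|\cI|$; hence they are all unchanged when one fixes the determinant. In particular the types $(n_-(s,\cI),n_+(s,\cI))$ of the wall-crossing flips are the same, since the flip type is determined by $\eta'$ and $\eta''$ via \eqref{equation for n plus} and is insensitive to fixing $\det$. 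Consequently the entire combinatorial skeleton of the proof of Theorem \ref{thm par motive rk 2 odd deg}, including the generating-function identities \eqref{eq b and d} and \eqref{poly in x for bs}, transfers verbatim.

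Next I would establish the two geometric inputs in the fixed-determinant case. For the base case: the flag-degeneration maps of Theorem \ref{thm geom flag degen} are compatible with fixing the determinant, so exactly as in Corollary \ref{cor full deg flag motive} a sufficiently small generic weight $\alpha_{\min}$ gives a forgetful morphism $\cN^{\alpha_{\min}}_\cL\to \cN_\cL=\cN^{\underline 0}_\cL$ which is an iterated $\PP^1$-bundle (each flag variety $\cF(m_i)$ being a $\PP^1$ since $n=2$ and the flags are full), whence $\fh(\cN^{\alpha_{\min}}_\cL)\simeq \fh(\cN_\cL)\otimes\fh(\PP^1)^{\otimes N}$, which matches the claimed formula because $b_j(\alpha_{\min})=0$. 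For the centre of a wall-crossing flip: by Theorem \ref{thm geom flips par} the wall $W_{s,\cI}$ produces a standard flip whose centre in the fixed-determinant moduli space is the locus of pairs $(L',L'')$ of parabolic line bundles of the prescribed degrees and flag types with $L'\otimes L''\simeq \cL$; since a line bundle carries a unique parabolic structure and the relation $L''\simeq \cL\otimes (L')^{-1}$ eliminates the second factor, this centre is a single copy of $\Pic^{d'(s)}(C)\simeq \Jac(C)$ rather than $\Jac(C)^2$ (cf. the discussion preceding Corollary \ref{cor int Jac 1cycles par} and \cite{BY_rationality}).

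With these in place the induction proceeds as for Theorem \ref{thm par motive rk 2 odd deg}: along the ray from a minimal chamber through $\alpha$ toward the centre of the hypercube one crosses walls one at a time, and by Corollary \ref{cor motivic par WC} together with Proposition \ref{prop par WC} each crossing enlarges the motive by $\bigoplus_{n_-(s,\cI)<j\leq n_+(s,\cI)}\fh(\Jac(C))(j)$ (flopping walls contributing nothing); because the motive only gains summands along this ray, no Kimura cancellation is needed and the identity holds integrally. The remaining point is to check that the accumulated Tate twists occur with multiplicities exactly $b_j(\alpha)$, which one verifies by passing to Poincaré polynomials — now with $P_t(\Jac(C))=(1+t)^{2g}$ in place of $(1+t)^{4g}$ — and this is identical to the computation already carried out, culminating in \eqref{poly in x for bs} with $x=t^2$. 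The only mild obstacle, exactly as in the non-fixed case, is keeping straight which Tate twists appear at each wall-crossing and matching them to Definition \ref{def exponents in terms of alpha}; but this is a purely combinatorial matter, unaffected by replacing $\Jac(C)^2$ with $\Jac(C)$.
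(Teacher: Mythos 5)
Your proposal is correct and follows exactly the route the paper takes: the paper justifies this corollary in one sentence by noting that the wall-crossing picture (walls, flip types, combinatorics of the $b_j(\alpha)$) is unchanged upon fixing the determinant, with the only difference being that each flip centre is a single copy of $\Jac(C)$ rather than $\Jac(C)^2$, and then invokes the argument of Theorem \ref{thm par motive rk 2 odd deg}. Your write-up simply fills in the details of that same argument (base case via the $(\PP^1)^N$-bundle over $\cN_\cL$, identification of the centre via $L''\simeq\cL\otimes(L')^{-1}$, and the unchanged generating-function bookkeeping), all of which is accurate.
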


\begin{rmk}\label{rmk par vb rk 2 even deg}
For $n = 2$ and an even degree $d$, one can perform a Hecke modification at a single point $p_i$ to obtain an isomorphism $\cN^\alpha(2,d) \cong \cN^{\alpha'}(2,d-1)$, where $\alpha'=\alpha(p_i)$ (see Example \ref{ex Hecke mod rank 2}). Therefore, for $n=2$, $d$ even and $\alpha$ generic, we also obtain formulas for the Chow motive of $\cN^\alpha(2,d)$.
\end{rmk}

\begin{ex}
Let us explicitly compute some examples for low values of $N$.
\begin{enumerate}
\item For $N = 2$, we have $\cA_2^\circ = (0,1)^2$ and the following picture shows the wall and chamber decomposition for $d$ odd (on the left) and $d$ even (on the right). 
\begin{center}
\begin{tikzpicture}
\draw[black, thick] (-4,-1) -- (-2,-1);
\draw[black, thick] (-4,+1) -- (-2,+1);
\draw[black, thick] (-4,-1) -- (-4,+1);
\draw[black, thick] (-2,-1) -- (-2,+1);
\draw[red, thick] (-4,+1) -- (-2,-1);
\draw[black, thick] (4,-1) -- (2,-1);
\draw[black, thick] (4,+1) -- (2,+1);
\draw[black, thick] (4,-1) -- (4,+1);
\draw[black, thick] (2,-1) -- (2,+1);
\draw[red, thick] (2,-1) -- (4,+1);
\end{tikzpicture}
\end{center}
These pictures are related via the Hecke modification $(\alpha_1,\alpha_2) \mapsto (1-\alpha_1, \alpha_2)$ at $p_1$. There is one wall (shown in red), which is a flopping wall, as it goes through the center of the hypercube. Thus for any generic $\alpha$ and any $d$, we have
\[ \fh(\cN^{\alpha}) \simeq \fh(\cN) \otimes \fh(\PP^1)^{\otimes 2}. \]
In this case the minimal and maximal chambers coincide.
\item For $N=3$, we have $\cA_3^\circ = (0,1)^3$. For $d$ odd, there are 4 walls which cut out a tetrahedron inside this cube through the odd vertices $(1,0,0), (0,1,0), (0,0,1), (1,1,1)$ as in the diagram below.   
\begin{center}
\begin{tikzpicture}
\draw[black, dashed] (0,0,0) -- (2,0,0);
\draw[black, dashed] (0,0,0) -- (0,2,0);
\draw[black, dashed] (0,0,0) -- (0,0,1.5);
\draw[black, thick] (2,0,1.5) -- (2,0,0);
\draw[black, thick] (2,0,1.5) -- (0,0,1.5);
\draw[black, thick] (2,0,1.5) -- (2,2,1.5);
\draw[black, thick] (2,2,0) -- (0,2,0);
\draw[black, thick] (2,2,0) -- (2,2,1.5);
\draw[black, thick] (2,2,0) -- (2,0,0);
\draw[black, thick] (0,2,1.5) -- (0,2,0);
\draw[black, thick] (0,2,1.5) -- (0,0,1.5);
\draw[black, thick] (0,2,1.5) -- (2,2,1.5);
\draw[red, thick] (2,2,1.5) -- (2,0,0);
\draw[red, thick] (2,2,1.5) -- (0,2,0);
\draw[red, thick] (2,2,1.5) -- (0,0,1.5);
\draw[red, thick] (0,2,0) -- (2,0,0);
\draw[red, thick] (0,0,1.5) -- (2,0,0);
\draw[red, thick] (0,2,0) -- (0,0,1.5);
\end{tikzpicture}
\end{center}
In this case the interior of the tetrahedron is the maximal chamber and there are four exterior chambers, all of which are minimal. The motives of the parabolic moduli spaces for weights in the minimal and maximal chambers are given by
\begin{align*}
\fh(\cN^{\alpha_{\min}}) &\simeq \fh(\cN) \otimes \fh(\PP^1)^{\otimes 3} \\
\fh(\cN^{\alpha_{\max}}) & \simeq \fh(\cN) \otimes \fh(\PP^1)^{\otimes 3} \oplus \fh(\Jac(C))^{\otimes 2}(g),
\end{align*}
where the second formula is obtained from the first formula by crossing the back wall $ 1 = \alpha_1 + \alpha_2 + \alpha_3$ of the tetrahedron, which is of type $(g-1,g)$ with centre $\Jac(C)^2$.
\end{enumerate}
\end{ex}

\section{Motives of moduli spaces of (parabolic) Higgs bundles}

\subsection{Moduli spaces of Higgs bundles}

A \emph{Higgs bundle} on $C$ is a pair $(E,\Phi)$ consisting of a vector bundle $E$ and  an $\mathcal{O}_C$-linear homomorphism $\Phi\colon E\to E\otimes \omega_C$ called the \emph{Higgs field}. A Higgs subbundle of $(E,\Phi)$ is a subbundle of $E$ that is invariant under the Higgs field. There is a notion of semistability for Higgs bundles which involves showing that the slope is increasing on Higgs subbundles. Let $\cM=\cM(n,d)$ denote the moduli space of semistable rank $n$ degree $d$ Higgs bundles on $C$. The moduli space $\cM$ is a quasi-projective variety which contains the cotangent bundle to the moduli space $\cN=\cN(n,d)$ of semistable vector bundles as a dense open subvariety. Over $k = \CC$, $\cM$ is a non-compact hyper-K\"{a}hler manifold \cite{Hitchin}. Moreover, $\cM$ contains the moduli space $\cM^s$ of (geometrically) stable Higgs bundles as a smooth open subset. In this section, we will assume that $n$ and $d$ are coprime, so semistability and stability coincide and $\cM = \cM^s$ is a smooth quasi-projective variety. 

We can also fix $\cL \in \Pic^d(C)(k)$ and consider the moduli space $\cM_{\cL}=\cM_{\cL}(n,d)$ of semistable rank $n$ degree $d$ Higgs bundles on $C$ with fixed determinant $\cL$, which is also quasi-projective and is smooth when $n$ and $d$ are coprime.

\subsubsection{Motives of moduli spaces of Higgs bundles}

Although the smooth quasi-projective moduli spaces $\cM$ and $\cM_{\cL}$ are not proper, Hitchin \cite{Hitchin} and Simpson \cite{Simpson} observed that there is a $\GG_m$-action on both moduli spaces given by scaling the Higgs field such that the fixed locus is proper and the limit as $t \in \GG_m$ tends to zero exists for all points (such a $\GG_m$-action is referred to as being semi-projective, see \cite[Definition A.1]{HPL_Higgs}, and induces an associated Bia{\l}ynicki-Birula decomposition \cite{BB}). In particular, the flow under this action is a deformation retract and consequently the Voevodsky motives of $\cM$ and $\cM_{\cL}$ are pure (see \cite[Corollary 6.9]{HPL_Higgs} for the case of $\cM$) and we can consider their associated Chow motives. By \cite[Therorem 1.1]{HPL_Higgs}, the rational Chow motive $\fh(\cM)$ is contained in the subcategory of $\CHM(k,\QQ)$ generated by $\fh(C)$ and is a direct summand of the motive of a sufficiently large power of $C$.

Unlike in the case of the moduli spaces $\cN$ and $\cN_{\cL}$ of semistable vector bundles without and with fixed determinant whose motives are related by Theorem \ref{thm motive N and N fixed det}, for the Higgs moduli spaces $\cM$ and $\cM_{\cL}$ we have
\[ \fh(\cM) \not\simeq \fh(\cM_{\cL}) \otimes \fh(\Jac(C)). \]
Indeed, already for rank $n =2$, Hitchin shows this over $k = \CC$ on the level of singular cohomology \cite{Hitchin}. We will describe the motives of $\cM$ and $\cM_{\cL}$ for rank $n = 2$ below. In fact, we will show in Proposition \ref{prop:EnlargeCategory} that the motive of $\cM_{\cL}$ is in general \textit{not} contained in the tensor subcategory of $\CHM(k,\QQ)$ generated by $\fh(C)$.

\subsubsection{Formulas for the motive of the rank 2 Higgs moduli spaces}
\label{sec higgs rank 2 fixed det}

In this section, we consider rank $2$ Higgs bundles of odd degree $d$ and let $\cM = \cM(2,d)$ and $\cM_{\cL} = \cM_{\cL}(2,d)$ denote the moduli spaces of Higgs bundles without and with fixed determinant respectively.

\begin{thm}\label{thm motive rk 2 Higgs}
For an odd integer $d$, the integral Chow motive of the moduli space of semistable rank $2$ degree $d$ Higgs bundles on $C$ is given by an explicit isomorphism
\[ \fh(\cM(2,d)) \simeq \fh(\cN(2,d)) \oplus \bigoplus_{j=1}^{g-1} \fh(\Pic^{a_{d,j}}(C)) \otimes \fh(\Sym^{2j-1}(C))(3g - 2j -2 ), \]
where $a_{d,j} = g-j+(d-1)/2$. When working with rational coefficients, we have a non-explicit isomorphism
\begin{align*}
 \fh(\cM(2,d)) \simeq &\fh(\Jac(C)) \otimes \left(\fh(\Sym^{g-1}(C))(g-1) \oplus \bigoplus_{i=0}^{g-2} \fh(\Sym^i(C))\otimes \left( \QQ(i) \oplus \QQ(3g-3-2i)\right)\right) \\
  & \oplus \bigoplus_{j=1}^{g-1} \fh(\Pic^{a_{d,j}}(C)) \otimes \fh(\Sym^{2j-1}(C))(3g - 2j -2 ).
\end{align*}
\end{thm}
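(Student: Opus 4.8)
The strategy is to use the Bia\l ynicki-Birula (BB) decomposition associated to the semi-projective $\GG_m$-action on $\cM(2,d)$ given by scaling the Higgs field, following the approach of \cite{HPL_Higgs}. Since this action is semi-projective and the Voevodsky motive of $\cM(2,d)$ is pure (hence a Chow motive), the motive $\fh(\cM(2,d))$ decomposes as a direct sum indexed by the connected components $F$ of the fixed locus, with each summand of the form $\fh(F)(c_F)$ where $c_F$ is the codimension (equivalently, the rank of the negative part of the normal bundle) of the attracting cell over $F$. So the entire computation reduces to: (i) identifying the connected components of the $\GG_m$-fixed locus in $\cM(2,d)$, (ii) computing the dimension of the downward flow for each, and (iii) recognising each fixed component geometrically.

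\textbf{Key steps.} First I would recall the standard classification of $\GG_m$-fixed Higgs bundles in rank $2$: a stable Higgs bundle $(E,\Phi)$ is fixed (up to isomorphism) iff either $\Phi=0$, giving the component $\cN(2,d)\subset\cM(2,d)$ (the zero section of $T^*\cN\subset\cM$), or $E=L_1\oplus L_2$ with $\Phi$ strictly lower-triangular, i.e.\ $\Phi\colon L_1\to L_2\otimes\omega_C$ nonzero; stability forces $\deg L_1 > \deg L_2$ and the divisor of $\Phi$ has degree $\deg L_1 - \deg L_2 + 2g-2$, which must be nonnegative, so writing $\deg L_1 = g-1+\tfrac{d-1}{2}+ (g-j)$ for a suitable range one gets $1\le j\le g-1$. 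The component indexed by $j$ is then a family over $\Pic^{a_{d,j}}(C)$ (parametrising $L_1$, with $L_2 = \cL\otimes L_1^{-1}$ determined by $\det E$, which ranges over $\Pic^d$ — this is where the extra $\fh(\Pic)$ factor in the non-fixed-determinant case comes from) whose fibre is the projectivisation of the space of sections $H^0(L_1^{-1}L_2\omega_C)$, i.e.\ $\PP(H^0)$; the effective divisor of a section identifies this fibre with $\Sym^{2j-1}(C)$ (the degree being $\deg L_2 - \deg L_1 + 2g-2 = 2j-1$). Thus the fixed component is isomorphic to a $\Sym^{2j-1}(C)$-bundle over $\Pic^{a_{d,j}}(C)$; since $\Sym^m(C)$ is a projective bundle over $\Pic^m(C)$ (as $m = 2j-1 \ge 2g-1$ fails in general — so one must be slightly careful, but the motive of $\Sym^m(C)$ is still understood via \cite{dB_rk2}), the total motive is $\fh(\Pic^{a_{d,j}}(C))\otimes\fh(\Sym^{2j-1}(C))$ by projective bundle formula / a product computation. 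Second, I would compute the codimension $c_F$ of the attracting set: for the $\Phi=0$ component it is $0$; for the $j$-th component a weight computation on the deformation complex of Higgs bundles (the hypercohomology of $[\End_0 E \xrightarrow{[\Phi,-]} \End_0 E\otimes\omega_C]$ decomposed into $\GG_m$-weight spaces) gives $c_F = 3g-2j-2$, matching the Tate twist in the formula. Finally, assembling the BB decomposition $\fh(\cM(2,d)) \simeq \fh(\cN(2,d)) \oplus \bigoplus_{j=1}^{g-1}\fh(F_j)(c_{F_j})$ gives the integral statement, and substituting the rank-$2$ formula for $\fh(\cN(2,d))$ from Theorem \ref{thm motive N rank 2} gives the rational (non-explicit, because of the Kimura cancellation used there) statement.

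\textbf{Main obstacle.} The routine-but-delicate part is step (ii): carefully computing the weight decomposition of the tangent/deformation complex at a $\GG_m$-fixed point and extracting the rank of the negative-weight part, making sure the bookkeeping with $\omega_C$ twists and the traceless condition is correct and that it indeed yields $3g-2j-2$ uniformly. The genuinely substantive point, which is more than bookkeeping, is verifying that the BB decomposition is \emph{motivic} in the required sense — i.e.\ that one may apply \cite[Corollary 6.9]{HPL_Higgs} and its method to conclude that $\fh(\cM(2,d))$ splits as the direct sum of the (Tate-twisted) motives of the fixed components rather than merely that there is such a decomposition on the level of, say, Poincar\'e polynomials or Voevodsky motives with a filtration; this requires knowing the fixed locus is smooth projective and the semi-projectivity hypotheses, all of which hold here but must be cited precisely. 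The geometric identification in step (i) — especially the claim that the $j$-th fixed component is globally a $\Sym^{2j-1}(C)$-bundle over $\Pic^{a_{d,j}}(C)$ and that this bundle is motivically a product — should be handled with care, but is standard (it appears already in Hitchin's analysis over $\CC$ and in \cite{HPL_Higgs}).
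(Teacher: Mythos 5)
Your proposal is correct and follows essentially the same route as the paper: the motivic Bia{\l}ynicki-Birula decomposition for the scaling action, the identification of the fixed components as $\cN(2,d)$ and $\Sym^{2j-1}(C)\times\Pic^{a_{d,j}}(C)$ (the latter is literally a product, since the divisor of $\Phi$ together with $L_0$ determines $L_1$, rather than the bundle you describe, but the motive is the same), and the substitution of Theorem \ref{thm motive N rank 2} at the end. The only cosmetic difference is that the paper obtains the codimension $3g-2j-2$ of each attracting cell not by a weight computation on the deformation complex but by the observation that the downward flow is Lagrangian, so the codimension is simply $\dim\cN(2,d)-\dim F_j$.
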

\begin{proof}
We prove the above formula using the motivic Bia{\l}ynicki-Birula decomposition associated to the $\GG_m$-action on $\cM(2,d)$ given by $t \cdot (E,\Phi) = (E,t\Phi)$. For this, we first need to describe the fixed locus and the associated geometric  Bia{\l}ynicki-Birula decomposition, which in rank $2$ was studied by Hitchin \cite{Hitchin}. We claim that
\[ \cM(2,d)^{\GG_m} \simeq \cN(2,1) \sqcup \bigsqcup_{j=1}^{g-1} \Sym^{2j-1}(C) \times \Pic^{a_{d,j}}(C). \]
Indeed if $(E,\Phi)$ is a $\GG_m$-fixed point, then either $\Phi = 0$ and $E$ is a semistable vector bundle, or $\Phi \neq 0$ and so we have $\GG_m \subset \Aut(E)$ giving a weight space decomposition $E = L_0 \oplus L_1$ as a sum of two line bundles such that $\Phi$ is given by a homomorphism $L_0 \ra L_1 \otimes \omega_C$. In this latter case, let $(e,d-e)$ denote the degrees of $(L_0,L_1)$; then since the Higgs field gives a non-zero section of $L_0^\vee \otimes L_1 \otimes \omega_C$, we have $d-2e + 2g -2 \geq 0$. Furthermore, stability of the Higgs bundle means that the Higgs subbundle $L_1 \subset (E,\Phi)$ has slope less than that of $E$ and so we must have $d < 2e$. Therefore, we have $(d+1)/2 \leq e \leq  g-1 + (d-1)/2$. For each $e$ in this range, the Higgs bundle is determined by a degree $e$ line bundle $L_0$ and an effective divisor of degree $d- 2e +2g -2$. After setting $j = g-e+(d-1)/2$, we get the fixed locus components $F_j:=\Sym^{2j-1}(C) \times \Pic^{a_{d,j}}(C)$ for  $1 \leq j \leq g-1$. 

For $1 \leq j \leq g-1$, let $\cM(2,d)^{+}_j$ denote the locus of points whose flow as $t \ra 0$ lies in $F_j$. The rank of the fibration $\cM(2,1)^{+}_j \ra F_j$ is $\frac{1}{2} \dim \cM(2,1) = \dim \cN(2,1)$, as the downward flow is Lagrangian; thus the codimension $c^+_j$ of $ \cM(2,1)^{+}_j$ is given by $c_j^+ = \dim \cN(2,1) - (2j-1 +g)$ (see \cite[Proposition 2.2]{HPL_Higgs} and Equation (2) in \textit{loc.\ cit.}\ as well as the references therein). The result then follows from the motivic Bia{\l}ynicki-Birula decomposition (see \cite[Theorem A.4]{HPL_Higgs}, where although the isomorphism is given in Voevodsky's triangulated category, all motives appearing are pure and so we can interpret this in the category of Chow motives) and Theorem \ref{thm motive N rank 2}.
\end{proof}

If $C$ admits a degree $1$ line bundle, then we have $\fh(\Pic^i(C))\simeq\fh(\Jac(C))$ for all $i$, and in Theorem \ref{thm motive rk 2 Higgs}, we obtain the same formula for all $d$. One can replace the Chow motives of $\Sym^{n}(C)$ for $g \leq n \leq 2g-2$ appearing in the formulas in Theorem \ref{thm motive rk 2 Higgs} with lower symmetric powers of $C$ and Tate twists of $\fh(\Jac(C))$ using \cite[Corollary 5.1]{Jiang19}. 

Similarly to above, one can use the motivic Bia{\l}ynicki-Birula decomposition for the moduli space $\cM_{\cL}(2,d)$ of Higgs bundles with fixed determinant. 

\begin{prop}\label{prop motive rk 2 higgs fixed det}
For $d$ odd and $\cL \in \Pic^d(C)$, the integral Chow motive of the moduli space of semistable rank $2$ Higgs bundles with determinant $\cL$ on $C$ is given by an explicit isomorphism
\begin{equation}
	\label{eqn:MotivePHBFixDet}
	\fh(\cM_{\cL}(2,d)) \simeq \fh(\cN_{\cL}(2,d)) \oplus \bigoplus_{j=1}^{g-1} \fh(\widetilde{\Sym}^{2j-1}(C))(3g - 2j -2 )
\end{equation}
where $\widetilde{\Sym}^{2j-1}(C) \ra {\Sym}^{2j-1}(C)$ is the degree $2^{2g}$ \'etale cover given by the base change of the multiplication-by-2 map on $\Jac(C)$.
\end{prop}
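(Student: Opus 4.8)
The plan is to run the same argument as in the proof of Theorem \ref{thm motive rk 2 Higgs}, using the motivic Bia{\l}ynicki-Birula decomposition attached to the $\GG_m$-action on $\cM_\cL(2,d)$ scaling the Higgs field, $t\cdot(E,\Phi)=(E,t\Phi)$. This action is semi-projective: the fixed locus is proper and the limit as $t\to 0$ exists for every point, since these properties already hold on $\cM(2,d)$ and $\cM_\cL(2,d)$ is a $\GG_m$-invariant closed subvariety. Hence the Voevodsky motive of $\cM_\cL(2,d)$ is pure (so we may read everything in $\CHM(k,\ZZ)$), and by \cite[Theorem A.4]{HPL_Higgs} it decomposes as a direct sum of Tate twists $\fh(F_i)(c_i^+)$ of the motives of the fixed components $F_i$, with $c_i^+$ the codimension of the corresponding attracting set.

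First I would identify the fixed locus. As in the non-fixed-determinant case, a $\GG_m$-fixed Higgs bundle either has $\Phi=0$, in which case it is a semistable vector bundle with determinant $\cL$, so the corresponding stratum is $\cN_\cL(2,d)$ and its attracting set is the dense open cotangent bundle $T^*\cN_\cL(2,d)\subset\cM_\cL(2,d)$, contributing $\fh(\cN_\cL(2,d))$ untwisted; or it has $\Phi\neq 0$, which by the usual structure theory forces $E\simeq L_0\oplus L_1$ with $\Phi$ a nonzero map $L_0\to L_1\otimes\omega_C$. Writing $e=\deg L_0$, Higgs-stability forces $e\geq (d+1)/2$ (the only $\Phi$-invariant subbundle is $L_1$) and non-vanishing of $\Phi$ forces $e\leq g-1+(d-1)/2$; setting $j=g-e+(d-1)/2$ gives $1\leq j\leq g-1$ and the divisor of zeros $D$ of $\Phi$ is effective of degree $2j-1$. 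The essential difference from Theorem \ref{thm motive rk 2 Higgs} is that fixing $\det E\simeq\cL$ together with $\cO(D)\simeq L_0^{-1}\otimes L_1\otimes\omega_C$ forces $L_0^{\otimes 2}\simeq\cL\otimes\omega_C(-D)$, so that the fixed point is precisely a pair $(D,L_0)$ with $D\in\Sym^{2j-1}(C)$ and $L_0$ a square root of $\cL\otimes\omega_C(-D)$; the forgetful map $(D,L_0)\mapsto D$ realises this component $F_j$ as the fibre product of the Abel--Jacobi-type morphism $\Sym^{2j-1}(C)\to\Pic^{d+2g-2j-1}(C)$, $D\mapsto\cL\otimes\omega_C(-D)$, with the multiplication-by-$2$ isogeny $\Pic^{g-j+(d-1)/2}(C)\to\Pic^{d+2g-2j-1}(C)$. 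Since this isogeny is compatible with translations and the inversion automorphism of $\Jac(C)$, the resulting degree $2^{2g}$ étale cover is isomorphic over $\Sym^{2j-1}(C)$ to the cover $\widetilde{\Sym}^{2j-1}(C)$ of the statement, defined via the Abel--Jacobi map and $[\times 2]$.

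Next I would compute the codimensions of the attracting sets. As in the proof of Theorem \ref{thm motive rk 2 Higgs} (see \cite[Proposition 2.2]{HPL_Higgs} and the references therein), the downward flow into a component $F_j$ with $\Phi\neq 0$ is Lagrangian, so the attracting bundle $\cM_\cL(2,d)^+_j\to\widetilde{\Sym}^{2j-1}(C)$ has rank $\tfrac12\dim\cM_\cL(2,d)=\dim\cN_\cL(2,d)$, and therefore
\[
c_j^+=\dim\cN_\cL(2,d)-\dim\widetilde{\Sym}^{2j-1}(C)=(3g-3)-(2j-1)=3g-2j-2.
\]
Feeding the fixed-locus description and these codimensions into the motivic Bia{\l}ynicki-Birula decomposition then yields
\[
\fh(\cM_\cL(2,d))\simeq\fh(\cN_\cL(2,d))\oplus\bigoplus_{j=1}^{g-1}\fh(\widetilde{\Sym}^{2j-1}(C))(3g-2j-2),
\]
which is \eqref{eqn:MotivePHBFixDet}.

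I expect the only genuinely new point — and the main obstacle — to be the second step: recognising that, because the determinant is now fixed, the fixed-locus stratum is the connected étale cover $\widetilde{\Sym}^{2j-1}(C)$ (the locus of square roots) rather than the product $\Sym^{2j-1}(C)\times\Pic(C)$ appearing in Theorem \ref{thm motive rk 2 Higgs}, and checking that the cover produced by $D\mapsto\cL\otimes\omega_C(-D)$ indeed agrees, up to the translation/inversion ambiguity harmless for the multiplication-by-$2$ isogeny, with the cover named in the statement. The semi-projectivity, the purity, the analysis of $\Phi$-invariant subbundles, and the dimension bookkeeping are all entirely parallel to the proof of Theorem \ref{thm motive rk 2 Higgs}.
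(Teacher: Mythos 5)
Your proposal is correct and follows exactly the route the paper takes: the paper's proof is a two-line remark that the argument of Theorem \ref{thm motive rk 2 Higgs} goes through verbatim once the $\GG_m$-fixed locus is identified as $\cN_{\cL}(2,d) \sqcup \bigsqcup_{j=1}^{g-1}\widetilde{\Sym}^{2j-1}(C)$ (citing Hitchin), and you have simply filled in the details of that identification (the constraint $L_0^{\otimes 2}\simeq \cL\otimes\omega_C(-D)$ turning the Picard factor into the $2^{2g}$-fold \'etale cover) together with the same Lagrangian codimension count $c_j^+=\dim\cN_{\cL}-(2j-1)=3g-2j-2$.
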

\begin{proof}
The proof is almost the same as in the case without fixed determinant, except now the fixed locus for the $\GG_m$-action is
\[ \cM_{\cL}(2,d)^{\GG_m} \simeq \cN_{\cL}(2,d) \sqcup \bigsqcup_{j=1}^{g-1} \widetilde{\Sym}^{2j-1}(C), \]
as the determinant is fixed (see \cite{Hitchin}).
\end{proof}

By combining this with Theorem \ref{thm motive N rank 2}, one obtains a non-explicit isomorphism describing the rational Chow motive of $\cM_{\cL}(2,d)$ with $d$ odd.

The following result contrasts with Proposition \ref{prop:N-abelian}.

\begin{prop}\label{prop:EnlargeCategory}
The rational Chow motive of $\cM_{\cL}(2,d)$ lies in the tensor subcategory of  $\CHM(k, \QQ)$ generated by the motive of the $2^{2g}$-cover $\pi\colon \tilde{C} \ra C$ given as the base change of the multiplication-by-2 map on $\Jac(C)$. Furthermore, over $k=\CC$, we have:
\begin{enumerate}[label=\emph{(\roman*)}, leftmargin=0.7cm]
	\item $\fh(\cM_{\cL}(2,d))$ lies in the tensor subcategory of  $\CHM(k, \QQ)$  generated by the motives of the \'etale double covers of $C$ (or equivalently, $C$ and the Prym varieties associated to the  \'etale double covers of $C$).
	\item For a general curve $C$ of genus $\geq 2$, $\fh(\cM_{\cL}(2,d))$ does \emph{not} belong to the tensor subcategory of $\CHM(k, \QQ)$ generated by the motive of $C$.
\end{enumerate}  	
\end{prop}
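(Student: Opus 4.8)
The plan is to isolate a single direct summand of $\fh(\cM_{\cL}(2,d))$ whose Betti realisation over $\CC$ provably cannot lie in the Tannakian tensor subcategory of polarisable $\QQ$-Hodge structures generated by $H^{1}(C)$ when $C$ is very general; since that subcategory contains the Betti realisation of $\langle \fh(C)\rangle^{\otimes}$, this suffices.

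First I would extract from Proposition \ref{prop motive rk 2 higgs fixed det} the $j=1$ term, which exhibits $\fh(\widetilde{\Sym}^{1}(C))(3g-4)=\fh(\tilde C)(3g-4)$ as a direct summand of $\fh(\cM_{\cL}(2,d))$, where $p\colon\tilde C\to C$ is the connected Galois étale cover with group $G=\Jac(C)[2]\simeq(\ZZ/2\ZZ)^{2g}$ from part (i). Decomposing $\fh(\tilde C)=\bigoplus_{\chi\in\widehat G}\fh(\tilde C)^{\chi}$ into isotypic components, the trivial character gives $\fh(C)$, while for each nontrivial $\chi$ the intermediate double cover $C_{\chi}=\tilde C/\ker(\chi)$ has deck involution $\sigma$, and a transfer argument together with the vanishing of $\sigma$-antiinvariants on the Tate summands $\fh^{0},\fh^{2}$ and Deligne--Scholl gives $\fh(\tilde C)^{\chi}\simeq\fh^{1}(C_{\chi})^{-}\simeq\fh^{1}(\operatorname{Prym}(C_{\chi}/C))$, the $\fh^{1}$ of an abelian variety of dimension $g-1$. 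Fixing any nontrivial $\chi$, I conclude that for $g\geq2$ the motive $\fh^{1}(\operatorname{Prym}(C_{\chi}/C))(3g-4)$ is a direct summand of $\fh(\cM_{\cL}(2,d))$.

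Next, assuming for contradiction that $\fh(\cM_{\cL}(2,d))\in\langle\fh(C)\rangle^{\otimes}$, and using that this subcategory is pseudo-abelian, rigid and $\otimes$-closed (hence closed under direct summands and Tate twists), I get $\fh^{1}(\operatorname{Prym}(C_{\chi}/C))\in\langle\fh(C)\rangle^{\otimes}$. Applying the Betti realisation $\CHM(\CC,\QQ)\to\operatorname{HS}_{\QQ}$, a tensor functor sending $\fh(C)$ to $\QQ\oplus H^{1}(C)\oplus\QQ(-1)$, puts $H^{1}(\operatorname{Prym}(C_{\chi}/C))$ into the Tannakian subcategory generated by $H^{1}(C)$. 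For $C$ very general of genus $g$ the Mumford--Tate group of $\Jac(C)$ equals $\operatorname{GSp}_{2g}$ (classically: the monodromy of the universal Jacobian over $\mathcal M_{g}$ is Zariski-dense in $\operatorname{Sp}_{2g}$, and the Mumford--Tate group is squeezed between the connected monodromy group and $\operatorname{GSp}_{2g}$), so that subcategory is $\operatorname{Rep}_{\QQ}(\operatorname{GSp}_{2g})$ with $H^{1}(C)$ the standard representation $V$. The final ingredient is the representation-theoretic fact that every \emph{effective} weight-one object of $\operatorname{Rep}_{\QQ}(\operatorname{GSp}_{2g})$ is, as a Hodge structure, a direct sum of copies of $V$ --- equivalently, any weight-one polarisable Hodge structure lying in $\langle H^{1}(C)\rangle^{\otimes}$ is $H^{1}$ of an abelian variety isogenous to a power of $\Jac(C)$ --- which follows because the Hodge cocharacter is conjugate to the minuscule cocharacter $t\mapsto\operatorname{diag}(t,\dots,t,1,\dots,1)$ with similitude factor $t$, so effectivity forces its eigenvalues on the representation into $\{0,1\}$, and among irreducibles this holds only for the trivial one (weight $0$) and for $V$. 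Hence $\dim H^{1}(\operatorname{Prym}(C_{\chi}/C))=2(g-1)$ would have to be a positive multiple of $2g=\dim V$, impossible for $g\geq2$ since $0<2(g-1)<2g$. This contradiction proves (ii).

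The hard part will be the last step --- identifying exactly which Hodge structures appear in $\langle H^{1}(C)\rangle^{\otimes}$ --- which rests on two classical but nontrivial inputs: the genericity statement $\operatorname{MT}(\Jac C)=\operatorname{GSp}_{2g}$, and the absence of an effective weight-one representation of $\operatorname{GSp}_{2g}$ of dimension $2g-2$. The effectivity hypothesis in the latter is essential, since $\operatorname{Rep}(\operatorname{GSp}_{2g})$ does contain noneffective weight-one objects such as $V^{\otimes3}(1)$ in a range of dimensions; without it the dimension count collapses. The remaining bookkeeping --- the isotypic decomposition of $\fh(\tilde C)$ and its match with Proposition \ref{prop motive rk 2 higgs fixed det} --- is routine given part (i).
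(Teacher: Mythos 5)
Your argument is correct and follows essentially the same route as the paper: extract $\fh(\tilde C)$ from the $j=1$ term of Proposition \ref{prop motive rk 2 higgs fixed det}, decompose it into Prym pieces, and rule out $H^1(\operatorname{Prym})$ lying in the Tannakian category generated by $H^1(C)$ for a very general $C$ via fullness of the Mumford--Tate group of its Jacobian. The only (immaterial) difference is the endgame: the paper restricts to the Hodge group $\operatorname{Sp}_{2g}$ and simply notes that its nontrivial representations have dimension at least $2g>2g-2$, whereas you invoke the slightly heavier classification of effective weight-one objects of $\operatorname{Rep}_{\QQ}(\operatorname{GSp}_{2g})$ as sums of the standard representation.
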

\begin{proof}	
For any $k\in \NN$, there is a surjective morphism $\Sym^k(\tilde{C})\to \tilde{\Sym}^k(C)$, thus $\fh( \tilde{\Sym}^k(C))$ is a summand of $\fh(\Sym^k(\tilde{C}))\simeq \Sym^k \fh(\tilde{C})$. Since also $\fh_1(C)$ is a direct factor of $\fh_1(\tilde{C})$, we deduce the first statement from Propositions \ref{prop:N-abelian} and \ref{prop motive rk 2 higgs fixed det}.

Let us now assume that $k = \CC$. We thank Salvatore Floccari and Zhi Jiang for their kind help on the proof. 
	\'Etale double covers (not necessarily connected) of $C$ (or $\Jac(C)$) are in bijection with the following abelian group 
	\[T:=\Hom(H_1(C, \ZZ), \ZZ/2\ZZ)=H^1(C, \ZZ/2\ZZ)\simeq (\ZZ/2\ZZ)^{\oplus 2g}.\]
	For any $t\neq 0\in T$, we denote by $\pi_t\colon C'_t\to C$ the corresponding \'etale double cover and denote by $P_t:=\operatorname{Prym}(C'_t/C)$ the associated $(g-1)$-dimensional Prym variety. By convention, $C_0'=C\coprod C$ and $P_0=\Jac(C)$.
	Note that the rank $2$ variation of Hodge structures $\pi_{t, *}\QQ_{C'_t}$ splits up as $ \QQ_{C}\oplus L_t$. In particular, there is an isomorphism of Hodge structures $H^1(P_t, \QQ)\simeq H^1(C, L_t)$.
	
	Since $\pi\colon \tilde{C} \ra C$ is an abelian cover, the variation of Hodge structures $\pi_*\QQ_{\tilde{C}}$ splits into a direct sum of rank one variations of Hodge structures:
	$$\pi_*\QQ_{\tilde{C}}\simeq \bigoplus_{t\in T}L_t,$$ where  $L_t$ is the local system on $C$ corresponding to the double cover $C'_t/C$.
 	 Therefore, we have isomorphisms of rational Hodge structures:
	\begin{equation*}
		H^1(\tilde{C}, \QQ) \simeq H^1(C, \pi_*\QQ_{\tilde{C}})\simeq \bigoplus_{t\in T} H^1(C, L_t)\simeq \bigoplus_{t\in T}H^1(P_t, \QQ).
	\end{equation*}
	Therefore we have an isogeny 
	\begin{equation}\label{eq isog prym} 
	\Jac(\tilde{C})\simeq_\QQ \prod_{t\in T}P_t
\end{equation}	
	Consequently, the tensor subcategory of $\CHM(k, \QQ)$ generated by $\fh(\tilde{C})$ is the same as the tensor subcategory generated by the motives of $C$ and all Prym varieties $P_t$ for $t\in T$. Let us denote this subcategory by $\langle \fh(P_t), t\in T\rangle$. This is also equivalent to the category generated by the motives of the double covers $C_t'$ of $C$. Since $\fh(\Sym^j(\tilde{C})) \in \langle \fh(P_t), t\in T\rangle$ for all $j \in \NN$ and $\fh(\cN_\cL(2, d))\in \langle\fh(C)\rangle$ (Proposition \ref{prop:N-abelian}), we deduce statement (i) from Equation \eqref{eqn:MotivePHBFixDet}.
	
	To prove (ii), it suffices to show that for some $t\in T\backslash\{0\}$, the Hodge structure  $H^1(P_t, \QQ)$ does not belong to the tensor subcategory generated by $H^1(C, \QQ)$. Assuming the contrary, by Tannakian duality, $H^1(P_t, \QQ)$ is a $(2g-2)$-dimensional  representation of the Hodge group of $C$, which is the symplectic group $\operatorname{Sp}(H^1(C, \QQ))\simeq \operatorname{Sp}_{2g}$ when $C$ is general. But this is absurd since a nontrivial representation of the symplectic group $ \operatorname{Sp}_{2g}$ is of dimension at least $2g$.	
\end{proof}

\begin{rmk}
We think that part (i) of this proposition also holds over field $k \neq \CC$, as we suspect the isogeny \eqref{eq isog prym} holds in greater generality. 
\end{rmk}

\subsection{Motives of moduli spaces of parabolic Higgs bundles}
\label{subsec:ParabolicHiggs}
Let $C$ be a smooth projective curve of genus $g$. Let  $p_1, \dots, p_N$ be $N$ distinct $k$-rational points on $C$ and denote $D=p_1+\cdots+p_N$. Assume that $2g-2+N\geq 0$, i.e.~$\omega_C(D)$ is nef.

\begin{defn}
A (quasi) \textit{parabolic Higgs bundle} on $(C, D)$ is a pair $(E_*,\Phi)$ consisting of
\begin{itemize}
	\item  a (quasi) parabolic vector bundle $E_*$ (see Definition \ref{def:ParaBun}) with full flag-type\footnote{One can also consider non-full flags, but we make this simplifying assumption so that all walls are good (see $\S$\ref{sec geom var stab par}) and so all wall-crossings can be explicitly described.} at each marked point $p_i$, $1\leq i\leq N$;
	\item  an  $\mathcal{O}_C$-linear homomorphism $\Phi\colon E\to E\otimes K_C(D)$, called a \textit{(strongly parabolic) Higgs field}, satisfying that $\Phi(E_{i,j})\subset E_{i, j+1}\otimes \omega_C(D)$, for any $1\leq i\leq N$ and any $1\leq j\leq \operatorname{rk}(E)$.
\end{itemize}
The notion of (semi-)stablity is defined similarly as in Definition \ref{def:StabilityParaBun} using $\alpha$-slopes, except that only parabolic Higgs subbundles need to be considered, i.e. those parabolic subbundles $F$ of $E_*$ that are $\Phi$-invariant: $\Phi(F)\subset F\otimes \omega_C(D)$. 
\end{defn}

For $n\in \NN^*, d\in \ZZ$ and a weight $\alpha$, Yokogawa \cite{Yokogawa, Yokogawa93} constructed the moduli space of $\alpha$-semistable parabolic Higgs bundles rank $n$ degree $d$, which we denote by $\mathcal{M}^\alpha(n, d)$. The stable locus $\mathcal{M}^{\alpha-s}(n, d)$ forms an open subset.

Similarly to the case of parabolic bundles discussed in \S\ref{sec:ParabolicBundles}, there is a wall and chamber structure in the weight space, such that the corresponding moduli space of stable parabolic Higgs bundles stays the same when varying the weight within a chamber, and undergoes a birational transform when crossing a wall. 
However, 
Boden--Yokogawa \cite{BY_ParHiggs} observed that moduli spaces of stable parabolic Higgs bundles have the same Betti numbers when crossing a wall, and they conjectured that the diffeomorphic type should also be preserved, which was proved by Nakajima \cite{NakajimaPHB} shortly after. Subsequently, Thaddeus \cite{Thaddeus_var_PHiggs} gave a more precise geometric picture: when crossing a wall in the weight space, the moduli space of stable parabolic Higgs bundles undergoes a very special birational transform, namely, a \textit{Mukai flop}; see \S \ref{subsec:MukaiFlop}, for the precise definition. 

For a generic weight $\alpha$, the moduli space $\mathcal{M}^\alpha(n, d)=\mathcal{M}^{\alpha-s}(n, d)$ is a smooth quasi-projective (and over $k = \CC$, hyper-K\"ahler) variety \cite{NakajimaPHB}, but in general it is non-proper. Neverthless, its Voevodsky motive is pure.

\begin{lemma}\label{lemma:par Higgs pure}
For $\alpha$ generic (i.e.~not on any wall), the Voevodsky motive of $\mathcal{M}^\alpha(n, d)$ lies in the subcategory of Chow motives. 
\end{lemma}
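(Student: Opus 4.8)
The plan is to imitate the proof of the non-parabolic case in \cite[Corollary 6.9]{HPL_Higgs}, using the natural $\GG_m$-action on $\mathcal{M}^\alpha(n,d)$ that scales the Higgs field, $t\cdot(E_*,\Phi)=(E_*,t\Phi)$. First I would record that, since $\alpha$ is generic, $\mathcal{M}^\alpha(n,d)=\mathcal{M}^{\alpha-s}(n,d)$ is a smooth quasi-projective $k$-variety, and that the above formula does define a $\GG_m$-action on it: scaling $\Phi$ preserves $\alpha$-stability because it does not change the set of $\Phi$-invariant parabolic subbundles, nor their $\alpha$-slopes.

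The key point to establish is that this action is \emph{semi-projective} in the sense of \cite[Definition A.1]{HPL_Higgs}, i.e. that the limit $\lim_{t\to 0} t\cdot(E_*,\Phi)$ exists in $\mathcal{M}^\alpha(n,d)$ for every point, and that the fixed locus $\mathcal{M}^\alpha(n,d)^{\GG_m}$ is proper. Existence of the limit is the parabolic analogue of the classical fact (due to Simpson and Hitchin) that the Hitchin map is proper on the nilpotent cone; in the parabolic setting this follows from Yokogawa's construction and boundedness results \cite{Yokogawa93} (or directly from the valuative criterion applied to the moduli functor, using that the family $(E_*,t\Phi)$ over $\GG_m$ extends over $0$ after a suitable Hecke-type modification). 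For the fixed locus, I would use the standard description of $\GG_m$-fixed parabolic Higgs bundles as parabolic systems of Hodge bundles: a decomposition $E=\bigoplus_k E_k$ compatible with the flags at the $p_i$ such that $\Phi$ maps $E_k$ into $E_{k+1}\otimes\omega_C(D)$. Hence $\mathcal{M}^\alpha(n,d)^{\GG_m}$ is a finite disjoint union of moduli spaces of $\alpha$-stable parabolic chains of the relevant discrete types; since such chains carry no residual scaling automorphism, these moduli spaces are projective (and, for generic $\alpha$, smooth), so the fixed locus is smooth and proper.

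Once semi-projectivity is in place, I would apply the motivic Białynicki–Birula decomposition \cite[Theorem A.4]{HPL_Higgs}: writing $\mathcal{M}^\alpha(n,d)^{\GG_m}=\bigsqcup_i F_i$ for the connected components of the fixed locus and $c_i$ for the codimension in $\mathcal{M}^\alpha(n,d)$ of the attracting set flowing to $F_i$, there is an isomorphism in $\DM(k,\ZZ)$ of the form $M(\mathcal{M}^\alpha(n,d))\simeq\bigoplus_i M(F_i)(c_i)[2c_i]$. As each $F_i$ is smooth projective, $M(F_i)$ lies in the subcategory of Chow motives via Theorem \ref{thm:embedding}, and this subcategory is stable under Tate twists, shifts of the form $(c)[2c]$, and finite direct sums; therefore $M(\mathcal{M}^\alpha(n,d))$ is a Chow motive (in particular its Voevodsky motive is pure).

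\textbf{Main obstacle.} The formal part — deducing the conclusion from the motivic Białynicki–Birula decomposition — is immediate. The real work is the verification that the $\GG_m$-action is semi-projective, i.e. the properness of the fixed locus together with the uniform existence of the $t\to 0$ limit; this requires the parabolic analogue of Simpson's properness statements, boundedness of the relevant families, and the precise identification of $\GG_m$-fixed objects with $\alpha$-stable parabolic chains whose moduli spaces are projective. I expect this to be where all the genuine input lies, and it may be cleanest to extract it from the literature on parabolic Higgs bundles and parabolic chains rather than reprove it.
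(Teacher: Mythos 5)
Your proposal is correct and follows essentially the same route as the paper: the scaling $\GG_m$-action, verification of semi-projectivity via the identification of the fixed locus with projective moduli spaces of parabolic chains (the paper cites Simpson's description of the fixed points and flow for this), and the motivic Bia{\l}ynicki--Birula decomposition of \cite[Theorem A.4]{HPL_Higgs}. The one point you flag as the ``main obstacle'' is handled in the paper exactly as you suggest, by extracting it from the literature rather than reproving it.
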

\begin{proof}
The argument is almost identical to the result for (non-parabolic) Higgs moduli spaces when $n$ and $d$ are coprime \cite[Corollary 6.9]{HPL_Higgs} and so we simply sketch the details. In the parabolic setting, there is also a $\GG_m$-action on parabolic Higgs moduli space $\mathcal{M}^\alpha(n, d)$ given by scaling the parabolic Higgs field. For generic $\alpha$, the parabolic Higgs moduli space $\mathcal{M}^\alpha(n, d)$ is a smooth quasi-projective variety. In this case, the $\GG_m$-action on the smooth variety $\mathcal{M}^\alpha(n, d)$ is semi-projective (see \cite[Definition A.1]{HPL_Higgs}) in the sense that the fixed locus is proper and the limit as $t \in \GG_m$ tends to zero exists for all points. Indeed the $\GG_m$-fixed points and flow are described in \cite[Theorem 8]{Simpson_harmonic}: the $\GG_m$-fixed loci are moduli spaces of chains of parabolic vector bundles for appropriate stability parameters, which are projective varieties by their GIT constructions. In particular, there is an associated Bia{\l}ynicki-Birula decomposition \cite{BB} of $\mathcal{M}^\alpha(n, d)$, and as the flow under this $\GG_m$-action is a deformation retract and the fixed loci are smooth projective varieties, the Voevodsky motive of $\mathcal{M}^\alpha(n, d)$ is pure (see \cite[Appendix A]{HPL_Higgs}).
\end{proof}

As a consequence of Theorem \ref{thm:MotiveMukaiFlop} combined with Thaddeus' aforementioned result \cite[6.2]{Thaddeus_var_PHiggs}, we obtain the following result.

\begin{cor}\label{cor motive par higgs indept of alpha}
Fix $(C, D)$ and $n$ and $d$. Then for a generic weight $\alpha$, the integral Chow motive of the moduli space $\cM^\alpha_{C,D}(n,d)$ of $\alpha$-semistable parabolic Higgs bundles of rank $n$ and degree $d$ is independent of $\alpha$.
\end{cor}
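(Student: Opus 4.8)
The plan is to combine Thaddeus' geometric description of parabolic Higgs wall-crossings with the motivic invariance under Mukai flops that we have just established in Theorem \ref{thm:MotiveMukaiFlop}. First I would reduce to the case of two generic weights $\alpha_-$ and $\alpha_+$ lying in chambers separated by a single wall, since the space of generic weights is connected through such adjacent chambers (any two generic weights can be joined by a path crossing one wall at a time), and an isomorphism of Chow motives on each wall-crossing composes to give the general statement. By Lemma \ref{lemma:par Higgs pure}, both $\cM^{\alpha_-}$ and $\cM^{\alpha_+}$ have Voevodsky motive lying in the subcategory of Chow motives, so it makes sense to speak of $\fh(\cM^{\alpha_\pm})$.

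Next I would invoke Thaddeus' theorem \cite[6.2]{Thaddeus_var_PHiggs}: crossing a (good) wall in the weight space, the birational transformation $\cM^{\alpha_-} \dashrightarrow \cM^{\alpha_+}$ is a Mukai flop in the sense of Definition \ref{def:MukaiFlop}, with centre a projective bundle over a product of smaller parabolic Higgs moduli spaces. Here one needs to check that this fits the hypotheses of Definition \ref{def:MukaiFlop} and Theorem \ref{thm:MotiveMukaiFlop}: the normal bundle of the centre $Z$ has the required form $\Omega_{Z/S}\otimes\varpi^*(L)$ (this is exactly the content of Thaddeus' analysis — the exceptional loci are, up to the symplectic structure, cotangent-type, which is why the transformation is a Mukai rather than standard flip), and that there is a small extremal contraction $\cM^{\alpha_-}\to\bar{\cM}$ restricting to $\varpi$ on $Z$ (this is provided by passing to a weight on the wall, where the moduli space $\cM^\alpha$ with semistability is the target of the contraction from both sides). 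As the full-flag assumption ensures all walls are good, Thaddeus' description applies directly. Then Theorem \ref{thm:MotiveMukaiFlop} — noting the quasi-projective case is exactly why we proved that generalisation — yields an isomorphism $\fh(\cM^{\alpha_-})\xrightarrow{\simeq}\fh(\cM^{\alpha_+})$ of integral Chow motives induced by the correspondence $[\cM^{\alpha_-}\times_{\bar{\cM}}\cM^{\alpha_+}]$.

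The main obstacle I anticipate is the verification that Thaddeus' wall-crossing really falls within the precise framework of Definition \ref{def:MukaiFlop} over a general base field and in the non-proper setting, in particular the identification of the normal bundle of the flopping centre and the existence of the small extremal contraction. Thaddeus works over $\CC$ and in the language of symplectic geometry; one has to extract from his argument that, algebraically, for a parabolic Higgs bundle in the exceptional locus there is a unique destabilising parabolic Higgs subbundle on one side of the wall, so that a descent argument (as in the parabolic bundle case, see the discussion before Theorem \ref{thm geom flips par}) makes the description valid over $k$, and that the centre is cut out with the correct infinitesimal structure so that $N_{Z/X}\cong \Omega_{Z/S}\otimes\varpi^*(L)$. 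Once this geometric input is in place, the motivic conclusion is a formal consequence of Theorem \ref{thm:MotiveMukaiFlop} and the connectedness of the generic locus in the weight space, so no further calculation is required.
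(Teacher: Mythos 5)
Your proposal is correct and follows exactly the paper's route: the paper derives this corollary in one line by combining Thaddeus' description of the wall-crossing as a Mukai flop \cite[6.2]{Thaddeus_var_PHiggs} with Theorem \ref{thm:MotiveMukaiFlop} (whose quasi-projective generalisation was proved precisely for this purpose), together with Lemma \ref{lemma:par Higgs pure} to ensure the motives in question are Chow motives. Your additional remarks on reducing to adjacent chambers and on verifying Thaddeus' hypotheses over a general field are sensible elaborations of the same argument rather than a different approach.
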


\subsubsection{Closed formula for motives of rank $2$ parabolic Higgs bundles}

In this section we consider moduli spaces parabolic Higgs bundles of rank $n = 2$ and odd degree $d$ with full flags at $N$ points $p_1, \dots p_N$. For a generic weight $\alpha$, we compute the Chow motive of $\cM^{\alpha} = \cM^{\alpha}(2,d)$.

\begin{thm}\label{thm motive rk 2 para higgs}
For a generic weight $\alpha$, we have an explicit isomorphism of integral Chow motives
\[ \fh(\cM^\alpha) \simeq \fh(\cN) \otimes \fh(\PP^1)^{\otimes N} \oplus \bigoplus_{\begin{smallmatrix}  0 \leq l \leq N  \\ \frac{l+1-N}{2} \leq j \leq g -1 \end{smallmatrix}} \fh(\Pic^{a_{d,j}}(C)) \otimes \fh(\Sym^{2j+N - l-1}(C))(3g -2j+l-2)^{\oplus {N \choose l }} \]
where $a_{d,j} := g-j+(d-1)/2$ and $\cN = \cN(2,d)$ is the moduli space of semistable vector bundles.
\end{thm}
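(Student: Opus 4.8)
The plan is to combine Corollary \ref{cor motive par higgs indept of alpha} with the motivic Bia\l ynicki-Birula decomposition of $\cM^\alpha$ for the $\GG_m$-action scaling the parabolic Higgs field, following the proof of Theorem \ref{thm motive rk 2 Higgs}. Since Corollary \ref{cor motive par higgs indept of alpha} shows $\fh(\cM^\alpha)$ does not depend on the generic weight, I would first replace $\alpha$ by a small generic weight $\alpha_{\min}$ in a minimal chamber, so that Proposition \ref{prop motive min and max}(1) identifies $\fh(\cN^{\alpha_{\min}})\simeq \fh(\cN)\otimes\fh(\PP^1)^{\otimes N}$. By Lemma \ref{lemma:par Higgs pure}, the $\GG_m$-action $t\cdot(E_*,\Phi)=(E_*,t\Phi)$ on $\cM^{\alpha_{\min}}$ is semi-projective with smooth projective fixed loci, so \cite[Theorem A.4]{HPL_Higgs} gives $\fh(\cM^{\alpha_{\min}})\simeq\bigoplus_F \fh(F)(c_F^+)$, where $F$ runs over connected components of the fixed locus and $c_F^+$ is the codimension of the attracting cell of $F$.

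Next I would identify the fixed locus. A $\GG_m$-fixed parabolic Higgs bundle either has $\Phi=0$ --- in which case $E_*$ is $\alpha_{\min}$-semistable and this component is $\cN^{\alpha_{\min}}$, whose attracting cell is dense so $c_F^+=0$ --- or has $\Phi\neq 0$, in which case (as in Hitchin's rank-$2$ analysis and Simpson's description of the fixed loci as moduli of parabolic chains) $E=L_0\oplus L_1$, the flag at each $p_i$ equals the fibre of $L_0$ or of $L_1$, and $\Phi$ is a strongly parabolic morphism $L_0\to L_1\otimes\omega_C(D)$. Writing $S\subset\{1,\dots,N\}$ for the set of the $l$ points where the flag comes from $L_0$, the strongly parabolic condition forces $\Phi$ to vanish along $\sum_{i\in S}p_i$, so $\Phi$ is a nonzero section of $L_0^\vee\otimes L_1\otimes\omega_C\big(\sum_{i\notin S}p_i\big)$; in the non-fixed-determinant case $L_1$ is recovered from $L_0$ and the zero divisor of $\Phi$, so such a fixed point is freely parametrised by $(L_0,\operatorname{div}(\Phi))$ together with $S$. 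A degree count forces $\deg L_0=a_{d,j}$ with $j:=g-\deg L_0+(d-1)/2$ and then $\deg\operatorname{div}(\Phi)=2j+N-l-1$; $\alpha_{\min}$-stability (only the $\Phi$-invariant line subbundle $L_1$ must be tested, giving $\deg L_1<d/2$, i.e.\ $j\leq g-1$, for small weights) together with the existence of an effective divisor of that degree give exactly the range $\tfrac{l+1-N}{2}\leq j\leq g-1$. Hence the positive-weight fixed components are the $\binom{N}{l}$ copies (one per size-$l$ subset $S$) of $\Pic^{a_{d,j}}(C)\times\Sym^{2j+N-l-1}(C)$.

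Then I would compute the codimensions $c_F^+$ by the parabolic analogue of \cite[Proposition 2.2]{HPL_Higgs}: since the $\GG_m$-action has weight $1$ on the holomorphic symplectic form, each attracting cell over a fixed component $F$ is ``half-dimensional over $F$'', so that $c_F^+=\tfrac12\dim\cM^{\alpha_{\min}}-\dim F$. As $\dim\cM^{\alpha_{\min}}=2\dim\cN^{\alpha_{\min}}=2(4g-3+N)$, this yields $c_F^+=0$ for the $\Phi=0$ component and $c_F^+=3g-2j+l-2$ for the component attached to a size-$l$ subset and to $j$ (this is consistent with the specialisation $N=0$ recovering Theorem \ref{thm motive rk 2 Higgs}). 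Substituting into the Bia\l ynicki-Birula decomposition and using $\fh(X\times Y)\simeq\fh(X)\otimes\fh(Y)$ gives the stated formula, with $\alpha$-independence reiterating Corollary \ref{cor motive par higgs indept of alpha}.

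The main obstacle is the second and third steps: matching the strongly parabolic condition with the precise divisor/degree bookkeeping, pinning down the stability range of $j$ for small generic weights (and checking there are no extra strictly semistable fixed points since $d$ is odd), and transferring the ``Lagrangian attracting cell'' codimension computation of \cite{HPL_Higgs} to the parabolic setting via parabolic Serre duality. Once these are in place, the remainder is a routine assembly.
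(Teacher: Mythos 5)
Your proposal matches the paper's proof essentially step for step: invariance of the motive in $\alpha$ to reduce to a small weight, the motivic Bia{\l}ynicki-Birula decomposition for the scaling action, the identification of the nonzero-Higgs-field fixed components as $\binom{N}{l}$ copies of $\Pic^{a_{d,j}}(C)\times\Sym^{2j+N-l-1}(C)$ via the strongly parabolic factorisation of $\Phi$, and the codimension computation $c^+=3g-2j+l-2$ from the Lagrangian attracting cells. All the degree and stability bookkeeping you outline agrees with the paper's, so this is the same argument.
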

\begin{proof}
We use the $\GG_m$-action on $\cM^\alpha$ given by $t \cdot (E_*,\Phi) = (E_*,t\Phi)$ and its associated motivic Bia{\l}ynicki-Birula decomposition to prove the above formula. Since $\fh(\cM^\alpha)$ is independent of $\alpha$ by Corollary \ref{cor motive par higgs indept of alpha}, we will take a particular choice of $\alpha$. 

By performing a linear shift, we can assume that $\alpha$ has the form $(\alpha_{i,1}, \alpha_{i,2}) = (0, \alpha_i)$ with $\alpha_i > 0$. The fixed points of this $\GG_m$-action in rank $2$ for trivial parabolic degree and fixed determinant is described in \cite{BY_ParHiggs} and the fixed locus in our setting is a minor modification of this. If $(E_*,\Phi)$ is a $\GG_m$-fixed point, then either $\Phi = 0$ and $E_*$ is an $\alpha$-semistable parabolic vector bundle or $\Phi \neq 0$ and so we have $\GG_m \subset \Aut(E)$ giving a weight space decomposition $E = L \oplus M$ as a sum of two line bundles such that $\Phi$ is given by a non-zero strongly parabolic homomorphism $\Phi : L \ra M \otimes \omega_C$. The parabolic structure and weights on $L$ and $M$ are induced by that of $E_*$ as follows. Let $d' = \deg(L)$ and $m'$ denote the multiplicity of $L \subset E_*$ given by $m'_i = \dim (L_{p_i} \cap E_{i,2}) \in \{ 0 ,1\}$. Then the weight of the induced flags in $L_{p_i}$ is $\alpha_i$ if $m_i' = 1$, and $0$ if $m_i' = 0$. Hence $\Phi : L \ra M \otimes \omega_C$ being strongly parabolic means $\Phi(L_{p_i}) = 0$ for all $i$ with $m_i' = 1$. Equivalently this means that $\Phi$ factors as
\[ \Phi : L \ra M \otimes \omega_C(D - \sum_{i=1}^N m_i'p_i) \hookrightarrow M \otimes \omega_C(D).\]
In this case $(E_* = L \oplus M,\Phi)$ is specified by $L \in \Pic^{d'}(C)$ and $ L \ra M \otimes \omega_C(D - \sum_{i=1}^N m_i'p_i)$, which corresponds to an effective divisor of degree $d-2d'+2g-2+N-|m'| \geq 0$, where $|m'|:= \sum_{i=1}^N m_i'$. Since  $(E_*,\Phi)$ is $\alpha$-stable, the parabolic Higgs subbundle $M_* \subset E_*$ satisfies $\mu_{\alpha}(M_*) < \mu_{\alpha}(E_*)$, or equivalently $\mu_{\alpha}(L_*) > \mu_{\alpha}(E_*)$, which gives 
\begin{equation}\label{eq M subHiggs}
2d'  > d + \sum_{i=1}^N \alpha_i (1-2m'_i)
\end{equation} 
Therefore the $\GG_m$-fixed locus with non-zero Higgs field is indexed by tuples $(m',d')$ such that $d + \sum_{i=1}^N \alpha_i (1-2m'_i) < 2d' \leq 2g - 2 +N + d - |m'|$ and the corresponding fixed locus is $\Pic^{d'}(C) \times \Sym^{2g+d-2d'+N-l-2}(C)$.

Now let us pick $\alpha_i $ very small, so that the moduli space $\cN^\alpha$ of $\alpha$-stable parabolic vector bundles is a $(\PP^1)^N$-bundle over the moduli space $\cN$ of stable vector bundles (\textit{cf.} Corollary \ref{cor full deg flag motive}) and so \eqref{eq M subHiggs} is equivalent to $2d' \geq d+1$ for all possible $m'$. If we set $j = g-d' +(d-1)/2$, then for this $\alpha$, we have fixed set
\[ (\cM^\alpha)^{\GG_m} = \cN^\alpha \sqcup \bigsqcup_{\begin{smallmatrix} m' \in \{ 0,1\}^N \\ \frac{l+1-N}{2} \leq j \leq g-1  \end{smallmatrix}} \Pic^{a_{d,j}}(C) \times \Sym^{2j+N-l-1}(C). \]
and the codimension of the Bia{\l}ynicki-Birula stratum indexed by $(m',j)$ is 
\[ c^+_{m',j} = \dim \cN^\alpha - (2j+N-l-1+ g) = 3g - 2j +l-2\]
since the downward flow is Lagrangian (as in the proof of Theorem \ref{thm motive rk 2 Higgs}). For $0 \leq l \leq N$, there are $N \choose l$ multiplicities $m'$ with $l = \sum_{i=1}^N m_i'$ and so the motivic Bia{\l}ynicki-Birula decomposition (see \cite[Theorem A.4]{HPL_Higgs}, again interpreted in the category of Chow motives) gives 
\[\fh(\cM^\alpha)\simeq \fh(\cN^\alpha) \oplus \bigoplus_{\begin{smallmatrix}   0 \leq l \leq N  \\ \frac{l+1-N}{2} \leq j \leq g-1 \end{smallmatrix}}\fh(\Pic^{a_{d,j}}(C) ) \otimes \fh(\Sym^{2j+N-l-1}(C))(3g - 2j +l-2)^{\oplus {N \choose l }}. \]
To conclude, we use the fact that $\cN^\alpha \ra \cN$ is a $(\PP^1)^N$-bundle (\textit{cf.}\ Corollary \ref{cor full deg flag motive}). \end{proof}

By combining Theorem \ref{thm motive rk 2 para higgs} with the formula for the rational Chow motive of $\cN(2,d)$ in Theorem \ref{thm motive N rank 2}, we obtain a formula for the rational Chow motive of $\cM^\alpha(2,d)$ in terms of sums and tensor products of Tate twists of motives of $\Pic^i(C)$ and $\Sym^j(C)$.

For even degree $d$ and generic $\alpha$, we can also compute the Chow motive of $\cM^\alpha(2,d)$ using a Bia{\l}ynicki-Birula decomposition, where the fixed locus $\cN^\alpha(2,d)$ is isomorphic to $\cN^{\alpha'}(2,d-1)$ by a Hecke modification at a single parabolic point, where $\alpha':= \alpha(p_i)$ (see Example \ref{ex Hecke mod rank 2} and Remark \ref{rmk par vb rk 2 even deg}). 

We can also obtain a formula for the Chow motives of moduli space $\cM_{\cL}^\alpha(2,d)$ of parabolic Higgs bundles with fixed determinant $\cL$. Indeed these are also invariant of $\alpha$ (for $\alpha$ generic), as the Mukai flops for $\cM^\alpha(2,d)$ restrict to $\cM_{\cL}^\alpha(2,d)$, where the centres are pullbacks of symmetric powers of $C$ under the multiplication-by-2 map on $\Jac(C)$. The proof of the following formula is essentially the same as Theorem \ref{thm motive rk 2 para higgs} using a modification as in Proposition \ref{prop motive rk 2 higgs fixed det}.
 
 \begin{prop}\label{prop par higgs rk 2 fixed det} 
For a generic weight $\alpha$, the Chow motive of moduli space $\cM_{\cL}^\alpha(2,d)$ of parabolic Higgs bundles with fixed determinant $\cL$ of odd degree is given by an explicit isomorphism
\[ \fh(\cM^\alpha_{\cL}) \simeq \fh(\cN_{\cL}) \otimes \fh(\PP^1)^{\otimes N} \oplus \bigoplus_{\begin{smallmatrix}  0 \leq l \leq N  \\ \frac{l+1-N}{2} \leq j \leq g -1 \end{smallmatrix}}  \fh(\widetilde{\Sym}^{2j+N - l-1}(C))(3g -2j+l-2)^{\oplus {N \choose l }} \] 
where $\widetilde{\Sym}^{i}(C) \ra {\Sym}^{i}(C)$ is the degree $2^{2g}$ \'etale cover given as the base change of  the multiplication-by-2 map on $\Jac(C)$.
 \end{prop}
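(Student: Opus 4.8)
The plan is to follow closely the proofs of Theorem \ref{thm motive rk 2 para higgs} and Proposition \ref{prop motive rk 2 higgs fixed det}, combining the $\GG_m$-action on $\cM^\alpha_\cL(2,d)$ scaling the parabolic Higgs field with the motivic Bia{\l}ynicki--Birula decomposition. First I would record the independence of $\fh(\cM^\alpha_\cL)$ on the generic weight $\alpha$: Thaddeus' wall-crossing Mukai flops for $\cM^\alpha(2,d)$ restrict to Mukai flops between the fixed-determinant moduli spaces, with centres now given by the base change along the multiplication-by-$2$ map on $\Jac(C)$ of the symmetric-power centres occurring for $\cM^\alpha(2,d)$; since the proof of Lemma \ref{lemma:par Higgs pure} applies verbatim with fixed determinant, the Voevodsky motive of $\cM^\alpha_\cL$ is pure, and Theorem \ref{thm:MotiveMukaiFlop} produces an isomorphism of Chow motives across each wall. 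Hence it suffices to compute $\fh(\cM^\alpha_\cL)$ for one convenient small generic weight.

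Next I would fix $\alpha$ with $(\alpha_{i,1},\alpha_{i,2})=(0,\alpha_i)$, all $\alpha_i>0$ small, and analyse the $\GG_m$-action $t\cdot(E_*,\Phi)=(E_*,t\Phi)$ exactly as in Theorem \ref{thm motive rk 2 para higgs}. A fixed point is either $(E_*,0)$ with $E_*$ an $\alpha$-stable parabolic bundle with determinant $\cL$, contributing the fixed component $\cN^\alpha_\cL$, or has $\Phi\neq 0$, forcing a splitting $E=L\oplus M$ with $M\simeq\cL\otimes L^{-1}$ and a non-zero strongly parabolic homomorphism $\Phi\colon L\to M\otimes\omega_C(D-\sum_i m_i' p_i)$, where $m_i'=\dim(L_{p_i}\cap E_{i,2})\in\{0,1\}$. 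The crucial difference from the non-fixed-determinant case is that, with $\det E=\cL$ fixed, such a $\Phi$ is a non-zero section of $\cL\otimes L^{-2}\otimes\omega_C(D-\sum_i m_i' p_i)$, so the data of the fixed point is an effective divisor of degree $2j+N-l-1$ (setting $j=g-\deg L+(d-1)/2$ and $l=\sum_i m_i'$) together with a square root $L$ of the associated line bundle; this identifies the corresponding fixed component with $\widetilde{\Sym}^{2j+N-l-1}(C)$, the base change of $\Sym^{2j+N-l-1}(C)\to\Jac(C)$ along multiplication-by-$2$, exactly as in Proposition \ref{prop motive rk 2 higgs fixed det}. The stability inequality \eqref{eq M subHiggs} becomes $2\deg L\geq d+1$ for $\alpha$ small, which gives the range $\tfrac{l+1-N}{2}\leq j\leq g-1$, and for each $0\leq l\leq N$ there are $\binom{N}{l}$ multiplicities $m'$ with $l=\sum_i m_i'$.

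Finally, since the downward flow into each component $\widetilde{\Sym}^{2j+N-l-1}(C)$ is Lagrangian, its codimension equals $\dim\cN^\alpha_\cL-(2j+N-l-1+g)=3g-2j+l-2$, and the motivic Bia{\l}ynicki--Birula decomposition (\cite[Theorem A.4]{HPL_Higgs}, interpreted in the category of Chow motives as all motives occurring are pure) yields
\[\fh(\cM^\alpha_\cL)\simeq\fh(\cN^\alpha_\cL)\oplus\bigoplus_{\begin{smallmatrix}0\leq l\leq N\\ \frac{l+1-N}{2}\leq j\leq g-1\end{smallmatrix}}\fh(\widetilde{\Sym}^{2j+N-l-1}(C))(3g-2j+l-2)^{\oplus\binom{N}{l}}.\]
Combining this with the fixed-determinant version of Corollary \ref{cor full deg flag motive}, which gives that $\cN^\alpha_\cL\to\cN_\cL$ is a $(\PP^1)^N$-bundle for $\alpha$ small, hence $\fh(\cN^\alpha_\cL)\simeq\fh(\cN_\cL)\otimes\fh(\PP^1)^{\otimes N}$, produces the claimed formula. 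The main obstacle is the careful identification of the non-trivial $\GG_m$-fixed components with the degree-$2^{2g}$ covers $\widetilde{\Sym}^{2j+N-l-1}(C)$ — that is, checking that fixing the determinant genuinely replaces the $\Pic^{a_{d,j}}(C)\times\Sym^{\bullet}(C)$ factors of Theorem \ref{thm motive rk 2 para higgs} by a square-root cover, and keeping exact track of the parabolic twist $D-\sum_i m_i'p_i$ in the degree of the relevant linear system — together with verifying that Thaddeus' Mukai flops restrict compatibly to the fixed-determinant loci so that the independence of $\alpha$ remains valid.
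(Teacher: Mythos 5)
Your proposal is correct and follows exactly the route the paper takes: the paper's own proof of this proposition is just the remark that the argument of Theorem \ref{thm motive rk 2 para higgs} goes through with the modification of Proposition \ref{prop motive rk 2 higgs fixed det}, which is precisely what you carry out (the $\alpha$-independence via Mukai flops restricted to the fixed-determinant loci, the $\GG_m$-fixed components with $\Phi\neq 0$ identified with $\widetilde{\Sym}^{2j+N-l-1}(C)$ through the square-root condition on $L$ coming from $M\simeq\cL\otimes L^{-1}$, and the $(\PP^1)^N$-bundle $\cN^\alpha_\cL\to\cN_\cL$ for small $\alpha$). One small transcription slip: the codimension should be $\dim\cN^\alpha_\cL-(2j+N-l-1)$, not $\dim\cN^\alpha_\cL-(2j+N-l-1+g)$, since the fixed component no longer carries the $\Pic^{a_{d,j}}(C)$ factor; your stated value $3g-2j+l-2$ is nevertheless the correct one.
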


\appendix
\section{A local-to-global trick}\label{appendix}
We present a local-to-global trick employed in \cite{LLW-annals} and \cite{FuWang08}, 
which sometimes allows one to reduce the problem of computing the change of motives or Chow groups under a birational transform to the same problem for a local model. 

Let $i\colon Z\hookrightarrow X$ be a closed immersion between smooth varieties. Let $\tau\colon\tilde{X}\to X$ be the blow-up along $Z$ and $E=\PP(N_{Z/X})$ the exceptional divisor. Define $X_{\loc}:=\PP_Z(N_{Z/X}\oplus \mathcal{O}_Z)$, the compactification of the total space of the vector bundle $N_{Z/X}$, with the infinite part $E$. We think of the inclusion by zero section $Z\hookrightarrow X_{\loc}$ as the local projective model of $i$. We summarise the situation in the following diagram.
\begin{equation}
\label{diag:LocalGlobalTrick}
	\begin{tikzcd}
X_{\loc}  \arrow[dr, swap, "\pi"]&E \arrow[l, hook', swap, "\iota"] \arrow[r, hook, "j"] \arrow[d, "p"]&\tilde{X} \arrow[d, "\tau"]\\
&Z\arrow[r, hook, "i"]& X	
	\end{tikzcd}
\end{equation}
\begin{prop}
	\label{prop:LocalGlobal}
With the above notation, we have the following isomorphism
\begin{equation}
\label{eq:LocalGlobalTrick}
(\tau^*, \pi^*i^*)\colon  \CH^k(X) \xrightarrow{\quad\simeq\quad} \frac{\ker\left(\CH^k(\tilde{X})\oplus \CH^k(X_{\loc})\xtwoheadrightarrow{(j^*, -\iota^{*})} \CH^k(E)\right)}{\im\left(\CH^{k-1}(E)\xhookrightarrow{\; (j_*, -\iota_*) \;}\CH^k(\tilde{X})\oplus \CH^k(X_{\loc})\right)},	
\end{equation}
whose inverse is given by $(\tau_*, i_*\pi_*)$.
\end{prop}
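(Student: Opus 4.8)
The plan is to prove the isomorphism by showing that the correspondence $(\tau_*,i_*\pi_*)$ is a two‑sided inverse of $(\tau^*,\pi^*i^*)$, using only the standard functoriality calculus of Chow groups (l.c.i.\ pullback, proper pushforward, projection and self‑intersection formulas, localisation sequences) together with the blow‑up and projective bundle formulas. \emph{First, well‑definedness and that \eqref{eq:LocalGlobalTrick} is a complex.} Since $\tau\circ j=i\circ p$ and $\pi\circ\iota=p$ we get $j^{*}\tau^{*}=p^{*}i^{*}=\iota^{*}\pi^{*}i^{*}$, hence $(j^{*},-\iota^{*})\circ(\tau^{*},\pi^{*}i^{*})=0$, so $(\tau^{*},\pi^{*}i^{*})$ factors through $\ker(j^{*},-\iota^{*})$; dually $\tau_{*}j_{*}=i_{*}p_{*}=i_{*}\pi_{*}\iota_{*}$, so $(\tau_{*},i_{*}\pi_{*})$ kills $\im(j_{*},-\iota_{*})$ and descends to the displayed quotient. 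As $E$ is the exceptional divisor of $\tau$ we have $N_{E/\tilde X}\simeq\cO_{E}(-1)$, while $E\subset X_{\loc}$ is the section at infinity of $\PP_{Z}(N_{Z/X}\oplus\cO_{Z})$, so $N_{E/X_{\loc}}\simeq\cO_{E}(1)$; these being dual, the self‑intersection formula gives $j^{*}j_{*}+\iota^{*}\iota_{*}=0$ on $\CH^{*}(E)$, hence $(j^{*},-\iota^{*})\circ(j_{*},-\iota_{*})=0$. (Surjectivity of $(j^{*},-\iota^{*})$ follows from surjectivity of $j^{*}\colon\CH^{*}(\tilde X)\to\CH^{*}(E)$, and injectivity of $(j_{*},-\iota_{*})$ from the blow‑up formula, both in \cite{FultonBook}.)

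\emph{Second, one composite is the identity.} Because $\tau$ is proper birational, $\tau_{*}\tau^{*}=\id$; because $\pi$ is a projective bundle of positive relative dimension, $\pi_{*}\pi^{*}=0$, hence $i_{*}\pi_{*}\pi^{*}i^{*}=0$. Therefore $(\tau_{*},i_{*}\pi_{*})\circ(\tau^{*},\pi^{*}i^{*})=\tau_{*}\tau^{*}+i_{*}\pi_{*}\pi^{*}i^{*}=\id_{\CH^{k}(X)}$. In particular $(\tau^{*},\pi^{*}i^{*})$ is injective modulo $\im(j_{*},-\iota_{*})$ and $(\tau_{*},i_{*}\pi_{*})$ is a left inverse; it remains to prove surjectivity of $(\tau^{*},\pi^{*}i^{*})$ onto the quotient.

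\emph{Third, surjectivity — the crux.} Let $(\gamma,\delta)$ satisfy $j^{*}\gamma=\iota^{*}\delta$ and put $\alpha:=\tau_{*}\gamma+i_{*}\pi_{*}\delta$ (forced by Step~2); we must show $(\tau^{*}\alpha-\gamma,\ \pi^{*}i^{*}\alpha-\delta)\in\im(j_{*},-\iota_{*})$. For the first coordinate, $\tau$ restricts to an isomorphism over $X\setminus Z$ and $i_{*}\pi_{*}\delta$ is supported on $Z$, so $\tau^{*}\alpha$ and $\gamma$ have the same restriction to $\tilde X\setminus E\cong X\setminus Z$; by the localisation sequence $\CH^{k-1}(E)\xrightarrow{j_{*}}\CH^{k}(\tilde X)\to\CH^{k}(\tilde X\setminus E)\to 0$ we may write $\tau^{*}\alpha-\gamma=j_{*}\beta$ for some $\beta\in\CH^{k-1}(E)$. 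It then remains to prove $\pi^{*}i^{*}\alpha-\delta=-\iota_{*}\beta$. Applying $\iota^{*}$ and using $\iota^{*}\iota_{*}=c_{1}(\cO_{E}(1))=-\,j^{*}j_{*}$ together with $j^{*}\gamma=\iota^{*}\delta$ shows that $\pi^{*}i^{*}\alpha-\delta+\iota_{*}\beta$ lies in $\ker\iota^{*}$. The difficulty — and the main obstacle of the whole proof — is that $\iota^{*}$ alone is not injective, so this is not yet enough: one must invoke the explicit module structure, namely $\CH^{*}(\tilde X)=\tau^{*}\CH^{*}(X)\oplus j_{*}\bigl(\bigoplus_{l}c_{1}(\cO_{E}(1))^{l}\,p^{*}\CH^{*}(Z)\bigr)$ from the blow‑up formula \cite[Prop.~6.7]{FultonBook}, together with the fact that $\CH^{*}(X_{\loc})$ and $\CH^{*}(E)$ are free $\CH^{*}(Z)$‑modules on powers of $c_{1}(\cO(1))$, with $\iota^{*}$ intertwining the two via the Grothendieck relation of $E=\PP(N_{Z/X})$. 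Writing the residual class in these bases reduces the required vanishing to a finite linear‑algebra identity over $\CH^{*}(Z)$, checked using only that relation (this is exactly the computation carried out in \cite{LLW-annals} and \cite{FuWang08}); the same calculation also yields the two end‑exactness statements quoted in Step~1.

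\emph{Conclusion.} Steps~1–3 show that $(\tau^{*},\pi^{*}i^{*})$ is a well‑defined isomorphism from $\CH^{k}(X)$ onto the quotient, and Steps~1–2 identify $(\tau_{*},i_{*}\pi_{*})$ as its inverse. Conceptually this is a Mayer–Vietoris isomorphism for the reducible scheme $\tilde X\cup_{E}X_{\loc}$ — the special fibre of the deformation to the normal cone of $Z\subset X$ — and the role of the blow‑up/projective‑bundle calculus is precisely to upgrade that principle to the clean statement with the explicit correspondences above.
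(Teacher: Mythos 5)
Your overall architecture is sound and is essentially the paper's argument run in the dual direction: you establish well-definedness, the identity $(\tau_*,i_*\pi_*)\circ(\tau^*,\pi^*i^*)=\id$, and then reduce to one remaining statement. Where the paper completes the proof by showing $(\tau_*,i_*\pi_*)$ is \emph{injective} on the displayed quotient, you aim to show $(\tau^*,\pi^*i^*)$ is \emph{surjective} onto it; given the left-inverse identity these are equivalent, and your reduction of surjectivity --- the correct guess $\alpha=\tau_*\gamma+i_*\pi_*\delta$, the localisation sequence producing $\beta$ with $\tau^*\alpha-\gamma=j_*\beta$, and the observation that the residual class $R:=\pi^*i^*\alpha-\delta+\iota_*\beta$ lies in $\ker\iota^*$ --- is all correct. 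The problem is that at exactly the point you yourself call ``the crux'' and ``the main obstacle of the whole proof,'' you stop proving and start citing: the vanishing of $R$ is asserted to be ``a finite linear-algebra identity \dots exactly the computation carried out in \cite{LLW-annals} and \cite{FuWang08}.'' That computation \emph{is} the content of the proposition (it occupies most of the paper's proof, via the expansions $a=\tau^*(a_0)+j_*(\sum p^*(a_r)\xi^{r-1})$, $b=\sum\pi^*(b_r)\xi^r$, the relations they satisfy, and the excess intersection formula $\tau^*i_*=j_*(p^*(-)\,c_{e-1}(\cE))$), so as written your Step 3 is a correct reduction followed by a black box, not a proof. To close it along your lines one can avoid the full expansion: note that $\ker\iota^*$ in codimension $k$ is the free rank-one $\CH^{k-e}(Z)$-module on the Grothendieck element $G=\xi^{e}+\pi^*c_1(N)\xi^{e-1}+\cdots+\pi^*c_e(N)$, on which $\pi_*$ is injective (indeed $\pi_*(\pi^*(c)G)=c$); hence $R=0$ follows from $\pi_*R=p_*\beta-\pi_*\delta=0$, which in turn requires identifying $p_*\beta$ via the excess intersection formula (one cannot get it from $i_*(p_*\beta-\pi_*\delta)=0$ alone, since $i_*$ need not be injective). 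Some such explicit input is unavoidable.

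Two smaller points. First, your parenthetical claim that surjectivity of $(j^*,-\iota^*)$ follows from surjectivity of $j^*\colon\CH^*(\tilde X)\to\CH^*(E)$ is based on a false premise: by the blow-up formula the image of $j^*$ is $p^*i^*\CH^*(X)+\sum_{r\geq 1}p^*\CH^*(Z)\xi^r$, which misses $p^*\CH^k(Z)$ whenever $i^*\colon\CH^k(X)\to\CH^k(Z)$ is not surjective. The correct source of surjectivity, as in the paper, is $\iota^*$ (every $p^*(z)\xi^r$ with $0\leq r\leq e-1$ is $\iota^*(\pi^*(z)\xi^r)$). Second, your closing remark that this is a Mayer--Vietoris statement for the special fibre of the deformation to the normal cone is a nice heuristic, but it carries no proof content here and should not be leaned on.
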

\begin{proof}
The statement and the argument are essentially contained in \cite[\S 4]{LLW-annals}.
The surjectivity of $(j^*, -\iota^*)$ follows from the surjectivity of $\iota^*$, and the injectivity  of $\iota_*$ implies the injectivity of $ (j_*, -\iota_*)$.

Denote by $\xi=c_1(\mathcal{O}_p(1))\in \CH^1(E)$. Since $N_{E/\tilde{X}}\simeq \mathcal{O}_p(-1)$, the composition $j^*j_*$ is multiplication by $-\xi$. 
Since  $N_{E/X_{\loc}}\simeq \mathcal{O}_p(1)$, the composition $\iota^*\iota_*$ is multiplication by $\xi$. Therefore, $(j^*, -\iota^*)\circ (j_*, -\iota_*)=0$, i.e.~
$\im(j_*, -\iota_*)\subset \ker(j^*, -\iota^*)$, and so the right-hand side of \eqref{eq:LocalGlobalTrick} makes sense.

To see that $(\tau^*, \pi^*i^*)$ is well-defined: $(j^*, -\iota^*)\circ (\tau^*, \pi^*i^*)=j^*\tau^*-\iota^*\pi^*i^*=0$, by the commutativity of Diagram \eqref{diag:LocalGlobalTrick}.
Similarly, one checks that $(\tau_*, i_*\pi_*)$ is well-defined.

The composition $(\tau_*, i_*\pi_*)\circ (\tau^*, \pi^*i^*)=\tau_*\tau^*+i_*\pi_*\pi^*i^*=\id$, thanks to the projection formula. It remains to show that $(\tau_*, i_*\pi_*)$ is injective. More explicitly, for any $a\in \CH^l(\tilde{X})$ and $b\in \CH^l(X_{\loc})$ satisfying that
\begin{align}
	j^*(a)&=\iota^*(b),\label{eqn:relation1}\\ 
	\tau_*(a)&=-i_*\pi_*(b),\label{eqn:relation2}
\end{align}
we want to show the existence of $\gamma\in \CH^{l-1}(E)$, such that $a=j_*(\gamma)$ and $b=-\iota_*(\gamma)$.

To this end, by abuse of notation, denote $\xi$ both $c_1(\mathcal{O}_\pi(1))$ and $c_1(\mathcal{O}_p(1))$ (note that the former does restrict to the latter). By the blow-up formula and the projective bundle formula, we can write
\begin{align}
	a&= \tau^*(a_0)+j_*(p^*(a_1)+\cdots+p^*(a_{e-1})\xi^{e-2});\label{eqn:develop-1}\\
	b&= \pi^*(b_0)+\pi^*(b_1)\xi+\cdots+ \pi^*(b_e)\xi^e,\label{eqn:develop-2}
\end{align}
for some $a_0\in \CH^{l}(X)$, $a_r\in \CH^{l-r}(Z)$ and $b_r\in \CH^{l-r}(Z)$, where $e:=\codim(Z\subset X)$.

Using  \eqref{eqn:relation1}, \eqref{eqn:develop-1}, \eqref{eqn:develop-2}, one obtains the following relations:
\begin{align}
i^*(a_0)&=b_0-b_ec_e(N); \label{eqn:relation1-1}\\
a_r+b_r&=b_ec_{e-r}(N),\quad 1\leq r\leq e-1. \label{eqn:relation1-2}
\end{align}
where $N:=N_{Z/X}$ is of rank $e$ and, for $E=\PP(N)$, we used the following identity in $\CH^*(E)$:
\[\xi^e+p^*c_1(N)\xi^{e-1}+\cdots+p^*c_e(N)=0.\]
By \eqref{eqn:relation2},  \eqref{eqn:develop-1}, \eqref{eqn:develop-2}, we have 
\begin{equation}\label{eqn:a0}
	a_0=-i_*(b_e),
\end{equation}
which implies that $i^*(a_0)=-i^*i_*(b_e)=-b_ec_e(N)$. Combining with \eqref{eqn:relation1-1}, we get $b_0=0$.
Using again \eqref{eqn:a0} and the excess intersection formula \cite[\S 6.3]{FultonBook}, 
\begin{equation}\label{eqn:Excess}
\tau^*(a_0)=-j_*(p^*(b_e)c_{e-1}(\mathcal{E})),
\end{equation}
where $\mathcal{E}:=p^*(N)/\mathcal{O}_p(-1)$ is the excess normal bundle of the blow-up square in Diagram \eqref{diag:LocalGlobalTrick}.
Putting \eqref{eqn:Excess} into \eqref{eqn:develop-1}, we obtain that $a= j_*(\gamma)$ with
\[\gamma:=-p^*(b_e)c_{e-1}(\mathcal{E})+p^*(a_1)+p^*(a_2)\xi+\cdots+p^*(a_{e-1})\xi^{e-2}.\]
Therefore, it remains to show that $-\iota_*(\gamma)=b$.
To this end,
\begin{align*}
	-\iota_*(\gamma)&=\iota_*(p^*(b_e)c_{e-1}(\mathcal{E}))-\iota_*(p^*(a_1)+\cdots+p^*(a_{e-1})\xi^{e-2}),\\
	&=\iota_*(p^*(b_e)c_{e-1}(\mathcal{E}))+\iota_*(p^*(b_1-b_ec_{e-1}(N)))+\cdots+\iota_*(p^*(b_{e-1}-b_ec_1(N))\xi^{e-2}),\\
	&=\pi^*(b_e)\iota_*c_{e-1}(\mathcal{E})+\pi^*(b_1-b_ec_{e-1}(N)))\xi+\cdots+\pi^*(b_{e-1}-b_ec_1(N))\xi^{e-1},\\
	&=b+\pi^*(b_e)\left(\iota_*c_{e-1}(\mathcal{E})-\pi^*(c_{e-1}(N))\xi-\cdots-\pi^*(c_1(N))\xi^{e-1}-\xi^e\right),\\
	&=b.
\end{align*}
where the second equality uses \eqref{eqn:relation1-2}, the third equality uses the projection formula (note that $p=\pi\circ\iota$) and the fact that $[E]=\xi \in \CH^1(X_{\loc})$, the fourth equality uses \eqref{eqn:develop-2} and that $b_0=0$, and the last equality uses the equality $$c_{e-1}(\mathcal{E})=p^*(c_{e-1}(N))+\cdots+p^*(c_1(N))\xi^{e-2}+\xi^{e-1},$$ which can be easily deduced from $c_t(N)=c_t(\mathcal{E})c_t(\mathcal{O}_p(-1))=c_t(\mathcal{E})(1-t\xi)$.
\end{proof}

\bibliographystyle{amsplain}
\bibliography{references}

\providecommand{\bysame}{\leavevmode\hbox to3em{\hrulefill}\thinspace}
\providecommand{\MR}{\relax\ifhmode\unskip\space\fi MR }
\providecommand{\MRhref}[2]{%
  \href{http://www.ams.org/mathscinet-getitem?mr=#1}{#2}
}
\providecommand{\href}[2]{#2}
\begin{thebibliography}{10}

\bibitem{SGA1}
\emph{Rev\^etements \'etales et groupe fondamental ({SGA} 1)}, Documents
  Math\'ematiques (Paris) [Mathematical Documents (Paris)], 3, Soci\'et\'e
  Math\'ematique de France, Paris, 2003.

\bibitem{AndreBourbaki}
Y.~Andr\'{e}, \emph{Motifs de dimension finie (d'apr\`es {S}.-{I}. {K}imura,
  {P}. {O}'{S}ullivan{$\dots$})}, no. 299, 2005, S\'{e}minaire Bourbaki. Vol.
  2003/2004, pp.~Exp. No. 929, viii, 115--145.

\bibitem{Atiyah_Elliptic}
M.~F. Atiyah, \emph{Vector bundles over an elliptic curve}, Proc. London Math.
  Soc. (3) \textbf{7} (1957), 414--452.

\bibitem{AB}
M.~F. Atiyah and R.~Bott, \emph{The {Y}ang-{M}ills equations over {R}iemann
  surfaces}, Philos. Trans. Roy. Soc. London Ser. A \textbf{308} (1983),
  no.~1505, 523--615.

\bibitem{BKN}
V.~Balaji, A.~D. King, and P.~E. Newstead, \emph{Algebraic cohomology of the
  moduli space of rank {$2$} vector bundles on a curve}, Topology \textbf{36}
  (1997), no.~2, 567--577.

\bibitem{Bauer}
S.~Bauer, \emph{Parabolic bundles, elliptic surfaces and {${\rm
  SU}(2)$}-representation spaces of genus zero {F}uchsian groups}, Math. Ann.
  \textbf{290} (1991), no.~3, 509--526.

\bibitem{Beauville_diag}
A.~Beauville, \emph{Sur la cohomologie de certains espaces de modules de
  fibr\'{e}s vectoriels}, Geometry and analysis ({B}ombay, 1992), Tata Inst.
  Fund. Res., Bombay, 1995, pp.~37--40.

\bibitem{Beauville_AV}
Arnaud Beauville, \emph{Sur l'anneau de {C}how d'une vari\'{e}t\'{e}
  ab\'{e}lienne}, Math. Ann. \textbf{273} (1986), no.~4, 647--651. \MR{826463}

\bibitem{BD}
K.~Behrend and A.~Dhillon, \emph{On the motivic class of the stack of bundles},
  Adv. Math. \textbf{212} (2007), no.~2, 617--644.

\bibitem{BB}
A.~Bia{\l}ynicki-Birula, \emph{Some theorems on actions of algebraic groups},
  Ann. of Math. (2) \textbf{98} (1973), 480--497.

\bibitem{BGL}
E.~Bifet, F.~Ghione, and M.~Letizia, \emph{On the {A}bel-{J}acobi map for
  divisors of higher rank on a curve}, Math. Ann. \textbf{299} (1994), no.~4,
  641--672.

\bibitem{BH}
H.~U. Boden and Y.~Hu, \emph{Variations of moduli of parabolic bundles}, Math.
  Ann. \textbf{301} (1995), no.~3, 539--559.

\bibitem{BY_ParHiggs}
H.~U. Boden and K.~Yokogawa, \emph{Moduli spaces of parabolic {H}iggs bundles
  and parabolic {$K(D)$} pairs over smooth curves. {I}}, Internat. J. Math.
  \textbf{7} (1996), no.~5, 573--598.

\bibitem{BY_rationality}
\bysame, \emph{Rationality of moduli spaces of parabolic bundles}, J. London
  Math. Soc. (2) \textbf{59} (1999), no.~2, 461--478.

\bibitem{Bulles}
T.-H. B\"{u}lles, \emph{Motives of moduli spaces on {K}3 surfaces and of
  special cubic fourfolds}, Manuscripta Math. \textbf{161} (2020), no.~1-2,
  109--124.

\bibitem{Chakraborty1}
S.~Chakraborty, \emph{Chow group of 1-cycles of the moduli of parabolic bundles
  over a curve}, arxiv: 1907.13431, 2019.

\bibitem{Chakraborty2}
\bysame, \emph{On {A}bel-{J}acobi maps of moduli of parabolic bundles over a
  curve}, arxiv: 2003.00854, 2020.

\bibitem{ChoeHwang}
I.~Choe and J.-M. Hwang, \emph{Chow group of 1-cycles on the moduli space of
  vector bundles of rank 2 over a curve}, Math. Z. \textbf{253} (2006), no.~2,
  281--293.

\bibitem{cisinski-deglise-integral}
D.-C. Cisinski and F.~D\'{e}glise, \emph{Integral mixed motives in equal
  characteristic}, Doc. Math. (2015), no.~Extra vol.: Alexander S. Merkurjev's
  sixtieth birthday, 145--194.

\bibitem{dB_motive_moduli_vb}
S.~del Ba\~{n}o, \emph{On the {C}how motive of some moduli spaces}, J. Reine
  Angew. Math. \textbf{532} (2001), 105--132.

\bibitem{dB_rk2}
\bysame, \emph{On the motive of moduli spaces of rank two vector bundles over a
  curve}, Compositio Math. \textbf{131} (2002), no.~1, 1--30.

\bibitem{DN}
J.-M. Drezet and M.~S. Narasimhan, \emph{Groupe de {P}icard des
  vari\'{e}t\'{e}s de modules de fibr\'{e}s semi-stables sur les courbes
  alg\'{e}briques}, Invent. Math. \textbf{97} (1989), no.~1, 53--94.

\bibitem{FuWang08}
B.~Fu and C.-L. Wang, \emph{Motivic and quantum invariance under stratified
  {M}ukai flops}, J. Differential Geom. \textbf{80} (2008), no.~2, 261--280.

\bibitem{Salvator-Lie-Ziyu}
L.~Fu, S.~Floccari, and Z.~Zhang, \emph{On the motive of {O}'{G}rady's
  ten-dimensional hyper-{K}\"ahler varieties}, To appear in Communications in
  Contemporary Mathematics, 2019.

\bibitem{FultonBook}
W.~Fulton, \emph{Intersection theory}, second ed., Ergebnisse der Mathematik
  und ihrer Grenzgebiete. 3. Folge. A Series of Modern Surveys in Mathematics
  [Results in Mathematics and Related Areas. 3rd Series. A Series of Modern
  Surveys in Mathematics], vol.~2, Springer-Verlag, Berlin, 1998.

\bibitem{HN}
G.~Harder and M.~S. Narasimhan, \emph{On the cohomology groups of moduli spaces
  of vector bundles on curves}, Math. Ann. \textbf{212} (1974/75), 215--248.

\bibitem{Hitchin}
N.~J. Hitchin, \emph{The self-duality equations on a {R}iemann surface}, Proc.
  London Math. Soc. (3) \textbf{55} (1987), no.~1, 59--126.

\bibitem{Holla}
Y.~I. Holla, \emph{Poincar\'{e} polynomial of the moduli spaces of parabolic
  bundles}, Proc. Indian Acad. Sci. Math. Sci. \textbf{110} (2000), no.~3,
  233--261.

\bibitem{HPL_Higgs}
V.~Hoskins and S.~Pepin~Lehalleur, \emph{On the {V}oevodsky motive of the
  moduli space of higgs bundles on a curve}, To appear in Selecta Mathematica,
  2019.

\bibitem{HPL_formula}
\bysame, \emph{A formula for the {V}oevodsky motive of the moduli stack of
  vector bundles on a curve}, (arxiv: 1809.02150) to appear in Geometry and
  Topology, 2020.

\bibitem{HuberKahn}
A.~Huber and B.~Kahn, \emph{The slice filtration and mixed {T}ate motives},
  Compos. Math. \textbf{142} (2006), no.~4, 907--936.

\bibitem{Huybrechts97JDG}
D.~Huybrechts, \emph{Birational symplectic manifolds and their deformations},
  J. Differential Geom. \textbf{45} (1997), no.~3, 488--513.

\bibitem{HL}
D.~Huybrechts and M.~Lehn, \emph{The geometry of moduli spaces of sheaves},
  second ed., Cambridge Mathematical Library, Cambridge University Press,
  Cambridge, 2010.

\bibitem{illusie-fga}
L.~Illusie, \emph{Grothendieck's existence theorem in formal geometry},
  Fundamental algebraic geometry, Math. Surveys Monogr., vol. 123, Amer. Math.
  Soc., Providence, RI, 2005, With a letter (in French) of Jean-Pierre Serre,
  pp.~179--233.

\bibitem{IyerLewis}
J.~N. Iyer, \emph{The {A}bel-{J}acobi isomorphism for one-cycles on {K}irwan's
  log resolution of the moduli space {$SU_C(2,O_C)$}}, J. Reine Angew. Math.
  \textbf{696} (2014), 1--29, With an appendix by James D. Lewis.

\bibitem{Jannsen}
U.~Jannsen, \emph{Motives, numerical equivalence, and semi-simplicity}, Invent.
  Math. \textbf{107} (1992), no.~3, 447--452.

\bibitem{Jiang19}
Q.~Jiang, \emph{On the {C}how theory of projectivization}, arxiv: 1910.06730,
  2019.

\bibitem{JiangYin}
Z.~Jiang and Q.~Yin, \emph{On the {C}how ring of certain rational cohomology
  tori}, C. R. Math. Acad. Sci. Paris \textbf{355} (2017), no.~5, 571--576.

\bibitem{Kanemitsu-K-equivalence}
A.~Kanemitsu, \emph{Mukai pairs and simple k-equivalence}, arxiv: 1812.05392v1,
  2018.

\bibitem{Kimura-FiniteDimension}
S.-I. Kimura, \emph{Chow groups are finite dimensional, in some sense}, Math.
  Ann. \textbf{331} (2005), no.~1, 173--201.

\bibitem{KS}
A.~King and A.~Schofield, \emph{Rationality of moduli of vector bundles on
  curves}, Indag. Math. (N.S.) \textbf{10} (1999), no.~4, 519--535.

\bibitem{KollarMori}
J.~Koll\'{a}r and S.~Mori, \emph{Birational geometry of algebraic varieties},
  Cambridge Tracts in Mathematics, vol. 134, Cambridge University Press,
  Cambridge, 1998, With the collaboration of C. H. Clemens and A. Corti,
  Translated from the 1998 Japanese original.

\bibitem{LLW-annals}
Y.-P. Lee, H.-W. Lin, and C.-L. Wang, \emph{Flops, motives, and invariance of
  quantum rings}, Ann. of Math. (2) \textbf{172} (2010), no.~1, 243--290.

\bibitem{LiLinPan}
D.~Li, Y.~Lin, and X.~Pan, \emph{A note on 1-cycles on the moduli space of
  rank-2 bundles over a curve}, C. R. Math. Acad. Sci. Paris \textbf{357}
  (2019), no.~2, 209--211.

\bibitem{MVW}
C.~Mazza, V.~Voevodsky, and C.~Weibel, \emph{Lecture notes on motivic
  cohomology}, Clay Mathematics Monographs, vol.~2, American Mathematical
  Society, Providence, RI; Clay Mathematics Institute, Cambridge, MA, 2006.

\bibitem{MS}
V.~B. Mehta and C.~S. Seshadri, \emph{Moduli of vector bundles on curves with
  parabolic structures}, Math. Ann. \textbf{248} (1980), no.~3, 205--239.

\bibitem{Mukai84Invent}
S.~Mukai, \emph{Symplectic structure of the moduli space of sheaves on an
  abelian or {$K3$} surface}, Invent. Math. \textbf{77} (1984), no.~1,
  101--116.

\bibitem{NakajimaPHB}
H.~Nakajima, \emph{Hyper-{K}\"{a}hler structures on moduli spaces of parabolic
  {H}iggs bundles on {R}iemann surfaces}, Moduli of vector bundles ({S}anda,
  1994; {K}yoto, 1994), Lecture Notes in Pure and Appl. Math., vol. 179,
  Dekker, New York, 1996, pp.~199--208.

\bibitem{Ramanan}
S.~Ramanan, \emph{The moduli spaces of vector bundles over an algebraic curve},
  Math. Ann. \textbf{200} (1973), 69--84.

\bibitem{SchollClassical}
A.~J. Scholl, \emph{Classical motives}, Motives ({S}eattle, {WA}, 1991), Proc.
  Sympos. Pure Math., vol.~55, Amer. Math. Soc., Providence, RI, 1994,
  pp.~163--187.

\bibitem{Seshadri_VBAC}
C.~S. Seshadri, \emph{Space of unitary vector bundles on a compact {R}iemann
  surface}, Ann. of Math. (2) \textbf{85} (1967), 303--336.

\bibitem{Simpson_harmonic}
C.~Simpson, \emph{Harmonic bundles on noncompact curves}, J. Amer. Math. Soc.
  \textbf{3} (1990), no.~3, 713--770.

\bibitem{Simpson}
\bysame, \emph{The {H}odge filtration on nonabelian cohomology}, Algebraic
  geometry---{S}anta {C}ruz 1995, Proc. Sympos. Pure Math., vol.~62, Amer.
  Math. Soc., Providence, RI, 1997, pp.~217--281.

\bibitem{Thaddeus_pairs}
M.~Thaddeus, \emph{Stable pairs, linear systems and the {V}erlinde formula},
  Invent. Math. \textbf{117} (1994), no.~2, 317--353.

\bibitem{Thaddeus_VGIT}
\bysame, \emph{Geometric invariant theory and flips}, J. Amer. Math. Soc.
  \textbf{9} (1996), no.~3, 691--723.

\bibitem{Thaddeus_var_PHiggs}
\bysame, \emph{Variation of moduli of parabolic {H}iggs bundles}, J. Reine
  Angew. Math. \textbf{547} (2002), 1--14.

\bibitem{VoevodskyBookChapter}
V.~Voevodsky, \emph{Triangulated categories of motives over a field}, Cycles,
  transfers, and motivic homology theories, Ann. of Math. Stud., vol. 143,
  Princeton Univ. Press, Princeton, NJ, 2000, pp.~188--238.

\bibitem{VoevodskyCancellation}
\bysame, \emph{Cancellation theorem}, Doc. Math. (2010), no.~Extra vol.: Andrei
  A. Suslin sixtieth birthday, 671--685.

\bibitem{wildeshaus}
J.~Wildeshaus, \emph{On the interior motive of certain {S}himura varieties: the
  case of {P}icard surfaces}, Manuscripta Math. \textbf{148} (2015), no.~3-4,
  351--377.

\bibitem{Yokogawa93}
K.~Yokogawa, \emph{Compactification of moduli of parabolic sheaves and moduli
  of parabolic {H}iggs sheaves}, J. Math. Kyoto Univ. \textbf{33} (1993),
  no.~2, 451--504.

\bibitem{Yokogawa}
\bysame, \emph{Infinitesimal deformation of parabolic {H}iggs sheaves},
  Internat. J. Math. \textbf{6} (1995), no.~1, 125--148.

\end{thebibliography}

\medskip \medskip

\noindent{Radboud University, IMAPP, PO Box 9010, 6500 GL Nijmegen, The Netherlands} 

\medskip \noindent{\texttt{fu@math.univ-lyon1.fr, v.hoskins@math.ru.nl, simon.pepinlehalleur@ru.nl}}

\end{document}